\patchcmd{\section}{\scshape}{\bfseries}{}{}
\renewcommand{\@secnumfont}{\bfseries}
\theoremstyle{plain}
\DeclareMathOperator{\id}{\textrm{id}}
\newcommand{\FF}{\mathbb{F}}    
\newcommand{\ZZ}{\mathbb{Z}}
\newcommand{\wild}{\mathbb{L}}
\newcommand{\NI}{\operatorname{NI}} 
\newcommand{\Vect}{\operatorname{Vect}} 
\newcommand{\Rep}{\operatorname{Rep}}  
\newcommand{\Res}{\operatorname{Res}}
\newcommand{\nil}{\operatorname{nil}}
\newcommand{\Hom}{\operatorname{Hom}} 
\newcommand{\mycirc}[1][black]{\Large\textcolor{#1}{\ensuremath\bullet}}
\theoremstyle{definition}
\newtheorem{mydef}{\textbf{Definition}}[section]
\newtheorem{myeg}[mydef]{\textbf{Example}}
\newtheorem{question}[mydef]{\textbf{Question}}
\newtheorem{rmk}[mydef]{\textbf{Remark}}
\newtheorem{construction}[mydef]{\textbf{Construction}}
\theoremstyle{plain}
\newtheorem*{nothm}{\textbf{Theorem}}
\newtheorem*{nothma}{\textbf{Theorem A}}
\newtheorem*{nothmb}{\textbf{Theorem B}}
\newtheorem*{nothmc}{\textbf{Theorem C}}
\newtheorem*{nothmd}{\textbf{Theorem D}}
\newtheorem*{nothme}{\textbf{Theorem E}}
\newtheorem*{nothmf}{\textbf{Theorem F}}
\newtheorem*{nothmg}{\textbf{Theorem G}} 
\newtheorem*{nothmh}{\textbf{Theorem H}}
\newtheorem{mythm}[mydef]{\textbf{Theorem}}
\newtheorem{lem}[mydef]{\textbf{Lemma}}
\newtheorem{pro}[mydef]{\textbf{Proposition}}
\newtheorem{cor}[mydef]{\textbf{Corollary}}
\tikzset{main node/.style={circle,fill=black,draw,minimum size=0.3cm,inner sep=0pt},
}
\begin{document}

	\title[$\FF_1$-representations and Euler characteristics]{Coefficient quivers, $\FF_1$-representations, and Euler characteristics of quiver Grassmannians}

	\author{Jaiung Jun}
	\address{State University of New York at New Paltz, NY, USA}
	\curraddr{}
	\email{junj@newpaltz.edu}

	\author{Alex Sistko}
	\address{Manhattan College, NY, USA}
	\curraddr{}
	\email{asistko01@manhattan.edu}
	
	\makeatletter
	\@namedef{subjclassname@2020}{%
		\textup{2020} Mathematics Subject Classification}
	\makeatother
	
	\subjclass[2020]{Primary 16G20; Secondary 05E10, 16T30, 17B35}
	\keywords{the field with one element, representations of quivers, Hall algebra, Euler characteristic, quiver Grassmannian}
	\date{}
	
	\dedicatory{}

	\maketitle
	
	
\begin{abstract}
A quiver representation assigns a vector space to each vertex, and a linear map to each arrow of a quiver. When one considers the category $\textrm{Vect}(\mathbb{F}_1)$ of vector spaces ``over $\mathbb{F}_1$'' (the field with one element), one obtains $\mathbb{F}_1$-representations of a quiver. In this paper, we study representations of a quiver over the field with one element in connection to coefficient quivers. To be precise, we prove that the category $\textrm{Rep}(Q,\mathbb{F}_1)$ is equivalent to the (suitably defined) category of coefficient quivers over $Q$. This provides a conceptual way to see Euler characteristics of a class of quiver Grassmannians as the number of ``$\mathbb{F}_1$-rational points'' of quiver Grassmannians. We generalize techniques originally developed for string and band modules to compute the Euler characteristics of quiver Grassmannians associated to $\mathbb{F}_1$-representations. These techniques apply to a large class of $\mathbb{F}_1$-representations, which we call the $\mathbb{F}_1$-representations with finite nice length: we prove sufficient conditions for an $\mathbb{F}_1$-representation to have finite nice length, and classify such representations for certain families of quivers. Finally, we explore the Hall algebras associated to $\mathbb{F}_1$-representations of quivers. We answer the question of how a change in orientation affects the Hall algebra of nilpotent $\mathbb{F}_1$-representations of a quiver with bounded representation type. We also discuss Hall algebras associated to representations with finite nice length, and compute them for certain families of quivers.
\end{abstract} 

	
\section{Introduction}
Mathematics over the ``field with one element'' $\FF_1$ is a recent area of research that draws primarily from considerations in algebraic geometry, number theory, and combinatorics. The term ``field of characteristic one'' was originally coined by J.~Tits. In  \cite{tits1956analogues}, Tits observed that incidence geometries over a finite field $\FF_q$ have a combinatorial counterpart\footnote{It is ``thin geometries'' where one does not require a projective line to contain at least three points.} which could be interpreted as incidence geometries defined over ``the field of characteristic one''. 

In \cite{soule2004varietes}, C.~Soul\'e first introduced the notion of algebraic varieties over the field with one element (denoted by $\FF_1$) by taking the functor of points approach, and suggested several research directions to pursue. Soul\'e also asked whether or not Chevalley group schemes $G$ could be defined over $\FF_1$ in such a way as to relate the set $G(\FF_1)$ of ``$\FF_1$-rational points'' to the Weyl group $W_G$ of $G$. In \cite{connes2011notion}, A.~Connes and C.~Consani provided a positive answer to the question posed by Soul\'e. This question was further studied by O.~Lorscheid \cite{lorscheid2018geometry} by using the algebraic structure of a ``blueprint''; see \cite{lorscheid2016blueprinted} for blueprints.

One heuristic idea of $\FF_1$-geometry is that when an algebraic variety $X$ over $\mathbb{Z}$ has an ``$\FF_1$-model'' $X_{\FF_1}$, then the number of ``$\FF_1$-rational points'' of $X_{\FF_1}$ should be the Euler characteristic of $X(\mathbb{C})$.\footnote{There are several (non-equivalent) definitions for an $\FF_1$-model of $X$ and $\FF_1$-rational points.} The heuristic is essentially based on the relation between the counting function of $X(\FF_q)$ and the Euler characteristic of $X(\mathbb{C})$ for a smooth projective scheme $X$. For example, the cardinality of the set of $\FF_q$-rational points of the Grassmannian $\textrm{Gr}(k,n)$ is given by the $q$-binomial coefficients:
\begin{equation}\label{eq: Euler characteristic for Grassmannian}
|\textrm{Gr}(k,n)(\FF_q)|=\begin{bmatrix}
	n \\
	k
\end{bmatrix}_q, 
\end{equation}
and by evaluating \eqref{eq: Euler characteristic for Grassmannian} at $q=1$, we obtain 
\begin{equation}\label{eq: Euler charac 2}
{n\choose k} = \chi (\textrm{Gr}(k,n)(\mathbb{C})), 
\end{equation}
the Euler characteristic of $\textrm{Gr}(k,n)(\mathbb{C})$.\footnote{Note that for a Chevalley group $G$ and its Weyl group $W_G$, the cardinality $|W_G|$ can be computed from the counting function of $G(\FF_q)$ at $q=1$ after removing zeroes at $q=1$.}  We refer the reader to \cite[Section 4]{lorscheid2016blueprinted} for more details. 

Quiver Grassmannians are projective varieties whose points parameterize certain subrepresentations of a given quiver representation. The usual Grassmannians can be recovered by taking the quiver to have a single vertex and no arrows. In \cite{reineke2013every}, M.~Reineke showed that \emph{any} projective variety is a quiver Grassmannian. In particular, the class of quiver Grassmannians is not just a special class of projective varieties, but they are all projective varieties. What's more surprising is the result of C~M.~Ringel \cite{ringel2018quiver} which shows that there exists a \emph{single} quiver $Q$ (independent of a projective variety) such that for a given projective variety $X$, one can find a representation $M$ of $Q$ and a dimension vector $\textbf{e}$ such that $X=\operatorname{Gr}^Q_{\textbf{e}}(M)$, the quiver Grassmannian of $\textbf{e}$-dimensional subrepresentations of $M$:
\[
\textrm{Gr}^Q_{\textbf{e}}(M):=\{N \leq M \mid \textbf{\textrm{dim}}(N)=\textbf{e}\}.
\]
 We refer the reader to \cite{irelli2020three} for further results. 

Let $Q$ be a finite quiver throughout. Lorscheid has proved that if $M$ admits a basis in which the arrows of $Q$ act via integer matrices, then $\operatorname{Gr}^Q_{{\bf{e}}}(M)$ admits an  $\mathbb{F}_1$-model \cite{lorscheid2016blueprinted}. If $M$ is a tree module, it can be proved that the Euler characteristic of $\operatorname{Gr}^Q_{{\bf{e}}}(M)$ counts the number of $\FF_1$-rational points. Cerulli Irelli computes the Euler characteristic of $\operatorname{Gr}^Q_{\textbf{e}}(M)$ when $M$ is a string module in \cite{irelli2011quiver}. Haupt extends this work to tree modules, and produces results of a similar flavor for band modules \cite{Haupt2012euler}. For some tree modules and modules over certain tame quivers, Lorscheid and Weist develop techniques for computing Schubert decompositions of quiver Grassmannians which can be used to compute their Euler characteristics \cite{lorscheid2014schubert, lorscheid2015schubert,LorscheidWeist2019DynkinI,LorscheidWeist2019RepType,LorscheidWeistDynkinII}. It is therefore natural to pose the following questions, following Lorscheid.


\begin{question}\label{question: main question}(cf. \cite{lorscheid2016blueprinted})
\begin{enumerate}
\item If $M$ is such that $\operatorname{Gr}^Q_{\textbf{e}}(M)$ admits an $\FF_1$-model, what conditions on $M$ guarantee that the Euler characteristic of $\operatorname{Gr}^Q_{\textbf{e}}(M)$ counts its number of $\FF_1$-rational points?
\item If the number of $\FF_1$-rational points of $\operatorname{Gr}^Q_{\textbf{e}}(M)$ is given by its Euler characteristic, can one find an efficient combinatorial formula for computing it?
\end{enumerate}
\end{question}

In this paper, we partially provide an affirmative answer to Question \ref{question: main question}. We begin with the $\FF_1$-representations of $Q$, first introduced by M.~Szczesny in \cite{szczesny2011representations} and studied further by the authors in \cite{jun2020quiver} using the coefficient quivers of Ringel \cite{ringel1998exceptional}. Using the combinatorial techniques developed in \cite{Haupt2012euler}, we describe a class of $\FF_1$-representations whose associated quiver Grassmannians admit filtrations by locally-closed subsets, each of which is the fixed point set of a torus action on the previous one, and whose last piece is finite. We show that this class, which we call the $\FF_1$-representations with \emph{finite nice length}, contains many of the representations considered in \cite{irelli2011quiver,Haupt2012euler} as well as new ones. In addition to exhibiting new representations, we show that the class of $\FF_1$-representations with finite nice length includes representations whose coefficient quivers have first homology groups with arbitrarily high rank. Taken together, this demonstrates that the basic techniques of \cite{irelli2011quiver,Haupt2012euler} can be successfully applied to a broad class of representations beyond those previously considered in the literature.


Szczesny explored several aspects of representation theory over $\FF_1$ in \cite{szczesny2012hall,szczesny2014hall,szczesny2018hopf,jun2020toric}. In particular, he introduced a notion of quiver representations over $\FF_1$ based on an idea that vector spaces over $\FF_1$ are \emph{finite pointed sets}. To be precise, to define an $\FF_1$-representation of a quiver $Q$, one may replace vector spaces with finite pointed sets ($\FF_1$-vector spaces) and linear maps with pointed functions satisfying an injectivity condition ($\FF_1$-linear maps). This defines the category $\textrm{Rep}(Q,\FF_1)$ of $\FF_1$-representations of $Q$, which can be considered as a degenerate combinatorial model of the category $\textrm{Rep}(Q,\FF_q)$. In fact, Szczesny's main observation was that the Hall algebra $H_{Q,\FF_1}$ of $\textrm{Rep}(Q,\FF_1)$ behaves in some ways like the specialization at $q=1$ of the Hall algebra $H_{Q,\FF_q}$ of $\textrm{Rep}(Q,\FF_q)$.\footnote{The Hall algebra $H_{Q,\FF_1}$ can be constructed directly by mimicking the construction of $H_{Q,\FF_q}$. One may also directly appeal to the framework of Dyckerhoff and Kapranov \cite{dyckerhoff2012higher}.} This line of ideas was further pursued with the first named author in \cite{jun2020toric} to compute the Hall algebra of coherent sheaves on $\mathbb{P}^2$ by using its degenerate combinatorial model of monoid schemes. 

In our recent work \cite{jun2020quiver}, we stratified quivers according to the asymptotic growth of their indecomposable nilpotent $\FF_1$-representations. To this end, we defined the growth function $\NI_Q:\mathbb{N} \to \mathbb{Z}_{\geq 0}$ such that 
\[
\NI_Q(n) := \#\{\text{isoclasses of $n$-dimensional nilpotent indecomposables in $\Rep(Q,\FF_1)$} \}. 
\]
Then we used the growth function to define an order relation among quivers: for two quivers $Q$ and $Q'$, write $Q \le_{\nil} Q'$ if there exists a natural number $C$ such that $\NI_Q = O(\NI_{Q'} \circ \mu_C)$ in big-$O$ notation, where $\mu_C$ is a multiplication function by $C$.
This order relation induces an equivalence relation $\approx_{\nil}$ on quivers as follows:
\[
Q \approx_{\nil} Q' \iff Q \le_{\nil} Q'\textrm{ and } Q' \le_{\nil} Q.
\]
In \cite{jun2020quiver}, we proved the following: 
\begin{nothm}\cite{jun2020quiver}
Let $\wild_n$ be the quiver with one vertex and $n$-loops, and let $Q$ be connected.
\begin{enumerate}
	\item[(i)]
	$\wild_0,\wild_1,\wild_2$ are not equivalent to each other, and $\wild_m  \approx_{\nil} \wild_n$ whenever $\min\{m,n\} \geq 2$.
	\item[(ii)]
	$Q \approx_{\nil} \wild_0$ if and only if $Q$ is a tree quiver. 
	\item[(iii)]
	$Q \approx_{\nil} \wild_1$ if and only if $Q$ is a cycle quiver. 
	\item[(iv)]
	For any quiver $Q$, one has $Q \le_{\nil} \wild_2$.
\end{enumerate}
\end{nothm}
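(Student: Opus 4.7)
The plan is to address each part by direct combinatorial analysis of $\NI_Q$, leveraging the coefficient quiver equivalence wherever possible. Beginning with part (i), I would compute the first two growth functions exactly: for $\wild_0$, indecomposable $\FF_1$-reps are one-dimensional pointed sets, so $\NI_{\wild_0}(n) = \delta_{n,1}$; for $\wild_1$, $\FF_1$-linearity forces a nilpotent pointed self-map of a pointed set to have its non-basepoint functional graph with both in- and out-degree at most one, so indecomposables correspond to single directed paths, giving $\NI_{\wild_1}(n) = 1$ for all $n \geq 1$. For $\wild_2$, I would exhibit at least $2^{n-1}$ pairwise non-isomorphic indecomposable nilpotent reps of dimension $n$ via the ``labeled path'' construction: a chain $v_1, v_2, \ldots, v_n$ whose $i$-th step is labeled by one of the two loops, with the other loop killing every non-basepoint element. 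These three growth rates immediately give the pairwise inequivalences. The equivalence $\wild_m \approx_{\nil} \wild_n$ for $\min(m,n) \geq 2$ splits into the easy direction (for $m \leq n$, setting extra loops to zero gives a full inclusion $\Rep(\wild_m, \FF_1) \hookrightarrow \Rep(\wild_n, \FF_1)$ that preserves indecomposability and dimension) and the reverse direction, which follows by transitivity from part (iv): $\wild_n \le_{\nil} \wild_2 \le_{\nil} \wild_m$ for $m \geq 2$.

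For part (ii), the forward direction rests on an induction on $|Q_0|$ showing that for a tree quiver $Q$, any indecomposable nilpotent $\FF_1$-rep has at most one non-basepoint element per vertex. Removing a leaf vertex, $\FF_1$-linearity of the incident arrow forces any two non-basepoint elements at the leaf either both to map to basepoint or to distinct non-basepoint elements, and in either case one extracts a direct summand, contradicting indecomposability. Hence $\NI_Q$ is eventually zero and matches $\NI_{\wild_0}$ in the $O(\,\cdot\,)$ sense. For the converse, if $Q$ contains an undirected cycle then repeatedly unfolding this cycle in the universal cover of the underlying graph yields an infinite family of connected coefficient quivers over $Q$ of increasing dimension, each giving an indecomposable nilpotent $\FF_1$-rep and preventing $\NI_Q$ from being eventually zero.

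Part (iii) proceeds analogously. For a cycle quiver $C_k$, the same degree argument as in part (i) shows that indecomposable nilpotent coefficient quivers are single paths projecting onto the cycle, and their isomorphism classes are determined by starting vertex and length; hence $\NI_{C_k}(n) = k$ for all $n \geq 1$ and $C_k \approx_{\nil} \wild_1$. Conversely, if $Q$ is neither a tree nor a cycle quiver, then $Q$ contains either a branching vertex on an undirected cycle or two independent undirected cycles; in either case I would produce a family of indecomposable nilpotent reps parameterized by the position of the branch (respectively, the switching point between cycles) along a long winding path, giving $\NI_Q(n)$ unbounded so that $Q \not\le_{\nil} \wild_1$.

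Part (iv) is the most substantive. The plan is to define, for each quiver $Q$ with $|Q_0|=k$ vertices and $|Q_1|=m$ arrows, an injection from isoclasses of indecomposable nilpotent coefficient quivers over $Q$ of dimension $n$ to isoclasses of indecomposable nilpotent coefficient quivers over $\wild_2$ of dimension at most $(2m+k)n$. For each non-basepoint element $v$ at vertex $i$ of $Q$, create a chain of $2m+i$ auxiliary elements joined by the first loop of $\wild_2$, with distinguished positions acting as ``out-slots'' (one per arrow with source $i$) and ``in-slots'' (one per arrow with target $i$). The second loop implements the original $Q$-arrows by sending each out-slot to the matching in-slot of the target chain, whenever the original coefficient quiver has the corresponding edge. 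The vertex type $i$ is encoded by the chain length $2m+i$, all distinct, so that collapsing chains recovers the original coefficient quiver up to isomorphism. The main obstacle is to verify $\FF_1$-linearity of both loops globally (i.e., in- and out-degree at most one), which is inherited from the slot structure and the original $\FF_1$-linearity, and injectivity on isomorphism classes via the collapse procedure. Combined with the growth of $\NI_{\wild_2}$, which absorbs the polynomial factor from summing over target dimensions up to $(2m+k)n$, this gives $\NI_Q = O(\NI_{\wild_2} \circ \mu_{2m+k})$.
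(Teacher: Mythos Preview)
This theorem is not proved in the present paper: it is quoted in the introduction as a result from the authors' earlier article \cite{jun2020quiver}, and no argument for it appears anywhere in this text. There is therefore no proof here against which to compare your proposal.

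Taken on its own, your sketch has the right shape and the correct growth computations for $\wild_0$, $\wild_1$, $\wild_2$, but two of the steps do not go through as written. In (ii), your leaf-removal induction does not establish thinness: when the leaf is a source and both of its non-basepoint elements map to \emph{distinct nonzero} targets (or dually, when the leaf is a target and both elements lie in the image of the incident arrow), neither element is isolated in the coefficient quiver and you give no mechanism for extracting a summand. The clean argument is that the winding condition forbids backtracking, so a winding from a connected quiver into a tree is necessarily injective on vertices (any closed walk in a tree has a spur, which would force two distinct arrows with a common endpoint to have the same image); hence indecomposables correspond to connected subquivers of $Q$ and there are only finitely many. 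In (iv), your encoding is a workable idea, but you have not verified that the resulting $\wild_2$-coefficient quiver is acyclic: this requires a specific placement of the slots (for instance, all in-slots preceding all out-slots along each chain) so that any directed path in the encoding projects to a directed walk in the original coefficient quiver. Your converse in (iii) is also only gestured at; the clean reduction is that a connected quiver which is neither a tree nor a cycle must contain a vertex of degree at least three, and one then varies the position at which the extra incident edge is attached along a long path wrapping the cycle to produce unboundedly many non-isomorphic indecomposables in each large dimension.
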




When $S$ is a coefficient quiver of $Q$, one naturally obtains a quiver map $F:S \to Q$ satisfying some conditions. The class of quiver maps satisfying this condition, called \emph{windings}, was studied by P.~Gabriel \cite{gabriel1981quivers}, W.W.~Crawley-Boevey \cite{crawley1989maps}, H.~Krause \cite{krause1991maps} and N.~Haupt \cite{Haupt2012euler}. More explicitly, a winding is a morphism of quivers $F:S\to Q$ consisting of a pair of functions\footnote{For a quiver $Q$, $Q_0$ is the vertex set of $Q$ and $Q_1$ is the arrow set of $Q$.}
\[
F_0:S_0 \to Q_0, \quad F_1:S_1 \to Q_1
\]
satisfying the following condition:
\[
F_1(\alpha)=F_1(\beta) \textrm{ implies } s(\alpha) \neq s(\beta) \textrm{ and } t(\beta) \neq t(\beta),
\]
where $s(\alpha)$ (resp.~$t(\alpha)$) is the source (resp.~target) of an arrow $\alpha$. We consider the category $\mathcal{C}_Q$ of quivers over a quiver $Q$ as follows: Let $\mathcal{C}_Q$ be the category whose objects are windings of quivers $F:S \to Q$. A morphism $\phi : (S,F)\rightarrow (S',F')$ is an ordered triple $\phi = (\mathcal{U}_{\phi}, \mathcal{D}_{\phi}, c_{\phi})$ satisfying some technical conditions, where $\mathcal{U}_{\phi}$ is a full subquiver of $S$, $\mathcal{D}_{\phi}$ is a full subquiver of $S'$, and $c_{\phi} : \mathcal{U}_{\phi} \rightarrow \mathcal{D}_{\phi}$ is a quiver isomorphism. We first upgrade the correspondence between coefficient quivers and $\FF_1$-representations in \cite{jun2020quiver} to the categorical equivalence as follows. 

\begin{nothma}(Proposition \ref{proposition: equivalence of categories})\label{thm: nothmA}
The categories $\emph{Rep}(Q,\FF_1)$ and $\mathcal{C}_Q$ are equivalent. This restricts to an equivalence between $\Rep(Q,\FF_1)_{\nil}$ and the full subcategory of $\mathcal{C}_Q$ whose objects are windings $F:S \to Q$ with $S$ acyclic. 
\end{nothma}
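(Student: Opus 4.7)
The plan is to build explicit functors $\Phi \colon \mathrm{Rep}(Q,\FF_1) \to \mathcal{C}_Q$ and $\Psi \colon \mathcal{C}_Q \to \mathrm{Rep}(Q,\FF_1)$, and then verify that they are mutually quasi-inverse; the nilpotent restriction will then follow almost for free.

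First I would define $\Phi$ on objects. Given $M \in \mathrm{Rep}(Q,\FF_1)$, consisting of pointed sets $(M_i,*_i)_{i\in Q_0}$ and pointed maps $M_\alpha \colon M_i \to M_j$ (which are injective away from the basepoint), put $S_0 := \bigsqcup_{i\in Q_0}(M_i \setminus \{*_i\})$ with $F_0$ the obvious label map. For each $\alpha \colon i \to j$ in $Q_1$ and each $x \in M_i\setminus\{*_i\}$ with $M_\alpha(x)\neq *_j$, adjoin an arrow $x \to M_\alpha(x)$ in $S_1$, sent by $F_1$ to $\alpha$. The winding condition $F_1(\alpha)=F_1(\beta)\Rightarrow s(\alpha)\neq s(\beta),\ t(\alpha)\neq t(\beta)$ is exactly the injectivity condition on $M_\alpha$ away from the basepoint, so $\Phi(M) = (S,F)$ is a well-defined object of $\mathcal{C}_Q$. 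Conversely I would define $\Psi(F\colon S \to Q)$ by $\Psi(F)_i := F_0^{-1}(i) \sqcup \{*_i\}$, and $\Psi(F)_\alpha(x) := y$ whenever there is an arrow $x\to y$ in $S$ with $F_1$-image $\alpha$, and $*_j$ otherwise; the winding property guarantees both that such a $y$ is unique (same source) and that the resulting map is $\FF_1$-linear (different sources force different targets). On objects, one checks directly that $\Psi \circ \Phi$ and $\Phi \circ \Psi$ are naturally isomorphic to the identity: the underlying sets and incidence data are reconstructed bijectively.

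Next I would translate morphisms. Given a morphism $f \colon M \to M'$ of $\FF_1$-representations, i.e.\ a family of pointed maps $f_i$ intertwining the $M_\alpha$'s, define $\mathcal{U}_f$ to be the full subquiver of $\Phi(M)$ on the vertices $\{x : f_{F_0(x)}(x) \neq *\}$, let $\mathcal{D}_f$ be the full subquiver of $\Phi(M')$ on the image, and let $c_f$ be the bijection $x \mapsto f_{F_0(x)}(x)$. The key point is that naturality of $f$ together with the injectivity of the $M_\alpha$'s ensures: (i) $c_f$ preserves arrows (because $f$ intertwines the maps and non-basepoint arrows in $\Phi(M)$ correspond to non-basepoint values of $M_\alpha$), and (ii) $F' \circ c_f = F|_{\mathcal{U}_f}$ (because $f$ is a morphism of $Q$-representations and hence respects the vertex labeling). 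Conversely, given $\phi = (\mathcal{U}_\phi, \mathcal{D}_\phi, c_\phi)$, I would send $x \in \mathcal{U}_\phi$ to $c_\phi(x)$ and everything else to the basepoint; the compatibility of $c_\phi$ with $F$ and $F'$ guarantees this is a morphism in $\mathrm{Rep}(Q,\FF_1)$. Functoriality — especially preservation of composition — requires checking that, if $\phi' \circ \phi$ is obtained from $\Psi(\phi')\circ \Psi(\phi)$, the domain subquiver is $c_\phi^{-1}(\mathcal{U}_{\phi'} \cap \mathcal{D}_\phi)$ and the codomain is the image under $c_{\phi'}$; this is the main bookkeeping step and is where the ``technical conditions'' built into the definition of morphisms in $\mathcal{C}_Q$ earn their keep.

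For the nilpotent restriction, I would use the standard characterization: $M \in \mathrm{Rep}(Q,\FF_1)$ is nilpotent iff every element of $\bigsqcup_i (M_i \setminus \{*_i\})$ is annihilated by every sufficiently long path in $Q$. Under $\Phi$, arrows in $S$ are exactly non-basepoint applications of $M_\alpha$, so a directed cycle in $\Phi(M)$ would produce a nonzero orbit under iteration, contradicting nilpotency; conversely, if $\Phi(M)$ is acyclic and finite-dimensional at each vertex, the length of any directed path in $\Phi(M)$ is bounded, yielding nilpotency of $M$. Passing to the infinite-dimensional case (if allowed) requires the same argument vertex-by-vertex. Hence $\Phi$ restricts to the desired equivalence.

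The main obstacle I anticipate is verifying that $\Phi$ and $\Psi$ are functorial on morphisms — in particular, that composition of morphisms in $\mathcal{C}_Q$ (which involves intersecting and pulling back full subquivers) matches composition of pointed natural transformations. The object-level correspondence is a clean bookkeeping exercise, but compatibility with composition, together with the precise identification of the ``technical conditions'' on $(\mathcal{U}_\phi, \mathcal{D}_\phi, c_\phi)$ with the axioms of a pointed map intertwining the $M_\alpha$'s, is where one must be careful.
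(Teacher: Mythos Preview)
Your approach is correct and essentially the same as the paper's: the paper defines your $\Psi$ as the functor $\mathbf{F}$, uses your $\Phi$ at the object level to prove essential surjectivity, and then appeals to \cite[Lemma~3.10]{jun2020quiver} for full faithfulness rather than explicitly constructing the quasi-inverse as you do. The only verifications you leave implicit---that $\mathcal{U}_f$ is successor-closed, $\mathcal{D}_f$ is predecessor-closed, and $c_f$ is a quiver \emph{isomorphism} (not merely arrow-preserving in one direction)---are exactly the ``technical conditions'' you flag, and they follow routinely from naturality of $f$ together with the $\FF_1$-linearity (injectivity off the basepoint) of the structure maps.
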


The above theorem also provides a conceptual framework to compute the Euler characteristic of a quiver Grassmannian through a base-change functor. To be precise, one always obtains a representation of $Q$ over $\mathbb{C}$ from a representation of $Q$ over $\FF_1$ functorially as follows: a finite pointed set $V$ defines a vector space $V_\mathbb{C}$ whose basis is $V-\{0_V\}$. This induces a functor for quiver representations:
\[
\mathbb{C}\otimes_{\FF_1}-:\textrm{Rep}(Q,\FF_1) \to \textrm{Rep}(Q,\mathbb{C}), \quad M \mapsto M_\mathbb{C}.
\]
This functor is always faithful, but generally not full (cf. the example at the end of \cite{krause1991maps}).  


The methods of Cerulli Irelli and Haupt to compute Euler characteristics of quiver Grassmannians are based on the following idea: that when a projective variety $X$ is equipped with a torus action admitting a finite number of fixed points, one may compute the Euler characteristic $\chi(X)$ of $X$ as the number of fixed points by the torus action. To ensure the existence of a torus action, Cerulli Irelli introduced a certain condition for string modules. This idea was generalized by Haupt by introducing a notion of gradings on representations. We show that Haupt's definition applies to $\FF_1$-representations of quivers.

For an $\FF_1$-representation $M$ of $Q$ and its corresponding winding $\Gamma_M \to Q$ (from Theorem $A$), we define a \emph{nice sequence} on $M$ to be a collection $\underline{\partial} = ( \partial_i)_{i \geq 0}$ of functions $\partial_i : (\Gamma_M)_0 \rightarrow \mathbb{Z}$ satisfying the following two conditions:\footnote{For the precise definition, see Definition \ref{definition: gradings}}
\begin{enumerate} 
\item $\partial_0$ is a nice grading.
\item For all $i>0$, $\partial_i$ is a $(\partial_0,\ldots , \partial_{i-1})$-nice grading.
\end{enumerate}  
See Section \ref{s: gradings} or \cite{Haupt2012euler} for the terminology on gradings. If there exists a nice sequence $\underline{\partial} = ( \partial_i)_{i\geq 0}$ on $M$ with the property that for all distinct $x,y\in (\Gamma_M)_0$, $\partial_i(x) \neq \partial_i(y)$ for some $i\geq 0$, then Haupt proves in \cite{Haupt2012euler} that the following formula holds for all dimension vectors $\textbf{e}$: 
\begin{equation}\label{e: intro nice}
\chi_{\textbf{e}}^Q(M_\mathbb{C}) = |\{ N \le M \mid \textbf{\textrm{dim}}(N) = \textbf{e} \}|,
\end{equation}
where $\chi^Q_{\textbf{e}}(M_\mathbb{C})$ is the Euler characteristic of $\textrm{Gr}^Q_{\textbf{e}}(M_\mathbb{C})$. In other words, the Euler characteristics of the associated quiver Grassmannians can all be computed by counting $\FF_1$-subrepresentations of $M$, which is a combinatorial task. See Proposition \ref{rmk: distinguish vertices} below for more details. We say that $M$ has \emph{finite nice length} in this case: specifically, we say $\operatorname{nice}(M) = n$ if there exists a nice sequence $\underline{\partial}$ for $M$ such that each pair of vertices in $\Gamma_M$ can be distinguished by the first $n+1$ gradings (and $\operatorname{nice}(M) = \infty$ otherwise). More generally, if \eqref{e: intro nice} holds for all dimension vectors $\textbf{e}$ we say that $M$ is \emph{nice}. Our task is then to identify all nice $\FF_1$-representations of a given quiver, or at least the $\FF_1$-representations with finite nice length. 

In Construction \ref{con: universal}, we generalize notions from \cite{Haupt2012euler} to test for the existence of nice sequences with prescribed properties. To any indecomposable $\FF_1$-representation $M$, we construct a sequence of finitely-generated free abelian groups $\mathcal{V}^{(i)}_M$. For each $i$, we then define a function  
\[
X^{(i)} : (\Gamma_M)_0 \rightarrow \mathcal{V}^{(i)}_M 
\] 
\[ 
 v \mapsto X^{(i)}_v 
\] called the \emph{universal $i$-nice grading on $M$}. The image of $v$ under $X^{(i)}$ is called the \emph{$i$-nice variable associated to $v$}. The universal $i$-nice grading is only unique up to translation, but this is easily dealt with by specifying a \emph{basepoint} $b \in (\Gamma_M)_0$. The name of this function is justified by the following theorem.

\begin{nothmb}(Theorem \ref{t: universal property})
Let $M$ be an indecomposable $\FF_1$-representation of $Q$, with associated winding $c : \Gamma \rightarrow Q$ and basepoint $b \in \Gamma_0$. Let $\underline{\partial} = (\partial_i)_{i=0}^{\infty}$ be a nice sequence for $M$. Then for each $i$, there exists a unique affine map 
\[ 
\operatorname{ev}^{(i)}(\underline{\partial}) : \mathcal{V}^{(i)}_M \rightarrow \mathbb{Z}
\] 
such that $\partial_i = \operatorname{ev}^{(i)}(\underline{\partial}) \circ X^{(i)}$. We write $X^{(i)}(\underline{\partial}) :=  \operatorname{ev}^{(i)}(\underline{\partial}) \circ X^{(i)}$ and call it the evaluation of $X^{(i)}$ at $\underline{\partial}$. Conversely, any such sequence of affine maps defines a nice sequence on $M$. 
\end{nothmb}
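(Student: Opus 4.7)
The plan is to derive the theorem directly from the explicit construction of $\mathcal{V}^{(i)}_M$ and $X^{(i)}$ in Construction~\ref{con: universal}, reading it as a universal property. Recall that $\mathcal{V}^{(i)}_M$ is built inductively: it is a free abelian group on generators $X^{(i)}_v$ indexed by $v \in (\Gamma_M)_0$, modulo the integer-linear relations that every $(\partial_0,\ldots,\partial_{i-1})$-nice grading is forced to satisfy, expressed in terms of the previously constructed universal variables $X^{(0)},\ldots,X^{(i-1)}$. The basepoint $b$ is used to normalize the construction (for instance by setting $X^{(i)}_b = 0$), which is precisely what makes maps out of $\mathcal{V}^{(i)}_M$ correspond to affine rather than strictly linear gradings.

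For existence, given a nice sequence $\underline{\partial}$, I would define the map on generators by $X^{(i)}_v \mapsto \partial_i(v) - \partial_i(b)$. Since $\partial_i$ is by assumption a $(\partial_0,\ldots,\partial_{i-1})$-nice grading, the defining relations of $\mathcal{V}^{(i)}_M$ — which by construction are exactly the relations every such grading must respect — are sent to $0$, so the assignment extends uniquely to a group homomorphism $\mathcal{V}^{(i)}_M \to \mathbb{Z}$. Composing with translation by $\partial_i(b)$ yields the required affine map $\operatorname{ev}^{(i)}(\underline{\partial})$. Uniqueness is then automatic because the elements $X^{(i)}_v$ generate $\mathcal{V}^{(i)}_M$ as an abelian group, so the equation $\partial_i = \operatorname{ev}^{(i)}(\underline{\partial}) \circ X^{(i)}$ pins the value of $\operatorname{ev}^{(i)}(\underline{\partial})$ down on a generating set, hence everywhere.

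For the converse, I would proceed inductively on $i$: given a sequence of affine maps $\operatorname{ev}^{(j)}$ for $j \le i$, set $\partial_j := \operatorname{ev}^{(j)} \circ X^{(j)}$. Assuming by induction that $\partial_0,\ldots,\partial_{i-1}$ already forms a valid nice sequence through stage $i-1$, one checks that $\partial_i$ is a $(\partial_0,\ldots,\partial_{i-1})$-nice grading by pulling the defining relations of $\mathcal{V}^{(i)}_M$ — which hold tautologically among the $X^{(i)}_v$ — through the affine map $\operatorname{ev}^{(i)}$. Since affine maps preserve integer-linear combinations up to additive constants that cancel in the homogeneous relations defining a nice grading, all of the required axioms transfer, and one recovers a bona fide nice sequence.

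The main obstacle I anticipate is the bookkeeping in Construction~\ref{con: universal}: one must carefully verify that the relations imposed on $\mathcal{V}^{(i)}_M$ depend on $X^{(0)},\ldots,X^{(i-1)}$ in exactly the way the $(\partial_0,\ldots,\partial_{i-1})$-nice grading condition demands, and confirm that the basepoint normalization yields the correct affine (not merely linear) freedom. Once this precise correspondence between defining relations and the nice grading axioms is established at each level, the theorem reduces to a clean application of the universal property of a free abelian group presented by generators and relations.
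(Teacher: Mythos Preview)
Your proposal rests on a mistaken reading of Construction~\ref{con: universal}. The group $\mathcal{V}^{(i)}_M$ is \emph{not} presented as a free abelian group on the symbols $X^{(i)}_v$ for $v \in \Gamma_0$. Rather, $\mathcal{V}^{(0)}_M$ is the torsion-free quotient of $\mathbb{Z}Q_1/\operatorname{Im}(\iota\circ H_1(c))$, so its generators are (cosets of) \emph{arrows} of $Q$; similarly $\mathcal{V}^{(i)}_M$ is a torsion-free quotient of $\mathbb{Z}\Gamma^{(i)}_1$. The elements $X^{(i)}_v$ are specific members of this group, obtained by summing arrow-classes along a path from $b$ to $v$, and in general they need not generate $\mathcal{V}^{(i)}_M$ (for instance, any arrow of $Q$ outside $c(\Gamma_1)$ gives a class not in their span). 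Consequently your existence argument --- ``define the map on generators by $X^{(i)}_v \mapsto \partial_i(v)-\partial_i(b)$ and check relations'' --- does not get off the ground, and your uniqueness argument (``the $X^{(i)}_v$ generate, so the map is determined'') is not justified.

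The paper's proof works instead with the actual generators. For $i=0$ one defines $\hat{g}_0 : \mathbb{Z}Q_1 \to \mathbb{Z}$ on arrows by $c(\alpha) \mapsto \partial_0(t(\alpha))-\partial_0(s(\alpha))$; this is well-defined precisely because $\partial_0$ is a nice grading (so the increment depends only on $c(\alpha)$). One then checks, via a telescoping computation over any cycle of $\Gamma$, that $\hat{g}_0$ vanishes on $\operatorname{Im}(\iota\circ H_1(c))$, hence descends to $g_0 : \mathcal{V}^{(0)}_M \to \mathbb{Z}$. The affine map is $\operatorname{ev}^{(0)}(\underline{\partial}) = g_0 + \partial_0(b)$, and the identity $\partial_0(v) = g_0(X^{(0)}_v)+\partial_0(b)$ follows by summing the arrow-increments along a path from $b$ to $v$. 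The case $i>0$ is obtained by replacing $c$ with $\sigma^{(i)} : \Gamma \to \Gamma^{(i)}$. Your intuition that this is ``a universal property of generators and relations'' is correct in spirit, but the relevant presentation is by arrows and homology relations, not by the vertex-variables $X^{(i)}_v$.
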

\noindent As a consequence, we have $\partial_i(u) \neq \partial_i(v)$ for \emph{some} nice sequence $\underline{\partial}$ if and only if $X^{(i)}_u \neq X^{(i)}_v$. 

In Section \ref{s: Euler}, we apply the machinery of Section \ref{s: gradings} to identify representations with finite nice length.  To begin, we prove several sufficient conditions for $M$ to have finite nice length (see Sections \ref{s: gradings} and \ref{s: Euler} for all associated terminology): 

\begin{nothmc} (Propositions \ref{p: pos gradings} and \ref{p: gluing})
Let $Q$ be a quiver and $M$ an indecomposable $\FF_1$-representation of $Q$ with associated winding $c : \Gamma \rightarrow Q$. Then the following statements hold: 
\begin{enumerate} 
\item Suppose that $M$ admits a positive or negative nice grading, and that $Q$ has no loops. If for each $\alpha \in Q_1$, $\partial$ restricts to an injection on the set $\{s(\beta) \mid \beta \in \Gamma_1, c(\beta) =\alpha \}$, then $\operatorname{nice}(M) \le 1$ and $M$ is nice.
\item Suppose that $M$ admits a non-degenerate grading, and that for all $\alpha \in Q_1$, the minimal subquiver of $\Gamma$ containing the arrows $\{ \beta \in \Gamma_1 \mid c(\beta) = \alpha\}$ is connected. Then $\operatorname{nice}(M)\le 1$. 
\item Suppose that $N$ is another indecomposable representation of $Q$, with associated winding $c' : \Gamma' \rightarrow Q$. Suppose that $c(\Gamma)$ and $c'(\Gamma')$ have disjoint arrow sets, and that there exist vertices $u \in \Gamma_0$ and $v \in \Gamma'_0$ with $c(u) = c'(v)$. If $\operatorname{nice}(M)<\infty$ and $\operatorname{nice}(N)<\infty$, then the $\FF_1$-representation associated to the amalgam $c\sqcup_{u\sim v}c' : \Gamma\sqcup_{u\sim v}\Gamma' \rightarrow Q$ has finite nice length.
\end{enumerate}
\end{nothmc}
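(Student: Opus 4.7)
The overall plan is to make Theorem B do the heavy lifting. To prove $\operatorname{nice}(M) \le 1$, it suffices to exhibit an explicit pair $(\partial_0,\partial_1)$ of nice gradings (with $\partial_1$ being $\partial_0$-nice) separating every pair of vertices of $\Gamma$; by Theorem B, separability of $u,v$ by some $\partial_1$ is equivalent to $X^{(1)}_u \neq X^{(1)}_v$ in $\mathcal{V}^{(1)}_M$, so the problem reduces to a linear-algebra check on the universal $1$-nice grading. For the amalgamation result (3), the approach is instead combinatorial: glue existing nice sequences and correct for the extra ambiguity coming from the two disjoint components.

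For part (1), take $\partial_0$ to be the given positive (or negative) nice grading; since $Q$ has no loops, every arrow of $\Gamma$ joins two distinct vertices in $Q$, so positivity forces $\partial_0(t(\gamma)) > \partial_0(s(\gamma))$ strictly for every $\gamma \in \Gamma_1$. In particular each level set $\partial_0^{-1}(k)\subseteq \Gamma_0$ is an independent set, so pairs with different $\partial_0$-values are already separated. For a pair $(u,v)$ with $\partial_0(u) = \partial_0(v)$, by Theorem B we must show $X^{(1)}_u \neq X^{(1)}_v$. The defining relations of $\mathcal{V}^{(1)}_M$ arise from the $\partial_0$-nice compatibility conditions across arrows sharing a common image in $Q$; the injectivity of $\partial_0$ on $\alpha$-sources forces any two such arrows either to share a source already, or to have sources in different $\partial_0$-levels, so no relation can identify $X^{(1)}_u$ with $X^{(1)}_v$ at the same level.

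For part (2), the strategy is the same: pairs distinguished by $\partial_0$ are handled automatically, and we test the remaining pairs against the universal $1$-nice variables. Connectedness of each $c^{-1}(\alpha)$ collapses the $\partial_0$-niceness relations into a single relation per $Q$-arrow, so $\mathcal{V}^{(1)}_M$ is presented as a quotient of the free abelian group on $\Gamma_0$ by relations that act inside single connected fibers. Non-degeneracy of $\partial_0$ then ensures that on each $\partial_0$-level the induced identifications are trivial, so all $X^{(1)}_v$ are distinct.

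For part (3), translate the given nice sequences so that $\partial_i(u) = \partial'_i(v) = 0$ for all $i$, and define $\tilde\partial_i$ on the amalgam by $\partial_i$ on $\Gamma_0$ and $\partial'_i$ on $\Gamma'_0$. Disjointness of the arrow images means the $Q$-arrow-indexed niceness constraints split into the two components, so $\tilde\partial_i$ is nice and $\{\tilde\partial_i\}$ is a nice sequence separating every pair of vertices that lies in a common component. To distinguish $x \in \Gamma_0\setminus\{u\}$ from $y \in \Gamma'_0\setminus\{v\}$, prepend one further nice grading that vanishes on $\Gamma_0$ and takes a large constant value on $\Gamma'_0\setminus\{v\}$; this grading is nice because no arrow of the amalgam has one endpoint in each component, so all niceness constraints are preserved. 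The main obstacle will be part (1): carefully unwinding the presentation of $\mathcal{V}^{(1)}_M$ to show the $\alpha$-source injectivity really does prevent two $1$-nice variables from being identified within a common $\partial_0$-level, since this is where the full strength of positivity, the no-loops hypothesis, and the injectivity assumption must conspire.
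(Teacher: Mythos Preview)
Your approach to (1) and (2) is more convoluted than necessary, and your sketch does not actually verify $X^{(1)}_u \neq X^{(1)}_v$. The paper's argument is much simpler and bypasses the universal gradings entirely: under the stated hypotheses, the premise of the $\partial_0$-niceness condition is \emph{vacuous}. In (1), since $Q$ has no loops and $c$ is a winding, the connected components of $c^{-1}(\alpha)$ are isolated vertices or single arrows, and injectivity of $\partial$ on $\alpha$-sources means any two distinct $\beta,\beta' \in c^{-1}(\alpha)$ satisfy $\partial(s(\beta)) \neq \partial(s(\beta'))$. In (2), connectedness forces $c^{-1}(\alpha)$ to be an oriented path, on which non-degeneracy makes $\partial$ strictly monotone. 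In either case no two distinct arrows with the same $Q$-colour ever satisfy the hypothesis of the $\partial_0$-niceness implication, so \emph{every} map $\Gamma_0 \to \mathbb{Z}$ is $\partial_0$-nice; take an injective one for $\partial_1$. Your detour through $\mathcal{V}^{(1)}_M$ would ultimately prove the same thing, but you never pin down the presentation of $\mathcal{V}^{(1)}_M$ well enough to justify the claim.

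Your argument for (3) has a genuine error. The grading you propose to ``prepend'' --- zero on $\Gamma_0$ and a large constant $C$ on $\Gamma'_0\setminus\{v\}$ --- is \emph{not} a nice grading. Your justification that ``no arrow of the amalgam has one endpoint in each component'' is false: the identified vertex $u=v$ lies in both, so any arrow of $\Gamma'$ incident to $v$ has difference $\pm C$, while another arrow of $\Gamma'$ with the same $Q$-colour but endpoints in $\Gamma'_0\setminus\{v\}$ has difference $0$. This breaks niceness. (If you had \emph{appended} rather than prepended, the argument could be salvaged: once the glued sequence $\{\tilde\partial_i\}$ distinguishes all vertices within each component, the niceness hypothesis for any further grading is vacuous, and you may append an injective map.) The paper instead handles (3) through the universal gradings: disjointness of arrow sets yields a splitting $\mathcal{V}^{(i)}_A \cong \mathcal{V}^{(i)}_M \oplus \mathcal{V}^{(i)}_N$ sending $Z^{(i)}_w$ to $X^{(i)}_w$ or $Y^{(i)}_w$, so $Z^{(i)}_w = Z^{(i)}_z$ for all $i$ with $w\in\Gamma_0$, $z\in\Gamma'_0$ would force $X^{(i)}_w = 0 = Y^{(i)}_z$ for all $i$, contradicting that $w$ is distinguishable from $u$ and $z$ from $v$.
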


We then exhibit a nontrivial class of representations which satisfy $\operatorname{nice}(M) \le 1$. In general, this class of representations will have nice length $1$, and coefficient quivers that are neither trees nor affine Dynkin quivers of type $\tilde{\mathbb{A}}$. In particular, this class contains representations different than the cases considered in \cite{irelli2011quiver, Haupt2012euler}:

\begin{nothmd}(Proposition \ref{p: connected fibers})
Let $M$ be a nilpotent $\FF_1$-representation of a quiver $Q$ with associated winding $c: \Gamma \rightarrow Q$. Suppose that $c^{-1}(\alpha)$ is connected for all $\alpha \in Q_1$, and that $\Gamma$ contains a set of $\mathbb{Z}$-linearly independent cycles $\{ X_1,\ldots , X_n\}$ with following properties: 
\begin{enumerate} 
\item [(i)] The cycles $[\iota\circ H_1(c)](X_1)$, $\ldots , [\iota\circ H_1(c)](X_n)$ form a $\mathbb{Q}$-basis for $\mathbb{Q}\otimes_{\mathbb{Z}}\operatorname{Im}(\iota \circ H_1(c))$, where $\iota\circ H_1(c)$ is as in \eqref{eq: construction homology}.
\item[(ii)] 
For all $i$ we can write $X_i = p_i - q_i$, where $p_i$ and $q_i$ are directed paths of positive length in $\Gamma$ with common source and target, but no interior vertices in common. 
\item[(iii)] 
For each $i\le n$, either $c(p_i)$ or $c(q_i)$ consists of arrows that do not appear in $c(X_j)$ for $j \neq i$, where we consider $c(X_j)$ as a subquiver of $Q$.
\end{enumerate}  
Then $\operatorname{nice}(M)\le 1$ and $M$ is nice.
\end{nothmd}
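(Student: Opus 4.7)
The plan is to reduce to Theorem C(2): once we exhibit a non-degenerate nice grading on $M$, the connected-fiber hypothesis and Theorem C(2) deliver $\operatorname{nice}(M) \le 1$, and the niceness of $M$ then follows from \eqref{e: intro nice}. By Theorem B applied at $i = 0$, nice gradings on $M$ correspond to $\mathbb{Z}$-affine functionals on $\mathcal{V}^{(0)}_M$ composed with the universal $0$-nice grading $X^{(0)}$; since $\mathcal{V}^{(0)}_M$ is finitely generated and free abelian while $\Gamma_0$ is finite, a generic affine functional separates any finite list of distinct elements. Thus it suffices to prove
\[
X^{(0)}_u \neq X^{(0)}_v \qquad \text{for all distinct } u,v \in \Gamma_0 \text{ with } c(u)=c(v).
\]

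Because $M$ is indecomposable, $\Gamma$ is connected; for any such pair $(u,v)$ we may choose a $\mathbb{Q}$-chain $\gamma \in C_1(\Gamma)_{\mathbb{Q}}$ with $\partial\gamma = v-u$. By Construction \ref{con: universal} and \eqref{eq: construction homology}, $X^{(0)}_v - X^{(0)}_u$ is the class of $c_\ast(\gamma) \in C_1(Q)_{\mathbb{Q}}$ modulo $\mathbb{Q}\otimes \operatorname{Im}(\iota \circ H_1(c))$. When $c(u)=c(v)$, the chain $c_\ast(\gamma)$ is a $1$-cycle in $Q$, and hypothesis (i) identifies $\mathbb{Q}\otimes \operatorname{Im}(\iota\circ H_1(c))$ with the $\mathbb{Q}$-span of $\{c_\ast(X_i)\}_{i=1}^n$. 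The separation claim therefore becomes: $c_\ast(\gamma)$ is not a $\mathbb{Q}$-linear combination of the $c_\ast(X_i)$.

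I will argue this by contradiction: if $c_\ast(\gamma) = \sum_i \lambda_i c_\ast(X_i)$ for some (necessarily unique) $\lambda_i \in \mathbb{Q}$, then $\eta := \gamma - \sum_i \lambda_i X_i$ satisfies $c_\ast(\eta)=0$ and $\partial\eta = v-u \neq 0$, so $v-u \in \partial(\ker c_\ast)_{\mathbb{Q}}$. The winding condition together with connectedness of $c^{-1}(\alpha)$ forces $|c^{-1}(\alpha)|\le 1$ for every non-loop $\alpha$ and makes each loop-fiber a directed path $w_0^{(\alpha)} \to \cdots \to w_{k_\alpha}^{(\alpha)}$ inside a single vertex-fiber; hence $\partial(\ker c_\ast)_{\mathbb{Q}}$ is spanned by the ``second differences'' $2w_j^{(\alpha)} - w_{j-1}^{(\alpha)} - w_{j+1}^{(\alpha)}$ across all loop-fibers.

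The hard part will be to show that $v-u$ is not such a sum. Within any single loop-fiber, the position-weighted functional $(a_j)\mapsto \sum_j j\,a_j$ vanishes on the generating second differences but is nonzero on a nontrivial simple difference, handling the single-fiber case. For the cross-fiber case at a shared $Q$-vertex, I will combine (ii) with the connected-fiber structure to deduce $c(p_i)\cap c(q_i) = \emptyset$ at the arrow level (using $X_i \neq 0$), then invoke (iii) to uniquely pin down each $\lambda_i$ from the coefficient of an arrow in $c(p_i)$ (or $c(q_i)$) exclusive to $c(X_i)$, and finally propagate these forced values back into the support of $\eta$ to derive an inconsistency with $\partial\eta = v-u$. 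This cross-fiber bookkeeping is the main obstacle, but I anticipate it will reduce to a finite linear-algebra calculation driven by the rigidity that (iii) imposes on the $\lambda_i$.
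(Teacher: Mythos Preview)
Your reduction to Theorem~C(2) is sound in outline, but the central claim you set out to prove is false under the stated hypotheses, so the argument cannot be completed along these lines.

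\medskip

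\textbf{A counterexample to your key claim.} Take $Q=\wild_2$ with loops $\alpha_1,\alpha_2$ and let $\Gamma$ be the ``diamond''
\[
\begin{tikzcd}
& v_1 \arrow[dl,swap,"\alpha_1"] \arrow[dr,"\alpha_2"] & \\
v_2 \arrow[dr,swap,"\alpha_1"] & & v_3 \arrow[dl,"\alpha_2"] \\
& v_4 &
\end{tikzcd}
\]
Here $c^{-1}(\alpha_1)$ and $c^{-1}(\alpha_2)$ are each connected paths of length two, and $H_1(\Gamma,\mathbb{Z})$ is generated by $X_1=p_1-q_1$ with $p_1=(v_1\!\to\! v_2\!\to\! v_4)$, $q_1=(v_1\!\to\! v_3\!\to\! v_4)$. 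Conditions (i)--(iii) hold (with $n=1$, so (iii) is vacuous). Yet $c_\ast(X_1)=2\alpha_1-2\alpha_2$, so in $\mathcal{V}^{(0)}_M$ one has $\overline{\alpha_1}=\overline{\alpha_2}$ and hence
\[
X^{(0)}_{v_2}=\overline{\alpha_1}=\overline{\alpha_2}=X^{(0)}_{v_3},
\]
even though $c(v_2)=c(v_3)$. Equivalently, in your own notation, $v_3-v_2\in\partial(\ker c_\ast)_{\mathbb{Q}}$: the two ``second differences'' $2v_2-v_1-v_4$ and $2v_3-v_1-v_4$ coming from the two loop-fibers differ exactly by $2(v_2-v_3)$. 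This is precisely the cross-fiber situation you flagged as ``the main obstacle,'' and here no amount of bookkeeping with (iii) helps, since (iii) imposes nothing when $n=1$. Note that for this $M$ one genuinely has $\operatorname{nice}(M)=1$, not $0$: every nice grading satisfies $\partial_0(v_2)=\partial_0(v_3)$.

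\medskip

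\textbf{What the paper does instead.} Rather than proving a structural injectivity statement about $X^{(0)}$ (which, as shown, fails), the paper constructs a \emph{positive} nice grading $\Delta:Q_1\to\mathbb{Z}_{>0}$ directly. By (i) and Remark~\ref{r: arrows}, the condition on $\Delta$ reduces to the finite linear system
\[
\sum_{j}\Delta\bigl(c(\alpha_j^{(i)})\bigr)=\sum_{j}\Delta\bigl(c(\beta_j^{(i)})\bigr),\qquad i=1,\dots,n,
\]
coming from (ii). One solves this by induction on $n$: given a positive solution on the arrows of $c(X_1),\dots,c(X_{n-1})$, hypothesis (iii) guarantees that one side of the $n^{\text{th}}$ equation involves only fresh variables, which can then be chosen to balance the other side while remaining positive. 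Positivity gives non-degeneracy, and Proposition~\ref{p: pos gradings}(3) (your Theorem~C(2)) finishes the proof. In the diamond above this simply yields $\Delta_{\alpha_1}=\Delta_{\alpha_2}=1$; the resulting $\partial_0$ does not separate $v_2$ from $v_3$, but it is non-degenerate on each connected arrow-fiber, which is all that is needed.

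\medskip

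A secondary issue: even your reduction step conflates two different conditions. ``Non-degenerate'' means $\overline{\alpha}\neq 0$ in $\mathcal{V}^{(0)}_M$ for every $\alpha\in Q_1$, which is a statement about arrows, whereas your sufficient condition ``$X^{(0)}_u\neq X^{(0)}_v$ whenever $c(u)=c(v)$'' is about vertex-fibers. For loops the two are related, but for a non-loop $\alpha$ with $|c^{-1}(\alpha)|=1$ your condition says nothing about $\overline{\alpha}$. (There is a correct alternative reduction: if $\partial_0$ is injective on every vertex-fiber then \emph{every} map is $\partial_0$-nice, hence $\operatorname{nice}(M)\le 1$. But this still requires the false injectivity claim.)
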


 We illustrate the above results with several examples. Furthermore, we patch apparent gaps in the proofs of Lemmas 6.3-6.4 in \cite{Haupt2012euler}, at least for the case of $\FF_1$-representations. We outline the gaps we believe we have uncovered in an appendix to this article, and explain how our results resolve them. It should be noted that the class of $\FF_1$-representations with finite nice length contains representations which have been previously studied in the literature, in addition to the new ones described above. Special cases include the following: the $\FF_1$-representations satisfying the conditions of Theorem 1 in \cite{irelli2011quiver}, which are recovered as the $\FF_1$-representations with nice length $0$; the $\FF_1$-representations whose coefficient quiver is a tree, correcting the proof of \cite[Lemma 6.3]{Haupt2012euler}; and the $\FF_1$-representations whose coefficient quiver is a primitive affine Dynkin quiver of type $\tilde{\mathbb{A}}$, recovering a special case of \cite[Lemma 6.4]{Haupt2012euler}. These cases cover all $\FF_1$-representations with finite nice length when $Q$ is a pseudotree, but not in general. We summarize these remarks with the following theorem:

\begin{nothme}[cf. Theorem 1 \cite{irelli2011quiver}, Lemmas 6.3-6.4 \cite{Haupt2012euler}] 
Let $Q$ be a quiver, and for each $n \in \mathbb{N}$ let $I_{Q,\nil}^{n}$ denote the set of isomorphism classes of finite-dimensional indecomposable, nilpotent $\FF_1$-representations of $Q$ with nice length $n$. Furthermore, let $I_{Q,\nil}^{\operatorname{nice}} = \bigcup_{n \in \mathbb{N}}{I_{Q,\nil}^{n}}$. Then the following hold: 
\begin{enumerate} 
\item $M \in I_{Q,\nil}^0$ if and only if $M$ admits a nice grading $\partial$ that restricts to an injection on $c_M^{-1}(v)$, for all $v \in Q_0$. 
\item If the coefficient quiver of $M$ is a tree, then $M \in I_{Q,\nil}^{\operatorname{nice}}$. In general, $M \not\in I_{Q,\nil}^0$. 
\item If the coefficient quiver of $M$ is an affine Dynkin quiver of type $\tilde{\mathbb{A}}$, then $M \in I_{Q,\nil}^{\operatorname{nice}}$ if and only if the associated winding $c_M : \Gamma_M \rightarrow Q$ is primitive. In general, $M \not\in I_{Q,\nil}^0$.  
\end{enumerate}   
If $Q$ is a (not necessarily proper) pseudotree, then every $M \in I_{Q,\nil}^{\operatorname{nice}}$ belongs to at least one of the three cases described above. If $Q = \wild_n$ with $n\geq 2$, then $I_{Q,\nil}^{\operatorname{nice}}$ contains $\FF_1$-representations that do not belong to any of the these three cases.
\end{nothme}

Case (1) is discussed in Example \ref{ex: irelli}. Case (2) is discussed in Corollary \ref{c: distinguishing trees}, and case (3) is discussed in Theorem \ref{l: distinguishing bands}. Examples of representations with strictly positive nice length are discussed throughout Sections \ref{s: gradings} and \ref{s: Euler}. The results on pseudotrees follows readily from Corollary  \ref{c: nice pseudotree} and the classification of nilpotent indecomposable $\FF_1$-representations for bounded type given in \cite{jun2020quiver}. Finally, explicit examples for $\wild_2$ and $\wild_3$ are computed in Section \ref{s: Euler}.

In Section \ref{section: Hall}, we turn our attention to the Hall algebras $H_Q$ and $H_{Q,\nil}$ of $\Rep(Q,\FF_1)$ and $\Rep(Q,\FF_1)_{\nil}$. Recall that $H_Q$ and $H_{Q,\nil}$ are graded, connected, cocommutative Hopf algebras: by Milnor-Moore Theorem, they are isomorphic to the universal enveloping algebras of their Lie subalgebras of primitive elements. Throughout, we denote the Lie algebra of $H_{Q,\nil}$ by $\mathfrak{n}_Q$. Motivated by a question from \cite{szczesny2011representations}, in Section \ref{s: Hall Algebras} we study how a change in the orientation of $Q$ affects $H_{Q,\nil}$. We obtain the following results for quivers of bounded representation type.

\begin{nothmf}
Let $Q$ be a quiver, with $\mathfrak{n}_Q$ the Lie algebra of primitive elements in $H_{Q,\nil}$. Let $Q'$ be a quiver with the same underlying graph as $Q$. 
\begin{enumerate} 
\item (Proposition \ref{p: trees}) If $Q$ and $Q'$ are trees, then $\mathfrak{n}_Q \cong \mathfrak{n}_{Q'}$ as Lie algebras.
\item (Proposition \ref{p: affine}) Suppose that $Q'$ is an equioriented affine Dynkin quiver of type $\tilde{\mathbb{A}}_n$. Then $\mathfrak{n}_Q$ is a central extension of $\mathfrak{n}_{Q'}$. 
\end{enumerate}
\end{nothmf}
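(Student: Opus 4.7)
The plan is to handle the two parts separately, using Theorem A (the equivalence with coefficient quivers) to parametrize indecomposables combinatorially, and then analyzing how Hall multiplication transports across orientations.

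For part (1), I would first use Theorem A together with the restriction to nilpotent representations to identify the indecomposable nilpotent $\FF_1$-representations of a tree quiver $Q$ with windings $c : \Gamma \to Q$ whose source is a connected finite acyclic quiver. Because the target $Q$ is a tree and $c$ is a winding (and hence locally injective at every vertex), a simple inductive argument on $|\Gamma_0|$ shows that $c$ must be injective on both vertices and arrows. Thus indecomposables correspond bijectively to connected subquivers of $Q$, and this bijection depends only on the underlying unoriented graph. By Milnor--Moore, $\mathfrak{n}_Q$ has a vector-space basis indexed by this set, so one gets a canonical linear isomorphism $\phi : \mathfrak{n}_Q \to \mathfrak{n}_{Q'}$ sending a subtree-indecomposable to the same subtree-indecomposable in $Q'$. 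The main step is to verify that $\phi$ intertwines the brackets: the bracket $[M,N]$ is the alternating sum of Hall numbers counting subrepresentation-pairs $(U,V)$ of some $L$ with $U \cong N$ and $L/U \cong M$, and for tree-type windings such a pair is determined purely combinatorially by how the subtree $T_N$ sits inside the subtree $T_L$ so that $T_L/T_N = T_M$. The orientation of $Q$ plays no role in this combinatorial count because all relevant arrows have a unique endpoint pattern inside the tree.

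For part (2), the setup is more delicate. I would again invoke Theorem A to describe indecomposable nilpotent $\FF_1$-representations of a cycle quiver as windings $c : \Gamma \to Q$ with $\Gamma$ a finite connected tree, equivalently finite intervals inside the universal cover of $Q$, which is always an infinite line (whose orientations track $Q$). For the equioriented $Q' = \tilde{\mathbb{A}}_n$ the isoclasses of indecomposables are parametrized by pairs (starting vertex, positive length), and the same combinatorial parametrization applies to $Q$. The plan is to define $\phi : \mathfrak{n}_Q \to \mathfrak{n}_{Q'}$ on indecomposables by sending a length-$\ell$ indecomposable of $Q$ to the length-$\ell$ indecomposable of $Q'$ starting at the image vertex, and extending linearly. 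Verifying that $\phi$ respects the bracket reduces to comparing Hall numbers on both sides: short exact sequences $0 \to N \to L \to M \to 0$ in $\Rep(Q,\FF_1)_{\nil}$ correspond to decompositions of the associated intervals, and the matching across orientations works for all "generic" extensions.

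The hard part of part (2) is identifying and analyzing the kernel of $\phi$. The kernel is spanned by differences of indecomposables with the same length whose coefficient quivers wrap around the cycle in a nontrivial way (so that on the equioriented side they collapse but on $Q$ they remain distinct because of zig-zag choices). I would show by direct bracket computation that such "wrap-around difference" elements commute with every indecomposable generator: the obstruction to computing $[M, W]$ for a generator $M$ and a wrap-around element $W$ is that any extension of $M$ by $W$ (or vice versa) either extends the length of $W$ by attaching to a free end---contributing symmetrically to both orders and thus canceling in the commutator---or glues along an internal vertex, which is forbidden by the winding condition. Putting these together yields $\ker \phi \subseteq Z(\mathfrak{n}_Q)$, completing the central extension claim. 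The main obstacle throughout is bookkeeping: tracking exactly which short exact sequences survive the passage between $Q$ and $Q'$ and verifying that the residual ones precisely span a central ideal, without being able to fall back on the orientation-independent combinatorics available in the tree case.
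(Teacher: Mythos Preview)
Your proposal has substantive gaps in both parts.

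\textbf{Part (1).} You assert that the ``canonical'' linear map $\phi$ sending a connected subtree $S$ to the same subtree is already a Lie algebra isomorphism, because ``the orientation of $Q$ plays no role in this combinatorial count.'' This is false. Whether $S'$ sits as a \emph{subrepresentation} of $S\vee S'$ (as opposed to a quotient) depends on orientation: the coefficient quiver of a subrepresentation must be predecessor closed. Concretely, the paper computes the structure constants as $[S,S']_Q = \mu_{S,S'}^Q\, S\vee S'$ with $\mu_{S,S'}^Q = +1$ when the joining arrow points $S\to S'$ and $-1$ when it points $S\leftarrow S'$. Reversing that single arrow flips the sign of $[S,S']$, so the identity map on subtrees is not bracket-preserving. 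The paper repairs this by reducing to a single arrow reversal $Q' = r_\alpha(Q)$ and twisting by the sign $\epsilon(S)=-1$ exactly when $\alpha\in S_1$; the isomorphism is $S\mapsto \epsilon(S)S$, and the verification reduces to the identity $\epsilon(S\vee S')\mu_{S,S'}^Q = \epsilon(S)\epsilon(S')\mu_{S,S'}^{Q'}$.

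\textbf{Part (2).} Your classification of nilpotent indecomposables for a type $\tilde{\mathbb{A}}_n$ quiver is incomplete. You claim that the coefficient quiver $\Gamma$ must be a tree; but nilpotence only forces $\Gamma$ to be acyclic as a \emph{directed} graph. When $Q$ is an acyclic orientation of $\tilde{\mathbb{A}}_n$, there is a second family of nilpotent indecomposables, the band-type modules $\tilde{I}_d$ for $d\ge 1$, whose coefficient quivers are undirected cycles of type $\tilde{\mathbb{A}}_{dn}$. These do not arise in the equioriented case because there the corresponding winding contains an oriented cycle. Since you omit $\tilde{I}_d$, your subsequent description of the kernel as ``differences of wrap-around string modules'' cannot be right: in fact the string modules $I_{[d,i]}$ are parametrized identically for $Q$ and $Q'$, with no collapsing. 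The paper shows directly that every short exact sequence with $\tilde{I}_d$ as a sub or a quotient splits, so the $[\tilde{I}_d]$ span a central ideal $Z_Q$; the map to $\mathfrak{n}_{Q'}\cong\mathfrak{a}_+$ is then $E_{ij}\otimes t^q \mapsto \epsilon(j-1)\,[I_{[j-i+qn,i]}]+Z_Q$, again with an indispensable sign twist. Your outline contains neither the correct center nor the sign.
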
 

Note that Szczesny computed $\mathfrak{n}_Q$ in the case that $Q$ is an equioriented $\tilde{\mathbb{A}}_n$ in \cite{szczesny2011representations}. Finally, in Section \ref{s: nice length} we construct Hall algebras associated to representations of finite nice length and identify them in specific instances. Indeed, consider the full subcategories $\Rep(Q,\FF_1)^{\operatorname{nice}}$ and $\Rep(Q,\FF_1)_{\nil}^{\operatorname{nice}}$ of $\Rep(Q,\FF_1)$ and $\Rep(Q,\FF_1)_{\nil}$ whose objects $M$ satisfy $\operatorname{nice}(M)<\infty$. These categories are finitary and proto-exact, and so one can associate Hall algebras $H_Q^{\operatorname{nice}}$ and $H_{Q,\nil}^{\operatorname{nice}}$. We prove the following theorem, which relates $H_Q$ (resp. $H_{Q,\nil}$) to $H_Q^{\operatorname{nice}}$ (resp. $H_{Q,\nil}^{\operatorname{nice}}$).

\begin{nothmg}(Proposition \ref{p: hopf ideal})
Let $Q$ be a quiver. Then the $\mathbb{C}$-subspaces  
\[
I=\langle [M] \mid \operatorname{nice}(M) = \infty \rangle 
\]
\[ 
I_{\nil}=\langle [M] \mid \text{$M$ is nilpotent and } \operatorname{nice}(M) = \infty \rangle 
\] 
are Hopf ideals in $H_Q$ and $H_{Q,\nil}$, respectively. We have Hopf algebra isomorphisms $H_Q/I \cong H_Q^{\operatorname{nice}}$ and $H_{Q,\nil}/I_{\nil} \cong H_{Q,\nil}^{\operatorname{nice}}$.
\end{nothmg}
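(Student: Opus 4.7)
The plan is to establish the theorem by proving that the class of $\FF_1$-representations with finite nice length is closed under subobjects, quotients, and extensions in the proto-exact category $\Rep(Q,\FF_1)$. These three closure properties translate directly into $I$ being both a two-sided ideal and a coideal of $H_Q$, from which the Hopf ideal property and the isomorphism $H_Q/I \cong H_Q^{\operatorname{nice}}$ follow by standard considerations. The same reasoning handles $I_{\nil}$ and $H_{Q,\nil}^{\operatorname{nice}}$, since nilpotency is itself preserved under the three operations.

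For closure under subobjects and quotients, I would argue at the level of coefficient quivers. Given $N \le M$ with associated winding $c : \Gamma_M \to Q$, the coefficient quiver $\Gamma_N$ embeds in $\Gamma_M$ as a subquiver closed under outgoing arrows, while $\Gamma_{M/N}$ is identified with the complementary subquiver, closed under incoming arrows. The defining conditions of a nice grading are local to the fibers $c^{-1}(\alpha)$, so a nice sequence $\underline{\partial}$ on $M$ restricts to nice sequences on $N$ and on $M/N$, inheriting the vertex-separating property; hence $\operatorname{nice}(N), \operatorname{nice}(M/N) \le \operatorname{nice}(M)$. For closure under extensions, given $0 \to N \to M \to L \to 0$ with both $\operatorname{nice}(N)$ and $\operatorname{nice}(L)$ finite, one builds a nice sequence on $\Gamma_M$ by interleaving nice sequences on $\Gamma_N$ and $\Gamma_L$, choosing large shifts so that values on $(\Gamma_N)_0$ and $(\Gamma_L)_0$ do not collide and so that the crossing arrows (from $(\Gamma_L)_0$ into $(\Gamma_N)_0$) do not violate the grading conditions. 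Theorem B provides the appropriate language: one works with the universal nice gradings $X^{(i)}$ and produces the combined sequence as a compatible family of affine evaluations. The main obstacle lies precisely here, since the crossing arrows introduce constraints between the two shifted pieces, and one must verify that these constraints can always be satisfied by suitable choices of shift.

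With the three preservation properties in hand, $I$ is a two-sided ideal: closure under subobjects and quotients ensures every term $[M]$ appearing in $[N]\cdot[L]$ has infinite nice length whenever one of $[N], [L]$ does. It is a coideal: closure under extensions ensures each term of $\Delta([M])$ for $[M] \in I$ has at least one factor in $I$. Since $[0]\notin I$, we have $\epsilon(I)=0$, and as $H_Q$ is graded connected, every biideal contained in $\ker(\epsilon)$ is automatically a Hopf ideal (closure under the antipode follows from the convolution recursion). Finally, $H_Q/I$ and $H_Q^{\operatorname{nice}}$ share the canonical basis $\{[M] \mid \operatorname{nice}(M)<\infty\}$, and their Hall structure constants $g^M_{N,L} = |\{N' \le M \mid N' \cong N,\ M/N' \cong L\}|$ agree term by term, since the preservation properties guarantee that no nontrivial contribution is lost to $I$. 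The obvious linear map is therefore an isomorphism of Hopf algebras, completing the proof.
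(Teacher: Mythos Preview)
Your argument has a genuine gap: closure under general extensions is \emph{false}, so the construction you sketch for building a nice sequence on $M$ from nice sequences on $N$ and $L$ cannot work. For a concrete counterexample, take $Q$ to be the Kronecker quiver (two vertices, two parallel arrows $\alpha,\beta$) and let $M=\tilde{I}_2$ be the band module whose coefficient quiver is a $4$-cycle wrapping twice around $Q$. By Theorem~\ref{l: distinguishing bands}, $\operatorname{nice}(M)=\infty$. But if $v$ is any sink of $\Gamma_M$, then $N=\{v\}$ is a simple subrepresentation and $M/N$ has coefficient quiver a tree (a path on three vertices), so $\operatorname{nice}(N)=\operatorname{nice}(M/N)=0<\infty$. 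Thus the extension $0\to N\to M\to M/N\to 0$ violates the closure you claim. The ``crossing arrows'' you flag as the main obstacle are a fatal one, not a technical one.

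Fortunately, you do not need closure under general extensions. The comultiplication in $H_Q$ is given by $\Delta([M])=\sum_{M\cong A\oplus B}[A]\otimes[B]$, so for the coideal property you only need closure under \emph{direct sums}: if $\operatorname{nice}(A),\operatorname{nice}(B)<\infty$ then $\operatorname{nice}(A\oplus B)<\infty$. This is immediate, since $\Gamma_{A\oplus B}=\Gamma_A\sqcup\Gamma_B$ has no crossing arrows at all, and a nice sequence on the disjoint union is simply a pair of nice sequences on the pieces. This is exactly what the paper does. With this correction, the rest of your outline (the ideal property via subquotient closure, the antipode via graded-connectedness or via direct-sum decompositions, and the identification of $H_Q/I$ with $H_Q^{\operatorname{nice}}$) is sound and matches the paper's proof.
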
 

\noindent This allows us to identify $H_{Q,\nil}^{\operatorname{nice}}$ in the case that $Q$ is a (not-necessarily proper) pseudotree. Specific formulas can be found in Corollaries \ref{c: nice bounded}, \ref{c: nice pseudotree hall}. We can also give the following, more conceptual interpretation to our results. We say that an $\FF_1$-representation $M$ is \emph{absolutely indecomposable} if $k\otimes_{\FF_1}M$ is indecomposable for every algebraically-closed field $k$.

\begin{nothmh}(Corollaries \ref{c: nice bounded}, \ref{c: nice pseudotree}, \ref{c: nice pseudotree hall})
Let $Q$ be a (not-necessarily proper) pseudotree. Then the following statements hold.\footnote{See \cite[Section 5.2]{jun2020quiver} for the notion of bounded representation type over $\mathbb{F}_1$.}
\begin{enumerate} 
\item Let $Q$ be of bounded representation type over $\FF_1$. Then $H_{Q,\nil}^{\operatorname{nice}}$ has a generating set that may be naturally identified with the absolutely indecomposable $\FF_1$-representations of $Q$.
\item Let $Q$ be a proper pseudotree with central cycle $C$. Then $H_{Q,\nil}^{\operatorname{nice}}$ has a generating set that may be naturally identified with the indecomposable $\FF_1$-representations $M$ such that $\Res_C(M)$ is absolutely indecomposable.
\end{enumerate}
\end{nothmh}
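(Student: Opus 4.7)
The strategy is to combine Theorem G with the nice-length classifications of Theorem E and an explicit analysis of the base-change functor $k\otimes_{\FF_1}-$ on coefficient quivers. First, Theorem G realizes $H_{Q,\nil}^{\operatorname{nice}}$ as the Hopf-algebra quotient $H_{Q,\nil}/I_{\nil}$. Since $H_{Q,\nil}$ is graded, connected, and cocommutative, Milnor-Moore identifies it with $U(\mathfrak{n}_Q)$; the Hall-algebra structure forces the classes $[M]$ of indecomposable nilpotent representations to span the Lie algebra $\mathfrak{n}_Q$ of primitives, and hence to generate $H_{Q,\nil}$ as an algebra. Passing to the quotient, $H_{Q,\nil}^{\operatorname{nice}}$ is generated by the images of those $[M]$ for which $M$ is indecomposable with $\operatorname{nice}(M)<\infty$, so both parts of the theorem reduce to identifying this set.

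\textbf{Part (1).} A pseudotree of bounded representation type is, up to $\approx_{\nil}$, either a tree or a cycle. If $Q$ is a tree, Theorem E(1) gives $\operatorname{nice}(M)<\infty$ for every indecomposable $M$; moreover, the coefficient quiver $\Gamma_M$ is itself a tree, so $k\otimes_{\FF_1}M$ is the associated tree module over $k$, which is indecomposable for every algebraically closed $k$, and hence every indecomposable is absolutely indecomposable. If $Q$ is a cycle, Theorem E(2) gives $\operatorname{nice}(M)<\infty$ iff $\Gamma_M$ is a primitive band. The remaining step is to match primitive bands with absolute indecomposability: a band of period $kn$ with $k>1$ that covers a primitive band of period $n$ base-changes to a direct sum of $k$ band modules over $k$ indexed by the $k$-th roots of unity (via the eigenspace decomposition of the associated cyclic permutation), while a primitive band yields a single indecomposable band module over every algebraically closed $k$.

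\textbf{Part (2) and main obstacle.} For a proper pseudotree $Q$ with central cycle $C$, Theorem E(3) gives $\operatorname{nice}(M)<\infty$ iff $\operatorname{nice}(\Res_C(M))<\infty$, which, by the cycle case of Part (1), is equivalent to $\Res_C(M)$ being absolutely indecomposable; combined with the reduction step this yields the claimed identification of generators. The main obstacle is the primitive-band $\iff$ absolutely-indecomposable equivalence in Part (1): one must work out the decomposition of $k\otimes_{\FF_1}M$ for a cyclic coefficient quiver and identify the cover structure on bands with the eigenspace decomposition of the associated cyclic permutation over each algebraically closed $k$. A secondary technical point is confirming that the Hopf-ideal property of $I_{\nil}$ from Theorem G is genuinely used, so that no relations collapse distinct generators in the quotient.
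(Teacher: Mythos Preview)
Your overall strategy matches the paper's: reduce via Theorem G and Milnor--Moore to the claim that the indecomposable nilpotent $M$ with $\operatorname{nice}(M)<\infty$ are exactly those described, then invoke the nice-length classifications of Theorem E and check absolute indecomposability by hand. The paper does precisely this in Corollaries \ref{c: nice bounded}--\ref{c: nice pseudotree hall}.

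There is, however, a genuine omission in your treatment of the cycle case in Part (1). When $Q$ is of type $\tilde{\mathbb{A}}_n$, the indecomposable nilpotent $\FF_1$-representations are not all bands: there are also the string modules $I_{[d,i]}$, whose coefficient quivers are of Dynkin type $\mathbb{A}$ (trees). Your sentence ``If $Q$ is a cycle, Theorem E(2) gives $\operatorname{nice}(M)<\infty$ iff $\Gamma_M$ is a primitive band'' is therefore incorrect as a characterization of all indecomposables. You need to split into strings (handled exactly as in your tree case, via Theorem E(1) and indecomposability of tree modules) and bands (where your argument applies). Relatedly, you do not distinguish the equioriented and acyclic orientations: in the equioriented case there are \emph{no} nilpotent bands at all, so every nilpotent indecomposable is already a string and the band discussion is vacuous. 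The paper separates these cases explicitly.

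A smaller point: for the primitive band, the paper does not appeal to an eigenspace argument to prove absolute indecomposability. Since the only primitive band over an acyclic $\tilde{\mathbb{A}}_n$ is $\tilde{I}_1$, which is the thin representation $\mathbbm{1}_Q$, the paper simply observes that $\operatorname{End}_{kQ}(\mathbbm{1}_Q)\cong k$ for connected $Q$. Your eigenspace argument correctly handles the decomposable direction ($\tilde{I}_d$ for $d>1$), but you assert the indecomposable direction without justification; the thin-representation observation fills this.
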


\bigskip 

\textbf{Acknowledgment}\hspace{0.1cm} The authors would like to thank Ryan Kinser for several insightful conversations on the connection between quiver representations and $\FF_1$-geometry. 

\bigskip

\textbf{Competing Interests:}\hspace{0.1cm} The authors declare none.

\bigskip


\section{Preliminaries}  

\subsection{Representations of quivers over $\FF_1$}

In this section, we review basic definitions and properties of representations of quivers over $\FF_1$. We first recall the definition of $\FF_1$-vector spaces and $\FF_1$-linear maps.  

\begin{mydef}
The category $\textrm{Vect}(\mathbb{F}_1)$ of \emph{finite dimensional vector spaces} over $\mathbb{F}_1$ consists of the following:
\begin{enumerate}
	\item 
Objects are finite pointed sets $(V,0_V)$, called $\FF_1$-vector spaces; 
\item 
Morphisms are pointed functions $\varphi:V \to W$ such that $\varphi|_{V-\varphi^{-1}(0_W)}$ is an injection, called $\FF_1$-linear maps.
\end{enumerate}
For an $\mathbb{F}_1$-vector space $V$, the \emph{dimension} of $V$, denoted by $\dim_{\FF_1}(V)$, is the number of nonzero elements of $V$. In other words, $\dim_{\FF_1}(V)=|V|-1$.
\end{mydef}

\begin{mydef}\label{definition: $F_1$-vectorspace}
	Let $V$ and $W$ be $\mathbb{F}_1$-vector spaces. 
	\begin{enumerate}
		\item 
		The \emph{direct sum} $V\oplus W$ is the following $\FF_1$-vector space: $V\oplus W:=V\sqcup W /\langle 0_V \sim 0_W\rangle$.
		\item 
		A unique $\mathbb{F}_1$-linear map $0: V \to W$ sending any element in $V$ to $0_W$ is said to be the \emph{zero map}.
		\item 
		$W$ is said to be a \emph{subspace} of $V$ if $W$ is a subset of $V$ containing $0_V$ and $0_W=0_V$. 
		\item 
		For a subspace $W$ of $V$, the \emph{quotient space} $V/W$ is defined as $V-(W-\{0_V\})$. 
		\item 
		For an $\mathbb{F}_1$-linear map $\varphi:V \to W$, the \emph{kernel} (resp.~\emph{cokernel}) of $\varphi$ is defined to be $\ker(\varphi):=\varphi^{-1}(0_W)$ (resp.~$\textrm{coker}(\varphi):=W/\varphi(V)$).
	\end{enumerate}
\end{mydef}

\begin{mydef}
	A \emph{quiver} $Q$ is a finite directed graph, where we allow multiple arrows and loops. To be precise, a quiver $Q$ consists of a quadruple $Q=(Q_0,Q_1,s,t)$ as follows:
	\begin{enumerate}
		\item 
		$Q_0$ (resp.~$Q_1$) is the finite set of vertices (resp.~arrows),
		\item
		$s$ and $t$ are functions
		\[
		s,t:Q_1 \to Q_0
		\] 
		assigning to each arrow in $Q_1$ its \emph{source} and \emph{target} in $Q_0$. 
	\end{enumerate}
A quiver $Q$ is \emph{connected} if its underlying undirected graph is connected. A quiver is \emph{acyclic} if it does not contain any directed cycles. By a \emph{subquiver} $S$ of $Q$, we mean a quiver $S = (S_0,S_1)$ such that $S_i \subseteq Q_i$ for $i = 0,1$.
\end{mydef}

\begin{myeg}
For a nonnegative integer $n$, we let $\wild_n$ be the quiver with one vertex and $n$ loops. $\wild_1$ is called the \emph{Jordan quiver}.
\end{myeg}

Throughout this paper, we will simply denote a quiver by $Q$ and the underlying undirected graph of a quiver $Q$ by $\overline{Q}$. We will also use the basic terminology of undirected graphs to $Q$; when we say some graph-theoretic property holds for $Q$, it means that it holds for $\overline{Q}$. For example, when we say $Q$ is a tree, it means that $\overline{Q}$ is a tree.

\begin{mydef}
Let $S$ and $Q$ be quivers.
\begin{enumerate}
	\item 
 A \emph{quiver map} $F : S \rightarrow Q$ consists of a pair of functions  
\[ 
F_i : S_i \rightarrow Q_i, \quad i=0,1
\] 
which satisfy the following conditions: for all $\alpha \in S_1$,
\[
s(F_1(\alpha)) = F_0(s(\alpha)), \quad t(F_1(\alpha)) = F_0(t(\alpha)).
\] 
\item 
A quiver map $F$ is said to be injective (resp.~surjective) if and only if $F_0$ and $F_1$ are both injective (resp.~surjective). 
\end{enumerate}
\end{mydef}

With the notion of quiver maps, we can identify a subquiver $S$ of $Q$ with the image of the inclusion map $S \hookrightarrow Q$. A subquiver $S$ is said to be \emph{full} if for any $u, v \in S_0$, any arrow $\alpha$ in $Q_1$ such that $u=s(\alpha)$ and $v=t(\alpha)$ (or $u=t(\alpha)$ and $v=s(\alpha)$) is also in $S_1$.

\begin{mydef} 
Let $Q$ be a quiver with underlying graph $G =\overline{Q}$. Recall that a \emph{walk} in $G$ is a sequence of edges $ w = (e_1, \ldots , e_d)$ such that $e_i$ and $e_{i+1}$ share an endpoint. In other words, it is a sequence of vertices $(v_1,v_2,\ldots , v_d, v_{d+1})$ together with a specification of an edge $e_i$ between $v_i$ and $v_{i+1}$ for all $i\le d$. Let $\alpha_i$ be the arrow in $Q$ corresponding to the edge $e_i$ of $G$. Define $\epsilon_i = +1$ if $s(\alpha_i) = v_i$ and $t(\alpha_i) = v_{i+1}$, and $\epsilon_i = -1$ if $s(\alpha_i) = v_{i+1}$ and $t(\alpha_i) = v_i$. Then we will write a walk of $Q$ as follows: 
\[ 
w = \alpha_1^{\epsilon_1}\cdots \alpha_d^{\epsilon_d}.
\]
A \emph{path} is a directed walk in the sense that all $\epsilon_i$ have the same sign.
\end{mydef} 

\begin{mydef} 
Let $Q$ be a quiver with underlying graph $\overline{Q}$. Then $\overline{Q}$ is a $1$-simplex, whose $0$-simplices can be identified with $Q_0$ and whose $1$-simplices can be identified with $Q_1$. We then obtain a chain complex 
\[ 
0 \rightarrow \mathbb{Z}Q_1 \xrightarrow[]{\partial} \mathbb{Z}Q_0 \rightarrow 0
\] 
where $\mathbb{Z}Q_1$ and $\mathbb{Z}Q_0$ denote the free abelian groups generated by arrows and vertices respectively with $\partial$ defined via the formula $\partial (\alpha) = t(\alpha) - s(\alpha)$, for $\alpha \in Q_1$. The \emph{(integral) cycle space} is defined to be the first homology group $H_1(Q, \mathbb{Z}) = \operatorname{ker}(\partial)$. Note that the cycle space is a finitely-generated free abelian group, and a basis for $H_1(Q,\mathbb{Z})$ is called a \emph{(integral) cycle basis} for $Q$. Note that the cycle space does not depend on the choice of orientation of $\overline{Q}$: indeed, the orientation functions simply as a device for writing elements of $\operatorname{ker}(\partial)$ as $\mathbb{Z}$-linear combinations of $1$-simplices. 
\end{mydef} 

\begin{myeg}
Consider the following quiver:
\[
Q=
\left(\begin{tikzcd}  
	& v_2 \arrow[dl,swap,"\alpha_1"]\arrow[dr,"\alpha_3"]  &  \\ 
	v_1  & & v_4 \\ 
	& v_3 \arrow[ul,"\alpha_2"] \arrow[ur,swap,"\alpha_4"]&  \\
\end{tikzcd} \right).
\]	
Then, we obtain the following chain complex
\[ 
0 \rightarrow \mathbb{Z}^4 \xrightarrow[]{\partial} \mathbb{Z}^4 \rightarrow 0
\]
where $\partial$ is given by the following matrix: 
\[
\partial = \begin{bmatrix*}[r] 
1& 1& 0&0  \\
-1&0 &-1 &0  \\
0& -1&0 &-1  \\
0& 0&1 &1 
\end{bmatrix*}
\]
Then $\ker(\partial)$ is generated by $\begin{bmatrix*}[r]
	1 \\
	-1\\
	-1\\
	1
\end{bmatrix*}$, or $(\alpha_1-\alpha_2-\alpha_3+\alpha_4)$ corresponding to the unique cycle of $Q$. 

\end{myeg}

\begin{mydef} 
Let $Q$ be a connected quiver. We say that $Q$ is a \emph{pseudotree} if $\operatorname{rank}(H_1(Q,\mathbb{Z})) \le 1$. If $Q$ is not a tree or an affine Dynkin quiver of type $\tilde{\mathbb{A}}_n$, we say that $Q$ is a \emph{proper pseudotree}. Any proper pseudotree contains a unique subquiver $C$ that is an affine Dynkin quiver of type $\tilde{\mathbb{A}}_n$: we call $C$ the \emph{central cycle} of $Q$.
\end{mydef}

\begin{mydef}\cite[Definition 4.1]{szczesny2011representations}\label{definition: representation of a quiver over $F_1$}
Let $Q$ be a quiver.
\begin{enumerate}
	\item 
A representation of $Q$ over $\FF_1$ (or an $\FF_1$-representation of $Q$) is the collection of data $\mathbb{V}=(V_i,f_\alpha)$ consisting of an $\FF_1$-vector space $V_i$ for each vertex $i \in Q_0$ and an $\FF_1$-linear map $f_\alpha \in \Hom(V_{s(\alpha)},V_{t(\alpha)})$ for each arrow $\alpha \in Q_1$. 
\item 
By a subrepresentation $\mathbb{W}=(W_i,g_\alpha)$ of $\mathbb{V}=(V_i,f_\alpha)$ we mean an $\FF_1$-representation such that each $W_i$ is a subspace of $V_i$ and $g_\alpha$ is a restriction of $f_\alpha$. When $\mathbb{W}$ is a subrepresentation of $\mathbb{V}$, we write $\mathbb{W} \leq \mathbb{V}$. 
\end{enumerate}
\end{mydef}

\begin{mydef}\cite[Definition 4.3]{szczesny2011representations}
	Let $Q$ be a quiver and $\mathbb{V}=(V_i,f_\alpha)$ be an $\FF_1$-representation of $Q$. 
	\begin{enumerate}
		\item 
		The \emph{dimension} of $\mathbb{V}$ is defined to be the sum of dimensions of $V_i$:
		\[
		\dim(\mathbb{V})=\sum_{i \in Q_0} \dim_{\FF_1}(V_i).
		\]
		\item 
		The \emph{dimension vector} of $\mathbb{V}$ is an element of $\mathbb{N}^{|Q_0|}$:
		\[
		\textbf{\textrm{dim}}(\mathbb{V})=(\dim_{\FF_1}(V_i))_{i \in Q_0}. 
		\]
	\end{enumerate}
	An $\FF_1$-representation $\mathbb{V}=(V_i,f_\alpha)$ is \emph{nilpotent} if there exists a positive integer $N$ such that $\forall~n \geq N$ and any path $\alpha_1\alpha_2\dots \alpha_n$ in $Q$ (left-to-right in the order of traversal), one has
	\[
	f_{\alpha_n}f_{\alpha_{n-1}}\cdots f_{\alpha_1}=0 \textrm{ (zero map). }
	\]
\end{mydef}

\begin{mydef}
Let $\mathbb{V}=(V_i,f_\alpha)$ and $\mathbb{W}=(W_i,g_\alpha)$ be $\FF_1$-representations of a quiver $Q$. A \emph{morphism} $\Phi:\mathbb{V} \to \mathbb{W}$ is a collection of $\FF_1$-linear maps $\{\phi_i:V_i\to W_i\}_{i \in Q_0}$ making the following diagram commute: 
\begin{equation}
	\begin{tikzcd}[row sep=large, column sep=1.5cm]
		V_{s(\alpha)}\arrow{r}{\phi_{s(\alpha)}}\arrow{d}{f_\alpha}
		& W_{s(\alpha)} \arrow{d}{g_\alpha} \\
		V_{t(\alpha)} \arrow{r}{\phi_{t(\alpha)}} 
		& W_{t(\alpha)}
	\end{tikzcd}
\end{equation}
We denote by $\textrm{Rep}(Q,\mathbb{F}_1)$ the category whose objects are $\FF_1$-representations of $Q$ and whose morphisms are defined as above. We let $\textrm{Rep}(Q,\mathbb{F}_1)_{\textrm{nil}}$ be the full subcategory of $\textrm{Rep}(Q,\mathbb{F}_1)$ consisting of nilpotent representations. 
\end{mydef}

We note that each morphism $\Phi \in \Hom(\mathbb{V},\mathbb{W})$ has a kernel and cokernel obtained from a kernel and cokernel at each vertex. Similarly, one obtains the notions of \emph{subrepresentations} and \emph{quotient representations}. We refer the reader to \cite[Definition 4.3]{szczesny2011representations} for details. 

\begin{rmk}
Recall that an $\FF_1$-representation $\mathbb{V}$ of a quiver $Q$ is \emph{indecomposable} if $\mathbb{V}$ cannot be written as a nontrivial direct sum of subrepresentations. In \cite{szczesny2011representations}, Szczesny proves that the Krull-Schmidt Theorem holds for the category $\Rep(Q,\FF_1)$. To be precise, any $\FF_1$-representation $M$ can be written uniquely (up to permutation) as a finite direct sum of indecomposable representations. 
\end{rmk}

Let $k$ be a field and $\Vect(k)$ be the category of finite-dimensional vector spaces over $k$. Then one may define ``the base change functor'' as follows\footnote{Tom Zaslavsky suggested to use the term ``basis functor'' as it does not change bases. In $\FF_1$-geometry, the functor was first introduced to define the base change from an algebraic variety over $\FF_1$ to an algebraic variety over a field. For this reason (to be compatible with already existing convention), we use ``base change functor''.}:
\begin{equation}\label{eq: base change1}
k\otimes_{\FF_1} -:\textrm{Vect}(\mathbb{F}_1) \to \textrm{Vect}(k),
\end{equation}
where any $\FF_1$-vector space $V$ goes to the vector space whose basis is $V-\{0_V\}$.

Note that representations of a quiver can be defined in a more categorical way. Let $Q$ be a quiver. One can consider a discrete category $\mathcal{Q}$: objects are vertices of $Q$ and morphisms are directed paths. Then, a representation $M$ of $Q$ over $\FF_1$ is nothing but a functor $\mathbf{M}: \mathcal{Q} \to \textrm{Vect}(\mathbb{F}_1)$. In particular, $\textrm{Rep}(Q,\FF_1)$ is equivalent to the functor category $\textrm{Vect}(\mathbb{F}_1)^{\mathcal{Q}}$. In fact, the same description holds for representations of $Q$ over a field $k$. Therefore, from the base change functor $k\otimes_{\FF_1} -$,
one has the following base change functor which is faithful (but not full in general):
\begin{equation}\label{eq: base change2}
k\otimes_{\FF_1} -:\textrm{Rep}(Q,\FF_1) \to \textrm{Rep}(Q,k), \quad M \to M_k. 
\end{equation}
The base change functor will be considered in \S \ref{s: Euler}  to compute Euler characteristic of quiver Grassmannians associated to a class of quiver representations. 

\subsection{Hall algebras for $\textrm{Rep}(Q,\FF_1)$} \label{subsection: Hall algebras}

There are two (equivalent) ways to construct the Hall algebra $H_Q$ of $\textrm{Rep}(Q,\FF_1)$. The first way is to appeal to some categorical interpretation of $\FF_1$-representations of $Q$. As we mentioned above, $\textrm{Rep}(Q,\FF_1)$ is equivalent to the functor category $\textrm{Vect}(\mathbb{F}_1)^{\mathcal{Q}}$. Since $\Vect(\FF_1)$ is \emph{proto-exact} in the sense of Dyckerhoff and Kapranov \cite{dyckerhoff2012higher}, where they also prove that for a small category $\mathcal{I}$ the functor category $\mathcal{C}^\mathcal{I}$ is proto-exact for a proto-exact category $\mathcal{C}$. The construction of $H_Q$ then follows from a more general construction of Hall algebras in \cite{dyckerhoff2012higher}. 

The second construction is to mimic the classical construction of the Hall algebra of representations of $Q$ over $\FF_q$. To be precise, let $\textrm{Iso}(Q)$ be the set of isomorphism classes of objects in $\textrm{Rep}(Q,\mathbb{F}_1)$. The Hall algebra $H_Q$ of $\textrm{Rep}(Q,\mathbb{F}_1)$ has the following underlying set:
\begin{equation}\label{eq: hall algebra}
	H_Q:=\{f:\textrm{Iso}(Q) \to \mathbb{C} \mid f([M])=0\textrm{ for all but finitely many $[M]$.}\}
\end{equation}
For each $[M] \in \textrm{Iso}(Q)$, let $\delta_{[M]}$ be the delta function in $H_Q$ supported at $[M]$. In particular, we consider $H_Q$ as the vector space spanned by $\{\delta_{[M]}\}_{[M] \in \textrm{Iso}(Q)}$ over $\mathbb{C}$. To ease the notation, we will denote the delta function $\delta_{[M]}$ by $[M]$. One defines the following multiplication on the elements in $\textrm{Iso}(Q)$:
\begin{equation}\label{eq: hall product}
	[M]\cdot[N]:=\sum_{R \in \textrm{Iso}(Q)} \frac{a^R_{M,N}}{a_Ma_N}[R],
\end{equation}
where $a_M=|\textrm{Aut}(M)|$ and $a^R_{M,N}$ is the number of ``short exact sequences'' of the form:\footnote{As in the classical case, by a short exact sequence we mean that $\ker=\textrm{coker}$.}
\[
0 \to N \to R \to M \to 0.
\]
Then, one can easily check the following equality as in the classical case:
\[
\frac{a^R_{M,N}}{a_Ma_N}=|\{L \leq R \mid L\simeq N\textrm{ and } R/L \simeq M\}|.
\]
By linearly extending the multiplication \eqref{eq: hall product} to $H_Q$, we obtain an associative algebra $H_Q$ over $\mathbb{C}$. Moreover, one may check that $H_Q$ is also equipped with the coproduct defined as follows:
\begin{equation}\label{eq: hall coprod}
	\Delta:H_Q\to H_Q\otimes_\mathbb{C}H_Q, \quad \Delta(f)([M],[N])=f([M\oplus N]).
\end{equation}

With \eqref{eq: hall product} and \eqref{eq: hall coprod}, Szczesny proves various interesting results. We refer the interested reader to \cite[Section 6]{szczesny2011representations} for details. 

\subsection{Coefficient quivers} \label{subsection: coefficient quivers}

Coefficient quivers\footnote{We emphasize that even if we are using the same terminology ``coefficient quivers'' our notion of coefficient quivers is different from that of Ringel.} were first introduced by Ringel \cite{ringel1998exceptional} as a combinatorial gadget to study representations of quivers. 

Let $\mathbb{V}=(V_i,f_\alpha)$ be a representation of a quiver $Q$ over $\mathbb{C}$. We fix a basis $B(i)$ for each vector space $i$ and let $B=\bigsqcup_{i \in Q_0}B(i)$, i.e., $B$ is a basis for the vector space $\bigoplus_{i \in Q_0}V_i$. We simply call $B$ a basis for $\mathbb{V}$. 

\begin{mydef}\label{definition: coefficient quiver}
The coefficient quiver $\tilde{Q}=\tilde{Q}(\mathbb{V},B)$ is a quiver defined as follows:
\begin{enumerate}
	\item 
$\tilde{Q}_0=B$. 
\item 
For every arrow $\alpha:v \to w$ of $Q$ and every element $x \in B(v)$ if we can write
\[
f_\alpha(x)=\sum c_b b, \quad b \in B(w), c_b\neq 0, 
\]
then we draw an arrow from $x$ to $b \in B(w)$ in $\tilde{Q}$, and label it with $\alpha$. 
\end{enumerate}
\end{mydef}

The coefficient quiver depends on a choice of a basis for a representation $\mathbb{V}$ of $Q$.

\begin{myeg}
	\[
Q=\left( 
\begin{tikzcd}[row sep=2em]
	v_1  \arrow[dr,swap,"\alpha"]
	& v_2 \arrow[d,"\beta"] & \\
	& v_{3} \arrow[loop right,"\gamma"]
\end{tikzcd}
\right)
\]	
Consider the following representation $\mathbb{V}$ of $Q$:
\[
V_1=\mathbb{C}^2, \quad V_2=\mathbb{C}, \quad V_3=\mathbb{C}^2.
\]
\[
f_\alpha=\begin{bmatrix}
1 & 0 \\
0 & 0 
\end{bmatrix}, \quad f_\beta=\begin{bmatrix}
0  \\
1 
\end{bmatrix}, \quad f_\gamma=\begin{bmatrix}
0 & -1 \\
0 & 0
\end{bmatrix}
\]
Let's fix bases: $B_1=\{e_1,e_2\}$, $B_2=\{1\}$, and $B_3=\{e_1+e_2,e_2\}$. Then we obtain the following coefficient quiver. 
\[
\begin{tikzcd}
\bullet	& \bullet \arrow[dd,swap, "\alpha"] \arrow[ddr, "\alpha"]& \bullet \arrow[dd,"\beta"] \\
&  &  &\\
& \bullet \arrow[loop left,looseness=10,"\gamma"] \arrow[r,swap, shift right,"\gamma"] & \bullet \arrow[l,swap,"\gamma"] \arrow[loop right,looseness=10,"\gamma"]
\end{tikzcd} 
\]
Let's change bases: $B_1'=\{e_1-e_2,e_2\}$, $B_2'={1}$, and $B_3'=\{e_1,e_2\}$. Then we have the following coefficient quiver:
\[
\begin{tikzcd}
	\bullet	& \bullet \arrow[dd,swap, "\alpha"] & \bullet \arrow[dd,"\beta"] \\
	&  &  &\\
	& \bullet & \bullet    \arrow[l,"\gamma"] 
\end{tikzcd} 
\]
\end{myeg}


\section{The slice category over Q and $\textrm{Rep}(Q,\FF_1)$}\label{s: slice}

A notion of \emph{windings} of quivers was first introduced by Crawley-Boevey \cite{crawley1989maps} and Krause \cite{krause1991maps} to define morphisms between tree and band modules. Later, Haupt \cite[Section 2.3]{Haupt2012euler} generalized Krause's definition of windings as follows.\footnote{Crawley-Boevey considered tree modules and Krause considered tree and band modules, and they have one more condition. For instance, Krause \cite{krause1991maps} has an extra condition (W2).} Let $Q$ and $S$ be quivers. A winding of quivers $F:S\to Q$  is a morphisms of quivers
\[
F_0:S_0 \to Q_0, \quad F_1:S_1 \to Q_1
\]
satisfying the following two conditions:
\begin{enumerate}
	\item 
If $\alpha,\beta \in S_1$ with $\alpha \neq \beta$ and $s(\alpha)=s(\beta)$, then $F_1(\alpha) \neq F_1(\beta)$. 	
	\item 
If $\alpha,\beta \in S_1$ with $\alpha \neq \beta$ and $t(\alpha)=t(\beta)$, then $F_1(\alpha) \neq F_1(\beta)$. 	
\end{enumerate}

For a winding map $F:S \to Q$, it is easy to check that for an $S$-representation $V$ over a field $k$, the pushforward $F_*(V)$ is a $Q$-representation over $k$.\footnote{See Remark \ref{remar: scalar extension} for the definition of the pushforward.} In particular, $F_*(\mathbbm{1}_S)$ is said to be a \emph{tree module} if $S$ is a tree, where $\mathbbm{1}_S$ is the representation of $S$ assigning $k$ to each vertex and the identity map to each arrow. The following lemma shows that the same is true for $\mathbb{F}_1$-representations. 

\begin{lem}\label{lemma: pushforward reps}
Let $F:S \to Q$ be a winding map of quivers. For each $v \in Q_0$, let $M_v=F^{-1}(v) \cup \{0\}$. For each $\alpha \in Q_1$, consider the following function: let $v=s(\alpha)$, $w=t(\alpha)$, 
\begin{equation}\label{eq: map}
\tilde{\alpha}: M_v \to M_w, \quad x \mapsto \begin{cases}
	y, \emph{ if } ~\exists \beta \in S_1 \emph{ such that } s(\beta)=x,~t(\beta)=y,~F(\beta)=\alpha,\\
	0, \emph{ otherwise.}
\end{cases}
\end{equation}
Then $(M_v,\tilde{\alpha})_{v \in Q_0, \alpha \in Q_1}$ is an $\mathbb{F}_1$-representation of $Q$. By abuse of notation, we denote this $\mathbb{F}_1$-representation of $Q$ by $F_*(S)$. 
\end{lem}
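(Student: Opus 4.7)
The plan is to unpack the definition of an $\FF_1$-representation of $Q$ and check each requirement, with the two winding conditions doing exactly the work needed at the two steps that are not completely formal.

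First I would check that each $M_v$ is a finite pointed set: since $S$ is a finite quiver, $F^{-1}(v)$ is finite, and the formula $M_v := F^{-1}(v) \cup \{0\}$ literally adjoins a distinguished basepoint $0$, so $M_v$ is an $\FF_1$-vector space in the sense of the paper. I would set $0_{M_v} = 0$ throughout.

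Second I would verify that the formula \eqref{eq: map} really defines a function, i.e.\ that the element $y$ (when it exists) is unique. So suppose $x \in M_v$ is nonzero and $\beta, \beta' \in S_1$ both satisfy $s(\beta) = s(\beta') = x$ and $F_1(\beta) = F_1(\beta') = \alpha$. The first winding axiom (same source plus equal image under $F_1$ forces equality of arrows) yields $\beta = \beta'$, and hence $t(\beta) = t(\beta')$. This shows $\tilde{\alpha}(x)$ is unambiguously defined. By construction $\tilde{\alpha}(0) = 0$, so $\tilde{\alpha}$ is a pointed map.

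Third, and this is the only step with any content, I would show that $\tilde\alpha$ restricted to $M_v \setminus \tilde\alpha^{-1}(0_{M_w})$ is injective, which is the $\FF_1$-linearity requirement. Suppose $x_1, x_2 \in M_v$ are nonzero with $\tilde\alpha(x_1) = \tilde\alpha(x_2) = y \ne 0$. Then by definition there exist $\beta_1, \beta_2 \in S_1$ with $s(\beta_i) = x_i$, $t(\beta_i) = y$, and $F_1(\beta_i) = \alpha$. Since $t(\beta_1) = t(\beta_2) = y$ and $F_1(\beta_1) = F_1(\beta_2)$, the second winding axiom forces $\beta_1 = \beta_2$, hence $x_1 = s(\beta_1) = s(\beta_2) = x_2$. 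Therefore $\tilde\alpha$ is $\FF_1$-linear, and the collection $(M_v, \tilde\alpha)$ satisfies Definition \ref{definition: representation of a quiver over $F_1$}.

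The main (and essentially only) obstacle is recognizing that the two clauses of the definition of a winding correspond exactly to well-definedness and to injectivity of $\tilde\alpha$ on its nonzero support; once that dictionary is in place, the proof is a direct verification and I would not foresee any further difficulty. I would close with a brief remark that this construction is functorial in $F$, setting up the categorical equivalence of Theorem A that follows.
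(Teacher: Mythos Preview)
Your proof is correct and follows essentially the same approach as the paper: both use the first winding condition (contrapositive: same source and same $F_1$-image force equality of arrows) to establish well-definedness of $\tilde\alpha$, and the second winding condition (same target and same $F_1$-image force equality) to establish $\FF_1$-linearity. Your version is slightly more explicit about the finite-pointed-set structure of $M_v$ and adds a closing remark on functoriality, but the substance of the argument is identical.
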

\begin{proof}
We only have to check that $\tilde{\alpha}$ is a well-defined $\mathbb{F}_1$-linear map. In fact, suppose that we have $\beta,\beta' \in S_1$ and $x \in M_v$ such that $s(\beta)=s(\beta')=x$ and $F(\beta)=F(\beta')=\alpha \in Q_1$. Since $F$ is a winding map, this implies that $\beta=\beta'$, and hence $\tilde{\alpha}$ is well defined.

Next, suppose that $\tilde{\alpha}(x)=\tilde{\alpha}(z) = e\neq 0$. In other words, there exist $\beta_x, \beta_z \in S_1$ such that 
\[
s(\beta_x)=x, \quad s(\beta_z)=z, \quad t(\beta_x)=e=t(\beta_z), \quad F(\beta_x)=F(\beta_z)=\alpha. 
\]
But, again since $F$ is a winding map, this implies that $\beta_x=\beta_z$, showing that $x=z$. Hence, $\tilde{\alpha}$ is an $\mathbb{F}_1$-linear map. 
\end{proof}

\begin{lem}\label{lemma: essentially surjective}
Let $Q$ be a quiver and $\mathbb{V}=(M_v,f_\alpha)$ an $\FF_1$-representation of $Q$. Then, there exists a winding map of quivers $F:S\to Q$ such that $F_*(S) \simeq \mathbb{V}$. 
\end{lem}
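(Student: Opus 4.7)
The plan is to construct $S$ directly from the data of $\mathbb{V}$ and read off the winding $F$ from the projections onto $Q$. Explicitly, I would take
\[
S_0 = \bigsqcup_{v \in Q_0} \bigl(M_v \setminus \{0_{M_v}\}\bigr),
\]
and for every arrow $\alpha : v \to w$ in $Q_1$ and every $x \in M_v \setminus \{0_{M_v}\}$ with $f_\alpha(x) \neq 0_{M_w}$, introduce a single arrow $\beta_{x,\alpha} \in S_1$ with $s(\beta_{x,\alpha}) = x$ and $t(\beta_{x,\alpha}) = f_\alpha(x)$. Define $F_0$ to send each $x \in M_v \setminus \{0_{M_v}\}$ to $v$, and $F_1(\beta_{x,\alpha}) = \alpha$. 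By construction $s(F_1(\beta_{x,\alpha})) = v = F_0(x)$ and $t(F_1(\beta_{x,\alpha})) = w = F_0(f_\alpha(x))$, so $F$ is a quiver map.

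Next I would verify the two winding conditions. For the source condition, if $\beta, \beta' \in S_1$ satisfy $s(\beta) = s(\beta') = x$ and $F_1(\beta) = F_1(\beta') = \alpha$, then both must equal $\beta_{x,\alpha}$ by construction, hence $\beta = \beta'$. For the target condition, if $t(\beta) = t(\beta') = y$ and $F_1(\beta) = F_1(\beta') = \alpha$, write $\beta = \beta_{x,\alpha}$ and $\beta' = \beta_{x',\alpha}$; then $f_\alpha(x) = y = f_\alpha(x')$ with $y \neq 0_{M_w}$. This is precisely where the key hypothesis enters: the $\FF_1$-linearity of $f_\alpha$ forces its restriction to $M_v \setminus f_\alpha^{-1}(0_{M_w})$ to be injective, so $x = x'$ and hence $\beta = \beta'$. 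This is the main (and really the only) step that requires anything beyond bookkeeping.

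Finally, I would check that $F_*(S) \simeq \mathbb{V}$. At each vertex $v \in Q_0$, the set $F_0^{-1}(v) \cup \{0\}$ is canonically identified with $M_v$ via the tautological bijection. Under this identification, the map $\tilde{\alpha}$ from \eqref{eq: map} sends $x$ to $f_\alpha(x)$ whenever the arrow $\beta_{x,\alpha}$ exists (i.e., whenever $f_\alpha(x) \neq 0$), and to $0$ otherwise; but by the definition of $\tilde{\alpha}$ above, this coincides with $f_\alpha$ on all of $M_v$. The family of identifications $F_0^{-1}(v)\cup\{0\} \to M_v$ thus assembles into an isomorphism $F_*(S) \to \mathbb{V}$ in $\Rep(Q,\FF_1)$, completing the proof.
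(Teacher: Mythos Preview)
Your proof is correct and follows essentially the same construction as the paper: build $S_0$ from the nonzero elements of the $M_v$, add an arrow for each pair $(x,\alpha)$ with $f_\alpha(x)\neq 0$, and verify the winding conditions using the definition of $\FF_1$-linearity. If anything, your write-up is slightly more explicit than the paper's, since you spell out the final isomorphism $F_*(S)\cong \mathbb{V}$ and check both winding conditions directly rather than leaving one as ``similar.''
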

\begin{proof}
This construction is essentially same as the one given in \cite{jun2020quiver}, but we include it here for completeness. We first define the set of vertices of a quiver $S$ as follows:
\[
S_0:= \bigsqcup_{v \in Q_0}M_v\setminus \{0\}.
\]	
For each $x, y \in S_0$, we draw an arrow $\beta_\alpha:x \to y$ in $S_1$ if and only if $x \in M_v$, $y \in M_w$ and there exist $\alpha \in Q_1$ and $f_\alpha$ such that $s(\alpha)=v$, $t(\alpha)=w$, and $f_\alpha(x)=y$. Note that if there is another $\alpha' \in Q_1$ with the same property then we draw two different arrows $\beta_\alpha$ and $\beta_{\alpha'}$. 

Next, we define a winding map $F:S \to Q$ as follows: for each $x_v \in S_0$, $x_v \in M_v \setminus \{0\}$ for some $v \in Q_0$, we let $F(x_v)=v$. We send each arrow $\beta_{\alpha}\in S_1$ to $\alpha$. One can easily check that $F$ is a quiver map. To check the winding condition, suppose  $\beta_{\alpha} \neq\beta_{\alpha'} \in S_1$ and $s(\beta_{\alpha})=s(\beta_{\alpha'})$. If $t(\beta_{\alpha}) = t(\beta_{\alpha'})$, then $\beta_{\alpha} \neq \beta_{\alpha'}$ implies $F(\beta_{\alpha}) =\alpha \neq \alpha' = F(\beta_{\alpha'})$, since there is at most one $\alpha$-labeled arrow between any two vertices of $S$. Now, suppose that $t(\beta_{\alpha}) \neq t(\beta_{\alpha'})$. Since $t(\beta_{\alpha}) = f_{\alpha}(s(\beta_{\alpha}))$ and $t(\beta_{\alpha'}) = f_{\alpha'}(s(\beta_{\alpha'}))$, we must have $\alpha \neq \alpha'$, which implies $F(\beta_{\alpha}) \neq F(\beta_{\alpha'})$ again. Hence, the first condition for $F$ to be a winding has been verified. The second condition is similar. 
\end{proof}

To define the category of quivers over a quiver, we first recall some definitions. Let $T$ be a full subquiver of $S$. We say that the $T_0$ is \emph{predecessor closed} if the following condition holds: for any oriented path in $S$ from $v$ to $w$ if $w \in T_0$, then $v \in T_0$. Analogously $T_0$ is \emph{successor closed} if the following condition holds: for any oriented path in $S$ from $v$ to $w$ if $v \in T_0$, then $w \in T_0$.

\begin{rmk}
We caution the reader that the authors use the \emph{opposite} convention for successor- and predecessor-closed subsets in \cite{jun2020quiver}. 
\end{rmk}

Let $Q$ be a fixed quiver. Let $\mathcal{C}_Q$ be the category whose objects are windings of quivers $F:S \to Q$. A morphism $\phi : (S,F)\rightarrow (S',F')$ is an ordered triple $\phi = (\mathcal{U}_{\phi}, \mathcal{D}_{\phi}, c_{\phi})$, where 
\begin{enumerate} 
\item $\mathcal{U}_{\phi}$ is a full subquiver of $S$ whose vertex set is predecessor closed,
\item $\mathcal{D}_{\phi}$ is a full subquiver of $S'$ whose vertex set is successor closed, and
\item $c_{\phi} : \mathcal{U}_{\phi} \rightarrow \mathcal{D}_{\phi}$ is a quiver isomorphism such that the diagram below commutes. 
\end{enumerate}   
\begin{equation}\label{eq: morphism of windings} 
\begin{tikzcd}[row sep=2em]
\mathcal{U}_{\phi} \arrow[rr,"c_{\phi}"] \arrow[dr,swap,"F|_{\mathcal{U}_\phi}"]
&& \mathcal{D}_{\phi} \arrow[dl,,"F'|_{\mathcal{D}_\phi}"] \\
& Q
\end{tikzcd}
\end{equation}

If $(S,F) \xrightarrow[]{\phi} (S',F')$ and $(S',F') \xrightarrow[]{\psi} (S'', F'')$ are two morphisms in $\mathcal{C}_Q$, their composition $(S,F)\xrightarrow[]{\psi\circ\phi} (S'',F'')$ is the ordered triple
\begin{equation}\label{eq: composition} 
\psi \circ \phi =  \left( \mathcal{U}_{\psi\circ \phi}, \mathcal{D}_{\psi \circ \phi}, c_{\psi\circ\phi} \right) = \left( c_{\phi}^{-1}\left( \mathcal{U}_{\psi} \cap \mathcal{D}_{\phi} \right), c_{\psi}\left( \mathcal{U}_{\psi}\cap \mathcal{D}_{\phi} \right), c_{\psi} \circ c_{\phi} \right).
\end{equation}
Of course, the composition $c_{\psi}\circ c_{\phi}$ is understood to be restricted to $c_{\phi}^{-1}\left( \mathcal{U}_{\psi} \cap \mathcal{D}_{\phi} \right)$. Loosely, one can think of composition as gluing the top of $S$ to the bottom of $S'$ in a way that respects the mappings $F$ and $F'$. One can check $\mathcal{C}_Q$ indeed satisfies the axioms of a category. 

Let $\phi:(S,F) \to (S',F')$ be a morphism in $\mathcal{C}_Q$. Then $\phi$ induces a morphism $\phi_*:F_*(S) \to F'_*(S')$ of $\FF_1$-representations of $Q$ as follows: for each $v \in Q_0$, we define the map
\begin{equation}\label{eq: 1}
(\phi_*)_v:F_*(S)_v\to F'_*(S')_v 
\end{equation}
as
\begin{equation}\label{eq: 2}
(\phi_*)_v(x) = \begin{cases}
	c_\phi(x), \textrm{ if $x \in \mathcal{U}_\phi \cap F_*(S)_v$,}\\
	0, \textrm{ otherwise.}
\end{cases}
\end{equation}
Since $c_\phi$ is an isomorphism, clearly $(\phi_*)_v$ is an $\FF_1$-linear map. Next, let $\alpha\in Q_1$ with $v=s(\alpha)$ and $w=t(\alpha)$. Suppose first that $x \not \in \mathcal{U}_\phi \cap F_*(S)_v$, in particular, $(\phi_*)_v(x)=0$. Since $x \not \in \mathcal{U}_\phi$ and $\mathcal{U}_\phi$ is predecessor closed, we have that $\tilde{\alpha}(x) \not \in \mathcal{U}_\phi$, where $\tilde{\alpha}$ is a map defined in \eqref{eq: map}. Hence, in this case, we have
\begin{equation}\label{eq: comm}
\tilde{\alpha}(\phi_*)_v(x) = (\phi_*)_w\tilde{\alpha}(x).
\end{equation}
Now, suppose that $x \in \mathcal{U}_\phi \cap F_*(S)_v$ and $y=c_\phi(x)$. If $\tilde{\alpha}(y)=0$, then $\tilde{\alpha}(x)\not\in \mathcal{U}_{\phi}$, since $c_{\phi}$ is an isomorphism. In particular, $(\phi_*)_w\tilde{\alpha}(x) = 0$, and we have \eqref{eq: comm} in this case. Finally, if $\tilde{\alpha}(y)=z$, then $z \in \mathcal{D}_\phi$ since $y \in \mathcal{D}_\phi$ and it is successor closed. In particular, there exists an arrow $\beta$ in $\mathcal{D}_\phi$ such that $s(\beta)=y$ and $t(\beta)=z$. Since $c_\phi$ is an isomorphism, this implies that $\tilde{\alpha}(x) \in \mathcal{U}_\phi$ and $(\phi_*)_w\tilde{\alpha}(x)=z$, showing that \eqref{eq: comm} is valid in this case as well. Therefore, the following diagram commutes and $\phi_*$ is indeed a morphism of $\FF_1$-representations. 
\begin{equation}
	\begin{tikzcd}[row sep=large, column sep=1.5cm]
		(F_*(S))_v\arrow{r}{(\phi_*)_v}\arrow{d}{\tilde{\alpha}}
		& (F'_*(S'))_v \arrow{d}{\tilde{\alpha}} \\
		(F_*(S))_w \arrow{r}{(\phi_*)_w} 
		& (F'_*(S'))_w
	\end{tikzcd}
\end{equation}

\begin{rmk}
Before we proceed to prove an equivalence between $\mathcal{C}_Q$ and $\Rep(Q,\FF_1)$, we remark the following. 
\begin{enumerate}
\item 
Our definition for morphisms in $\mathcal{C}_Q$ generalizes the notion of maps between tree modules in \cite[Section 2]{crawley1989maps}. We also note that the notion of \emph{admissable triple} in \cite{krause1991maps} is similar to our definition in the sense that to define a morphism $\phi:(S,F) \to (S',F')$ we first assume an isomorphism of a subquiver of $S$ and a subquiver of $S'$ satisfying certain conditions whereas in \cite{krause1991maps}, this is done by using ``connecting triple'' rather than directly identifying subquivers. For details, see \cite[pages 189-191]{krause1991maps}. 	
\item 
The notion of tree modules in \cite{crawley1989maps} and \cite{krause1991maps} is more restrictive than the notion of tree modules given in \cite{ringel1998exceptional} due to the ``winding'' conditions imposed on quiver maps. 
\end{enumerate}
\end{rmk}

\begin{lem}\label{lemma: functor}
For an object $F:S\to Q$ in $\mathcal{C}_Q$, we let $\mathbf{F}(S)$ be the $\mathbb{F}_1$-representation of $Q$ as in Lemma \ref{lemma: pushforward reps}. For a morphism $\phi:(S,F) \to (S',F)$, we let $\mathbf{F}(\phi)$ be the morphism between $\mathbf{F}(S)$ and $\mathbf{F}(S')$ which we described above. Then $\mathbf{F}:\mathcal{C}_Q \to \emph{Rep}(Q,\FF_1)$ defines a functor. 
\end{lem}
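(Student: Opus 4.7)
The plan is to verify the two functoriality axioms---preservation of identities and preservation of composition---since Lemma \ref{lemma: pushforward reps} already handles well-definedness on objects, and the paragraph preceding the statement already verifies that each $\phi_{*}$ is a bona fide morphism in $\Rep(Q,\FF_{1})$. So only the axioms for a functor remain.

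For the identity axiom, first observe that the identity morphism on $(S,F)$ in $\mathcal{C}_{Q}$ is the triple $(S,S,\id_{S})$: the full subquiver $S$ of itself is trivially both successor closed and predecessor closed, $\id_{S}$ is a quiver isomorphism, and the diagram \eqref{eq: morphism of windings} commutes trivially. For each $v\in Q_{0}$ and each $x\in F_{*}(S)_{v}$, the definition of $\phi_{*}$ immediately gives $((\id)_{*})_{v}(x)=\id_{S}(x)=x$, which shows $\mathbf{F}(\id_{(S,F)})=\id_{\mathbf{F}(S)}$.

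For preservation of composition, take morphisms $\phi:(S,F)\to (S',F')$ and $\psi:(S',F')\to (S'',F'')$ in $\mathcal{C}_Q$. The goal is to show $\mathbf{F}(\psi\circ\phi)_{v}=\mathbf{F}(\psi)_{v}\circ \mathbf{F}(\phi)_{v}$ at each $v\in Q_{0}$, which I would do by case analysis on $x\in F_{*}(S)_{v}$ using the explicit composition formula \eqref{eq: composition}. Note that $\mathcal{U}_{\psi\circ\phi}=c_{\phi}^{-1}(\mathcal{U}_{\psi}\cap\mathcal{D}_{\phi})\subseteq \mathcal{U}_{\phi}$, and since $c_{\phi}$ maps $\mathcal{U}_{\phi}$ isomorphically onto $\mathcal{D}_{\phi}$, for $x\in\mathcal{U}_{\phi}$ the condition $x\in\mathcal{U}_{\psi\circ\phi}$ is equivalent to $c_{\phi}(x)\in\mathcal{U}_{\psi}$. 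Splitting into three cases---(i) $x\in\mathcal{U}_{\psi\circ\phi}$, (ii) $x\notin\mathcal{U}_{\phi}$, (iii) $x\in\mathcal{U}_{\phi}$ but $c_{\phi}(x)\notin\mathcal{U}_{\psi}$---one checks in (i) that both sides equal $c_{\psi}(c_{\phi}(x))$ by unwinding the definition of $\phi_*$ twice on the right and applying $c_{\psi\circ\phi}=c_{\psi}\circ c_{\phi}$ on the left; in (ii) both sides vanish, the left because $\mathcal{U}_{\psi\circ\phi}\subseteq\mathcal{U}_{\phi}$ and the right because $\mathbf{F}(\phi)_{v}(x)=0$; and in (iii) the left is $0$ by choice of $x$, while the right equals $\mathbf{F}(\psi)_{v}(c_{\phi}(x))=0$ since $c_{\phi}(x)\notin\mathcal{U}_{\psi}$.

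The verification is essentially bookkeeping, and I do not anticipate a serious obstacle. The only conceptual point worth flagging is that the composition law \eqref{eq: composition} in $\mathcal{C}_{Q}$ is engineered so that $\mathcal{U}_{\psi\circ\phi}$ captures precisely those vertices on which the pushforward $\psi_{*}\circ\phi_{*}$ acts nontrivially; this mirrors how $\FF_{1}$-linear maps behave under composition (a vertex is sent to a nonzero element only if it survives through both stages), so once one unravels the definitions the axiom becomes a direct check.
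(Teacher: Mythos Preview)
Your proof is correct and follows essentially the same approach as the paper's own proof: verify preservation of identities and of composition directly from the definitions and the composition formula \eqref{eq: composition}. The paper's proof is terser---for composition it simply asserts that $((\psi\phi)_*)_v = (\psi_*)_v(\phi_*)_v$ ``is clear from \eqref{eq: composition}''---whereas you spell out the three-case analysis explicitly, which is a reasonable elaboration of the same idea.
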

\begin{proof}
One can easily check that an identity map $\phi:(S,F) \to (S,F)$ in $\mathcal{C}_Q$ maps to the identity map in $\textrm{Rep}(Q,\FF_1)$ since in this case $\mathcal{U}_\phi=\mathcal{D}_\phi=S$ and $c_\phi$ is the identity map. 

Next, suppose that $\phi:(S,F) \to (S',F')$ and $\psi: (S',F') \to (S'', F'')$ are two morphisms in $\mathcal{C}_Q$. We want to check that $\mathbf{F}(\psi\phi)=\mathbf{F}(\psi)\mathbf{F}(\phi)$. With the same notation as in \eqref{eq: 1} and \eqref{eq: 2}, we only have to show that
\begin{equation}
((\psi\phi)_*)_v=((\psi)_*)_v((\phi)_*)_v.
\end{equation}
But, this is clear from \eqref{eq: composition}. 
\end{proof}

\begin{lem}\label{lemma: lifting}
Let $S=(S,F),S'=(S',F')$ be objects in $\mathcal{C}_Q$,  $\phi:F_*(S) \to F'_*(S')$ a morphism in $\emph{Rep}(Q,\FF_1)$, and $\mathcal{U}_\phi$ the full subquiver of $S$ with the vertex set $S_0\setminus \ker(\phi)$. Then $\phi$ induces a quiver map $f:\mathcal{U}_\phi \to S'$ such that $f(\mathcal{U}_\phi)$ is successor closed. 
\end{lem}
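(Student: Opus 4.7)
The plan is to construct $f$ by unpacking the components of $\phi$ directly. On vertices, set
\[
f_0(x) := \phi_{F_0(x)}(x), \quad x \in (\mathcal{U}_\phi)_0.
\]
Because $x \notin \ker(\phi)$, this value is a nonzero element of $(F')^{-1}(F_0(x)) \cup \{0\}$, hence lies in $S'_0$. Note also that $F'_0 \circ f_0 = F_0|_{(\mathcal{U}_\phi)_0}$, since each $\phi_v$ maps into the fiber over $v$.

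Next, I extend $f$ to arrows. For an arrow $\beta: x \to y$ in $\mathcal{U}_\phi$, let $\alpha := F_1(\beta)$, $v := F_0(x)$, $w := F_0(y)$. By the definition of $\tilde{\alpha}$ on $F_*(S)$ we have $\tilde{\alpha}(x) = y$, and then the commutativity of $\phi$ with the structure maps gives
\[
\tilde{\alpha}'(f_0(x)) = \tilde{\alpha}'(\phi_v(x)) = \phi_w(\tilde{\alpha}(x)) = f_0(y) \neq 0.
\]
By the construction of $F'_*(S')$ this forces the existence of an arrow $\gamma \in S'_1$ with $s(\gamma) = f_0(x)$, $t(\gamma) = f_0(y)$, $F'_1(\gamma) = \alpha$. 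I set $f_1(\beta) := \gamma$. Compatibility of $f$ with sources, targets, and the maps to $Q$ is then automatic.

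For the predecessor-closed claim, it suffices by induction along paths to consider a single arrow $\gamma \in S'_1$ with source $f_0(x)$ for some $x \in (\mathcal{U}_\phi)_0$. Setting $\alpha := F'_1(\gamma)$, the construction of $F'_*(S')$ gives $\tilde{\alpha}'(f_0(x)) = t(\gamma) \neq 0$, and the morphism identity then yields $\phi_w(\tilde{\alpha}(x)) \neq 0$. So $\tilde{\alpha}(x) = y$ for some $y \in S_0$ arising from an arrow $\beta \in S_1$ with $s(\beta) = x$, $t(\beta) = y$, $F_1(\beta) = \alpha$. Since $\phi_w(y) = t(\gamma) \neq 0$ we have $y \in (\mathcal{U}_\phi)_0$ and $f_0(y) = t(\gamma)$, completing the step.

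The main obstacle is the well-definedness of $f_1$: a priori there could be several arrows in $S'_1$ starting at $f_0(x)$ whose $F'$-image is $\alpha$, and a consistent choice among them would be delicate. This is precisely where the winding hypothesis on $F'$ is essential, as it rules out distinct arrows with the same source and the same image under $F'$ and thereby makes $\gamma$ unique. Everything else amounts to unwinding Lemma \ref{lemma: pushforward reps} and the commuting square defining a morphism in $\Rep(Q,\FF_1)$.
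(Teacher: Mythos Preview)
Your proof is correct and follows essentially the same route as the construction referenced in the paper (Construction 3.9 and Lemma 3.10 of \cite{jun2020quiver}): define $f_0$ via the components $\phi_v$, lift arrows using the commuting squares for $\tilde{\alpha}$, and invoke the winding condition on $F'$ for uniqueness. Your predecessor-closed argument in fact shows slightly more than stated, namely that the arrow $\gamma$ itself equals $f_1(\beta)$, so the image is full on its vertex set.
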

\begin{proof} 
This follows from Construction 3.9 and the proof of Lemma 3.10 in \cite{jun2020quiver}.
\end{proof}

\begin{pro}\label{proposition: equivalence of categories}
The functor $\mathbf{F}:\mathcal{C}_Q \to \emph{Rep}(Q,\FF_1)$ is an equivalence of categories. This restricts to an equivalence between $\Rep(Q,\FF_1)_{\nil}$ and the full subcategory of $\mathcal{C}_Q$ whose objects are windings $F:S \to Q$ with $S$ acyclic. 
\end{pro}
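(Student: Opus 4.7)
The proof plan is to construct an explicit quasi-inverse $\mathbf{G} : \operatorname{Rep}(Q,\FF_1) \to \mathcal{C}_Q$ and verify that $\mathbf{F}\circ \mathbf{G}$ and $\mathbf{G}\circ \mathbf{F}$ are naturally isomorphic to the respective identity functors. Since Lemma \ref{lemma: essentially surjective} already supplies essential surjectivity, the real content is to show that $\mathbf{F}$ is fully faithful; once that is established, the nilpotent subcategory statement follows from a short combinatorial argument.

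First I would prove faithfulness. Suppose $\phi, \phi' : (S,F) \to (S',F')$ satisfy $\mathbf{F}(\phi) = \mathbf{F}(\phi')$. Tracing through formulas \eqref{eq: 1}--\eqref{eq: 2}, one sees that for each $v \in Q_0$ and $x \in F_*(S)_v$, one has $x \in \mathcal{U}_\phi$ iff $(\phi_*)_v(x) \neq 0$, which forces $\mathcal{U}_{\phi} = \mathcal{U}_{\phi'}$; then $c_\phi$ and $c_{\phi'}$ agree on vertices because they both equal $\phi_*$ there, and they must therefore agree on arrows (an arrow of $\mathcal{U}_\phi$ is determined by its endpoints and its $F$-image, both of which are preserved by any morphism in $\mathcal{C}_Q$). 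Finally $\mathcal{D}_\phi = c_\phi(\mathcal{U}_\phi) = c_{\phi'}(\mathcal{U}_{\phi'}) = \mathcal{D}_{\phi'}$.

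Next I would prove fullness. Given a morphism $\psi : F_*(S) \to F'_*(S')$ in $\operatorname{Rep}(Q,\FF_1)$, let $\mathcal{U}$ be the full subquiver of $S$ with vertex set $S_0 \setminus \ker(\psi)$. The commutativity of the $\FF_1$-representation squares forces $\ker(\psi)$ to be closed under the maps $\tilde{\alpha}$, so $\mathcal{U}_0$ satisfies the paper's successor-closed condition. By Lemma \ref{lemma: lifting}, $\psi$ induces a quiver map $f : \mathcal{U} \to S'$ whose image is predecessor closed and which commutes with the windings $F$ and $F'|_{f(\mathcal{U})}$; moreover $f$ is injective on vertices (because distinct nonzero values of $\psi$ are distinct) and injective on arrows (because distinct arrows between fixed vertices of $S$ must have distinct $F$-labels by the winding condition, and $f$ preserves the $F$-label). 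Setting $\phi := (\mathcal{U}, f(\mathcal{U}), f|_{\mathcal{U}}^{f(\mathcal{U})})$ therefore defines a morphism in $\mathcal{C}_Q$, and unpacking \eqref{eq: 1}--\eqref{eq: 2} shows $\mathbf{F}(\phi) = \psi$. The hardest step here will be checking that $f$ is a quiver isomorphism onto its image (not merely a quiver map); this is where the winding hypothesis on both $F$ and $F'$ is used in an essential way, and care is needed to distinguish "no repeated arrows" from mere vertex injectivity.

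For the nilpotent statement, I would show that $F_*(S)$ is nilpotent iff $S$ is acyclic. If $S$ contains an oriented cycle $x_1 \to x_2 \to \cdots \to x_n \to x_1$ with corresponding arrows $\beta_1, \ldots, \beta_n$ in $S$, then the composition $\widetilde{F(\beta_n)} \circ \cdots \circ \widetilde{F(\beta_1)}$ sends $x_1$ to $x_1$, and iterating gives nonzero compositions along arbitrarily long paths in $Q$, contradicting nilpotence. Conversely, if $S$ is acyclic (hence finite and acyclic, so of bounded path length $\ell$), then for any path $\alpha_1 \cdots \alpha_n$ in $Q$ with $n > \ell$, the definition of $\tilde{\alpha}$ forces any composition $\tilde{\alpha}_n \cdots \tilde{\alpha}_1(x)$ to pass through a directed path of length $n$ in $S$, which cannot exist, so the composition is the zero map. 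Combined with the functoriality already established, this gives the claimed restricted equivalence and completes the proof.
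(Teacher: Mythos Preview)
Your approach is essentially the same as the paper's: essential surjectivity from Lemma~\ref{lemma: essentially surjective}, then fully faithful. The paper simply cites \cite[Lemma 3.10]{jun2020quiver} for full faithfulness, whereas you spell out the argument directly (and also supply the nilpotent characterization, which the paper's proof leaves implicit). Your faithfulness and nilpotence arguments are fine.

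One point in the fullness argument deserves to be made explicit rather than merely flagged. You set $\mathcal{D}_\phi = f(\mathcal{U})$, but the definition of morphisms in $\mathcal{C}_Q$ requires $\mathcal{D}_\phi$ to be a \emph{full} subquiver of $S'$; equivalently, $f$ must be surjective onto the full subquiver on $f(\mathcal{U}_0)$, not just onto its naive image. Concretely, given $\gamma \in S'_1$ with $s(\gamma)=f(x)$, $t(\gamma)=f(y)$ and $F'(\gamma)=\alpha$, the representation-morphism square for $\alpha$ gives $\psi(\tilde{\alpha}(x)) = \tilde{\alpha}(f(x)) = f(y)\neq 0$, so $\tilde{\alpha}(x)\in \mathcal{U}_0$ and by injectivity $\tilde{\alpha}(x)=y$; the corresponding arrow $\beta\in S_1$ lies in $\mathcal{U}$ (fullness of $\mathcal{U}$), and then $f(\beta)=\gamma$ by the winding condition on $F'$. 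This is exactly the step you warn is ``the hardest'', and it uses both winding hypotheses and the $\FF_1$-morphism squares; once written out, your proof is complete.
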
 
\begin{proof} 
Lemmas \ref{lemma: essentially surjective} and \ref{lemma: functor} show that $\mathbf{F}$ is an essentially surjective functor, and hence we only have to prove that $\mathbf{F}$ is fully faithful. But this is just Lemma 3.10 of \cite{jun2020quiver}.\footnote{An explicit construction showing that the functor is full is recalled in the proof of Lemma \ref{lemma: lifting}.} 
\end{proof} 

In what follows, we often denote the winding corresponding to an $\FF_1$-representation $M$ of a quiver $Q$ by $c:\Gamma_M \to Q$. We will simply call $\Gamma_M$ \emph{the coefficient quiver} of $M$.

The following example is taken from \cite{Haupt2012euler} and is restated in terms of $\mathbb{F}_1$-representation.

\begin{myeg}\cite[Example 2.5]{Haupt2012euler}
Let $F:S \to Q$ be an object in $\mathcal{C}_Q$ described in the following picture:
\[
F:S= \left( 
\begin{tikzcd}[row sep=2em]
	v_1  \arrow[dr,swap,"\alpha"]
	& v_2 \arrow[d,swap,"\beta"]& v_3 \arrow[dl,swap,"\gamma"] \\
	& v_{3'}\arrow[r,swap,"\gamma'"] & v_{3''}
\end{tikzcd}
\right) \longrightarrow Q=\left( 
\begin{tikzcd}[row sep=2em]
	v_1  \arrow[dr,swap,"\alpha"]
	& v_2 \arrow[d,"\beta"] & \\
	& v_{3'} \arrow[loop right,"\gamma"]
\end{tikzcd}
\right)
\]	
Then, $\mathbbm{1}_S$ is the following representation of $S$ over a field $k$:
\[
\mathbbm{1}_S= \left( 
\begin{tikzcd}[row sep=2em]
	k  \arrow[dr,swap,"\id"]
	& k \arrow[d,swap,"\id"]& k \arrow[dl,swap,"\id"] \\
	& k\arrow[r,swap,"\id"] & k
\end{tikzcd}
\right)
\]
Note that by definition, $F_*(\mathbbm{1}_S)$ is a tree module, given as follows:
\[
F_*(\mathbbm{1}_S)=\left( 
\begin{tikzcd}[row sep=2em]
	k \arrow[dr,swap,"A"]
	& k \arrow[d,"B"] & \\
	& k^3 \arrow[loop right,"C"]
\end{tikzcd}
\right)
\] 
where
\[
A=\begin{bmatrix}
	 0 \\
	 1 \\
	 0
\end{bmatrix}, \quad B=\begin{bmatrix}
0 \\
1\\
0
\end{bmatrix}, \quad C=\begin{bmatrix}
0 & 0 & 0 \\
1 & 0 & 0\\
0 & 1 & 0
\end{bmatrix}
\]
Now, we view $S$ as the coefficient quiver of an $\FF_1$-representation. The $\FF_1$-representation $\mathbb{V}=F_*(S)$ of $Q$ is the following: at each vertex of $Q$, we have
\[
M_{v_1}=\{0, v_1\}, \quad M_{v_2}=\{0, v_2\},\quad  M_{v_3}=\{0,v_3,v_{3'},v_{3''}\}.
\]
$\FF_1$-linear maps between vertices are given as follows:
\[
\tilde{\alpha}:M_{v_1} \to M_{v_3}, \quad v_1 \mapsto v_{3'},
\]
\[
\tilde{\beta}:M_{v_2} \to M_{v_3}, \quad v_2 \mapsto v_{3'},
\]
\[
\tilde{\gamma}: M_{v_3} \to M_{v_3}, \quad v_3 \mapsto v_{3'}, \quad v_{3'} \mapsto v_{3''}, \quad v_{3''} \mapsto 0.
\]
One can easily see that $\mathbb{V}_\mathbb{C} = F_*(\mathbbm{1}_S)$. 

Now, we depict the corresponding coefficient quiver $\Gamma_\mathbb{V}$ for $\mathbb{V}$. First, we consider the following coloring. 
\[
v_1=\mycirc, \quad v_2=\mycirc[red], \quad v_3=v_{3'}=v_{3''}=\mycirc[blue]
\]
We let $\alpha$ in green, $\beta$ in purple, and $\gamma$ in black. Then the coefficient quiver $\Gamma_\mathbb{V}$ is as follows:
\[
\begin{tikzcd}[row sep=2em]
&	\mycirc  \arrow[green, dr,swap]
	& \mycirc[red] \arrow[purple,d,swap] \\
	& \mycirc[blue]\arrow[black,r,swap] & \mycirc[blue]\arrow[black,r,swap] & \mycirc[blue]
\end{tikzcd}
\]
\end{myeg}


\begin{rmk}\label{remar: scalar extension}
Let $f:S \to Q$ be a quiver map (without winding condition) and $k$ be a field. The map $f$ induces a functor (pushforward) 
\begin{equation}
f_*:\Rep(S,k) \to \Rep(Q,k)
\end{equation}
which we briefly recall here. We refer the read to \cite{kinser2010rank,kinser2013tree} for more details. For a representation $V$ of $S$ over $k$, the pushforward $f_*(V)$ is defined as follows:
\begin{equation}\label{eq: pushforward}
f_*(V)v:=\bigoplus_{w \in f^{{-1}}(v)} V_w, \quad v \in Q_0, \qquad f_*(V)_\alpha=\sum_{\beta \in f^{-1}(\alpha)} V_\beta, \quad \beta \in Q_1
\end{equation}
Recall that by the identity representation $\mathbbm{1}_S$ of $S$, we mean a representation of $S$ over $k$ consisting of one dimensional vector space at each vertex and identity map at each arrow. In particular, each quiver map $f:S \to Q$, defines a representation of $Q$, namely $f_*(\mathbbm{1}_S)$. 

Let $\mathcal{D}_Q$ be the category whose objects are quiver maps $f:Q' \to Q$ (without winding condition) and morphisms are same as $\mathcal{C}_Q$. Then we have a faithful inclusion functor $i:\mathcal{C}_Q \to \mathcal{D}_Q$.

For each $f:S \to Q$ in $\mathcal{D}_Q$, we let $\mathbbm{1}(S)=f_*(\mathbbm{1}_S)$. For a morphism $\phi:S \to S'$ in $\mathcal{D}_Q$, where $S$ (resp.~$S'$) means $f:S \to Q$ (resp.~$f':S' \to Q$). In fact, the same definition as in \eqref{eq: 1} and \eqref{eq: 2} can be used to define a functor:
\[
\mathbbm{1}:\mathcal{D}_Q \to \Rep(Q,k), \quad (f:S\to Q) \mapsto f_*(\mathbbm{1}_S). 
\]
From the definition, one can easily check that the functor $\mathbbm{1}$ is faithful. To summarize, we have the following commutative diagram of categories. 

\begin{equation}\label{eq: scalar extension}
	\begin{tikzcd}[row sep=large, column sep=1.5cm]
		\mathcal{C}_Q\arrow{r}{\mathbf{F}}\arrow[swap,hook]{d}{i}
		& \Rep(Q,\FF_1) \arrow[hook]{d}{\otimes_{\FF_1}k} \\
		\mathcal{D}_Q \arrow[hook,swap]{r}{\mathbbm{1}}
		& \Rep(Q,k)
	\end{tikzcd}
\end{equation}  
\end{rmk}

\begin{mydef}\label{d: trees and bands} 
Let $Q$ be a quiver. An $\FF_1$-representation $M$ is called a \emph{tree module} if $\Gamma_M$ is a tree. This is equivalent to $M\otimes_{\FF_1}{k} \cong f_*(\mathbbm{1}_S)$ with $f : S\rightarrow Q$ a winding and $S$ a tree. $M$ will be called an \emph{$\FF_1$-band module} if the coefficient quiver of $M$ is an affine Dynkin quiver of type $\tilde{\mathbb{A}}$. 
\end{mydef} 

\begin{rmk} 
Let $M$ be an $\FF_1$-band. Then $\Gamma_M$ is connected, so $M$ is always indecomposable as an $\FF_1$-representation. However, since we do not require that the associated winding map $c_M : \Gamma_M \rightarrow Q$ is primitive (see Definition \ref{definition: primitive}), $M\otimes_{\FF_1}k$ may be decomposable. If $M$ is an $\FF_1$-band module and $c_M$ is primitive then $M\otimes_{\FF_1}k$ is a band module in the usual sense, but the converse does not generally hold.
\end{rmk}


\section{Gradings on representations}\label{s: gradings}

In \cite{irelli2011quiver}, Cerulli Irelli proved that when a quiver representation over $\mathbb{C}$ satisfies certain conditions\footnote{To be precise, Cerulli Irelli considered the coefficient quiver of a representation in a fixed basis.}, then one can compute the Euler characteristics of quiver Grassmannians for some special classes of quivers in a purely combinatorial way. Later, in \cite{Haupt2012euler}, Haupt generalized Cerulli Irelli's results by introducing the notion of a \emph{grading} on a representation of $Q$. The following appear as Definitions 4.1 and 4.2 of \cite{Haupt2012euler}.

\begin{mydef}[Haupt]\label{definition: gradings}
Let $M$ be an $\FF_1$-representation of $Q$, and let $(\Gamma,c) := (\Gamma_M,c_M)$ denote the associated coefficient quiver. By a \emph{grading} of $M$, we mean a map $\partial : \Gamma_0 \rightarrow \ZZ$. Suppose that $\partial_1,\ldots, \partial_n$ and $\partial$ are gradings for $M$. Suppose further that for any two arrows $\beta, \beta' \in \Gamma_1$, the equalities 
\begin{equation}\label{eq: con1}
\partial_i(s(\beta)) = \partial_i(s(\beta')),\text{ $i = 1,\ldots , n$}
\end{equation}
\begin{equation}\label{eq: con2}
\partial_i(t(\beta)) = \partial_i(t(\beta')), \text{ $i = 1,\ldots, n$}
\end{equation}
\begin{equation}\label{eq: con3}
c(\beta) = c(\beta')
\end{equation}
imply 
\begin{equation}\label{eq.nice}
\partial(t(\beta)) - \partial(s(\beta)) = \partial(t(\beta')) - \partial(s(\beta')). 
\end{equation}
Then we say that $\partial$ is a \emph{nice $(\partial_1,\ldots,\partial_n)$-grading}. A \emph{nice $\emptyset$-grading} (or \emph{nice grading} for short) is a grading such that $c(\beta) = c(\beta')$ implies \eqref{eq.nice}.
\end{mydef}

\begin{rmk} 
Haupt's original definition applies to representations over fields, where it is necessary to first specify a basis for $M$. Since representations over $\mathbb{F}_1$ have a unique basis, the definition above is unambiguous. In other words, a nice grading for an $\mathbb{F}_1$-representation $M$ is the same thing as a nice grading for $M_{\mathbb{C}}$ with respect to the basis $M\setminus \{0\}$, and so on. 
\end{rmk}

\begin{rmk} 
An $\FF_1$-representation $M$ is completely determined by its associated winding map $c_M$. Hence, we will use the terms ``nice grading for $M$'' and ``nice grading for $c_M : \Gamma_M \rightarrow Q$'' interchangeably. From Proposition \ref{proposition: equivalence of categories}, if $c : \Gamma\rightarrow Q$ is a winding then there is a unique (up to isomorphism) $\FF_1$-representation of $Q$, call it $M$, such that $(c_M, \Gamma_M) = (c,\Gamma)$. Hence, we can also discuss a nice grading for a general winding, without explicit reference to its associated representation. 
\end{rmk}

\begin{mydef} 
Let $F : S \rightarrow Q$ be a winding, and $\partial : S_0 \rightarrow \ZZ$ a nice grading. If $\beta \in S_1$ with $F(\beta) = \alpha$, define $\Delta_{\alpha}^{\partial} = \partial (t(\beta)) - \partial (s(\beta))$.\footnote{$\Delta_{\alpha}^{\partial}$ is well-defined since $\partial$ is a nice grading.} If $\partial$ is understood from context, we will abbreviate $\Delta_{\alpha}^{\partial} = \Delta_{\alpha}$. We say that $\partial$ is 
\begin{enumerate} 
\item \emph{non-trivial} if $\Delta_{\alpha} \neq 0$ for some $\alpha \in Q_1$,
\item \emph{non-degenerate} if $\Delta_{\alpha} \neq 0$ for \emph{all} $\alpha \in Q_1$, and  
\item \emph{positive} (resp. \emph{negative}) if $\Delta_{\alpha} > 0$ (resp. $\Delta_{\alpha} < 0$) for all $\alpha \in Q_1$. 
\end{enumerate} 
More generally, suppose that $\partial$ is a $(\partial_1,\ldots , \partial_n)$-nice grading. If $\beta \in S_1$ with 
\begin{equation} 
F_1(\beta) = \alpha
\end{equation} 
\begin{equation} 
(\partial_1(s(\beta)),\ldots , \partial_n(s(\beta))) = \textbf{s} \in \mathbb{Z}^n
\end{equation} 
\begin{equation} 
(\partial_1(t(\beta)),\ldots , \partial_n(t(\beta))) = \textbf{t} \in \mathbb{Z}^n,
\end{equation} 
then we define  
\begin{equation}
\Delta_{\alpha,\textbf{s},\textbf{t}}^{\partial} := \partial(t(\beta)) - \partial(s(\beta)). 
\end{equation} 
As before, when $\partial$ is understood we abbreviate $\Delta_{\alpha,\textbf{s},\textbf{t}}^{\partial} = \Delta_{\alpha,\textbf{s},\textbf{t}}$. The notions of non-trivial~/~non-degenerate~/~positive~/~negative $(\partial_1,\ldots ,\partial_n)$-nice gradings are defined in the obvious manner. 
\end{mydef}         

As in \cite{Haupt2012euler}, we will be interested in building sequences of nice gradings which have certain desirable properties. The definition below helps us formalize this process.

\begin{mydef} 
Let $c : \Gamma \rightarrow Q$ be a winding. A \emph{nice sequence} for $c$ is a sequence $\underline{\partial} = ( \partial_i)_{i=0}^{\infty}$ of maps $\Gamma_0 \rightarrow \mathbb{Z}$ such that 
\begin{enumerate} 
\item $\partial_0$ is a nice grading.
\item For all all $i>0$, $\partial_i$ is a $(\partial_0,\ldots , \partial_{i-1})$-nice grading.
\end{enumerate} 
Note that any finite sequence $\{\partial_i\}_{i=1}^n$ satisfying the conditions above can be extended to a nice sequence $\underline{\partial} = \{ \partial_i\}_{i=1}^{\infty}$ by defining $\partial_i = 0$ for $i>n$ (such a completion is not unique). 
\end{mydef}

\begin{mydef}\label{def: distinguish}
Let $c : \Gamma \rightarrow Q$ be a winding, and let $(\partial_i)$ be a nice sequence for $c$. If $x$ and $y$ are distinct vertices of $\Gamma$, we say that $(\partial_i)$ \emph{distinguishes $x$ and $y$} if there exists an index $i \in \mathbb{N}$ (depending on $x$ and $y$) such that $\partial_i(x) \neq \partial_i(y)$. We say that $(\partial_i)$ \emph{distinguishes vertices} if it distinguishes each pair of distinct vertices in $\Gamma$. Of course, if $(\partial_i)$ distinguishes vertices and $\Gamma$ has finitely-many vertices, then there exists an $N \in \mathbb{N}$ such that for all distinct $x$ and $y$, there exists an $i \le N$ satisfying $\partial_i(x) \neq \partial_i(y)$. 
\end{mydef}

\begin{myeg}\label{ex: string}
Let $Q = \wild_2$ with arrow set $Q_1 = \{\alpha_1,\alpha_2\}$. Let $M$ be the $\FF_1$-representation whose coefficient quiver is the following: 
\begin{equation}
\Gamma_M = 
\begin{tikzcd} 
	\bullet \arrow[r,blue,"\alpha_1"] & \bullet \arrow[r,red,"\alpha_2"] & \bullet & \bullet \arrow[l,blue,"\alpha_1",swap] & \bullet \arrow[l,red,"\alpha_2",swap] 
\end{tikzcd}
\end{equation}
where the winding $c:\Gamma_M \to Q$ sends the blue arrows (resp.~red arrows) to $\alpha_1$ (resp.~$\alpha_2$). 
\begin{enumerate}
	\item 
One can easily check that the following is a nice grading $\partial_0$ on $
\Gamma_M$:  
\begin{equation}\label{eq: e1}
\begin{tikzcd} 
	0 \arrow[r,blue,"+1"] & 1 \arrow[r,red,"+2"] & 3 & 2 \arrow[l,blue,"+1",swap] & 0 \arrow[l,red,"+2",swap] 
\end{tikzcd}
\end{equation}
where numbers on the arrows are $\partial_0(t(\beta))-\partial_0(s(\beta))$ for each arrow $\beta$ of $\Gamma_M$. 
\item 
The following is a $\partial_0$-nice grading $\partial_1$ on $M$ that is not nice: 
\begin{equation}\label{eq: e2}
\begin{tikzcd} 
	0 \arrow[r,blue,"+1"] & 1 \arrow[r,red,"+1"] & 2 & 3 \arrow[l,blue,"-1",swap] & 4 \arrow[l,red,"-1",swap] 
\end{tikzcd}
\end{equation}
Note that with the grading $\partial_0$ as in \eqref{eq: e1}, we only have to consider condition \eqref{eq: con3} when assigning images to the vertices. When building a $\partial_0$-nice grading as in \eqref{eq: e2}, no two arrows satisfy conditions \eqref{eq: con1}-\eqref{eq: con3} simultaneously, so that any integer function on the vertices is permissible. Note that the nice sequence $\underline{\partial} = (\partial_0, \partial_1,0,0,\ldots )$ distinguishes vertices.
\end{enumerate}
\end{myeg}

\begin{myeg}\label{ex: string2} 
Let $Q = \wild_3$ with arrow set $Q_1 = \{\alpha_1, \alpha_2, \alpha_3\}$. In the free group generated by the arrows of $Q$, set $p$ to be the element  
\[
p = \alpha_1[\alpha_2,\alpha_3]\alpha_1^{-1}[\alpha_3,\alpha_2]\alpha_1[\alpha_3,\alpha_2]\alpha_1^{-1}[\alpha_2,\alpha_3]. 
\] 
Here, $[\gamma,\delta] = \gamma \delta\gamma^{-1}\delta^{-1}$ is the usual commutator. We may consider $p$ to be a walk in $Q$, defining an $\mathbb{F}_1$-representation $M$ whose coefficient quiver is: 
\begin{equation}
\Gamma_M = 
\begin{tikzcd}  
 & \bullet \arrow[d,blue,"\alpha_1",swap] & & & & \\
	& \bullet \arrow[r,red,"\alpha_2"] & \bullet \arrow[r,green,"\alpha_3"] & \bullet & \bullet \arrow[l,red,"\alpha_2",swap] & \bullet \arrow[l,green,"\alpha_3",swap]  \\ 
  & \arrow[d,blue,"\alpha_1",swap] \bullet  \arrow[r,red,"\alpha_2"] & \bullet \arrow[r,green,"\alpha_3"] & \bullet  & \arrow[l,red,"\alpha_2",swap] \bullet & \arrow[u,blue,"\alpha_1",swap] \arrow[l,green,"\alpha_3",swap] \bullet  \\ 
 & \bullet \arrow[r,green,"\alpha_3"]   &  \bullet \arrow[r,red,"\alpha_2"] &  \bullet  & \arrow[l,green,"\alpha_3",swap] \bullet & \arrow[l,red,"\alpha_2",swap] \bullet \\ 
& \bullet \arrow[r,green,"\alpha_3"] & \bullet \arrow[r,red,"\alpha_2"] & \bullet & \arrow[l,green,"\alpha_3",swap] \bullet  & \arrow[l,red,"\alpha_2",swap] \arrow[u,blue,"\alpha_1",swap] \bullet \\
\end{tikzcd}
\end{equation} 
Using the notational conventions of the previous example, we can define a nice grading $\partial_0$ on $M$ as follows:  
\begin{equation} 
\begin{tikzcd}  
 & 0 \arrow[d,blue,"+1",swap] & & & & \\
	& 1 \arrow[r,red,"+10"] & 11 \arrow[r,green,"+100"] & 111 & 101 \arrow[l,red,"+10",swap] & 1 \arrow[l,green,"+100",swap]  \\ 
  & \arrow[d,blue,"+1",swap] 0  \arrow[r,red,"+10"] & 10 \arrow[r,green,"+100"] & 110  & \arrow[l,red,"+10",swap] 100 & \arrow[u,blue,"+1",swap] \arrow[l,green,"+100",swap] 0  \\ 
 & 1 \arrow[r,green,"+100"]   &  101 \arrow[r,red,"+10"] &  111  & \arrow[l,green,"+100",swap] 11 & \arrow[l,red,"+10",swap] 1 \\ 
& 0 \arrow[r,green,"+100"] & 100 \arrow[r,red,"+10"] & 110 & \arrow[l,green,"+100",swap] 10  & \arrow[l,red,"+10",swap] \arrow[u,blue,"+1",swap] 0 \\
\end{tikzcd}
\end{equation} 
We now define a $\partial_0$-nice grading which is not itself nice. Informally, the conditions \eqref{eq: con1}-\eqref{eq: con3} on a $\partial_0$-nice grading $\partial_1$ state that whenever two arrows with the same color start at the same number and end at the same number, their increments from the source to the target must be equal. Clearly, all four of the $\alpha_1$-colored arrows require the same increment. However, one can check that each remaining arrow is only required to share an increment with one other arrow. For instance, the following defines a $\partial_0$-nice grading $\partial_1$:  
\begin{equation} 
\begin{tikzcd}  
 & 0 \arrow[d,blue,"+1",swap] & & & & \\
	& 1 \arrow[r,red,"+1"] & 2 \arrow[r,green,"+1"] & 3 & 4 \arrow[l,red,"-1",swap] & 104 \arrow[l,green,"-100",swap]  \\ 
  & \arrow[d,blue,"+1",swap] 108  \arrow[r,red,"-1"] & 107 \arrow[r,green,"-1"] & 106  & \arrow[l,red,"+1",swap] 105 & \arrow[u,blue,"+1",swap] \arrow[l,green,"+2",swap] 103  \\ 
 & 109 \arrow[r,green,"-100"]   &  9 \arrow[r,red,"-1"] &  8  & \arrow[l,green,"+1",swap] 7 & \arrow[l,red,"+1",swap] 6 \\ 
& 0 \arrow[r,green,"+2"] & 2 \arrow[r,red,"+1"] & 3 & \arrow[l,green,"-1",swap] 4  & \arrow[l,red,"-1",swap] \arrow[u,blue,"+1",swap] 5 \\
\end{tikzcd}
\end{equation} 
The function $\partial_1$ still fails to distinguish several pairs of vertices, for instance the first and last vertices of the walk. However, note that any function now qualifies as a $(\partial_0,\partial_1)$-nice grading. For instance, there are exactly two arrows that start at $2$ and end at $3$, but one is $\alpha_2$-colored and the other is $\alpha_3$-colored: it follows that they violate Condition \eqref{eq: con3}, so their increments can be unequal. In particular, we may choose $\partial_2$ to be an injective integer-valued function on the vertices of $\Gamma_M$. Then $\partial_2$ will distinguish each pair of vertices of $\Gamma_M$, and so the nice sequence $(\partial_0, \partial_1,\partial_2,0,0,0\ldots )$ distinguishes vertices. Note that since $\partial_0$ and $\partial_1$ distinguish some pairs of vertices, there are non-injective choices for $\partial_2$ which still produce a nice sequence distinguishing vertices. However, we will show in Example \ref{example: nice length two} that no choice of $\partial_0$ and $\partial_1$ is enough to distinguish all vertices of $\Gamma_M$: we say that the \emph{nice length of $M$} is equal to $2$. 
\end{myeg}

\begin{rmk}\label{rmk: weave} 
Let $M$ be an $\FF_1$-representation of $Q$ and $F= (\delta_i)_{i\geq 0} $ a nice sequence for $M$ which is \emph{finite}, in the sense that there exists an $n \in \mathbb{N}$ such that $\delta_i = 0$ for all $i>n$. Let $( \partial_i)_{i \geq 0}$ be any other nice sequence for $M$. Then we can ``weave'' $(\delta_i)$ and $(\partial_i)$ together to create a new nice sequence $(\partial_i')_{i\geq 0}$ as follows:
\begin{enumerate} 
\item $\partial_i' = \delta_i$ for $0 \le i \le n$; 
\item $\partial_{i}' = \partial_{i-n-1}$ for $i > n$.
\end{enumerate} 
This is due to the following elementary observation: for \emph{any} sequence of gradings $\{ \gamma_1,\ldots, \gamma_n\}$ on $M$ and any subset $S \subseteq \{ \gamma_1,\ldots , \gamma_n\}$ (possibly empty), an $S$-nice grading (defined in the obvious way) is also $(\gamma_1,\ldots , \gamma_n)$-nice. 
\end{rmk}

In the following, we construct \emph{the universal $i$-nice gradings} of a winding $c:\Gamma \to Q$ under which one obtains all nice sequences (Theorem \ref{t: universal property}). 

\begin{construction}(Universal Nice Grading)\label{con: universal1} 
Let $c : \Gamma \rightarrow Q$ be an indecomposable winding (i.e. $\Gamma$ is connected), with $M$ the associated $\FF_1$-representation. Then we have a sequence of maps 
\begin{equation}\label{eq: construction homology}
H_1(\Gamma,\mathbb{Z}) \xrightarrow[]{H_1(c)} H_1(Q,\mathbb{Z}) \xrightarrow[]{\iota} \mathbb{Z}Q_1, 
\end{equation}
where $\iota$ is the inclusion map\footnote{The underlying graph of $Q$ is a $1$-simplex with associated chain complex $0 \rightarrow \mathbb{Z}Q_1 \xrightarrow[]{\delta} \mathbb{Z}Q_0 \rightarrow 0$, and $H_1(Q,\mathbb{Z}) = \operatorname{ker}(\delta)$. Then $\iota$ is the inclusion $\operatorname{ker}(\delta) \subseteq \mathbb{Z}Q_1$.}. Set $\mathcal{V}_M := G/t(G)$, where $G:= \mathbb{Z}Q_1/\operatorname{Im}(\iota \circ H_1(c)) = \operatorname{coker}(\iota \circ H_1(c))$ and $t(G)$ is the torsion subgroup of $G$. Since $G$ is a finitely-generated abelian group, $G/t(G)$ is a direct summand of $G$: by abuse of notation, we will use the coset notation of $G$ to denote elements of $\mathcal{V}_M$. Loosely, one can think of the elements of $\mathcal{V}_M$ as formal $\mathbb{Z}$-linear combinations of arrows of $Q$ subject to certain linear equations.

Fix a vertex $b \in \Gamma_0$ which we call the \emph{basepoint}. To each vertex $v \in \Gamma_0$ we assign an element $X(M)_v \in \mathcal{V}_M$ as follows: 
\begin{enumerate} 
\item 
If $v = b$ then $X(M)_v = 0$;
\item 
If $v \neq b$, pick a walk $p = \alpha_1^{\epsilon_1}\cdots \alpha_d^{\epsilon_d}$ from $b$ to $v$ for $\epsilon_j \in \{1,-1\}$. This is possible because $M$ is indecomposable. Then set 
\[ 
X(M)_v = \sum_{i}{\epsilon_ic(\alpha_i)} + \operatorname{Im}(\iota \circ H_1(c)). 
\]
\end{enumerate} 
Note that $X(M)_v$ does not depend on the choice of walk. The function 
\[ 
X(M) : \Gamma_0 \rightarrow \mathcal{V}_M 
\]
\[
v \mapsto X(M)_v
\] 
will be called the \emph{universal nice grading on $M$}. The image $X(M)_v$ of $v$ is the \emph{nice variable associated to $v$}. When there is no chance of confusion, we will denote $X(M)_v$ simply as $X_v$. 
\end{construction}  

\begin{construction}(Iterative Step)\label{con: universal2}
Let $c : \Gamma \rightarrow Q$ be an indecomposable winding, with $M$ the associated $\FF_1$-representation. Let $b \in \Gamma_0$ be a basepoint, and $X = X(M)$ the corresponding universal nice grading on $M$. We define a new quiver $Q^+ = Q^+(M)$ as follows. The vertices of $Q^{+}$ are 
\[ 
Q^{+}_0 = \{ (X_v,c(v)) \mid v \in \Gamma_0\}.
\]  
The arrows of $Q^{+}$ are 
\[ 
Q^{+}_1 = \{ (X_{s(\alpha)}, X_{t(\alpha)}, c(\alpha)) \mid \alpha \in \Gamma_1 \}.
\] 
To ease notation we define $\alpha^{+} :=  (X_{s(\alpha)}, X_{t(\alpha)}, c(\alpha))$. Then the source and target of $\alpha^{+}$ are as follows: 
\[ 
s(\alpha^{+}) = (X_{s(\alpha)},c(s(\alpha)))
\] 
\[ 
t(\alpha^{+}) =  (X_{t(\alpha)},c(t(\alpha))).
\] 
Note that $Q^{+}$ is connected. We have quiver maps  
\[ 
\Gamma \xrightarrow[]{c^+} Q^+ \xrightarrow[]{c^-} Q
\] 
defined as follows: $c^+$ is the unique quiver morphism which satisfies $c^+(\alpha) = \alpha^{+}$ for all $\alpha \in \Gamma_1$. Since $c^+(\alpha) = c^+(\beta)$ implies $c(\alpha) = c(\beta)$, it is clear that $c^+$ is a winding. Also note that $c^+$ is surjective on arrows, so that any arrow in $Q^{+}$ can be written as $\alpha^{+}$ for some $\alpha \in \Gamma_1$. Then $c^-$ is defined to be the unique quiver morphism satisfying $c^-(\alpha^{+}) = c(\alpha)$ for all $\alpha \in \Gamma_1$. Note that $c = c^-c^+$.

It turns out that $c^-$ is also a winding, which we can see as follows. By the surjectivity of $c^+$ on arrows, it suffices to show that whenever $\alpha, \beta \in \Gamma_1$ satisfy $c(\alpha) = c(\beta)$ and $\alpha^{+} \neq \beta^{+}$, they also satisfy $s(\alpha^{+}) \neq s(\beta^{+})$ and $t(\alpha^{+}) \neq t(\beta^{+})$. If $s(\alpha^{+}) = s(\beta^{+})$, then  
\begin{align*}
t(\alpha^{+}) & = (X_{t(\alpha)}, c(t(\alpha)) ) \\ 
& = (X_{s(\alpha)} + c(\alpha), c(t(\alpha))) \\ 
& =  (X_{s(\beta)} + c(\beta), c(t(\beta))) \\ 
& = t(\beta^{+}).
\end{align*} 
In turn, this implies $\alpha^{+} = (X_{s(\alpha)}, X_{t(\alpha)}, c(\alpha)) = (X_{s(\beta)}, X_{t(\beta)}, c(\beta)) = \beta^{+}$, contrary to hypothesis. The possibility $t(\alpha^{+}) = t(\beta^{+})$ can be ruled out in a similar fashion. 
\end{construction} 

\begin{construction}(Universal $i$-Nice Grading)\label{con: universal} 
Let $w : \Gamma \rightarrow Q$ be an indecomposable winding, with $R$ the corresponding $\mathbb{F}_1$-representation of $Q$.\footnote{The change of notation is purely cosmetic, to allow us to think of $c$ and $M$ in Constructions \ref{con: universal1} and \ref{con: universal2} as ``variables'' into which we can plug other windings/representations.} Let $b \in \Gamma_0$ be a basepoint, with $X = X(R)$ the universal nice grading of $R$ as in Construction \ref{con: universal1}. Then Construction \ref{con: universal2} yields a new indecomposable winding $w^+: \Gamma \rightarrow Q^+$, whose universal nice grading can be constructed with the same basepoint $b$. Iterating this process indefinitely yields a sequence of maps that will play a fundamental role in proving the existence (or non-existence) of nice sequences distinguishing the vertices of a given $\mathbb{F}_1$-representation. More precisely, consider the following algorithm: 
\begin{enumerate} 
\item Set $\sigma^{(0)} = w$, $M^{(0)} = R$, $Q^{(0)} = Q$ and $\tau^{(0)} = \operatorname{id}_Q$ (the identity quiver morphism of $Q$).
\item Suppose that for a fixed $i \in \mathbb{N}$, an indecomposable winding $\sigma^{(i)}: \Gamma \rightarrow Q^{(i)}$ with corresponding $\mathbb{F}_1$-representation $M^{(i)}$ has been defined. Apply Construction \ref{con: universal1} with $c = \sigma^{(i)}$ and $M = M^{(i)}$ to define  
\[ 
\mathcal{V}^{(i)}_R := \mathcal{V}_{M^{(i)}} 
\]   
\[ 
X(R)^{(i)}:=X(M^{(i)}).
\] 
\item Use Construction \ref{con: universal2} with $c = \sigma^{(i)}$ and $M = M^{(i)}$ to define 
\[ 
Q^{(i+1)} := Q^{(i)+} 
\] 
\[ 
\sigma^{(i+1)}:=\sigma^{(i)+}
\] 
\[ 
\tau^{(i+1)} :=\sigma^{(i)-}.
\]  
Furthermore, define $M^{(i+1)}$ to be the indecomposable $\mathbb{F}_1$-representation corresponding to $\sigma^{(i+1)}$.  
\item Replace $i$ with $i+1$ and go back to Step 2. 
\end{enumerate}  

When no confusion will arise, we abbreviate $X(R)^{(i)}$ to $X^{(i)}$. The image of $v \in \Gamma_0$ under $X^{(i)}$ will be denoted $X^{(i)}_v$. The function 
\[ 
X^{(i)} : \Gamma_0 \rightarrow \mathcal{V}^{(i)}_R
\] 
is called the \emph{universal $i$-nice grading of $R$}. Note that the universal $0$-nice grading of $R$ is simply the universal nice grading of $R$. To summarize, this algorithm takes the data $(w,R,b)$ and constructs each of the following (in no particular order): 
\begin{enumerate} 
\item A sequence of finitely-generated torsion free abelian groups: $\mathcal{V}^{(0)}_R, \mathcal{V}^{(1)}_R, \mathcal{V}^{(2)}_R,\ldots$. 
\item A function $X^{(i)}: \Gamma_0 \rightarrow \mathcal{V}^{(i)}_R$ for each $i \in \mathbb{N}$, called the universal $i$-nice grading of $R$. 
\item A sequence of connected quivers $Q = Q^{(0)}, Q^{(1)}, Q^{(2)}, \ldots$. We also define $Q^{(-1)} := Q$ for what follows below.
\item An indecomposable winding $\sigma^{(i)}: \Gamma \rightarrow Q^{(i)}$ for each $i \in \mathbb{N}$, with $\sigma^{(0)} = w$. 
\item A winding $\tau^{(i)}: Q^{(i)} \rightarrow Q^{(i-1)}$ for each $i \in \mathbb{N}$, satisfying $\sigma^{(i)} = \tau^{(i+1)}\sigma^{(i+1)}$ for all $i$.
\end{enumerate}
\end{construction}

\begin{rmk} 
The quiver $Q^+$ in Construction \ref{con: universal2} is adapted directly from the quiver $Q'$ defined in Proposition 6.1 of \cite{Haupt2012euler}. The key difference is that $Q'$ depends on a specific nice grading, whereas $Q^+$ only depends on the universal nice grading defined in Construction \ref{con: universal1}. This means that $Q^+$ is a general enough object to study the existence of nice gradings for the associated representation, as we shall see in Theorem \ref{t: universal property}.
\end{rmk}

\begin{rmk}\label{rmk: universal}
Use the notation of Construction \ref{con: universal}. We have now recursively defined $X^{(i)}$, $Q^{(i)}$, $\sigma^{(i)} : \Gamma \rightarrow Q^{(i)}$ and $\tau^{(i)}: Q^{(i)} \rightarrow Q^{(i-1)}$ for all $i \geq 0$, assuming that we define $Q^{(-1)}:=Q$. For all $v \in \Gamma_0$, $\alpha \in \Gamma_1$ and $i \in \mathbb{N}$ define $v^{(i)} = \sigma^{(i)}(v)$ and $\alpha^{(i)} = \sigma^{(i)}(\alpha)$. Then note that for $i \geq 1$ and $\alpha \in \Gamma_1$, $\tau^{(i)} : Q^{(i)} \rightarrow Q^{(i-1)}$ satisfies $\tau^{(i)}(\alpha^{(i)}) = \alpha^{(i-1)}$. Then we have the following elementary properties: 
\begin{enumerate} 
\item $\sigma^{(i)} = \tau^{(i+1)}\sigma^{(i+1)}$, for all $i \geq 0$. 
\item $c = \tau^{(1)}\cdots \tau^{(i)}\sigma^{(i)}$ for all $i \geq 1$.   
\item The map $H_1(\tau^{(i)}) : H_1(Q^{(i)},\mathbb{Z}) \rightarrow H_1(Q^{(i-1)},\mathbb{Z})$ induces a map $\mathcal{V}^{(i)}_M \rightarrow \mathcal{V}^{(i-1)}_M$ (still denoted $H_1(\tau^{(i)})$) that satisfies $H_1(\tau^{(i)})(X^{(i)}_v) = X^{(i-1)}_v$ for all $v \in \Gamma_0$ and $i \geq 1$. 
\item If $\alpha, \beta \in \Gamma_1$ satisfy $\alpha^{(k)} = \beta^{(k)}$ for some $k \geq 1$, then  
\[ 
c(\alpha) = c(\beta)  
\]
\[
 X^{(k-1)}_{s(\alpha)} = X^{(k-1)}_{s(\beta)}
\] 
\[
 X^{(k-1)}_{t(\alpha)} = X^{(k-1)}_{t(\beta)} 
\] 
by the definition of $Q^{(k)}$. Applying the maps $H_1(\tau^{(i)})$ for $0 \le i < k$ implies that  
\[ 
X^{(i)}_{s(\alpha)} = X^{(i)}_{s(\beta)}
\] 
\[
 X^{(i)}_{t(\alpha)} = X^{(i)}_{t(\beta)}
\] 
for all $0 \le i < k$. 
\end{enumerate} 
\end{rmk}  

\begin{myeg}\label{ex: nice1} 
Let us compute the $i$-nice variables for Example \ref{ex: string}. To begin, let us label the vertices and arrows of $\Gamma:= \Gamma_M$ as follows: 
\[ 
\Gamma = 
\begin{tikzcd} 
	v_1 \arrow[r,blue,"\beta_1"] & v_2 \arrow[r,red,"\beta_2"] & v_3 & v_4 \arrow[l,blue,"\gamma_1",swap] & v_5 \arrow[l,red,"\gamma_2",swap]
\end{tikzcd},
\]
where the coloring map $c : \Gamma \rightarrow \wild_2$ is understood to satisfy $c(\beta_i) = c(\gamma_i) = \alpha_i$ for all $i = 1, 2$.  We will choose $v_1$ as a basepoint throughout. Since $\Gamma$ is a tree, $H_1(\Gamma,\mathbb{Z}) = 0$ and $\mathcal{V}^{(0)}_M \cong \mathbb{Z}\alpha_1 \oplus \mathbb{Z}\alpha_2$. Hence, $X^{(0)}$ is the function 
\[ 
X^{(0)} : \Gamma_0 \rightarrow \mathcal{V}^{(0)}_M
\] 
\[ 
\left( \begin{array}{c} v_1 \\ v_2 \\ v_3 \\ v_4 \\ v_5 \end{array} \right) \mapsto \left( \begin{array}{c} 0 \\ \alpha_1 \\ \alpha_1+\alpha_2 \\ \alpha_2 \\ 0 \end{array} \right).
\] 
Then the arrows of $Q^{(1)}$ are described as follows: 
\[ 
\beta_1^{(1)} = (0,\alpha_1, \alpha_1) 
\] 
\[
\beta_2^{(1)} = (\alpha_1, \alpha_1+\alpha_2,\alpha_2 )
\] 
\[ 
\gamma_1^{(1)} = (\alpha_2, \alpha_1+\alpha_2, \alpha_1)
\] 
\[ 
\gamma_2^{(1)} = (0, \alpha_2, \alpha_2).
\] 
Note that $\sigma^{(1)} : \Gamma \rightarrow Q^{(1)}$ is injective on arrows, so $Q^{(1)}$ is the quiver 
\[ 
Q^{(1)} = 
\begin{tikzcd} 
& v_2^{(1)} \arrow[dr,"\beta_2^{(1)}"] & & \\  
 v_1^{(1)} \arrow[ur,"\beta_1^{(1)}"] \arrow[dr,"\gamma_2^{(1)}"] & & v_3^{(1)}  &\\ 
&  v_4^{(1)} \arrow[ur,"\gamma_1^{(1)}"]  & & \\
\end{tikzcd}
\] 
It follows that $\mathcal{V}^{(1)}_M = \mathbb{Z}\beta_1^{(1)} \oplus  \mathbb{Z}\beta_2^{(1)} \oplus  \mathbb{Z}\gamma_1^{(1)} \oplus  \mathbb{Z}\gamma_2^{(1)} \cong \mathbb{Z}^4$, and $X^{(1)}$ is the function 
\[ 
X^{(1)} : \Gamma_0 \rightarrow \mathcal{V}^{(1)}_M
\] 
\[ 
\left(\begin{array}{c} v_1 \\ v_2 \\ v_3 \\ v_4 \\ v_5 \end{array}\right) \mapsto \left(\begin{array}{c} 0 \\ \beta_1^{(1)} \\ \beta_1^{(1)}+\beta_2^{(1)} \\ \beta_1^{(1)}+\beta_2^{(1)} - \gamma_1^{(1)}\\ \beta_1^{(1)}+\beta_2^{(1)} - \gamma_1^{(1)} - \gamma_2^{(1)} \end{array}\right).
\] 
A similar description holds for all $i \geq 2$. For all such $i$ we have $\mathcal{V}^{(i)}_M =  \mathbb{Z}\beta_1^{(i)} \oplus  \mathbb{Z}\beta_2^{(i)} \oplus  \mathbb{Z}\gamma_1^{(i)} \oplus  \mathbb{Z}\gamma_2^{(i)} \cong \mathbb{Z}^4$, and $X^{(i)}$ is the function  
\[ 
X^{(i)} : \Gamma_0 \rightarrow \mathcal{V}^{(i)}_M
\] 
\[ 
\left(\begin{array}{c} v_1 \\ v_2 \\ v_3 \\ v_4 \\ v_5 \end{array}\right) \mapsto \left(\begin{array}{c} 0 \\ \beta_1^{(i)} \\ \beta_1^{(i)}+\beta_2^{(i)} \\ \beta_1^{(i)}+\beta_2^{(i)} - \gamma_1^{(i)}\\ \beta_1^{(i)}+\beta_2^{(i)} - \gamma_1^{(i)} - \gamma_2^{(i)} \end{array}\right).
\]  
It will turn out that such behavior is typical whenever $\Gamma$ is a tree.
\end{myeg} 

\begin{myeg} 
Let $Q = \wild_2$, and let $M$ be the representation with the following coefficient quiver 
\[
\Gamma =  
\begin{tikzcd}  
 & v_3 \arrow[dl,blue,swap,"\gamma_1"]\arrow[dr,red,"\beta_2"]  & \\ 
v_4  & & v_2   \\ 
 & v_1 \arrow[ur,blue,"\beta_1"] \arrow[ul,red,"\gamma_2"] &  
\end{tikzcd}
\] 
Here, the winding $c : \Gamma \rightarrow \wild_2$ is understood to satisfy $c(\beta_i) = c(\gamma_i) = \alpha_i$ for $i=1,2$. We choose $v_1$ to be the basepoint throughout. Note that $\operatorname{Im}(\iota \circ H_1(c)) = \mathbb{Z}(2\alpha_1-2\alpha_2)$, and so $\mathcal{V}^{(0)}_M$ is the torsion-free quotient of $\frac{\mathbb{Z}\alpha_1\oplus \mathbb{Z}\alpha_2}{\langle 2(\alpha_1-\alpha_2) \rangle} \cong \mathbb{Z} \oplus \mathbb{Z}_2$. Thus $\mathcal{V}^{(0)}_M \cong \mathbb{Z}$, and we will denote the generator corresponding to the coset of $\alpha_1$ by $\alpha$. The $0$-nice variables are then described via the function 
\[ 
X^{(0)} : \Gamma_0 \rightarrow \mathcal{V}^{(0)}_M
\] 
\[ 
\left( \begin{array}{c} v_1 \\ v_2 \\ v_3 \\ v_4 \end{array} \right)\mapsto \left( \begin{array}{c} 0 \\ \alpha \\ 0 \\ \alpha \end{array} \right).
\] 
It follows that
\[ 
Q^{(1)} = 
\begin{tikzcd}  
(0,\bullet) \arrow[r, bend left, "\beta_1^{(1)}=\gamma_1^{(1)}"] \arrow[r,bend right,swap, "\beta_2^{(1)}=\gamma_2^{(1)}"] & (\alpha,\bullet)
\end{tikzcd},
\] 
where $\bullet$ denotes the unique vertex of $\wild_2$. The winding map $\sigma^{(1)}$ is given by
\[ 
 \left( \begin{array}{c} v_1 \\ v_2 \\ v_3 \\ v_4 \end{array} \right)\mapsto \left( \begin{array}{c} (0,\bullet) \\ (\alpha,\bullet) \\ (0,\bullet) \\ (\alpha,\bullet) \end{array} \right)
\]  
\[ 
\left( \begin{array}{c} \beta_1 \\ \beta_2 \\ \gamma_1 \\ \gamma_2 \end{array} \right)\mapsto \left( \begin{array}{c} \beta_1^{(1)} \\ \beta_2^{(1)} \\ \beta_1^{(1)} \\ \beta_2^{(1)} \end{array} \right).
\] 
Hence, $\mathcal{V}^{(1)}_M$ is the torsion-free quotient of $\frac{\mathbb{Z}\beta_1^{(1)}\oplus \mathbb{Z}\beta_2^{(1)}}{\langle 2(\beta_1^{(1)} - \beta_2^{(1)}) \rangle} \cong \mathbb{Z} \oplus \mathbb{Z}_2$. Denoting its generator by $\beta$, we find that the $1$-nice variables are given by  
\[ 
X^{(1)} : \Gamma_0 \rightarrow \mathcal{V}^{(1)}_M
\] 
\[ 
\left( \begin{array}{c} v_1 \\ v_2 \\ v_3 \\ v_4 \end{array} \right)\mapsto \left( \begin{array}{c} 0 \\ \beta \\ 0 \\ \beta \end{array} \right).
\]  
We see that for this representation, no new information is obtained from iteration. This reflects the fact that for any nice grading $\partial$ of $M$, a $\partial$-nice grading is the same as a nice grading. Note that $v_1$ and $v_3$ cannot be distinguished by any nice sequence, nor can $v_2$ and $v_4$. This will turn out to be typical behavior for $\Gamma$ a ``non-primitive quiver of type $\tilde{\mathbb{A}}_n$'' (see Definition \ref{definition: primitive}).
\end{myeg}

\begin{mydef} 
Let $M$ and $N$ be abelian groups. A function $f : M \rightarrow N$ is said to be \emph{affine} if there exists a $z \in N$ such that $x \mapsto f(x)-z$ is a group homomorphism. If $M = N$ and $f(x) = x+z$ we say that $f$ is a \emph{translation}. The translations of $M$ form a group under composition isomorphic to $M$.
\end{mydef} 

\begin{rmk}\label{r: basepoint}
Assuming that $c : \Gamma \rightarrow Q$ is indecomposable, the variables $( X^{(i)})_{i \geq 0}$ are unique up to translation in $\mathcal{V}_M^{(i)}$. Hence, the condition $X^{(i)}_u = X^{(i)}_v$ does not depend on the choice of basepoint.
\end{rmk}

The following theorem explains the name ``universal nice grading'' as one obtains any nice sequence as an evaluation of universal nice gradings. In other words, any nice sequence $\underline{\partial}$ uniquely factors through universal nice gradings as follows:
\begin{equation}
	\begin{tikzcd}[row sep=2em]
		\Gamma_0 \arrow[rr,"\partial_i"] \arrow[dr,swap,"X^{(i)}"]
		&& \mathbb{Z}  \\
		& \mathcal{V}^{(i)}_M \arrow[ur,swap,"\operatorname{ev}^{(i)}(\underline{\partial})"]
	\end{tikzcd}
\end{equation}

To be precise, we prove the following. 

\begin{mythm}\label{t: universal property}
Let $M$ be an indecomposable $\FF_1$-representation of $Q$ and $c : \Gamma \rightarrow Q$ be the associated winding with basepoint $b$. Let $\underline{\partial} = (\partial_i)_{i=0}^{\infty}$ be a nice sequence for $M$. Then for each $i$, there exists a unique affine map 
\[ 
\operatorname{ev}^{(i)}(\underline{\partial}) : \mathcal{V}^{(i)}_M \rightarrow \mathbb{Z}
\] 
such that $\partial_i = \operatorname{ev}^{(i)}(\underline{\partial}) \circ X^{(i)}$. We write $X^{(i)}(\underline{\partial}) :=  \operatorname{ev}^{(i)}(\underline{\partial}) \circ X^{(i)}$ and call it the evaluation of $X^{(i)}$ at $\underline{\partial}$.
\end{mythm}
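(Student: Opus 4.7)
The plan is to proceed by induction on $i$, leveraging the identification $\mathcal{V}^{(i)}_M = \mathcal{V}^{(0)}_{M^{(i)}}$ from the end of Construction \ref{con: universal}: this reduces the case $i>0$ to the case $i=0$ applied to the winding $\sigma^{(i)}: \Gamma \to \Gamma^{(i)}$, provided I first verify that $\partial_i$ is a nice grading for $\sigma^{(i)}$.

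For the base case $i = 0$, the idea is to build a group homomorphism $\phi: \mathbb{Z}Q_1 \to \mathbb{Z}$. Because $\partial_0$ is a nice grading, the integer $\Delta_\alpha^{\partial_0}$ is well-defined for each $\alpha \in c(\Gamma_1)$, and I set $\phi(\alpha) := \Delta_\alpha^{\partial_0}$ on those arrows, extending by zero elsewhere. The essential step is to check that $\phi$ annihilates $\operatorname{Im}(\iota \circ H_1(c))$. Any cycle $\gamma = \sum_k n_k \alpha_k \in H_1(\Gamma,\mathbb{Z})$ satisfies $\sum_k n_k (t(\alpha_k) - s(\alpha_k)) = 0$ in $\mathbb{Z}\Gamma_0$, so applying $\phi$ to $(\iota \circ H_1(c))(\gamma) = \sum_k n_k c(\alpha_k)$ yields $\sum_k n_k (\partial_0(t(\alpha_k)) - \partial_0(s(\alpha_k)))$, which is $\partial_0$ evaluated on the boundary of $\gamma$, hence $0$. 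Because $\mathbb{Z}$ is torsion-free, $\phi$ factors through a homomorphism $\bar\phi: \mathcal{V}^{(0)}_M \to \mathbb{Z}$. Setting $\operatorname{ev}^{(0)}(\underline{\partial})(x) := \bar\phi(x) + \partial_0(b)$ produces an affine map, and $\partial_0 = \operatorname{ev}^{(0)}(\underline{\partial}) \circ X^{(0)}$ follows by another telescoping argument along any path from $b$ to $v$.

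For the induction step, I aim to show that $\partial_i$ (viewed as a map $\Gamma_0 \to \mathbb{Z}$) is a nice grading with respect to $\sigma^{(i)}: \Gamma \to \Gamma^{(i)}$. If $\beta, \beta' \in \Gamma_1$ satisfy $\sigma^{(i)}(\beta) = \sigma^{(i)}(\beta')$, then by the definition of $\Gamma^{(i)}$ one has $c(\beta) = c(\beta')$ together with agreement of the $(i-1)$-nice variables at corresponding sources and targets. Iteratively composing the homomorphisms $H_1(\tau^{(j)}): \mathcal{V}^{(j)}_M \to \mathcal{V}^{(j-1)}_M$ from Remark \ref{rmk: universal}(3), which satisfy $X^{(j)}_v \mapsto X^{(j-1)}_v$, these equalities push down to agreement of $X^{(j)}$-values for every $j \le i-1$. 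The inductive hypothesis $\partial_j = \operatorname{ev}^{(j)}(\underline{\partial}) \circ X^{(j)}$ then yields $\partial_j(s(\beta)) = \partial_j(s(\beta'))$ and $\partial_j(t(\beta)) = \partial_j(t(\beta'))$ for all $j < i$. The $(\partial_0,\ldots,\partial_{i-1})$-niceness of $\partial_i$ now gives the niceness of $\partial_i$ with respect to $\sigma^{(i)}$. Applying the base case to $\sigma^{(i)}$ and using $\mathcal{V}^{(i)}_M = \mathcal{V}^{(0)}_{M^{(i)}}$ produces the required affine map $\operatorname{ev}^{(i)}(\underline{\partial})$ and factorization.

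Uniqueness follows from a spanning argument: for every $\alpha \in \Gamma_1$ one has $\sigma^{(i)}(\alpha) = X^{(i)}_{t(\alpha)} - X^{(i)}_{s(\alpha)}$ in $\mathcal{V}^{(i)}_M$, and for $i \ge 1$ the set $\Gamma^{(i)}_1 = \sigma^{(i)}(\Gamma_1)$ generates $\mathcal{V}^{(i)}_M$ by construction, so the image of $X^{(i)}$ affinely generates the subgroup on which $\operatorname{ev}^{(i)}(\underline{\partial})$ is constrained; any two such affine maps must therefore coincide. The main difficulty, I expect, is the bookkeeping in the induction step: specifically, verifying rigorously that equality of the $(i-1)$-nice variables forces equality of every lower $j$-nice variable through the tower of compatibility maps in Remark \ref{rmk: universal}(3), so that $(\partial_0,\ldots,\partial_{i-1})$-niceness can be invoked to conclude niceness with respect to $\sigma^{(i)}$.
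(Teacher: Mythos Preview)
Your approach is essentially the same as the paper's: both establish the $i=0$ case by building a homomorphism $\mathbb{Z}Q_1\to\mathbb{Z}$ from the increments $\Delta_\alpha^{\partial_0}$, checking it kills $\operatorname{Im}(\iota\circ H_1(c))$, and then reduce the general case to the base case by replacing $c$ with $\sigma^{(i)}:\Gamma\to\Gamma^{(i)}$. Your inductive verification that $\partial_i$ is a nice grading for $\sigma^{(i)}$ (via Remark~\ref{rmk: universal}(3) and the factorizations for $j<i$) is precisely the detail the paper leaves implicit when it writes ``the claim for $i=k>0$ now follows by replacing $c$ with $\sigma^{(k)}$ and $\underline{\partial}$ with $\{\partial_i\}_{i\ge k}$.''
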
  
\begin{proof} 
We first prove the claim for $i=0$. Define a map $g_0 : \mathcal{V}^{(0)}_M \rightarrow \mathbb{Z}$ via the formula 
\begin{equation}\label{e: base case}
\overline{c(\alpha)} \mapsto \partial_0(t(\alpha)) - \partial_0(s(\alpha)),
\end{equation}
where $\alpha \in \Gamma_1$. We must show that this map is well-defined. First note that $\partial_0 : \Gamma_0 \rightarrow \mathbb{Z}$ extends uniquely to a group map $\mathbb{Z}\Gamma_0 \rightarrow \mathbb{Z}$ (also denoted $\partial_0$). Since $\partial_0$ is a nice grading, we have a well-defined map $\hat{g}_0 : \mathbb{Z}Q_1 \rightarrow \mathbb{Z}$ defined via the following formulas: 
\begin{enumerate} 
\item $\hat{g}_0(c(\alpha)) = \partial_0(t(\alpha)) - \partial_0(s(\alpha))$, for all $c(\alpha) \in c(\Gamma_1)$;
\item $\hat{g}_0(\beta) = 0$, for all $\beta \in Q_1\setminus c(\Gamma_1)$. 
\end{enumerate}
If $Z :=\sum{\lambda_{\alpha}\alpha}$ is an element of $H_1(\Gamma,\mathbb{Z})$ then  
\[ 
0 = \sum{\lambda_{\alpha}[t(\alpha)-s(\alpha)]}
\] 
and hence the element $\iota^{(0)}H_1(c)(Z) = \sum{\lambda_{\alpha}c(\alpha)}$ satisfies
\begin{align*}
 \hat{g}_0\left(\sum{\lambda_{\alpha}c(\alpha)}\right) & = \sum{\lambda_{\alpha}\hat{g}_0(c(\alpha))}\\ 
& =  \sum{\lambda_{\alpha}[\partial_0(t(\alpha)) - \partial_0(s(\alpha))]} \\ 
& = \partial_0\left(  \sum{\lambda_{\alpha}[t(\alpha)-s(\alpha)]} \right) \\ 
& = 0.
\end{align*} 
In other words, $\hat{g}_0$ descends to a map on $\mathcal{V}^{(0)}_M$ and $g_0$ is well-defined\footnote{Note that any homomorphism $f : M \rightarrow N$ between abelian groups induces a homomorphism $M/t(M) \rightarrow N/t(N)$.}. We now set 
\[
\operatorname{ev}^{(0)}(\underline{\partial}) := g_0 + \partial_0(b). 
\]  
Let $v \in \Gamma_0$ with $\alpha_1^{\epsilon_1}\cdots \alpha_d^{\epsilon_d}$ a walk in $\Gamma$ from $b$ to $v$. Then 
\begin{align*}
\partial_0(v) & = \partial_0(b) + \sum_j{\epsilon_j[\partial_0(t(\alpha_j)) - \partial_0(s(\alpha_j))]} \\ 
& = \partial_0(b) + g_0\left(\sum_j{\epsilon_jc(\alpha_j)} + \operatorname{Im}(\iota^{(0)}\circ H_1(c)) \right) \\ 
& = \partial_0(b) + g_0\left(X^{(0)}_v\right) \\ 
& = [\operatorname{ev}^{(0)}(\underline{\partial})\circ X^{(0)}](v), 
\end{align*} 
so we have the desired factorization. To prove uniqueness, suppose $a : \mathcal{V}^{(0)}_M \rightarrow \mathbb{Z}$ is an affine map such that $\partial_0 = a\circ X^{(0)}$. Then in particular,
\begin{align*} 
 \operatorname{ev}^{(0)}(\underline{\partial})(0) & = \partial_0(b) \\
& = [a\circ X^{(0)}](b) \\ 
 & = a\left(X^{(0)}_b \right) \\ 
& = a(0). 
\end{align*}  
In other words, $g_0 = \operatorname{ev}^{(0)}(\underline{\partial}) - \partial_0(b)$ and $f := a-\partial_0(b)$ are both group homomorphisms. Pick $\alpha \in \Gamma_1$ and let $ p =\alpha_1^{\epsilon_1}\cdots \alpha_d^{\epsilon_d}$ denote a walk from $b$ to $s(\alpha)$ in $\Gamma$. Then $p\alpha$ is a walk from $b$ to $t(\alpha)$ in $\Gamma$ and
\begin{align*} 
g_0(\overline{c(\alpha)}) & = \partial_0(t(\alpha)) - \partial_0(s(\alpha)) \\  
& = \left[g_0\left(X^{(0)}_{t(\alpha)}\right) + \partial_0(b)\right] - \left[g_0\left(X^{(0)}_{s(\alpha)} \right) + \partial_0(b) \right] \\ 
& = a\left(X^{(0)}_{t(\alpha)}\right) - a\left(X^{(0)}_{s(\alpha)} \right)  \\ 
& = \left[ f\left(X^{(0)}_{t(\alpha)}\right) + \partial_0(b) \right] - \left[ f\left( X^{(0)}_{s(\alpha)} \right) + \partial_0(b) \right] \\ 
& = f\left( X^{(0)}_{t(\alpha)} - X^{(0)}_{s(\alpha)} \right) \\ 
& = f(\overline{c(\alpha)}). 
\end{align*} 
It follows that $a = \operatorname{ev}^{(0)}(\underline{\partial})$. Thus, the claim holds for $i = 0$. Now suppose that $i > 0$, and that the result has been proved for all $j < i$. Since $\partial_0$ is a nice grading for $\sigma^{(0)} = c$, we may also assume by induction that $\partial_j$ is a nice grading for $\sigma^{(j)}$ for all $0 \le j < i$. Under these assumptions, we claim that $\partial_i$ induces a nice grading on $\sigma^{(i)}$. Suppose that $\alpha, \beta \in \Gamma_1$ are given such that $\alpha^{(i)} = \sigma^{(i)}(\alpha) = \sigma^{(i)}(\beta) = \beta^{(i)}$. Then by definition of $Q^{(i)}$, we must have
\[ 
c(\alpha) = c(\beta)
\] 
\[ 
X^{(i-1)}_{s(\alpha)} = X^{(i-1)}_{s(\beta)} 
\] 
\[ 
X^{(i-1)}_{t(\alpha)} = X^{(i-1)}_{t(\beta)}.
\] 
By Remark \ref{rmk: universal}, this also implies 
\[ 
X^{(j)}_{s(\alpha)} = X^{(j)}_{s(\beta)}
\] 
\[ 
X^{(j)}_{t(\alpha)} = X^{(j)}_{t(\beta)}
\] 
for all $0 \le j < i$. By induction, for all $0 \le j < i$ there exists a unique affine map $\operatorname{ev}^{(j)}(\underline{\partial}) : \mathcal{V}^{(j)}_M \rightarrow \mathbb{Z}$ satisfying $\partial_j = \operatorname{ev}^{(j)}(\underline{\partial}) \circ X^{(j)}$. In particular, we must have 
\begin{align*} 
\partial_j(s(\alpha)) & = \operatorname{ev}^{(j)}(\underline{\partial})(X^{(j)}_{s(\alpha)})\\ 
& = \operatorname{ev}^{(j)}(\underline{\partial})(X^{(j)}_{s(\beta)}) \\ 
& = \partial_j(s(\beta))
\end{align*} 
for all $0 \le j < i$. Similarly, $\partial_j(t(\alpha)) = \partial_j(t(\beta))$ for all $0 \le j < i$ as well. Since $c(\alpha) = c(\beta)$ as well, the assumption that $\partial_i$ is a $(\partial_0,\ldots , \partial_{i-1})$-nice grading implies that 
\[ 
\partial_i(t(\alpha)) - \partial_i(s(\alpha)) = \partial_i(t(\beta)) - \partial_i (s(\beta)),
\]  
from which it follows that $\partial_i$ is a nice grading for $\sigma^{(i)}$. By the base case, there exists a unique affine map $\operatorname{ev}^{(i)}(\underline{\partial}) : \mathcal{V}^{(0)}_{M^(i)} \rightarrow \mathbb{Z}$ satisfying $\partial_i = \operatorname{ev}^{(i)}(\underline{\partial}) \circ X(M^{(i)})$. Since $\mathcal{V}^{(i)}_M = \mathcal{V}^{(0)}_{M^{(i)}}$ and $X^{(i)} = X(M^{(i)})$ by Construction \ref{con: universal}, the result now follows from induction. 
\end{proof}

The proof above readily implies the following:

\begin{cor}  
Let $M$ be an indecomposable $\mathbb{F}_1$-representation of $Q$ and $c : \Gamma \rightarrow Q$ be the associated winding with basepoint $b$. Let $(\partial_i)_{i\geq 0}$ be a nice sequence for $M$. Then for each $i$, $\partial_i$ is a nice grading for $\sigma^{(i)}: \Gamma \rightarrow Q^{(i)}$. 
\end{cor}

\begin{rmk}\label{r: arrows}
We can view a nice grading as a certain integer-valued function on the arrows of $Q$. Indeed, a nice grading on $M$ is the same thing as a nice sequence $\underline{\partial} = ( \partial_i)_{i \geq 0}$ with $\partial_i = 0$ for all $i > 0$. By Theorem \ref{t: universal property}, a nice grading on $M$ is then an affine map $f : \mathcal{V}^{(0)}_M \rightarrow \mathbb{Z}$. Writing $f  = g+z$ with $z$ an integer and $g : \mathcal{V}^{(0)}_M \rightarrow \mathbb{Z}$ a group homomorphism, the map $g$ can be identified with a group homomorphism $\mathbb{Z}Q_1 \rightarrow \mathbb{Z}$ that vanishes on $\operatorname{Im}(\iota \circ H_1(c))$. If $b_1,\ldots, b_k$ is a cycle basis for $H_1(\Gamma,\mathbb{Z})$, then a map $\mathbb{Z}Q_1 \rightarrow \mathbb{Z}$ vanishes on $\operatorname{Im}(\iota \circ H_1(c))$ if and only if it vanishes on $[\iota\circ H_1(c)](b_i)$ for each $i$. Finally, any such homomorphism comes from a unique function $Q_1 \rightarrow \mathbb{Z}$.
\end{rmk}

\begin{cor}\label{c: torsion}
Let $c : \Gamma \rightarrow Q$ be an indecomposable winding with associated representation $M$. Let $X^{(i)}$ denote the universal $i$-nice grading of $M$. Then for any two $w, z \in \Gamma_0$, there exists a nice sequence distinguishing $w$ and $z$ if and only if $X_w^{(i)} \neq X_z^{(i)}$ for some $i$.
\end{cor}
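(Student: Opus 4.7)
The plan is to apply Theorem \ref{t: universal property} directly in both directions, with the key additional ingredient being that each $\mathcal{V}^{(i)}_M$ is a free abelian group (this is precisely why the torsion quotient appears in Construction \ref{con: universal}, and accounts for the corollary's label).

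For the forward direction, I would assume a nice sequence $\underline{\partial} = \{\partial_i\}_{i \geq 0}$ distinguishes $w$ and $z$, so that $\partial_i(w) \neq \partial_i(z)$ for some $i$. By Theorem \ref{t: universal property}, $\partial_i = \operatorname{ev}^{(i)}(\underline{\partial}) \circ X^{(i)}$ for a well-defined affine map $\operatorname{ev}^{(i)}(\underline{\partial}) : \mathcal{V}^{(i)}_M \rightarrow \mathbb{Z}$. If we had $X^{(i)}_w = X^{(i)}_z$, then applying $\operatorname{ev}^{(i)}(\underline{\partial})$ would force $\partial_i(w) = \partial_i(z)$, contradicting our assumption. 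Hence $X^{(i)}_w \neq X^{(i)}_z$, as desired.

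For the reverse direction, I would suppose $X^{(i)}_w \neq X^{(i)}_z$ for some $i$, and explicitly construct a nice sequence distinguishing the two vertices. By construction, $\mathcal{V}^{(i)}_M = G^{(i)}/t(G^{(i)})$ is a finitely generated torsion-free abelian group, hence a free abelian group of some finite rank $n$. Since $X^{(i)}_w - X^{(i)}_z$ is a nonzero element of $\mathbb{Z}^n$, at least one coordinate projection (with respect to any chosen basis) yields a group homomorphism $g : \mathcal{V}^{(i)}_M \rightarrow \mathbb{Z}$ with $g(X^{(i)}_w) \neq g(X^{(i)}_z)$. Viewing $g$ as an affine map, I would then invoke the converse statement of Theorem \ref{t: universal property}: setting $\operatorname{ev}^{(i)}(\underline{\partial}) := g$ and $\operatorname{ev}^{(j)}(\underline{\partial}) := 0$ for $j \neq i$ produces a bona fide nice sequence $\underline{\partial} = \{ \partial_j\}_{j\geq 0}$ on $M$, via $\partial_j := \operatorname{ev}^{(j)}(\underline{\partial})\circ X^{(j)}$. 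By construction, $\partial_i(w) - \partial_i(z) = g(X^{(i)}_w - X^{(i)}_z) \neq 0$, so $\underline{\partial}$ distinguishes $w$ and $z$.

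There is no real obstacle here beyond invoking the already-established universal property; the main subtlety worth emphasizing is that freeness of $\mathcal{V}^{(i)}_M$ is essential. Were we to work with $G^{(i)}$ itself, a nonzero element could be purely torsion and no homomorphism to $\mathbb{Z}$ would detect it; the passage to the torsion-free quotient in Construction \ref{con: universal} is precisely what allows Corollary \ref{c: torsion} to characterize distinguishability by \emph{some} nice sequence in terms of equality of universal $i$-nice variables.
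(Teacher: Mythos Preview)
Your proposal is correct and follows essentially the same approach as the paper's proof: both directions invoke Theorem \ref{t: universal property}, with the forward direction using the factorization $\partial_i = \operatorname{ev}^{(i)}(\underline{\partial})\circ X^{(i)}$ and the reverse direction using freeness of $\mathcal{V}^{(i)}_M$ to find a coordinate projection separating $X^{(i)}_w$ from $X^{(i)}_z$. Your closing remark on why the torsion quotient is needed is also on point.
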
 
\begin{proof}
 By Theorem \ref{t: universal property}, a nice sequence $\underline{\partial}$ is equivalent to a collection of affine maps $E_i :=\operatorname{ev}^{(i)}(\underline{\partial}): \mathcal{V}_M^{(i)} \rightarrow \mathbb{Z}$. Write $E_i = h_i + x_i$, where $h_i : \mathcal{V}_M^{(i)} \rightarrow \mathbb{Z}$ is a group homomorphism and $x_i \in \mathbb{Z}$. The $i^{th}$ map will distinguish $w$ and $z$ if and only if $E_i(X_w^{(i)}) \neq E_i(X_z^{(i)})$, hence $0 \neq E_i(X_w^{(i)}) - E_i(X_z^{(i)}) = h_i(X_w^{(i)}) - h_i(X_z^{(i)}) = h_i(X_w^{(i)} - X_z^{(i)})$. This implies that $X_w^{(i)} \neq X_z^{(i)}$. Conversely, assume that  $X_w^{(i)} \neq X_z^{(i)}$. Since $\mathcal{V}^{(i)}_M$ is a finitely-generated torsion-free abelian group, it is free abelian and hence has a basis $\{ b_1,\ldots, b_m\}$. If $b_i^* : \mathcal{V}^{(i)}_M \rightarrow \mathbb{Z}$ is the homomorphism defined by $b_i^*(b_j) = \delta_{ij}$, then $X_w^{(i)} \neq X_z^{(i)}$ implies $b_j^*(X_w^{(i)}) \neq b_j^*(X_z^{(i)})$ for some $j$. Setting $E_i = b_j$ and $E_{k} = 0$ for all $k \neq 0$, $\{ E_k \}_{k \geq 0}$ is a collection of affine maps which induce a nice sequence distinguishing $w$ and $z$. 
\end{proof}


\section{Euler characteristics of quiver Grassmannians} \label{s: Euler}

\subsection{Nice representations} 

Our reason for considering nice sequences for $\FF_1$-representations is that, under certain circumstances, they allow us to combinatorially compute the Euler characteristics of associated quiver Grassmannians. In this section, we introduce the notion of a \emph{nice} $\FF_1$-representation of $Q$. If $M$ is a nice $\FF_1$-representation, then the Euler characteristic of $\operatorname{Gr}_{\textbf{e}}^Q(M_\mathbb{C})$, the quiver Grassmannian associated to the $\mathbb{C}$-representation $M_\mathbb{C}$ of $Q$\footnote{Recall that $M_\mathbb{C}$ is the $\mathbb{C}$-representation of $Q$ obtained from $M$ via ``base change'' as in \eqref{eq: base change1} and \eqref{eq: base change2}.}, can be computed as the number of $\textbf{e}$-dimensional subrepresentations of $M$. This is a combinatorial task, since $M$ is a finite set. We prove several sufficient conditions for $M$ to be nice: more precisely, we prove sufficient conditions for the existence of a nice sequence on $M$ distinguishing vertices. We formalize this idea with the \emph{nice length} of a representation, which we use to construct new Hopf algebras from $H_Q$ and $H_{Q,\nil}$ in Section \ref{s: nice length}. We conclude this subsection with several examples of nice representations, including a family of nice representations for $Q = \wild_2$ whose cycle spaces can have arbitrarily high rank. Previously in the literature, only tree modules, band modules and representations of nice length $0$ had been explicitly considered. The above discussion motivates the following definition.

\begin{mydef} 
Let $M$ be an $\FF_1$-representation of $Q$ with associated winding 
\[
c_M : \Gamma_M \rightarrow Q. 
\]
 We say that $M$ is \emph{nice} if the following equation holds, for all dimension vectors $\textbf{e} \leq \textbf{\textrm{dim}}(M)$: 
\begin{equation}\label{e: nice} 
\chi_{\textbf{e}}(M_\mathbb{C}) = |\{ N \le M \mid \textbf{\textrm{dim}}(N) = \textbf{e} \}|,
\end{equation}
where $\chi_{\textbf{e}}(M_\mathbb{C})$ is the Euler characteristic of $\operatorname{Gr}_{\textbf{e}}^Q(M_\mathbb{C})$.
\end{mydef}  

In what follows, we will simply denote $\operatorname{Gr}_{\textbf{e}}^Q(M_\mathbb{C})$ by $\operatorname{Gr}_{\textbf{e}}(M_\mathbb{C})$ whenever there is no possible confusion. 

In Lemma 1 of \cite{irelli2011quiver}, Cerulli-Irelli shows that a nice grading on a string module $M$ induces an algebraic action of $\mathbb{C}^{\times}$ on $\operatorname{Gr}_{{\bf{e}}}(M)$ for each dimension vector ${\bf{e}}$. Writing $M = \bigoplus_{i\in Q_0}{M_i}$, this action has finitely-many fixed points when the grading distinguishes the basis elements of $M_i$ for each $i$, in which case the number of fixed points equals $\chi_{{\bf{e}}}(M)$. In Theorem 1.1 of \cite{Haupt2012euler}, Haupt generalizes this result to arbitrary representations and gradings. As a special case, Lemma 4.11 and Corollary 5.2 of \cite{Haupt2012euler} show how $(\partial_0,\ldots , \partial_i$)-nice gradings distinguishing basis elements can be used to compute Euler characteristics of quiver Grassmannians by counting fixed points of the induced torus actions. Haupt then applies these results to tree and band modules. The proposition below is essentially a restatement of the result on Euler characteristics from \cite{Haupt2012euler}. We include a proof for the convenience of the reader. 

\begin{pro}[\cite{Haupt2012euler}]  \label{rmk: distinguish vertices}
Let $M$ be an $\FF_1$-representation of $Q$, and $\underline{\partial}$ a nice sequence for $M$ which distinguishes vertices. Then $M$ is a nice representation.
\end{pro}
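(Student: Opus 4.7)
The approach is a torus-action argument in the style of Cerulli Irelli and Haupt, adapted to the $\FF_1$-setting. The plan is to construct a descending tower of $\mathbb{C}^*$-actions on the quiver Grassmannian whose ultimate fixed locus is in natural bijection with the set of $\FF_1$-subrepresentations of $M$ of dimension $\underline{d}$.

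\textbf{Construction of the base $\mathbb{C}^*$-action.} A nice grading $\partial$ of $M$ induces a $\mathbb{C}^*$-action on $M_{\mathbb{C}}$ by scaling each basis vector $v \in (\Gamma_M)_0$ by $t^{\partial(v)}$. A direct computation, using that $\Delta_{\alpha}^{\partial}$ is well defined by the nice condition, yields the intertwining relation $f_{\alpha}(t\cdot m) = t^{-\Delta_{\alpha}^{\partial}}(t\cdot f_{\alpha}(m))$ for $m \in M_{\mathbb{C}}$. Consequently $t\cdot N$ is a subrepresentation whenever $N$ is one, so we obtain a $\mathbb{C}^*$-action on $X := \operatorname{Gr}_{\underline{d}}(M_{\mathbb{C}})$ whose fixed locus $X^{(0)}$ parametrizes the $\partial$-homogeneous subrepresentations of $M_{\mathbb{C}}$.

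\textbf{Iteration using the nice sequence.} Starting from $\partial_0$ we obtain a $\mathbb{C}^*$-action on $X$ with fixed locus $X^{(0)}$. The grading $\partial_1$ is merely $(\partial_0)$-nice, not necessarily nice on its own. However, the $\partial_0$-homogeneous pieces of $M_{\mathbb{C}}$ carry the structure of quiver representations for a refined quiver (essentially the quiver $\Gamma^{(1)}$ of Construction \ref{con: universal}, read off the torus weights), and the $(\partial_0)$-nice condition on $\partial_1$ is exactly the ordinary nice condition for this refined structure. Hence $\partial_1$ induces a $\mathbb{C}^*$-action on $X^{(0)}$. Iterating with each $\partial_i$ yields a chain
\[
X \supseteq X^{(0)} \supseteq X^{(1)} \supseteq \cdots,
\]
where $X^{(i)} = (X^{(i-1)})^{\mathbb{C}^*}$ for the action induced by $\partial_i$.

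\textbf{Euler characteristics and identification of fixed points.} Any complex variety $Y$ equipped with a $\mathbb{C}^*$-action satisfies $\chi(Y) = \chi(Y^{\mathbb{C}^*})$, because all non-fixed orbits have vanishing Euler characteristic. Applying this at each level of the tower gives $\chi(X) = \chi(X^{(0)}) = \chi(X^{(1)}) = \cdots$. Since $(\Gamma_M)_0$ is finite and $\underline{\partial}$ distinguishes vertices, there exists $n$ such that $(\partial_0,\ldots ,\partial_n)\colon (\Gamma_M)_0 \to \mathbb{Z}^{n+1}$ is injective. At this stage, the common weight spaces of the combined torus action on $M_{\mathbb{C}}$ are all one-dimensional, each spanned by a single basis vector. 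Consequently, every point of $X^{(n)}$ is a coordinate subspace $\operatorname{span}_{\mathbb{C}}(S) \subseteq M_{\mathbb{C}}$ for some subset $S \subseteq (\Gamma_M)_0$, and the requirement that this span be closed under the maps $f_{\alpha}$ translates verbatim to $S \cup \{0\}$ being the vertex-set of an $\FF_1$-subrepresentation of $M$ of dimension $\underline{d}$. Combining these steps yields
\[
\chi_{\underline{d}}(M_{\mathbb{C}}) = |X^{(n)}| = |\{ N \le M \mid \underline{\dim}(N) = \underline{d}\}|,
\]
which is the required equation \eqref{e: nice}. The main technical obstacle is the iterative step: showing that a $(\partial_0,\ldots ,\partial_{i-1})$-nice grading $\partial_i$ really does induce a $\mathbb{C}^*$-action on $X^{(i-1)}$. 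The key point is that the abstract niceness hypothesis on $\partial_i$ is precisely designed to become an honest niceness condition once one passes to the quiver-representation structure carried by the multi-graded pieces of $M_{\mathbb{C}}$; this bookkeeping essentially replays Construction \ref{con: universal} inside $M_{\mathbb{C}}$, and is a mild adaptation of Haupt's argument in Proposition 3.6 of \cite{Haupt2012euler} to the coefficient-quiver formalism used here.
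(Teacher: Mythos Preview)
Your proposal is correct and follows essentially the same approach as the paper. The only difference is one of packaging: the paper invokes \cite[Theorem 1.1]{Haupt2012euler} as a black box to pass from $\operatorname{Gr}_{\underline{d}}(M_{\mathbb{C}})$ to the locally-closed subset of $(\partial_0,\ldots,\partial_n)$-homogeneous subrepresentations, and then identifies that finite set with the $\FF_1$-subrepresentations; you instead unpack the torus-action mechanism behind Haupt's theorem explicitly, building the tower $X \supseteq X^{(0)} \supseteq \cdots \supseteq X^{(n)}$ and applying $\chi(Y)=\chi(Y^{\mathbb{C}^*})$ at each step before making the same final identification.
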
 

\begin{proof}  
Note that $\Gamma_M$ can be considered as a coefficient quiver of $M_\mathbb{C}$ in the sense of Ringel \cite{ringel1998exceptional} with a choice of a basis as follows\footnote{See Section \ref{subsection: coefficient quivers} for the definition of coefficient quivers by Ringel.}:
\begin{equation}
	\mathcal{B}=\bigsqcup_{v \in Q_0}B_v, \qquad B(v)=M_v \backslash \{0\}.
\end{equation}
Since $\underline{\partial}$ distinguishes vertices, there exists an $n \in \mathbb{N}$ such that for all $x, y \in (\Gamma_M)_0$, $\partial_i(x) \neq \partial_i(y)$ for some $i\le n$. In particular, the elements of $M_\mathbb{C}$ which are $\partial_i$-homogeneous (as in \cite[pp 756]{Haupt2012euler}) for all $i\le n$ are precisely the scalar multiples of elements in the set $\mathcal{B}$.

Now, it follows from \cite[Theorem 1.1]{Haupt2012euler} that for all $\textbf{e} \le \textbf{\textrm{dim}}(M)$, $\operatorname{Gr}_{\textbf{e}}(M_\mathbb{C})$ has the same Euler characteristic as the locally-closed subset 
\begin{equation}\label{eq: homogenous}
X = \{N\in \operatorname{Gr}_{\textbf{e}}(M_\mathbb{C})\mid N\text{ has a $\partial_i$-homogeneous basis for all $i\le n$}.  \}
\end{equation}
From the aforementioned correspondence between elements of $M_\mathbb{C}$ which are $\partial_i$-homogeneous and the scalar multiples of elements in $\mathcal{B}$, any $N$ as in \eqref{eq: homogenous} must have a basis $\mathcal{B}_N\subseteq \mathcal{B}$. It is then easy to check that the span of $\mathcal{B}_N$ defines a $\mathbb{C}$-representation if and only if $\mathcal{B}_N$ is an $\FF_1$-representation of $Q$\footnote{This essentially follows from the fact that $f_{\alpha}(\mathcal{B}_N) \subseteq \mathcal{B}\cup\{ 0\}$ for all $\alpha \in Q_1$.}. Hence, $X$ is a finite set with $|\{ N \le M \mid \textbf{\textrm{dim}}(N) = \textbf{e} \}|$ elements, and so $M$ is nice.
\end{proof}

For an $\FF_1$-representation $M$ of $Q$ and its subrepresentation $N$, one can define the quotient $M/N$ by using the quotient of $\FF_1$-vector spaces in Definition \ref{definition: $F_1$-vectorspace}. By a \emph{subquotient}, we mean a quotient of a subrepresentation of $M$. The following is straightforward.  

\begin{pro}\label{proposition: sub quotient nice}
Let $M$ be an $\FF_1$-representation of $Q$ with associated winding $c_M : \Gamma_M \rightarrow Q$. Let $M'/N'$ denote a subquotient of $M$ with associated winding $c_{M'/N'}$. If $c_M$ admits a non-degenerate, positive or negative $(\partial_0,\ldots , \partial_n)$-nice grading, then so does $c_{M'/N'}$.
\end{pro}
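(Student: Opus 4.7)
The plan is to realize $\Gamma_{M'/N'}$ as a full subquiver of $\Gamma_M$ and then observe that restriction transports both niceness and the required sign property.

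First, I would use Proposition \ref{proposition: equivalence of categories} to pin down $\Gamma_{M'/N'}$ concretely. Subrepresentations correspond to predecessor-closed subquivers of coefficient quivers, so writing $V_N := (\Gamma_{N'})_0$ and $V_{M'} := (\Gamma_{M'})_0$, both are predecessor-closed subsets of $(\Gamma_M)_0$. Setting $V := V_{M'} \setminus V_N$, the coefficient quiver $\Gamma_{M'/N'}$ is the full subquiver of $\Gamma_M$ on vertex set $V$, and $c_{M'/N'}$ is the restriction of $c_M$. Indeed, an arrow $\beta \in (\Gamma_M)_1$ descends to a (nonzero) arrow in $\Gamma_{M'/N'}$ precisely when $s(\beta), t(\beta) \in V$: if $s(\beta) \notin V_{M'}$ the arrow is absent in $M'$, and if $s(\beta) \in V$ but $t(\beta) \in V_N$ the map becomes zero after quotienting by $N'$.

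Next, given gradings $\partial_0, \ldots, \partial_n$ on $\Gamma_M$ together with a $(\partial_0,\ldots,\partial_n)$-nice grading $\partial$ on $\Gamma_M$ enjoying the assumed sign property, I would define $\partial_i' := \partial_i|_V$ for $0 \le i \le n$ and $\partial' := \partial|_V$, and verify that $\partial'$ is $(\partial_0',\ldots,\partial_n')$-nice on $\Gamma_{M'/N'}$. This is immediate from Definition \ref{definition: gradings}: the defining implication ranges over pairs $\beta, \beta' \in (\Gamma_{M'/N'})_1 \subseteq (\Gamma_M)_1$, and the values of $\partial_i'$, $\partial'$, $c_{M'/N'}$ on their endpoints and labels agree with those of $\partial_i$, $\partial$, $c_M$, so niceness for $\partial$ implies niceness for $\partial'$ verbatim.

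Finally, for the sign property: any witnessing arrow $\beta \in (\Gamma_{M'/N'})_1$ of a triple $(\alpha, \underline{s}, \underline{t})$ is also an arrow of $\Gamma_M$ realizing the same triple for $\partial$, so
\[
\Delta_{\alpha, \underline{s}, \underline{t}}^{\partial'} = \partial'(t(\beta)) - \partial'(s(\beta)) = \partial(t(\beta)) - \partial(s(\beta)) = \Delta_{\alpha, \underline{s}, \underline{t}}^{\partial}.
\]
Hence the multiset of $\Delta$-values of $\partial'$ is contained in that of $\partial$, and if the latter is uniformly nonzero, positive, or negative, so is the former. The entire argument is routine bookkeeping once the identification of $\Gamma_{M'/N'}$ as a full subquiver of $\Gamma_M$ is in place; there is no substantive obstacle beyond setting up this identification carefully.
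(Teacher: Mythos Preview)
Your proof is correct and follows essentially the same approach as the paper's: the paper simply cites \cite[Lemma 3.12]{jun2020quiver} to assert that $\Gamma_{M'/N'}$ is a subquiver of $\Gamma_M$ and then observes that restriction preserves the required properties, whereas you spell out the identification of $\Gamma_{M'/N'}$ as a full subquiver and the restriction argument explicitly. Your added detail is accurate and the core idea is identical.
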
 
\begin{proof} 
It follows from \cite[Lemma 3.12]{jun2020quiver} that the coefficient quiver of $M'/N'$ is a subquiver of $\Gamma_M$. Hence, any non-degenerate~/~positive~/~negative $(\partial_0,\ldots , \partial_n)$-nice grading on $M$ will restrict to one on $M'/N'$.
\end{proof}  

\begin{mydef}\label{definition: length}
	Let $M$ be an $\FF_1$-representation of $Q$ with coefficient quiver $\Gamma := \Gamma_M$ and winding map $c :=c_M$. Recall from Definition \ref{def: distinguish} that a nice sequence $(\partial_i)$ for $M$ distinguishes vertices if for each distinct $x,y \in \Gamma_0$, there exists an $i \in \mathbb{N}$ for which $\partial_i(x) \neq \partial_i(y)$. We say that $M$ has \emph{finite nice length} if there exists a nice sequence $(\partial_i)$ for $M$ which distinguishes vertices in finitely-many steps, in the sense that there exists an $N \in \mathbb{N}$ such that for all distinct $x,y \in \Gamma_0$, $\partial_i(x) \neq \partial_i(y)$ for some $i \le N$. If $M$ has finite nice length, the \emph{nice length} of $M$ is the smallest nonnegative integer $n$ such that there exists a nice sequence $\underline{\partial} = (\partial_i)_{i=0}^{\infty}$ for which the truncated sequence $(\partial_0,\ldots , \partial_n)$ distinguishes vertices. We write ${\operatorname{nice}}(M) = n$ in this case, and $\operatorname{nice}(M) = \infty$ if $M$ does not have finite nice length.
\end{mydef} 

The following properties are clear from the definition (and Proposition \ref{rmk: distinguish vertices}): 
\begin{enumerate} 
	\item If $\operatorname{nice}(M) < \infty$ then $M$ is nice.
	\item If $M$ is an $\FF_1$-representation of $Q$, then $\operatorname{nice}(M'/N) \le \operatorname{nice}(M)$ for any subquotient $M'/N$ of $M$. In particular, if $\operatorname{nice}(M) < \infty$ then $\operatorname{nice}(M'/N) < \infty$.
\end{enumerate}

\begin{myeg}\label{ex: irelli} 
Let $M$ be an $\mathbb{F}_1$-representation of $Q$, with $\Gamma := \Gamma_M$ and $c:=c_M$. In Theorem 1 of \cite{irelli2011quiver}, Cerulli Irelli considers a map $d : \Gamma_0 \rightarrow \mathbb{Z}$ which satisfies the two conditions: 
\begin{enumerate} 
\item [(D1)] For all $v \in Q_0$, the function $d$ restricts to an injection on $c^{-1}(v)$. 
\item [(D2)] The function $d$ is a nice grading on $M$.
\end{enumerate} 
If such a $d$ exists, then certainly $\operatorname{nice}(M) \le 1$, say by Proposition 6.1 of \cite{Haupt2012euler}. In fact, we claim $\operatorname{nice}(M) = 0$. To do this, we must find an injective nice grading of $M$. First note that if $f : Q_0 \rightarrow \mathbb{Z}$ is any function, then $f$ induces a nice grading $f^* : \Gamma_0 \rightarrow \mathbb{Z}$ via the formula $f^*(v) = f(c(v))$. It is straightforward to check that $d + f^*$ still satisfies (D1)-(D2). Pick an ordering $v_0, \ldots , v_m$ of $Q_0$, and for each $i = 0, \ldots , m$, set $B_i := c^{-1}(v_i)$. Define a function $f : Q_0 \rightarrow \mathbb{Z}$ as follows: 
\[ 
f(v_i) = 2^i\cdot \max_{u,v \in \Gamma_0}\left({|d(u) - d(v)|} +1 \right), \text{ $i = 0, \ldots , m$.}
\] 
Setting $\partial := d + f^*$ yields a nice grading of $M$ that distinguishes any two vertices of $B_i$, for $i = 0,\ldots, m$. If $u \in B_i$ and $v \in B_j$ with $i < j$, a straightforward computation reveals that $\partial(v) - \partial(u) \geq 1$. Hence $\partial$ distinguishes all vertices of $\Gamma$ and $\operatorname{nice}(M) = 0$, as claimed. Conversely, any representation $M$ with nice length $0$ admits a grading that satisfies (D1)-(D2). Hence, we may identify such $\FF_1$-representations as those with nice length $0$.  
\end{myeg}

\begin{myeg} \label{example: nice length two}
 In light of Corollary \ref{c: torsion}, the computation in Example \ref{ex: nice1} shows that the string module described in Example \ref{ex: string} has nice length $1$. Similarly, one can show that the string module described in Example \ref{ex: string2} has nice length $2$. Indeed, consider the vertices $a,b,\dots,i$ defined as follows:
\begin{equation}
\begin{tikzcd}  
 & a \arrow[d,blue,"\alpha_1",swap] & & & & \\
	& b \arrow[r,red,"\alpha_2"] & \bullet \arrow[r,green,"\alpha_3"] & \bullet & \bullet \arrow[l,red,"\alpha_2",swap] & c \arrow[l,green,"\alpha_3",swap]  \\ 
  & \arrow[d,blue,"\alpha_1",swap] e  \arrow[r,red,"\alpha_2"] & \bullet \arrow[r,green,"\alpha_3"] & \bullet  & \arrow[l,red,"\alpha_2",swap] x & \arrow[u,blue,"\alpha_1",swap] \arrow[l,green,"\alpha_3",swap] d  \\ 
 & f \arrow[r,green,"\alpha_3"]   &  \bullet \arrow[r,red,"\alpha_2"] &  \bullet  & \arrow[l,green,"\alpha_3",swap] \bullet & \arrow[l,red,"\alpha_2",swap] g \\ 
& i \arrow[r,green,"\alpha_3"] & \bullet \arrow[r,red,"\alpha_2"] & \bullet & \arrow[l,green,"\alpha_3",swap] \bullet  & \arrow[l,red,"\alpha_2",swap] \arrow[u,blue,"\alpha_1",swap] h \\
\end{tikzcd}
\end{equation}  
Fixing $a$ as a basepoint, we have  
\[
X^{(0)}_a = X^{(0)}_d = X^{(0)}_e = X^{(0)}_h = X^{(0)}_i = 0 
\] 
\[ 
 X^{(0)}_b = X^{(0)}_c = X^{(0)}_f = X^{(0)}_g = \alpha_1.  
\]
The quiver $Q^{(1)}$ is given as follows: 
\begin{equation}  
Q^{(1)} =
\begin{tikzcd} 
  & \alpha_1+\alpha_2+\alpha_3 & \\ 
\alpha_1+\alpha_2 \arrow[ur] & T\curvearrowright & \alpha_1+\alpha_3 \arrow[ul] \\ 
& \alpha_1 \arrow[ur] \arrow [ul] & \\ 
& 0 \arrow[u, "\lambda",swap] \arrow[dr] \arrow[dl] & \\ 
\alpha_2 \arrow[dr] & D \curvearrowright & \alpha_3 \arrow[dl] \\ 
& \alpha_2 + \alpha_3 & \\
\end{tikzcd}
\end{equation} 
Here, each vertex is labeled by its $0$-nice variable, $T$ and $D$ are considered as elements of $H_1(Q^{(1)},\mathbb{Z})$ with the clockwise orientation, and $\lambda$ is an arrow of $Q^{(1)}$. We now have $X^{(1)}_a = X^{(1)}_i = 0$, $X^{(1)}_b = \lambda$, $X^{(1)}_c = \lambda +T$, $X^{(1)}_d = T$, $X^{(1)}_e = T+D$, $X^{(1)}_f = \lambda + T+D$, $X^{(1)}_g = \lambda+D$, $X^{(1)}_h = D$. Since $X^{(1)}_a = X^{(1)}_i$, it follows that the nice length of this representation is at least $2$. Since the nice sequence $(\partial_0, \partial_1,\partial_2, 0,0,\ldots )$ described in Example \ref{ex: string2} distinguishes vertices, the nice length of this representation is exactly $2$.
\end{myeg}


Let $c:\Gamma \to Q$ be a winding and $\alpha \in Q_1$. In the following, it will be convenient to endow $c^{-1}(\alpha)$ with the structure of a subquiver. We let $c^{-1}(\alpha)$ mean the arrow-induced subquiver of $\Gamma$ whose set of arrows is $\{\beta \in \Gamma_1 \mid c(\beta)=\alpha\}$ and whose vertex set consists of the sources and targets of those $\beta$.

\begin{pro}\label{p: pos gradings}
Let $M$ be an $\mathbb{F}_1$-representation of $Q$ and let $c_M : \Gamma_M \rightarrow Q$ denote the associated winding. Let $\partial$ be a nice grading on $M$. Then the following hold:  
\begin{enumerate} 
\item 
If $\partial$ is positive or negative, then $M$ is nilpotent. 
\item 
Suppose that $\partial$ is positive or negative, and that $Q$ has no loops. If for each $\alpha \in Q_1$, $\partial$ restricts to an injection on the set $\{s(\beta) \mid \beta \in (\Gamma_M)_1, c_M(\beta) = \alpha \}$, then $\operatorname{nice}(M) \le 1$ and $M$ is nice.
\item 
If $\partial$ is non-degenerate and $c_M^{-1}(\alpha)$ is connected for all $\alpha \in Q_1$, then $\operatorname{nice}(M)\le 1$ and $M$ is nice.  
\end{enumerate}
\end{pro}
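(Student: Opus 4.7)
The plan is to prove (1) by a direct argument on path lengths in $\Gamma_M$, then to prove (2) and (3) by a common strategy: set $\partial_0 := \partial$ and choose an injective grading $\partial_1$, then verify that under either set of hypotheses $\partial_1$ is automatically $\partial_0$-nice.

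For (1), I would assume $\partial$ is positive (the negative case is symmetric). Positivity gives $\partial(t(\beta)) > \partial(s(\beta))$ for every $\beta \in (\Gamma_M)_1$, so $\partial$ is strictly increasing along any directed path in $\Gamma_M$. Since $\Gamma_M$ has finitely many vertices, the length of any directed path in $\Gamma_M$ is bounded by $|(\Gamma_M)_0|-1$. By Lemma \ref{lemma: essentially surjective}, the structure maps of $M$ are determined by the arrows of $\Gamma_M$, and a nonzero value of $f_{\alpha_n}\circ \cdots \circ f_{\alpha_1}$ on some $x \in M_{s(\alpha_1)}$ lifts to a directed path of length $n$ in $\Gamma_M$ starting at $x$. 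Choosing $n$ larger than the uniform bound on directed path lengths forces every such composition to vanish, proving nilpotency.

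For (2) and (3), I would set $\partial_0 := \partial$, let $\partial_1 : (\Gamma_M)_0 \to \mathbb{Z}$ be any injection (for instance, an enumeration of the vertex set), and complete $(\partial_0, \partial_1)$ to a nice sequence by padding with zeros. Once $\partial_1$ is shown to be $\partial_0$-nice, this sequence distinguishes all vertices, so $\operatorname{nice}(M) \leq 1$; the conclusion that $M$ is nice then follows from Proposition \ref{rmk: distinguish vertices} together with the remark after Definition \ref{definition: length}.

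The central task is to verify that $\partial_1$ is $\partial_0$-nice, which in both (2) and (3) reduces to showing that the hypothesis of the $\partial_0$-nice condition (namely $c(\beta) = c(\beta')$, $\partial_0(s(\beta)) = \partial_0(s(\beta'))$, $\partial_0(t(\beta)) = \partial_0(t(\beta'))$) forces $\beta = \beta'$, in which case the conclusion is trivial. In (2), the injection hypothesis on $\alpha$-sources gives $s(\beta) = s(\beta')$, and the winding condition then forces $\beta = \beta'$, since two distinct arrows sharing a source cannot share a $c$-image. In (3), the winding condition implies that every vertex in $c^{-1}(\alpha)$ has in-degree and out-degree at most one within that fiber, so $c^{-1}(\alpha)$ is a disjoint union of directed paths and directed cycles; connectedness then forces it to be a single such component, while non-degeneracy of $\partial_0$ rules out a directed cycle (summing the nonzero constant $\Delta_\alpha$ around a cycle is impossible). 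Hence $c^{-1}(\alpha)$ is a single directed path along which $\partial_0$-values step by $\Delta_\alpha \neq 0$, so distinct $\beta, \beta' \in c^{-1}(\alpha)$ have $\partial_0(s(\beta)) \neq \partial_0(s(\beta'))$ and the niceness condition is again vacuous. The main obstacle is expected to be this structural analysis of $c^{-1}(\alpha)$ in (3), which depends on the interplay between the winding condition, connectedness, and non-degeneracy of $\partial_0$.
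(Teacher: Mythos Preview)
Your proof is correct and follows essentially the same strategy as the paper: for (1) you bound directed path lengths via strict monotonicity of $\partial$ (the paper equivalently shows $\Gamma_M$ has no oriented cycles and invokes the equivalence between acyclicity of $\Gamma_M$ and nilpotency of $M$), and for (2) and (3) you show that the hypotheses of $\partial_0$-niceness force $\beta = \beta'$, which is precisely how the paper concludes that an injective $\partial$-nice grading exists. Your treatment of (3) is in fact more explicit than the paper's, since you use non-degeneracy to rule out the possibility that $c_M^{-1}(\alpha)$ is a directed cycle, whereas the paper simply asserts that connectedness makes it an oriented path; similarly, your argument for (2) makes clear that the winding condition alone (without the no-loops or positivity hypotheses) already yields $\beta = \beta'$ from $s(\beta) = s(\beta')$ and $c(\beta) = c(\beta')$.
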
 
\begin{proof} 
For (1) and (2), we only prove the case that $\partial$ is positive, as the negative case is similar. 

(1) Suppose that $\beta_1\cdots \beta_d$ is an oriented cycle in $\Gamma_M$ with $v:=s(\beta_1) = t(\beta_d)$. Then 
\[\partial(v) = \sum_{i=1}^d{\Delta_{c_M(\beta_i)}^{\partial}}+ \partial(v),\] 
which implies $\sum_{i=1}^d{\Delta_{c_M(\beta_i)}^{\partial}} = 0$, contradicting the positivity of $\partial$. It follows that $\Gamma_M$ is acyclic, so $M$ is nilpotent. 

(2)  For each $\alpha \in Q_1$, let $S_{\alpha}$ denote the subquiver of $\Gamma_M$ whose vertex set is $(\Gamma_M)_0$ and whose arrow set is $A_\alpha:=\{\beta \in (\Gamma_M)_1 \mid c(\beta)=\alpha\}$. Since $Q$ has no loops, for each $\alpha \in Q_1$ the connected components of $S_{\alpha}$ are isolated vertices or arrows. Combining this with the assumption that $\partial$ restricts to an injection on $\{ s(\beta) \mid \beta \in A_\alpha\}$, it follows that for any two arrows $\beta, \gamma \in A_\alpha$ we have $\partial(s(\beta)) \neq \partial(s(\gamma))$. Then Conditions \eqref{eq: con1} and \eqref{eq: con3} can never hold simultaneously, and so any grading is a $\partial$-nice grading. In particular, any injective function $\partial_1 : (\Gamma_M)_0 \rightarrow \mathbb{Z}$ is a $\partial$-nice grading. Then $(\partial, \partial_1,0,0,0\ldots )$ is a nice sequence for $M$ distinguishing vertices and hence $\operatorname{nice}(M) \le 1$. 

(3) Since $c_M^{-1}(\alpha)$ is connected, it must be an oriented path of length $\geq 0$. It follows from the non-degeneracy of $\partial$ that for any two vertices $v, w \in c_M^{-1}(\alpha)$, $\partial(v) \neq \partial (w)$. Then there is an injective $\partial$-nice grading on $M$, and hence $\operatorname{nice}(M) \le 1$. 
\end{proof}

Let $c : \Gamma \rightarrow Q$ and $c' : \Gamma' \rightarrow Q$ be two winding maps. Suppose that $v \in \Gamma_0$ and $v' \in \Gamma_0'$ satisfy $c(v) = c'(v')$. Then one can define the amalgam $\Gamma\sqcup_{v\sim v'}\Gamma'$ and associated quiver map $c\sqcup_{v\sim v'}c' : \Gamma\sqcup_{v\sim v'}\Gamma' \rightarrow Q$\footnote{For more details, see \cite[Definition 3.4]{jun2020quiver}.}. Note that if $c(\Gamma_1) \cap c(\Gamma_1') = \emptyset$ then $c\sqcup_{v\sim v'}c'$ is again a winding. The next proposition shows how nice sequences behave with respect to such amalgams.

\begin{pro}\label{p: gluing} 
Let $M$ and $N$ be $\mathbb{F}_1$-representations of $Q$ with $c_M : \Gamma_M \rightarrow Q$ and $c_N: \Gamma_N \rightarrow Q$ their associated windings. Suppose that $c_M(\Gamma_M)$ and $c_N(\Gamma_N)$ have disjoint arrow sets, and that there are vertices $u \in (\Gamma_M)_0$ and $v \in (\Gamma_N)_0$ with $c_M(u) = c_N(v)$. Let $A$ denote the $\mathbb{F}_1$-representation of $Q$ associated to the amalgam $c_M \sqcup_{u\sim v} : \Gamma_M \sqcup_{u\sim v}\Gamma_N \rightarrow Q$. If $\operatorname{nice}(M) < \infty$ and $\operatorname{nice}(N)< \infty$, then $\operatorname{nice}(A) < \infty$ and $A$ is nice. 
\end{pro}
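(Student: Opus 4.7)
The plan is to build a nice sequence on $\Gamma_A := \Gamma_M\sqcup_{u\sim v}\Gamma_N$ that distinguishes vertices, by concatenating suitable translates of witness sequences for $M$ and $N$. The key structural observation is that since $c_M(\Gamma_M)$ and $c_N(\Gamma_N)$ have disjoint arrow sets, any two arrows $\beta, \beta'$ in $\Gamma_A$ with $c_A(\beta) = c_A(\beta')$ must both lie in $\Gamma_M$ or both lie in $\Gamma_N$. Hence the nice-grading conditions for the two pieces are completely decoupled.

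Set $m := \operatorname{nice}(M)$, $n := \operatorname{nice}(N)$, and pick nice sequences $\underline{\partial}^M = \{\partial_i^M\}$, $\underline{\partial}^N = \{\partial_j^N\}$ realizing these lengths. Define a sequence $\underline{\partial}^A = \{\partial_k^A\}$ on $(\Gamma_A)_0$ in two blocks: for $0 \leq i \leq m$, let $\partial_i^A$ agree with $\partial_i^M - \partial_i^M(u)$ on $\Gamma_M$ and be identically zero on $\Gamma_N$; for $j \geq 0$, let $\partial_{m+1+j}^A$ be identically zero on $\Gamma_M$ and agree with $\partial_j^N - \partial_j^N(v)$ on $\Gamma_N$. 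Both definitions assign the value $0$ to the amalgamated vertex, so each $\partial_k^A$ is unambiguous, and (completing by zeros) we extend this to an infinite sequence.

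I then verify that each $\partial_i^A$ is $(\partial_0^A,\ldots ,\partial_{i-1}^A)$-nice by splitting into cases based on whether a matched pair of arrows lies in $\Gamma_M$ or $\Gamma_N$. On the "opposite" piece the grading is identically zero, so the required jump is trivially $0$; on the "native" piece the translation by a constant does not alter jumps, and matching lower-grading values translates directly into matching values of the corresponding $\partial^M$'s or $\partial^N$'s, so niceness of the original sequences transfers. Distinguishing vertices is similarly checked by cases: two vertices of $\Gamma_M$ are separated by some $\partial_i^A$ with $i\leq m$; two vertices of $\Gamma_N$ by some $\partial_{m+1+j}^A$ with $j \leq n$; and any $x \in \Gamma_M\setminus\{u\}$ is separated from any $y \in \Gamma_N$ (including the amalgam vertex) by some first-block grading, because $\partial_i^M(x) \neq \partial_i^M(u)$ for some $i \leq m$ while $\partial_i^A(y) = 0$. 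Symmetrically in the second block.

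The main (mild) obstacles are purely notational: ensuring translation-invariance of the niceness condition under passage from $\partial_i^M$ to $\partial_i^M - \partial_i^M(u)$, and confirming consistency at the amalgamated vertex. Both amount to the identities $\partial_i^M(u) - \partial_i^M(u) = 0 = \partial_j^N(v) - \partial_j^N(v)$ together with the fact that niceness conditions only depend on jumps $\partial(t(\beta)) - \partial(s(\beta))$. Once the existence of a vertex-distinguishing nice sequence is established, $\operatorname{nice}(A) \leq m+n+1 < \infty$ by Definition \ref{definition: length}, and niceness of $A$ follows from Proposition \ref{rmk: distinguish vertices}.
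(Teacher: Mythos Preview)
Your proof is correct and takes a genuinely different, more elementary route than the paper. The paper's argument goes through the universal $i$-nice grading machinery of Construction~\ref{con: universal}: it chooses $u=v$ as basepoint, argues that the disjoint arrow sets force an isomorphism $\mathcal{V}^{(i)}_A \cong \mathcal{V}^{(i)}_M \oplus \mathcal{V}^{(i)}_N$ compatible with the universal variables $Z^{(i)}$, $X^{(i)}$, $Y^{(i)}$, and then invokes Corollary~\ref{c: torsion} together with Remark~\ref{rmk: weave} to conclude. Your approach bypasses all of this by writing down the witnessing nice sequence explicitly, exploiting directly the observation that arrows with the same colour must lie in the same piece. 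This yields a cleaner proof and even a concrete bound $\operatorname{nice}(A)\le \operatorname{nice}(M)+\operatorname{nice}(N)+1$, which the paper's argument does not give (it only shows finiteness). The price you pay is that your argument is specific to this gluing situation, whereas the paper's proof illustrates how the universal construction reduces such questions to linear algebra in $\mathcal{V}^{(i)}$; that viewpoint is what drives the later results on trees and bands in Section~\ref{s: Euler}.
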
 

\begin{proof} 
We will write $\Gamma_A = \Gamma_M \sqcup_{u\sim v}\Gamma_N$ to ease notation. Choose the points $u$, $v$ and $u=v$ as basepoints for $M$, $N$ and $A$. Let $X^{(i)}$, $Y^{(i)}$ and $Z^{(i)}$ denote the $i$-nice variables (defined in Construction \ref{con: universal}) for $M$, $N$ and $A$. Let $\mathcal{B}_M$ and $\mathcal{B}_N$ be the cycle bases of $\Gamma_M$ and $\Gamma_N$ associated to any two spanning trees containing the vertices $u$ and $v$, respectively. Then $\mathcal{B}_M \sqcup \mathcal{B}_N$ can be considered as a cycle basis for $\Gamma_A$. Since $c(\Gamma_M)$ and $c(\Gamma_N)$ have no arrows in common, the inclusions $\Gamma_M \hookrightarrow \Gamma_A$ and $\Gamma_N \hookrightarrow \Gamma_A$ induce a sequence of group isomorphisms 
\begin{equation}\label{eq: decomposition}
\mathcal{V}^{(i)}_A \xrightarrow[\cong]{f^{(i)}} \mathcal{V}^{(i)}_M \oplus \mathcal{V}^{(i)}_N
\end{equation}
satisfying $f^{(i)}(Z_w^{(i)}) = X^{(i)}_w$ whenever $w \in (\Gamma_M)_0$ and $f^{(i)}(Z_w^{(i)}) = Y_w^{(i)}$ whenever $w \in (\Gamma_N)_0$. Since any two vertices of $\Gamma_M$ can be distinguished by a nice sequence for $M$, it follows that they can be distinguished by a nice sequence for $A$ as well. Similarly, any two vertices of $\Gamma_N$ and can be distinguished by a nice sequence for $A$. By Remark \ref{rmk: weave}, one may begin any nice sequence for $A$ with the (finitely-many) gradings necessary to distinguish these vertices. Hence, it only remains to show that a vertex $w \in (\Gamma_M)_0$ can be distinguished from a vertex $z \in (\Gamma_N)_0$ by a nice sequence for $A$. By Corollary \ref{c: torsion}, this is equivalent to showing that $Z^{(i)}_w- Z^{(i)}_z \neq 0$ for some $i$. If $Z^{(i)}_w- Z^{(i)}_z = 0$ for all $i$, then $f^{(i)}(Z^{(i)}_w- Z^{(i)}_z) = X^{(i)}_w - Y^{(i)}_z = 0$ for all $i$ as well. But this would imply that $X^{(i)}_w$ and $Y^{(i)}_z$ were both $0$ for all $i$ from the decomposition \eqref{eq: decomposition}. But then $w$ cannot be distinguished from $u$ and $z$ cannot be distinguished from $v$, a contradiction. Hence, $Z^{(i)}_w - Z^{(i)}_z \neq 0$ for some $i$, which concludes the proof.
\end{proof}

In general, determining the $\FF_1$-representations of $Q$ which admit a positive grading is a subtle problem. Below, we describe a family of representations which admit positive and injective nice gradings. In particular, such representations are nice. These representations are interesting because they are defined by windings $c :\Gamma \rightarrow Q$, where $H_1(\Gamma,\mathbb{Z})$ can have arbitrarily large rank. The main results of \cite{Haupt2012euler} explicitly depend on $\Gamma$ being a tree or a band.

\begin{pro}\label{p: connected fibers}
Let $M$ be a nilpotent $\FF_1$-representation of $Q$ with associated winding $c_M$. Suppose that $c_M^{-1}(\alpha)$ is connected for all $\alpha \in Q_1$, and that $\Gamma_M$ contains a set of $\mathbb{Z}$-linearly independent\footnote{When considered as elements of $H_1(\Gamma_M,\mathbb{Z})$.} cycles $\{ X_1,\ldots , X_n\}$ with following properties: 
\begin{enumerate} 
\item [(i)] The cycles $[\iota\circ H_1(c_M)](X_1)$, $\ldots , [\iota\circ H_1(c_M)](X_n)$ form a $\mathbb{Q}$-basis for $\mathbb{Q}\otimes_{\mathbb{Z}}\operatorname{Im}(\iota \circ H_1(c_M))$, where $\iota\circ H_1(c_M)$ is as in \eqref{eq: construction homology}.
\item[(ii)] 
For all $i$ we can write $X_i = p_i - q_i$, where $p_i$ and $q_i$ are directed paths of positive length in $\Gamma_M$ with common source and target, but no interior vertices in common. 
\item[(iii)] 
For each $i\le n$, either $c_M(p_i)$ or $c_M(q_i)$ consists of arrows that do not appear in $c_M(X_j)$ for $j \neq i$, where we consider $c_M(X_j)$ as a subquiver of $Q$.
\end{enumerate}  
Then $\operatorname{nice}(M)\le 1$ and $M$ is nice.
\end{pro}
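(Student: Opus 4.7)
The plan is to construct a nice sequence $(\partial_0, \partial_1)$ for $M$ that distinguishes every pair of distinct vertices of $\Gamma_M$; this yields $\operatorname{nice}(M) \le 1$ by Definition \ref{definition: length}, after which niceness of $M$ follows from Proposition \ref{rmk: distinguish vertices}. Throughout, I would invoke condition (iii) to assume, after swapping $p_i \leftrightarrow q_i$ where necessary, that the arrows in $c_M(p_i)$ are \emph{exclusive} to cycle $i$, i.e.\ do not occur in $c_M(X_j)$ for $j \neq i$. The winding property forces each nonempty fiber $c_M^{-1}(\alpha)$ to consist of vertices of in-degree and out-degree at most one, so the connectivity hypothesis further guarantees that every such fiber is a single directed path in $\Gamma_M$ — this is the key geometric fact that will make monotonicity of a grading along a fiber translate into vertex distinction.

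For $\partial_0$, I would use Remark \ref{r: arrows} to identify nice gradings of $M$ with functions $f : Q_1 \to \mathbb{Z}$ vanishing on $\operatorname{Im}(\iota \circ H_1(c_M))$. By condition (i), vanishing reduces to the finite $\mathbb{Q}$-linearly independent system $f(Y_i) = 0$ for $i = 1, \ldots, n$, where $Y_i = c_M(p_i) - c_M(q_i)$. First assign generic rational values to $f$ on all arrows of $Q$ outside $\bigcup_i c_M(p_i)$, then use the exclusivity from condition (iii) to solve for $f$ on the arrows of $c_M(p_i)$ cycle by cycle, independently across $i$ (since an exclusive arrow of $c_M(p_i)$ contributes only to the relation $f(Y_i) = 0$). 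Clearing denominators produces an integer-valued $\partial_0$.

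For $\partial_1$, I would exploit that a $\partial_0$-nice grading partitions arrows of $\Gamma_M$ by the refined datum $(c_M(\beta), \partial_0(s(\beta)), \partial_0(t(\beta)))$; equivalently, such gradings are precisely nice gradings for the refined winding $\sigma^{(1)} : \Gamma_M \to \Gamma^{(1)}$ of Construction \ref{con: universal}. The cycle relations become $\sigma^{(1)}(Y_i) = 0$ in $\mathbb{Z}\Gamma^{(1)}_1$, and the arrows in $\sigma^{(1)}(c_M(p_i))$ remain exclusive to cycle $i$, so I repeat the cycle-by-cycle solvability argument but now design $\partial_1$ so that its $\Delta$ is nonzero on every refined arrow class on which $\Delta^{\partial_0}$ vanishes. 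This ensures that along every fiber path $c_M^{-1}(\alpha)$ — which is a single oriented path by the first paragraph — at least one of $\partial_0, \partial_1$ is strictly monotonic, hence separates the vertices of that fiber. Globalizing via a walk in the connected graph $\Gamma_M$ and tracking the accumulated differences of both gradings then yields $(\partial_0(v) - \partial_0(u), \partial_1(v) - \partial_1(u)) \neq (0,0)$ for any pair of distinct vertices.

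The main obstacle will be arranging the $\partial_0$- and $\partial_1$-constructions compatibly: specifically, verifying that in the refined picture on $\Gamma^{(1)}$ one can still make $\Delta^{\partial_1}$ nonzero exactly where $\Delta^{\partial_0}$ vanished, without violating the relations $\sigma^{(1)}(Y_i) = 0$. This is where conditions (i) and (iii) enter essentially — the rank equality in (i) ensures no hidden cycle constraints beyond the $Y_i$, while the exclusivity in (iii) decouples the cycle systems so that free parameters remain on each $c_M(p_i)$-side to supply the required nonzero increment. Finally, I would invoke Corollary \ref{c: torsion} (or the universal property in Theorem \ref{t: universal property}) to record that the resulting pair $(\partial_0, \partial_1)$ indeed realizes the claimed nice sequence, completing the proof.
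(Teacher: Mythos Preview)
Your proposal has a genuine gap in the ``globalizing via a walk'' step. You correctly observe that if $\Delta^{\partial_0}_{\alpha}=0$ on some arrow colour $\alpha$, then the fiber $c_M^{-1}(\alpha)$ is a path along which $\partial_0$ is constant, and a $\partial_0$-nice $\partial_1$ with nonzero increment on that refined class will be monotone there. But this only separates vertices \emph{within a single fiber}. For two vertices $u,v$ joined by a walk traversing several colours, the accumulated pair
\[
\bigl(\partial_0(v)-\partial_0(u),\ \partial_1(v)-\partial_1(u)\bigr)
\]
is a signed sum of increments coming from different arrow classes, and nothing in your construction prevents both coordinates from cancelling to zero simultaneously. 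You flag this as the ``main obstacle'' but do not resolve it; invoking conditions (i) and (iii) again does not supply the missing argument.

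The paper sidesteps this difficulty entirely by aiming for a \emph{positive} nice grading $\partial_0$ from the outset. The inductive construction scales the previously-built grading by $d_n$ at each step so that the right-hand side of the $n$th cycle equation is a positive multiple of $d_n$, allowing the exclusive arrows of $c_M(p_n)$ to receive equal positive integer values. Once $\partial_0$ is positive (hence non-degenerate) and fibers are connected, Proposition~\ref{p: pos gradings}(3) applies directly: any two same-coloured arrows lie on a common directed path along which $\partial_0$ is strictly monotone, so the $\partial_0$-nice condition becomes vacuous and \emph{every} function $\Gamma_0\to\mathbb{Z}$ is $\partial_0$-nice---in particular an injective one. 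Your ``generic rational values then solve'' scheme is close in spirit, but without enforcing positivity (or at least non-degeneracy) of $\partial_0$ you lose exactly the leverage that makes the second step trivial.
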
 
\begin{proof}
For each $i$, write $p_i = \alpha_1^{(i)}\cdots \alpha_{d_i}^{(i)}$ and $q_i = \beta_1^{(i)}\cdots \beta_{e_i}^{(i)}$, where the $\alpha_j^{(i)}$ and $\beta_j^{(i)}$ are arrows in $\Gamma_M$. For each $i=1,\dots,n$, we let $Y_i=[\iota\circ H_1(c_M)](X_i)$. To begin, note that Property (i) implies that $\{ Y_1,\ldots , Y_n\}$ generates a full-rank subgroup of $\operatorname{Im}(\iota\circ H_1(c_M))$. This means that for any $x \in \operatorname{Im}(\iota \circ H_1(c_M))$, there exists a positive integer $m$ such that $mx$ is a $\mathbb{Z}$-linear combination of  $\{Y_1,\ldots , Y_n\}$. But by Remark \ref{r: arrows}, a nice grading on $M$ may be viewed as a function $ Q_1 \rightarrow \mathbb{Z}$ whose induced group homomorphism $\mathbb{Z}Q_1 \rightarrow \mathbb{Z}$ factors through $\operatorname{Im}(\iota \circ H_1(c_M))$. Let $\Delta : Q_1 \rightarrow \mathbb{Z}$ be any function such that the induced map (also denoted $\Delta$) satisfies $\Delta(Y_i) = 0$ for all $i$. Then in particular $0 = \Delta (mx) = m\Delta (x)$, hence $\Delta (x) = 0$ since $m$ is positive. It now follows from Property (ii) that a nice grading on $M$ is a map $\Delta : Q_1 \rightarrow \mathbb{Z}$ satisfying 
\[ 
\sum_{j=1}^{d_i}{\Delta(c_M(\alpha_j^{(i)}))} = \sum_{j=1}^{e_i}{\Delta(c_M(\beta_j^{(i)}))}, \text{ for all $i = 1,\ldots , n$.}
\] 
We now prove the claim by induction on $n$. If $n=1$, then defining $\Delta(\alpha_j^{(1)}) = e_1$ for all $j\le d_1$, $\Delta(\beta_j^{(1)}) = d_1$ for all $j \le e_1$, and $\Delta(\alpha) = 1$ otherwise yields such a map. This $\Delta$ is a positive nice grading, so that since $c_M^{-1}(\alpha)$ is connected for all $\alpha \in Q_1$, $M$ is nice by Proposition \ref{p: pos gradings}. Now suppose the claim holds for all $k < n$. By induction, we can define a map  
\[ 
\Delta' : \bigcup_{i<n}{(c_M(X_i))_1} \rightarrow \mathbb{N}\setminus\{0\}
\] 
such that  
\begin{equation}
\sum_{j=1}^{d_i}{\Delta'(c_M(\alpha_j^{(i)}))} = \sum_{j=1}^{e_i}{\Delta'(c_M(\beta_j^{(i)}))},\text{ for all $i<n$.}
\end{equation}
By multiplying $d_n$, we have the following:
\begin{equation}\label{e: soe strand}
\sum_{j=1}^{d_i}{d_n\Delta'(c_M(\alpha_j^{(i)}))} = \sum_{j=1}^{e_i}{d_n\Delta'(c_M(\beta_j^{(i)}))},\text{ for all $i<n$.}
\end{equation}
Define the map $\Delta$ to be the scalar multiple $\Delta = d_n\Delta'$:
\begin{equation}
\Delta : \bigcup_{i<n}{(c_M(X_i))_1} \rightarrow \mathbb{N}\setminus\{0\}, \quad c_M(\alpha^{i}_j) \mapsto d_n\Delta'(c_M(\alpha^{i}_j)).
\end{equation}
After possibly relabeling $p_n$ and $q_n$, we may assume via Property (iii) that $c_M(p_n)$ contains no arrows in $c_M(X_j)$ for $j<n$. Furthermore, since $c_M^{-1}(\alpha)$ is connected for all $\alpha \in Q_1$, Property (i) implies that $c_M(p_n)$ and $c_M(q_n)$ have no arrows in common. Then the condition that $\Delta$ extends to a nice grading on $\Gamma_M$ is precisely the requirement that there exist integers  
\begin{itemize}
\item $\Delta(c_M(\alpha_k^{(n)}))$ for $k = 1,\ldots d_n$, 
\item $\Delta(c_M(\beta_k^{(n)}))$, whenever $c_M(\beta_k^{(n)})$ not an arrow in $c_M(X_i)$ with $i<n$,  
\end{itemize}
\noindent  such that 
\begin{equation}\label{e: induction step}
\sum_{j=1}^{d_n}{\Delta(c_M(\alpha_j^{(n)}))} = \sum_{j=1}^{e_n}{\Delta(c_M(\beta_j^{(n)}))}.
\end{equation}
We construct such a $\Delta$ as follows: first, for any $c_M(\beta_k^{(n)})$ which are not arrows in $c(X_i)$ ($i<n$), define $\Delta(c_M(\beta_k^{(n)})) = d_n$. Then the right hand side of \eqref{e: induction step} is a positive integer divisible by $d_n$. Define 
\[ 
\Delta(c_M(\alpha_j^{(n)})) = \frac{1}{d_n} \sum_{j=1}^{e_n}{\Delta(c_M(\beta_j^{(n)}))}, \text{ for all $ j = 1,\ldots , d_n$.}
\] 
Finally, define $\Delta(\alpha) = 1$ for any remaining arrows $\alpha \in Q_1$. Then $\Delta$ is a positive grading on $M$, so that $\operatorname{nice}(M) \le 1$ by Proposition \ref{p: pos gradings}.
\end{proof}  

We now illustrate this result with some examples.  

\begin{myeg} 
Let $Q = \wild_3$. For the sake of illustration, let $\{ \alpha, \beta, \gamma\}$ denote the arrow set of $\wild_3$. Then the representation $M$ with coefficient quiver 
\[ 
\Gamma_M = 
\begin{tikzcd}
\bullet \arrow[dr,red,"\beta"] &            &        & \bullet \arrow[drr,black,"\gamma"] &       &       &  \bullet  \arrow[r,blue,"\alpha"]    & \bullet \arrow[r,blue,"\alpha"] &   \bullet     \arrow[dr,blue,"\alpha"]     &  &\\  
                                                & \bullet \arrow[dr,red,"\beta"]\arrow[urr,black,"\gamma"] &          & \bullet \arrow[rr,blue,"\alpha"] & &\bullet \arrow[dr,black,"\gamma"] \arrow[ur,blue,"\alpha"] \arrow[r,red,"\beta"] & \bullet \arrow[r,red,"\beta"] & \bullet \arrow[r,red,"\beta"] & \bullet \arrow[r,red,"\beta"] & \bullet & \\ 
\bullet \arrow[ur,black,"\gamma"] &           & \bullet \arrow[rr,red,"\beta"]\arrow[ur,blue,"\alpha"]&         &\bullet \arrow[ur,red,"\beta"] &     &   \bullet     &  &            &  &
\end{tikzcd}
\] 
satisfies the hypotheses of Proposition \ref{p: connected fibers} with 
\[ 
X_1 =  
\begin{tikzcd} 
 & & \bullet \arrow[drr,black,"\gamma"] & &  \\ 
\bullet \arrow[urr,black,"\gamma"] \arrow[dr,red,"\beta"] & & & & \bullet \\ 
& \bullet \arrow[rr,red,"\beta"] & & \bullet \arrow[ur,red,"\beta"] & 
\end{tikzcd}
\]  
\noindent and
\[ 
X_2 =  
\begin{tikzcd} 
& \bullet \arrow[rr,blue,"\alpha"] & &  \bullet \\ 
\bullet \arrow[ur,blue,"\alpha"] \arrow[rr,red,"\beta"] & & \bullet \arrow[ur,red,"\beta"] & 
\end{tikzcd}
\]  
Here, we interpret the cycles clockwise so that $[\iota \circ H_1(c_M)](X_1) = 2\gamma - 3\beta$ and $[\iota \circ H_1(c_M)](X_2) = 2(\alpha - \beta)$. We exclude the cycle
\[ 
Y=
\begin{tikzcd} 
& \bullet \arrow[r,blue,"\alpha"]  & \bullet \arrow[r,blue,"\alpha"] &  \bullet \arrow[dr,blue,"\alpha"] & \\ 
\bullet \arrow[ur,blue,"\alpha"] \arrow[r,red,"\beta"]   & \bullet \arrow[r,red,"\beta"] & \bullet \arrow[r,red,"\beta"]  & \bullet \arrow[r,red,"\beta"]  & \bullet
\end{tikzcd} 
\] 
since $[\iota\circ H_1(c_M)](Y) = 4(\alpha - \beta) = 2[\iota \circ H_1(c_M)](X_2)$.
Note that the $1$-dimensional subrepresentations of $M$ are precisely the sinks of $\Gamma_M$. The $2$-dimensional subrepresentations are either a direct sum of two simples or indecomposable: the indecomposables are parameterized by sets of the form $\{ s, t\}$, where $s$ and $t$ are vertices of $\Gamma_M$, $t$ is a sink, and there is exists an arrow $s \xrightarrow[]{\alpha} t$ in $\Gamma_M$. It follows that $\chi_1(M_\mathbb{C}) = 2$ and $\chi_2(M_\mathbb{C}) = 4$. Similar considerations allows one to compute the remaining Euler characteristics.
\end{myeg}

\begin{myeg} \label{definition: strand rep}
Let $Q = \mathbb{L}_2$, the quiver with one vertex and two loops. We will let $(\mathbb{L}_2)_1 = \{\alpha_1, \alpha_2\}$, with $\alpha_1$-colored arrows appearing in blue and $\alpha_2$-colored arrows appearing in red. An $\mathbb{F}_1$-representation $M$ of $Q$ will be called a \emph{$2$-strand representation} if there exists a $2\times d$-matrix $X = \left( \begin{array}{cccc} m_1 & m_2 & \cdots & m_d \\ n_1 & n_2 & \cdots & n_d  \end{array} \right)$ with entries in $\mathbb{Z}_{>0}$ such that $\Gamma_M$ can be described as follows:  
\[
\begin{tikzcd} 
\bullet \arrow[dr, blue] & & \bullet \arrow[dl, red] \\ 
 & \bullet \arrow[d, bend right = 60, swap, blue, "{\color{black}m_1}"] \arrow[d, bend left = 60, red, "{\color{black}n_1}"] &  \\  
 & \bullet \arrow[d, bend right = 60, swap, blue, "{\color{black}m_2}"] \arrow[d, bend left = 60, red, "{\color{black}n_2}"] &  \\ 
& \text{} & \\ 
& \vdots &  \\
 & \bullet \arrow[d, bend right = 60, swap, blue, "{\color{black}m_d}"] \arrow[d, bend left = 60, red, "{\color{black}n_d}"] &  \\ 
& \bullet \arrow[dr, blue] \arrow[dl, red] & \\ 
\bullet & & \bullet \\
\end{tikzcd}
\] 
where each colored arrow above represents an oriented path of length $m_i$ (resp. $n_i$), and an arrow with no label is allowed to be of arbitrary length. Up to translations, a nice grading on $M$ is a pair $(\Delta_1,\Delta_2) \in \mathbb{Z}^2$ satisfying 
\[ 
m_i\Delta_1 = n_i\Delta_2,\text{ for all $i = 1,\ldots , d$.}
\]
 Using Proposition \ref{p: connected fibers}, it follows that $M$ admits a positive grading if and only if $\operatorname{rank}(X) = 1$. Indeed, if $\operatorname{rank}(X) = 1$ then the columns of $X$ are $\mathbb{Q}$-linear multiples of each other and $\mathbb{Q}\otimes \operatorname{Im}(\iota \circ H_1(c_M)) = \mathbb{Q}(n_1\alpha_2 - m_1\alpha_1)$. In this case, we only need  
\[ 
X_1 = 
\begin{tikzcd} 
\bullet \arrow[d, bend right = 60, swap, blue, "{\color{black}m_1}"] \arrow[d, bend left = 60, red, "{\color{black}n_1}"]  \\ 
\bullet
\end{tikzcd}
\] 
to satisfy the hypotheses of Proposition \ref{p: connected fibers}. Note that Condition (iii) is vacuous since $n=1$. If $\operatorname{rank}(X)>1$, then there exist two columns $\left(\begin{array}{c}m_i \\ n_i\end{array} \right)$ and $\left(\begin{array}{c} m_j \\ n_j \end{array}\right)$ of $X$ that are linearly independent over $\mathbb{Q}$. In particular, 
\[ 
\det\left( \begin{array}{cc} m_i & m_j \\ -n_i & -n_j \end{array} \right)  = -\det\left( \begin{array}{cc} m_i & m_j \\ n_i & n_j \end{array} \right) \neq 0.
\]
 Then $m_i\Delta_1 = n_i\Delta_2$ and $m_j\Delta_1 = n_j\Delta_2$ implies
\[ 
\left(\begin{array}{cc} m_i & m_j \\ -n_i & -n_j \end{array} \right) \left(\begin{array}{c} \Delta_1 \\ \Delta_2 \end{array} \right) = \left( \begin{array}{c} 0 \\ 0 \end{array} \right),
\] 
 which has only the trivial solution $\left(\begin{array}{c} \Delta_1 \\ \Delta_2 \end{array} \right) = \left( \begin{array}{c} 0 \\ 0 \end{array} \right)$.
\end{myeg} 

The following are more explicit examples. 

\begin{myeg} 
Let $M$ be a representation $M=(M_0,f_1,f_2)$ of $\mathbb{L}_2$, where $M_0=\{0, 1,2,3\}$,
	\[
	\begin{tikzcd}
		\bullet \arrow[loop left,looseness=20,"f_1"]
		\arrow[loop right, looseness=20,"f_2"]
	\end{tikzcd}
	\]
	such that
	\[
	f_1(1)=2, \quad f_1(2)=3,\quad  f_1(3)=0, 
	\]
	and
	\[
	f_2(1)=2\quad f_2(2)=3,\quad f_2(3)=0.
	\]
	Then, $\Gamma_M$ can be described as follows ($f_1$ is in blue and $f_2$ is in red):  
	\[
	\begin{tikzcd} 
		& 1 \arrow[d, bend right = 60, swap, blue, "{\color{black}}"] \arrow[d, bend left = 60, red, "{\color{black}}"] &  \\  
		& 2 \arrow[d, bend right = 60, swap, blue, "{\color{black}}"] \arrow[d, bend left = 60, red, "{\color{black}}"] &  \\ 
		& 3  & 
	\end{tikzcd}
	\] 
		Then $M_\mathbb{C}$ is a representation of $\wild_2$ over $\mathbb{C}$ given as follows:
	\[
	\begin{tikzcd}
		\mathbb{C}^3 \arrow[loop left,looseness=10,"A_1"]
		\arrow[loop right, looseness=10,"A_2"]
	\end{tikzcd},
	\]
	where 
	\[
	A=A_1=A_2=\begin{bmatrix}
		0 & 0 & 0\\
		1 & 0 & 0\\
		0 & 1 & 0
	\end{bmatrix}
	\]
	So, in this case $\textrm{Gr}_{1}(M_\mathbb{C})$ consists of the lines of $\mathbb{C}^3$ invariant under the linear transformation $A$. One can easily see that this set consists only of the line spanned by the vector $\begin{bmatrix}
		0\\
		0\\
		1
	\end{bmatrix}$. It follows that $\chi_{1}(M_\mathbb{C})=1$. On the other hand, following the notation in Example \ref{definition: strand rep}, $M$ is a $2$-strand representation in $\Rep(\wild_2,\FF_1)_{\nil}$ with associated matrix 
	\[
	X=\begin{bmatrix}
		1 & 1\\
		1 & 1
	\end{bmatrix}
	\]
	It follows from the discussion in Example \ref{definition: strand rep} that $M$ is nice, in particular, we have
	\begin{equation}
		\chi_{1}(M_\mathbb{C}) = |\{ N \le M \mid \textbf{\textrm{dim}}(N) = 1 \}|=1.
	\end{equation}
When $\textbf{e}=2$, one may obtain $\chi_{\textbf{e}}(M_\mathbb{C})=1$ from the duality in $\mathbb{C}$ (or applying the same matrix computation as in the case for $d=1$). This is also clear as follows:
	\begin{equation}
		\chi_{2}(M_\mathbb{C}) = |\{ N \le M \mid \textbf{\textrm{dim}}(N) = 2 \}|=1
	\end{equation}
\end{myeg}  

\begin{myeg} 
Let $M$ be a representation $M=(M_0,f_1,f_2)$ of $\mathbb{L}_2$, where $M_0=\{0, 1,2,3\}$ such that
\[
	f_1(1)=3, \quad f_1(2)=f_1(3)=0, \quad 
	\]
	and
	\[
	f_2(1)=2\quad f_2(2)=3,\quad f_2(3)=0.
	\]
	Then, $\Gamma_M$ can be described as follows ($f_1$ is in blue and $f_2$ is in red):  
	\[
	\begin{tikzcd} 
		& 1 \arrow[dd, bend right = 60, swap, blue, "{\color{black}}"]  \arrow[dr, bend left = 30, red, "{\color{black}}"]&  \\  
		&  &  2 \arrow[dl, bend left = 30, red, "{\color{black}}"]\\ 
		& 3  & 
	\end{tikzcd}
	\] 
	Then $M_\mathbb{C}$ is a representation of $\wild_2$ over $\mathbb{C}$ given as follows:
	\[
	\begin{tikzcd}
		\mathbb{C}^3 \arrow[loop left,looseness=10,"A_1"]
		\arrow[loop right, looseness=10,"A_2"]
	\end{tikzcd},
	\]
	where 
	\[
A_1=\begin{bmatrix}
		0 & 0 & 0\\
		0 & 0 & 0\\
		1 & 0 & 0
	\end{bmatrix}, \quad A_2=\begin{bmatrix}
		0 & 0 & 0\\
		1 & 0 & 0\\
		0 & 1 & 0
	\end{bmatrix}
	\]
Again, in this case $\textrm{Gr}_{1}(M_\mathbb{C})$ consists only of the line spanned by the vector $\begin{bmatrix}
	0\\
	0\\
	1
\end{bmatrix}$.  It follows that $\chi_{1}(M_\mathbb{C})=1$. On the other hand, $M$ is a $2$-strand representation with associated matrix 
	\[
	X=\begin{bmatrix}
		1 \\
		2
	\end{bmatrix}
	\]
	It follows from Example \ref{definition: strand rep} that $M$ is nice, and we have
	\begin{equation}
		\chi_{1}(M_\mathbb{C}) = |\{ N \le M \mid \textbf{\textrm{dim}}(N) = 1 \}|=1.
	\end{equation}
\end{myeg}  

The following example illustrates how our gluing procedure in Proposition \ref{p: gluing} creates a nice grading. 

\begin{myeg} 
Let $Q = \mathbb{L}_4$, the quiver with one vertex and four loops. We will write $Q_1 = \{\alpha_1,\alpha_2,\alpha_3,\alpha_4\}$. Consider the $2$-strand representation 
\[ M=
\begin{tikzcd}  
 & v_1 \arrow[dl,blue,swap,"\alpha_1"]\arrow[dr,red,"\alpha_2"]  & \\ 
v_2 \arrow[dr,blue,"\alpha_1"] & & v_3 \arrow[dl,red,"\alpha_2"]  \\ 
 & v_4 &  \\
\end{tikzcd}.
\]   
By the discussion above, there is a nice sequence $\underline{\partial}$ distinguishing its vertices. For instance, if we define $\partial_0$ as 
\[ 
\partial_0 = 
\begin{tikzcd}  
 & 0 \arrow[dl,blue,swap,"\alpha_1"]\arrow[dr,red,"\alpha_2"]  & \\ 
1 \arrow[dr,blue,"\alpha_1"] & & 1 \arrow[dl,red,"\alpha_2"]  \\ 
 & 2 &  \\
\end{tikzcd},
\] 
then any map $(\Gamma_M)_0 \rightarrow \mathbb{Z}$ is a $\partial_0$-nice grading. Hence, we can take $\partial_1(v_i) = i$ for all $i \le 4$, and set $\partial_j = 0$ for all $j> 1$. 

Now define $N$ to be the string representation 
\[ 
N = 
\begin{tikzcd} 
v_1' \arrow[r,green,"\alpha_3"] & v_2' \arrow[r,"\alpha_4"] & v_3' & v_4' \arrow[l,green,"\alpha_3",swap] & v_5' \arrow[l,"\alpha_4",swap] \\
\end{tikzcd}.
\] 
Note that any vector $(\Delta_1,\Delta_2,\Delta_3,\Delta_4) \in \mathbb{Z}^4$ induces a nice grading on $N$, but no choice distinguishes $v_1'$ and $v_5'$. Nevertheless, there is a nice sequence $\underline{\partial}'$ for $N$ which distinguishes its vertices. For instance, we can set 
\[ 
\partial_0' =  
\begin{tikzcd} 
0 \arrow[r,green,"\alpha_3"] & 1 \arrow[r,"\alpha_4"] & 3 & 2 \arrow[l,green,"\alpha_3",swap] & 0 \arrow[l,"\alpha_4",swap] \\
\end{tikzcd},
\] 
so that any map $(\Gamma_N)_0 \rightarrow \mathbb{Z}$ is a $\partial_0'$-nice grading. As before, set $\partial_1'(v_i') = i$ for $i \le 5$ and $\partial_j' = 0$ for $j> 1$.  

We can glue $M$ and $N$ by identifying $v_1 = v_1'$ to obtain another representation $A$:  

\[ A=
\begin{tikzcd}  
 &(v_1=v_1') \arrow[dl,blue,swap,"\alpha_1"]\arrow[dr,red,"\alpha_2"]  \arrow[r,green,"\alpha_3"] & v_2' \arrow[r,"\alpha_4"] & v_3' & v_4' \arrow[l,green,"\alpha_3",swap] & v_5' \arrow[l,"\alpha_4",swap] \\ 
v_2 \arrow[dr,blue,"\alpha_1"] & & v_3 \arrow[dl,red,"\alpha_2"] & & & \\ 
 & v_4 & & & &\\
\end{tikzcd}.
\] 
Proposition \ref{p: gluing} ensures that $A$ admits a nice sequence which distinguishes vertices.   
\end{myeg}


\subsection{Low-dimensional niceness results} In this subsection we give new proofs of results first published in \cite{irelli2011quiver,Haupt2012euler}. These new proofs fix apparent gaps in the proofs of Lemmas 6.3-6.4 of \cite{Haupt2012euler}, see the appendix at the end of this article for more details. In particular, we show that an $\mathbb{F}_1$-representation $M$ of $Q$ is nice if $\Gamma_M$ is either a tree or a (primitive) affine Dynkin quiver of type $\tilde{\mathbb{A}}_n$. We then apply these results to classify the representations of finite nice length when $Q$ is a pseudotree.

\begin{rmk}\label{r.identification}
Let $M$ be an $\FF_1$-representation of $Q$. Suppose that $S \subseteq \Gamma_M$ is a subquiver that is a deformation retract, and let $N$ denote the representation induced by the restriction of $c_M$ to $S$. Then $\Gamma_N = S$, $H_1(\Gamma_N, \mathbb{Z}) \cong H_1(\Gamma_M, \mathbb{Z})$, and we can find a cycle basis for $\Gamma_N$ which is also a cycle basis for $\Gamma_M$. In other words, $H_1(\Gamma_N, \mathbb{Z})$ and $H_1(\Gamma_M, \mathbb{Z})$ correspond to the same subgroup of $\mathbb{Z}Q_1$. It follows that there is a commutative diagram 
 \begin{equation}
\begin{tikzcd} 
H_1(\Gamma_N, \mathbb{Z}) \arrow[r,"\psi"] \arrow[d,"\phi_N"]  & H_1(\Gamma_M, \mathbb{Z})  \arrow[d,"\phi_M"] \\ 
\mathbb{Z}Q_1 \arrow[r,"\operatorname{id}"] & \mathbb{Z}Q_1 
\end{tikzcd}
\end{equation}
where $\psi$ is an isomorphism, $\phi_M = \iota_1 \circ H_1(c_M)$ and  $\iota_N = \iota_2 \circ H_1(c_N)$, where $\iota_1$ (resp. $\iota_2)$ is the inclusion of the image of $H_1(c_M)$ (resp. $H_1(c_N)$) into $\mathbb{Z}Q_1$. This allows us to identify $\operatorname{coker}(\iota_1\circ H_1(c_M))$ with $\operatorname{coker}(\iota_2\circ H_1(c_N))$, which in turn allows us to identify nice gradings of $M$ with nice gradings of $N$. We will make such identifications freely throughout this text.
\end{rmk}

Recall that any winding $c:\Gamma \to Q$ can be considered as a certain coloring of $\Gamma$, where $\Gamma_0$ (resp.~$\Gamma_1$) is colored by $Q_0$ (resp.~$Q_1$). In what follows, we interchangeably use the terms colored quivers and coefficient quivers. 

\begin{lem} 
Let $c_1,\dots,c_m$ be a basis for $H_1(\Gamma_M)$. Suppose that $v$ is a vertex of $\Gamma_M$ that lies in none of the cycles $c_1,\ldots , c_m$. Let $\Gamma_M\setminus\{v\}$ denote the colored subquiver of $\Gamma_M$ obtained by deleting $v$ and all arrows incident to it. Then $\Gamma_M$ admits a non-trivial (resp. non-degenerate, positive, negative) grading if and only if $\Gamma_M\setminus\{v\}$ does.
\end{lem}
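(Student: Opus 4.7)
The plan is to match nice gradings on $c_M$ with nice gradings on the restriction $c'_M : \Gamma_M\setminus\{v\} \to Q$ via a bijection that preserves the arrow-differences $\Delta_\alpha$. Since each of the four listed properties is defined purely in terms of the family $\{\Delta_\alpha\}_{\alpha \in Q_1}$, the equivalence will then be immediate.

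First I would verify that $c_1,\dots,c_m$ remains a $\mathbb{Z}$-basis of $H_1(\Gamma_M\setminus\{v\},\mathbb{Z})$. The subquiver inclusion $\Gamma_M\setminus\{v\} \hookrightarrow \Gamma_M$ induces an injection $H_1(\Gamma_M\setminus\{v\}) \hookrightarrow H_1(\Gamma_M)$ of free abelian groups, since graphs have no $2$-cells. The $c_i$'s avoid $v$ by hypothesis, so they sit inside $H_1(\Gamma_M\setminus\{v\})$ and remain $\mathbb{Z}$-linearly independent there. They also span, because any cycle supported in $\Gamma_M\setminus\{v\}$ is a fortiori a cycle in $\Gamma_M$, hence a $\mathbb{Z}$-linear combination of the $c_i$'s, and this combination already lives in $H_1(\Gamma_M\setminus\{v\})$.

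Next I would invoke Remark \ref{r: arrows} (working component-by-component if necessary): a nice grading of $c_M$ (resp.~of $c'_M$) is, up to a global translation, the same datum as a group homomorphism $\Delta : \mathbb{Z}Q_1 \to \mathbb{Z}$ that vanishes on $[\iota \circ H_1(c_M)](c_i)$ (resp.~on $[\iota \circ H_1(c'_M)](c_i)$) for every $i$. Since each $c_i$ uses only arrows not incident to $v$, these two pushforwards are literally equal as elements of $\mathbb{Z}Q_1$, so the two constraint systems coincide. Equivalently, the subgroups $\operatorname{Im}(\iota \circ H_1(c_M))$ and $\operatorname{Im}(\iota \circ H_1(c'_M))$ of $\mathbb{Z}Q_1$ are identical, so the groups $\mathcal{V}^{(0)}_M$ and $\mathcal{V}^{(0)}_{M'}$ of Construction \ref{con: universal} agree (where $M'$ denotes the $\FF_1$-representation attached to $c'_M$). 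By Theorem \ref{t: universal property} with a common basepoint chosen in $\Gamma_M\setminus\{v\}$, this yields a bijection between nice gradings on the two sides under which the value $\Delta_\alpha = \Delta(\alpha)$ is preserved. The four properties non-trivial, non-degenerate, positive, negative all reduce to the same conditions on the single function $\Delta$, so the ``if and only if'' follows.

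The main obstacle I anticipate is bookkeeping around connectedness: removing $v$ can disconnect $\Gamma_M$ into several components (in fact, since $v$ lies in no cycle, each arrow incident to $v$ is a bridge), in which case one must match gradings component-by-component and choose a basepoint per component. The identification of the cokernel groups described above and the application of Theorem \ref{t: universal property} go through with only notational modifications (compatible with Remark \ref{r.identification}), so this is a cosmetic complication rather than a genuine difficulty; the core of the argument is the equality of the two constraint subgroups of $\operatorname{Hom}(\mathbb{Z}Q_1,\mathbb{Z})$ established above.
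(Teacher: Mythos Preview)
Your proposal is correct and follows essentially the same approach as the paper, which dispatches the lemma in one line by observing that $\Gamma_M\setminus\{v\}$ is a deformation retract of $\Gamma_M$ and invoking Remark~\ref{r.identification}. Your version is more explicit (unwinding the content of that remark via Remark~\ref{r: arrows} and Theorem~\ref{t: universal property}) and is more careful about the possibility that deleting $v$ disconnects $\Gamma_M$, a point the paper's one-line proof glosses over.
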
  
\begin{proof} 
Since $\Gamma_M\setminus\{v\}$ is a deformation retract of $\Gamma_M$, this directly follows from Remark \ref{r.identification}.
\end{proof}

\begin{lem}\label{l: uni walks} 
Let $c : \Gamma \rightarrow Q$ be a winding with $\Gamma$ a Dynkin quiver of type $\mathbb{A}_n$, and let $M$ denote the associated representation. Pick a basepoint $b \in \Gamma_0$. Then there exists an $N$ such that for all $i \geq N$:
\begin{enumerate} 
\item The universal $i$-nice grading $X^{(i)} : \Gamma_0 \rightarrow \mathcal{V}^{(i)}_M$ is injective. 
\item If the distance from $v$ to $b$ is $d$, then $X^{(i)}_v$ is a linear combination of $d$ distinct elements of $Q^{(i)}_1$ with non-zero coefficients.
\end{enumerate}
 In particular, from Corollary \ref{c: torsion}, $\operatorname{nice}(M)<\infty$ and $M$ is nice.
\end{lem}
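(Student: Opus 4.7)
Since $\Gamma$ is a tree, $H_1(\Gamma,\mathbb{Z}) = 0$, so by Construction~\ref{con: universal} we have $\mathcal{V}^{(i)}_M = \mathbb{Z}\Gamma^{(i)}_1$ for every $i$ — a free abelian group on the arrow set of $\Gamma^{(i)}$. The plan is to reduce the lemma to the stronger claim that for some $N$, the winding $\sigma^{(i)} : \Gamma \to \Gamma^{(i)}$ is injective on arrows for all $i \geq N$. Granted this, the $d$ arrows traversed by the unique simple $\Gamma$-path from $b$ to $v$ map to $d$ distinct basis elements of $\mathcal{V}^{(i)}_M$; then the signed-path sum defining $X^{(i)}_v$ is a $\{\pm 1\}$-combination of these $d$ basis elements, yielding (2). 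Moreover, if $X^{(i)}_u = X^{(i)}_v$ for distinct $u,v \in \Gamma_0$, the signed sum along the unique simple $\Gamma$-path from $u$ to $v$ would equal $X^{(i)}_v - X^{(i)}_u = 0$, but this is a nonzero linear combination of distinct basis elements — contradiction — giving (1).

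Arrow-injectivity of $\sigma^{(i)}$ is forced by vertex-injectivity of $X^{(i-1)}$: if $\alpha \neq \beta$ in $\Gamma_1$ satisfy $\sigma^{(i)}(\alpha) = \sigma^{(i)}(\beta)$, then $c(\alpha) = c(\beta)$, so the winding condition gives $s(\alpha) \neq s(\beta)$; but the same identification yields $X^{(i-1)}_{s(\alpha)} = X^{(i-1)}_{s(\beta)}$, contradicting injectivity of $X^{(i-1)}$ on $\Gamma_0$. So it suffices to show $X^{(i_0)}$ is injective on $\Gamma_0$ for some $i_0$; then the lemma holds for all $i \geq i_0 + 1$.

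To show eventual vertex-injectivity, first note that Remark~\ref{rmk: universal}(3) forces the chain $\ker(X^{(0)}) \supseteq \ker(X^{(1)}) \supseteq \cdots$ of equivalence relations on the finite set $\Gamma_0$ to stabilize, and by Corollary~\ref{c: torsion} the stable equivalence relation consists exactly of those pairs that no nice sequence can distinguish. The main obstacle is therefore to prove that for $\Gamma$ of type $\mathbb{A}_n$, any two distinct vertices admit a distinguishing nice sequence. By Remark~\ref{r: arrows}, a nice grading on a tree module is specified freely by any function $\Delta : Q_1 \to \mathbb{Z}$, and the induced difference $\partial(v) - \partial(u)$ is obtained by evaluating $\Delta$ on $X^{(0)}_v - X^{(0)}_u \in \mathbb{Z}Q_1$; when this element is nonzero, a single coordinate choice of $\Delta$ already distinguishes $u$ and $v$.

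The delicate case is the ``balanced'' one, where the signed sum of $c$-images along the $u$-to-$v$ path vanishes in $\mathbb{Z}Q_1$. Here my approach is induction on the length of the $u$-to-$v$ path (equivalently, on $n$), combined with Remark~\ref{rmk: weave}: the balancing condition pairs up arrows along the path with matching $c$-image and opposite orientation, and one selects an initial nice grading $\partial_0$ (for instance, a signed-distance function adapted to the winding, or a positive grading coming from Proposition~\ref{p: pos gradings}) whose induced classes on the source/target sets of these paired arrows are distinct enough that the $\partial_0$-niceness constraint on $\partial_1$ becomes vacuous on the relevant arrows; a $\partial_0$-nice grading $\partial_1$ with $\partial_1(u) \neq \partial_1(v)$ can then be written down directly, and any inductive nice sequences coming from shorter sub-paths can be interleaved by Remark~\ref{rmk: weave}. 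The combinatorial construction of $\partial_0$ in this balanced case — guaranteeing that each successive niceness condition can be satisfied while still separating $u$ from $v$ — is the principal technical difficulty; the linear structure of $\mathbb{A}_n$ is what makes it tractable, since all pairings of oppositely-signed $c$-equivalent arrows sit inside a single path with a totally ordered set of positions.
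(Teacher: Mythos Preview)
Your reduction is correct and matches the paper: both arguments observe that vertex-injectivity of $X^{(i-1)}$ forces arrow-injectivity of $\sigma^{(i)}$, from which (2) follows immediately, so everything hinges on showing that $X^{(i)}$ is eventually injective on $\Gamma_0$. Your use of the stabilizing chain of kernels and Corollary~\ref{c: torsion} to reduce to ``every pair of distinct vertices is distinguished by \emph{some} nice sequence'' is also fine.

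The gap is in your ``balanced'' case. You correctly identify it as the heart of the argument, but then you do not actually carry it out: you only assert that a suitable $\partial_0$ exists (suggesting a signed-distance function or invoking Proposition~\ref{p: pos gradings}, whose hypotheses are not available here), and that the $\mathbb{A}_n$ linear order makes the pairing ``tractable''. None of this is a construction, and the proposal as written stops precisely where the difficulty begins. In particular, the inductive step you describe --- weaving shorter-path sequences via Remark~\ref{rmk: weave} --- does not by itself produce the separating grading for the full pair $(u,v)$; you still need to exhibit the $\partial_0$ that breaks enough of the $c$-equivalences among the paired arrows, and you have not done so.

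The paper avoids building explicit nice sequences entirely and instead works directly with the universal gradings. Taking $u,v$ to be the endpoints of $\Gamma$ with $b=u$, one has $0 = X^{(i)}_v = \epsilon_{n-1}\alpha_{n-1}^{(i)} + X^{(i)}_{s(\alpha_{n-1}^{\epsilon_{n-1}})}$ in the free group $\mathbb{Z}\Gamma^{(i)}_1$, so for each $i$ some earlier arrow $\alpha_j$ must satisfy $\alpha_j^{(i)} = \alpha_{n-1}^{(i)}$ with $\epsilon_j = -\epsilon_{n-1}$. A pigeonhole argument then shows a \emph{single} $j$ works for all $i$ (since $\alpha_j^{(i)} = \alpha_{n-1}^{(i)}$ is downward-closed in $i$ by Remark~\ref{rmk: universal}(3)). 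This yields $X^{(i)}_{s(\alpha_j^{\epsilon_j})} = X^{(i)}_{s(\alpha_{n-1}^{\epsilon_{n-1}})}$ for all $i$, so by induction on $n$ the shorter subpath between these two vertices contains a forbidden subquiver $\bullet \xrightarrow{\alpha}\bullet\xleftarrow{\alpha}\bullet$ or $\bullet\xleftarrow{\alpha}\bullet\xrightarrow{\alpha}\bullet$, contradicting the winding condition. This is the missing idea in your argument.
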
 
\begin{proof} 
Write $\Gamma = \alpha_1^{\epsilon_1}\cdots \alpha_{n-1}^{\epsilon_{n-1}}$ throughout. We start by proving the following claim: if there is no such $N$ such that (1) holds for all $i\geq N$, then $\Gamma$ contains distinct arrows $\beta, \gamma \in \Gamma_1$ with $c(\beta) = c(\gamma)$ and either $s(\beta) = s(\gamma)$ or $t(\beta) = t(\gamma)$, contradicting the assumption that $c$ is a winding.
 The cases $n = 2, 3$ can be verified through direct computation. Suppose the claim holds for all $k \leq n$. If no such $N$ exists then Corollary \ref{c: torsion} implies that there exist vertices $u, v \in \Gamma_0$ such that $X^{(i)}_u = X^{(i)}_v$ for all $i$. Indeed, $X^{(i)}_u \neq X^{(i)}_v$ implies $X^{(i+1)}_u \neq X^{(i+1)}_v$ since $H_1(\tau^{(i+1)})(X^{(i+1)}_z) = X^{(i)}_z$ for all $z \in \Gamma_0$ (see Remark \ref{rmk: universal}). If $\{u,v \} \neq \{ s(\alpha_1^{\epsilon_1}), t(\alpha_{n-1}^{\epsilon_{n-1}}) \}$ then the claim follows from induction, so without loss of generality assume $u = s(\alpha_1^{\epsilon_1})$ and $v = t(\alpha_{n-1}^{\epsilon_{n-1}})$. By Remark \ref{r: basepoint} we may assume $b = u$ so that $X^{(i)}_u = X^{(i)}_v = 0$ for all $i$. Since $\Gamma$ is simply connected as a topological space, $\mathcal{V}^{(i)}_M$ is contained in $\mathbb{Z}Q_1^{(i)}$ for each $i$. Furthermore,  
\[0 = X^{(i)}_v = \epsilon_{n-1}\alpha_{n-1}^{(i)} + X^{(i)}_{s(\alpha_{n-1}^{\epsilon_{n-1}})}.\]  
This implies that there exists a $j <n-1$ such that the following equations hold for all $i$:
\begin{equation}\label{e: walks1}
\epsilon_j + \epsilon_{n-1} = 0,
\end{equation}
\begin{equation}\label{e: walks2}
\alpha_j^{(i)} = \alpha_{n-1}^{(i)}.
\end{equation} 
Indeed, for each $i$ it is clear that a $j$ depending on $i$ exists. To prove that $j$ can be chosen independently of $i$, for each $j< n-1$ let $S_j = \{ i \in \mathbb{N} \mid \text{\eqref{e: walks1} and \eqref{e: walks2} hold for $\alpha_j^{(i)}$} \}$. By the pigeonhole principle, $S_j$ must be unbounded for a fixed $j$. But then \eqref{e: walks2} implies that $\alpha_j^{(i')} = \alpha_{n-1}^{(i')}$ for all $i'<i$, again using Remark \ref{rmk: universal}. It follows that $S_j$ is an unbounded, successor-closed subset of the poset $(\mathbb{N},\le)$, and so $S_j = \mathbb{N}$. Having established the existence of such a $j$, the definition of $Q^{(i)}$ implies
\[ 
X^{(i)}_{s(\alpha_j^{\epsilon_j})} = X^{(i)}_{s(\alpha_{n-1}^{\epsilon_{n-1}})},
\] 
which in turn implies that the sub-walk from $s(\alpha_j^{\epsilon_j})$ to $s(\alpha_{n-1}^{\epsilon_{n-1}})$ contains the desired subquiver by the induction hypothesis. Hence Claim (1) holds, since $c: \Gamma \rightarrow Q$ is a winding. So, choose an $N'$ such that $X^{(i)}$ is injective for all $i \geq N'$. Then by construction $\sigma^{(N'+1)}: \Gamma \rightarrow \Gamma^{(N'+1)}$ will be injective on arrows. Hence, Claims (1) and (2) will hold for $N = N'+1$. 
\end{proof} 

The following corollary confirms the truth of \cite[Lemma 6.3]{Haupt2012euler}. See the appendix at the end of this article for a discussion of the original proof.

\begin{cor}[cf. Lemma 6.3 of \cite{Haupt2012euler}]\label{c: distinguishing trees}
Let $c : \Gamma \rightarrow Q$ be a winding with associated representation $M$. Suppose that $\Gamma$ is a tree. Then $\operatorname{nice}(M)<\infty$ and $M$ is nice.
\end{cor}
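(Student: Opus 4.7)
The plan is to reduce the general tree case to Lemma \ref{l: uni walks} by restricting to paths between pairs of distinct vertices. Given distinct $u,v \in \Gamma_0$, let $P$ denote the unique undirected path in $\Gamma$ from $u$ to $v$; this is a Dynkin quiver of type $\mathbb{A}_n$ for some $n$, and the restriction $c|_P : P \rightarrow Q$ is still a winding, with associated representation $M|_P$. After choosing a basepoint $b \in P_0$ (which by Remark \ref{r: basepoint} is no loss of generality), Lemma \ref{l: uni walks} produces some $N$ such that the universal $N$-nice grading $X^{(N)}_{M|_P} : P_0 \rightarrow \mathcal{V}^{(N)}_{M|_P}$ is injective; in particular $X^{(N)}_{M|_P, u} \neq X^{(N)}_{M|_P, v}$.

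The next step is to lift this distinction to the universal $N$-nice grading of $M$. Since $\Gamma$ and $P$ are both trees, $H_1(\Gamma,\mathbb{Z}) = H_1(P,\mathbb{Z}) = 0$, so the formula in Construction \ref{con: universal} collapses to $\mathcal{V}^{(i)}_M = \mathbb{Z}\Gamma^{(i)}_1$ and $\mathcal{V}^{(i)}_{M|_P} = \mathbb{Z}P^{(i)}_1$ for every $i \geq 0$ (the relation subgroup and the torsion quotient are both trivial). One checks inductively that the constructions respect the inclusion $P \hookrightarrow \Gamma$: since the universal $(i-1)$-nice gradings agree on $P_0$, the quiver $P^{(i)}$ embeds naturally as a subquiver of $\Gamma^{(i)}$, and this inclusion induces an injection $\phi^{(i)} : \mathcal{V}^{(i)}_{M|_P} \hookrightarrow \mathcal{V}^{(i)}_M$ satisfying $\phi^{(i)}(X^{(i)}_{M|_P, w}) = X^{(i)}_{M,w}$ for all $w \in P_0$ (both sides being the signed sum of arrows along the unique $b$-to-$w$ path, which lies entirely in $P^{(i)}$). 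Applying this at $i = N$ and using injectivity of $\phi^{(N)}$ yields $X^{(N)}_{M,u} \neq X^{(N)}_{M,v}$, so by Corollary \ref{c: torsion} there is a nice sequence for $M$ distinguishing $u$ and $v$.

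Since $\Gamma_0$ is finite there are only finitely many pairs $(u,v)$ to consider. Concatenating the (finite truncations of the) resulting distinguishing nice sequences produces a single finite nice sequence for $M$ that distinguishes all vertices; the key observation here, which is the content of Remark \ref{rmk: weave}, is that an $S$-nice grading is automatically $T$-nice for any $T \supseteq S$, so the concatenation remains a valid nice sequence. This shows $\operatorname{nice}(M) < \infty$, and then $M$ is nice by Proposition \ref{rmk: distinguish vertices}.

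The main technical obstacle is verifying the inductive claim that $P^{(i)} \hookrightarrow \Gamma^{(i)}$ embeds as a subquiver compatibly with the universal gradings; this rests on the vanishing $H_1(\Gamma,\mathbb{Z}) = H_1(P,\mathbb{Z}) = 0$, which forces all the groups $\mathcal{V}^{(i)}$ to be free abelian on the arrows of the corresponding iterated quiver and removes any ambiguity coming from the torsion quotient appearing in Construction \ref{con: universal}.
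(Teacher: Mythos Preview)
Your proof is correct and follows essentially the same approach as the paper: both restrict to the unique path $P$ between a given pair of vertices, invoke Lemma \ref{l: uni walks} on $c|_P$, and use that the universal $i$-nice variables of vertices in $P$ coincide for $c$ and for $c|_P$. The paper states this compatibility tersely (``since $\Gamma$ is retractable''), whereas you supply the inductive verification that $P^{(i)}\hookrightarrow\Gamma^{(i)}$ using $H_1(\Gamma,\mathbb{Z})=H_1(P,\mathbb{Z})=0$; you also make explicit the final step of weaving together the pairwise distinguishing sequences via Remark \ref{rmk: weave}, which the paper leaves implicit in its proof by contradiction.
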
 
\begin{proof} 
Suppose that no nice sequence distinguishes $x, y \in \Gamma_0$. Let $p$ denote the unique walk of minimal length from $x$ to $y$, and select $x$ as the basepoint. Since $\Gamma$ is retractable, Remark \ref{r.identification} implies that the $i$-nice variables of vertices in $p$ are the same for $c$ and the restricted representation $c|_p$, for all $i \geq 0$. In other words, the restricted representation $c|_p : p \rightarrow Q$ cannot distinguish $x$ and $y$, contradicting Lemma \ref{l: uni walks}.
\end{proof}

\begin{mydef} \label{definition: primitive}
Let $c : \Gamma \rightarrow Q$ be a winding with $\Gamma$ an affine Dynkin quiver of type $\tilde{\mathbb{A}}_n$. Write $\Gamma = \beta_1^{\delta_1}\cdots \beta_d^{\delta_d}$, where $s(\beta_1^{\delta_1}) = t(\beta_d^{\delta_d})$. If the cycle $c(\Gamma)$ is primitive (in the sense that $c(\Gamma) = c(\beta_1)^{\delta_1}\cdots c(\beta_d)^{\delta_d} \neq q^{k}$ for a cycle $q$ of $Q$ with $k>1$) then we say that $c$ is \emph{primitive}. We make a similar definition for a cycle in a general winding.
\end{mydef}

The following lemma confirms the truth of \cite[Lemma 6.4]{Haupt2012euler} in the case of an $\mathbb{F}_1$-representation. See the appendix at the end of this article for a discussion of the original proof.

\begin{mythm}[cf. Lemma 6.4 of \cite{Haupt2012euler}]\label{l: distinguishing bands}
Let $c : \Gamma \rightarrow Q$ be a winding with $\Gamma$ an affine Dynkin quiver of type $\tilde{\mathbb{A}}_n$. Let $M$ be the associated $\mathbb{F}_1$-representation of $Q$. Then $\operatorname{nice}(M)<\infty$ if and only if $c$ is primitive.
\end{mythm}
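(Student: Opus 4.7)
The plan is to prove each direction of the biconditional separately, using the universal nice gradings $X^{(i)}$ from Construction~\ref{con: universal} together with the distinguishability criterion of Corollary~\ref{c: torsion}.

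For the direction ``$c$ non-primitive $\Rightarrow \operatorname{nice}(M) = \infty$'', suppose $c(\Gamma) = q^k$ in $Q$ for some cycle $q$ of length $m = d/k$ with $k>1$, so that the sequence $((c(\beta_\ell),\delta_\ell))_\ell$ is $m$-periodic. I will show by induction on $i \geq 0$ that $X^{(i)}_{v_\ell} = X^{(i)}_{v_{\ell+m}}$ for every $\ell$. At each level, the difference $X^{(i)}_{v_{\ell+m}} - X^{(i)}_{v_\ell}$ equals $\tilde{q}^{(i)} := \sum_{s=1}^{m} \delta_s \sigma^{(i)}(\beta_s)$ in $\mathbb{Z}\Gamma^{(i)}_1$. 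The inductive hypothesis together with the $m$-periodicity of $(c,\delta)$ forces $\sigma^{(i)}(\beta_{s+m}) = \sigma^{(i)}(\beta_s)$ for all $s$, and hence $\sigma^{(i)}(\Gamma) = (d/m)\,\tilde{q}^{(i)}$. Thus $\tilde{q}^{(i)}$ is torsion in $G^{(i)} = \mathbb{Z}\Gamma^{(i)}_1 / \langle \sigma^{(i)}(\Gamma)\rangle$ and vanishes in $\mathcal{V}^{(i)}_M = G^{(i)}/t(G^{(i)})$. Corollary~\ref{c: torsion} then implies that $v_\ell$ and $v_{\ell+m}$ cannot be distinguished by any nice sequence, so $\operatorname{nice}(M) = \infty$.

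For the converse, ``$c$ primitive $\Rightarrow \operatorname{nice}(M) < \infty$'', I argue contrapositively. Assume $\operatorname{nice}(M) = \infty$ and define the equivalence relation $\sim$ on $\Gamma_0$ by $u \sim v$ iff $X^{(i)}_u = X^{(i)}_v$ for all $i$; by Corollary~\ref{c: torsion}, $\sim$ is nontrivial. Since $\Gamma_0$ is finite, the tower $(\Gamma^{(i)})$ stabilizes, producing a surjective winding $\pi: \Gamma \to \bar{\Gamma}$ and an induced winding $\bar{c}: \bar{\Gamma} \to Q$ with $\bar{c}\pi = c$. The crux is to show that $\bar{\Gamma}$ must itself be an affine Dynkin quiver of type $\tilde{\mathbb{A}}_m$ for some $m \mid d$ with $m < d$, and that $\pi$ realizes $\Gamma$ as the $(d/m)$-fold cyclic cover of $\bar{\Gamma}$. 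Once this is established, $c = \bar{c}\pi$ has period $m$ as a word, contradicting primitivity. To extract the cyclic-cover structure, I would examine the closed walk $\pi(v_0), \pi(v_1), \ldots, \pi(v_d) = \pi(v_0)$ in $\bar{\Gamma}$ and exploit the winding conditions on both $\pi$ and $\bar{c}$ to rule out branching at vertices of $\bar{\Gamma}$.

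The main obstacle is this second direction: extracting an honest rotational symmetry of $(\Gamma, c)$ from the \emph{a priori} unstructured equivalence $\sim$. While a non-primitive $c$ trivially produces such a symmetry (by the first direction), the reverse implication requires showing that every vertex of $\bar{\Gamma}$ has degree exactly two. I expect this to follow from a careful accounting of how the (at most two) incident arrows at each preimage $v \in \pi^{-1}(\bar{v})$ collapse in $\bar{\Gamma}$, using the winding of $\bar{c}$ (which forces the outgoing and incoming arrows at $\bar{v}$ to have distinct $Q$-colors) together with the stability of $\pi$ (no further collapsing is possible). Once $\bar{\Gamma}$ is shown to be a cycle, the uniformity of the cover follows by traversing $\Gamma$ once and invoking the winding of $\bar{c}$ to lift the walk uniquely.
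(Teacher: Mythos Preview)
Your forward direction (non-primitive $\Rightarrow \operatorname{nice}(M)=\infty$) is correct and is essentially the paper's argument: both observe that the image of the generating cycle of $H_1(\Gamma)$ in $\mathbb{Z}\Gamma^{(i)}_1$ is $k$ times a period-$m$ piece, so that piece becomes torsion in $\mathcal{V}^{(i)}_M$ and the $X^{(i)}$-variables are $m$-periodic for every $i$.

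For the converse, the paper takes a rather different route from yours. It fixes an indistinguishable pair $u,v$ of \emph{minimal} distance, writes $\Gamma$ as the union of two paths $p,q$ from $u$ to $v$, and proves two things: (i) a combinatorial claim that primitivity forces some $i$ with $\{X^{(i)}_w : w\in q_0\}\not\subseteq\{X^{(i)}_z : z\in p_0\}$ (shown by tracing $q^{-1}$ through the variables of $p$ and using the winding condition to force $c(q^{-1})=c(p)^m$ otherwise), and (ii) after ``cutting'' $\Gamma$ open at $v$ to a tree $T$, Lemma~\ref{l: uni walks} on $\mathbb{A}_n$-windings applied to $p$ and $q$ plus a rank-$2$ linear-algebra argument produces an explicit nice sequence with $\partial_{N+1}(u)\neq\partial_{N+1}(v)$, a contradiction. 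So the paper reduces to the tree case and finishes by evaluation; it never analyzes the stable quotient $\bar\Gamma$.

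Your structural approach via the stabilized tower is appealing, but the gap you flag is real and is the crux of the argument. Showing that the \emph{stable} $\bar\Gamma=\Gamma^{(N)}$ is of type $\tilde{\mathbb{A}}_m$ is not bookkeeping: intermediate $\Gamma^{(i)}$ can genuinely have vertices of degree $>2$ (e.g.\ for $Q=\wild_2$ with $c=\alpha\alpha\beta\beta$, the quiver $\Gamma^{(1)}$ has a degree-four vertex even though $c$ is primitive), so you need a mechanism by which \emph{stability} rules this out. Concretely, if some $\bar v$ has degree $\geq 3$ then $\operatorname{rank} H_1(\bar\Gamma)\geq 2$ while $\operatorname{Im}(\iota^{(N)}\circ H_1(\sigma^{(N)}))$ has rank $\leq 1$; you would then have to show that the resulting ``extra'' cycles in $\bar\Gamma$ are detected by $X^{(N)}$, i.e.\ that every such cycle arises (up to $\mathbb{Q}$-multiples of $\sigma^{(N)}_*(\text{cycle})$) as $\sigma^{(N)}(w_u)-\sigma^{(N)}(w_v)$ for some pair with $\sigma^{(N)}(u)=\sigma^{(N)}(v)$. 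This is plausible but not automatic, because a winding is locally injective rather than a covering, so closed walks in $\bar\Gamma$ need not lift to $\Gamma$. Until that step is supplied, the converse remains a sketch; the paper's minimal-distance-plus-tree argument, while more hands-on, avoids this difficulty entirely.
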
  
\begin{proof} 
$(\implies)$ Suppose that $c$ is not primitive. Then there exists a cycle $q = \alpha_1^{\epsilon_1}\cdots \alpha_d^{\epsilon_d}$ in $Q$ such that $c(\Gamma) = q^k$ for some $k>1$. Since $c : \Gamma \rightarrow Q$ factors through the inclusion $q \hookrightarrow Q$, we may assume without loss of generality that $q = Q$. Then $c : \Gamma \rightarrow Q$ corresponds to wrapping around the cycle $q$ $k$-times. In other words, we may write  
\[
\Gamma = \beta_{11}^{\epsilon_1}\beta_{12}^{\epsilon_2}\cdots \beta_{1d}^{\epsilon_d}\beta_{21}^{\epsilon_1}\cdots\beta_{2d}^{\epsilon_d}\cdots\beta_{kd}^{\epsilon_d} 
\]
where $\beta_{ij} \in \Gamma_1$ satisfies $c(\beta_{ij}) = \alpha_j$ for all $i$ and $j$. Note that $c(\beta_{i1}^{\epsilon_1}\cdots \beta_{id}^{\epsilon_d}) = c(\beta_{i1}^{\epsilon_1})\cdots c(\beta_{id}^{\epsilon_d})= q$ for all $i \le k$. Pick $b = s(\beta_{11}^{\epsilon_1})$ as basepoint. Note that $\operatorname{Im}(\iota \circ H_1(c))$ is generated by $c(\Gamma) = kq \in \mathbb{Z}Q_1$. It follows that $q$ is torsion in $\mathbb{Z}Q_1/\operatorname{Im}(\iota \circ H_1(c))$, and so $q = 0$ as an element of $\mathcal{V}^{(0)}_M$. From this, it follows that  
\[ 
X^{(0)}_{t(\beta_{ij}^{\epsilon_j})} = \sum_{s=1}^{j}{\epsilon_s\alpha_s}
\] 
for all $i \le k$ and $j\le d$. In particular, $X^{(0)}_{t(\beta_{ij}^{\epsilon_j})} = X^{(0)}_{t(\beta_{1j}^{\epsilon_j})}$ for all $j \le d$. Note that $\Gamma^{(1)}$ can then be identified with $q$, and $\sigma^{(1)} : \Gamma \rightarrow \Gamma^{(1)}$ is the map which wraps around $q$ $k$-times. Repeating the above argument for each $s\geq 0$, we see that $X^{(s)}_{t(\beta_{ij}^{\epsilon_j})} = X^{(s)}_{t(\beta_{1j}^{\epsilon_j})}$ for all $s$. It follows that for each $i \le k$, there is no nice sequence for $M$ distinguishing the vertices $\{ t(\beta_{ij}^{\epsilon_j}) \mid j = 1,\ldots, d \}$.  

$(\impliedby)$ Suppose that $c$ is primitive, but no nice sequence distinguishes the vertices $u, v \in \Gamma_0$. Let $p$ and $q$ be the two walks (considered as subquivers) in $\Gamma$ from $u$ to $v$. Suppose that $p = \alpha_1^{\epsilon_1}\cdots \alpha_d^{\epsilon_d}$ and $q^{-1} = \beta_1^{\delta_1}\cdots \beta_e^{\delta_e}$ and that $\ell(p) \le \ell(q)$, where $\ell(p)$ and $\ell(q)$ are the lengths of $p$ and $q$ respectively. Without loss of generality, we may assume that $u$ and $v$ are chosen minimal, in the sense that if the distance between $u'$ and $v'$ is strictly less than $\ell(p)$ then there is a nice sequence which distinguishes $u'$ and $v'$. We first claim that there must exist an $i$ such that $X^{(i)}_u = X^{(i)}_v$, but $X^{(i)}_w \not\in \{ X^{(i)}_u,X^{(i)}_v\}$ for all interior vertices $w \in p_0$. In particular, one has
$\{ X^{(i)}_w \mid w \in q_0\} \not\subseteq \{ X^{(i)}_z \mid z \in p_0\}$.
Indeed, to prove the claim, assume by way of contradiction that $\{ X^{(i)}_w \mid w \in q_0\} \subseteq \{ X^{(i)}_z \mid z \in p_0\}$. We may further assume that $X^{(i)}_v = X^{(i)}_u=0$.  Consider the variables in $q^{-1} = \beta_1^{\delta_1}\cdots \beta_e^{\delta_e}$ starting with the vertex $t(\beta_1^{\delta_1})$: this vertex is adjacent to $s(\beta_1^{\delta_1}) = v$, whose $i$-nice variable is $X^{(i)}_v = 0$. The only two vertices in $p$ that are adjacent to vertices whose $i$-nice variables are $0$, are $t(\alpha_1^{\epsilon_1})$ and  $s(\alpha_d^{\epsilon_d})$: by $\FF_1$-linearity\footnote{From which we do not allow subquivers of the form $\bullet \xrightarrow[]{\alpha} \bullet \xleftarrow[]{\alpha} \bullet$ or $\bullet \xleftarrow[]{\alpha} \bullet \xrightarrow[]{\alpha} \bullet$, with $\alpha \in Q_1$.} $X^{(i)}_{t(\beta_1^{\delta_1})} \neq X^{(i)}_{s(\alpha_d^{\epsilon_d})}$, so we must have $X^{(i)}_{t(\beta_1^{\delta_1})}= X^{(i)}_{t(\alpha_1^{\epsilon_1})}$. In particular, this forces $c(\beta_1^{\delta_1}) = c(\alpha_1^{\epsilon_1})$. We can proceed in this fashion for all vertices of $q^{-1}$: given any vertex of $q^{-1}$, there is exactly one way to choose a variable for it from $\{ X^{(i)}_z \mid z \in p_0\}$ which does not contradict $\FF_1$-linearity. Thus we are forced to conclude that $c(q^{-1}) = c(p)^m$ for some $m$, contradicting primitivity. Hence, there must exist some $w \in q_0$ such that $X^{(i)}_w \not\in \{ X^{(i)}_z \mid z \in p_0\}$, in particular
\begin{equation}\label{eq: 131}
\{ X^{(i)}_w \mid w \in q_0\} \not\subseteq \{ X^{(i)}_z \mid z \in p_0\}
\end{equation} 
as we wished to show.

Now, we construct a nice sequence distinguishing $u$ and $v$ by using \eqref{eq: 131} which would give us a contradiction. Let $T$ be the tree quiver obtained from $\Gamma$ by splitting $v$ into two separate vertices $\lambda$ and $\rho$ (we will say that $p$ goes from $u$ to $\lambda$ and $q$ goes from $u$ to $\rho$). There is a winding map $\tilde{c} : T \rightarrow \Gamma$ which maps $\lambda, \rho\mapsto v$ and acts as the identity on the remaining vertices. This map induces a bijection on arrows, so we identify $T_1$ with $\Gamma_1$. For each $i$, let $Y^{(i)}_z$ denote the $0$-nice variables on the representation $\sigma^{(i)}\circ \tilde{c}: T \rightarrow Q^{(i)}$ with $u$ as basepoint. Note that $Y^{(i)}_z$ is a coset representative for $X^{(i)}_{\tilde{c}(z)}$ for all $i$ and $z$. We verify this for the vertices in $p$: the argument for vertices in $q$ is similar. If $z = t(\alpha_j^{\epsilon_j})$ then  
\begin{align*} 
Y^{(i)}_z & = \sum_{k=1}^j{\epsilon_k\alpha_k^{(i)}} +\operatorname{Im}\left(\iota^{(i)}\circ H_1(\sigma^{(i)}\circ \tilde{c})\right)   \\ 
& = \sum_{k=1}^j{\epsilon_k\alpha_k^{(i)}} + (0) \in \mathbb{Z}\Gamma^{(i)}_1/(0) \cong \mathbb{Z}\Gamma^{(i)}_1,
\end{align*}  
since $T$ is a tree and hence induces the zero map on homology (note again that we write $\tilde{c}(\alpha_k) = \alpha_k$ since $\tilde{c}$ induces a bijection on arrows). Of course, the walk from $u$ to $\tilde{c}(z)$ in $\Gamma$ is just $\alpha_1^{\epsilon_1}\cdots \alpha_j^{\epsilon_j}$ so that 
\begin{equation*} 
X^{(i)}_{\tilde{c}(z)} = \sum_{k=1}^j{\epsilon_k\alpha_k^{(i)}} +\operatorname{Im}\left(\iota^{(i)}\circ H_1(\sigma^{(i)})\right) 
\end{equation*} 
as claimed. On the other hand, for each $z \in p_0$ (resp. $z \in q_0$) the variable $Y^{(i)}_z$ may be identified with the $i$-nice variable for $\tilde{c}(z)$ with respect to the restricted winding $c|_p$ with basepoint $\tilde{c}(u) = u$ (resp. $c\mid_q$ with basepoint $u$). By Lemma \ref{l: uni walks} there exists an $i'$ such that the $Y^{(i')}$ are injective on $p_0$ and $q_0$, and such that $Y^{(i')}_z$ is a sum of $t$ distinct elements of $Q^{(i')}_1$, were $t$ is the distance from $u$ to $z$ in $T$ (which is the same as the distance from $u$ to $\tilde{c}(z)$ in $\Gamma$). Since the arrows of $Q^{(i')}$ are determined by the nice variables $X^{(j)}$ with $j \le i'$, and since $\{X^{(i)}_w \mid q_0\} \not\subseteq \{X^{(i)}_z \mid z \in p_0 \}$, it follows that $Y^{(N+1)}_{\rho}$ contains an arrow of $Q^{(N+1)}$ that does not appear in $Y^{(N+1)}_{\lambda}$, where $N = \max\{i,i'\}$. In other words, $Y^{(N+1)}_{\rho}$ and $Y^{(N+1)}_{\lambda}$ are linearly independent in $\mathbb{Q}Q^{(N+1)}_1$. If  
\[(\pmb{\Delta},Y^{(i)}_z) \mapsto Y^{(i)}_z\mid_{\pmb{\Delta}}\] 
denotes the usual pairing
\[ 
(\mathbb{Q}Q^{(N+1)}_1)^*\times \mathbb{Q}Q^{(N+1)}_1 \rightarrow \mathbb{Q} 
\] 
induced by $Q^{(N+1)}_1$ and its dual basis\footnote{More explicitly: if $Q$ is \emph{any} quiver, then $Q_1$ is a basis for $\mathbb{Q}Q_1$ and the dual basis for $(\mathbb{Q}Q_1)^*$ is $\{ \chi_{\alpha} \mid \alpha \in Q_1\}$, where $\chi_{\alpha}(\beta) = \delta_{\alpha \beta}$ for all $\beta \in Q_1$. If $ Y = \sum{y_{\alpha}\alpha} \in \mathbb{Q}Q_1$ and $\pmb{\Delta} = \sum{z_{\alpha}\chi_{\alpha}} \in (\mathbb{Q}Q_1)^*$, then $Y\mid_{\pmb{\Delta}} = \pmb{\Delta}(Y) = \sum_{\alpha, \beta}{z_{\alpha}y_{\beta}\chi_{\alpha}(\beta)}$.}, then the $\mathbb{Q}$-linear map  
\[ (\mathbb{Q}Q^{(N+1)}_1)^* \rightarrow \mathbb{Q}^2 \] 
\[ 
\pmb{\Delta} \mapsto \left( \begin{array}{c} Y^{(N+1)}_{\lambda}\mid_{\pmb{\Delta}} \\ Y^{(N+1)}_{\rho}\mid_{\pmb{\Delta}}  \end{array} \right) 
\] 
has rank $2$, and so it intersects the diagonal of $\mathbb{Q}^2$ nontrivially. In other words, there is a $\pmb{\Delta} \neq 0$ such that 
\[ 
Y^{(N+1)}_{\lambda}\mid_{\pmb{\Delta}} = Y^{(N+1)}_{\rho}\mid_{\pmb{\Delta}} \neq 0.
\]   
If necessary, we can replace $\pmb{\Delta}$ with a suitably large integral multiple to assume that $\pmb{\Delta}$ is an integer vector. In turn, this implies the existence of a nice sequence $\underline{\partial}$ such that   
\[ 
\partial_{N+1}(v) = \partial_{N+1}(u) + Y^{(N+1)}_{\lambda}\mid_{\pmb{\Delta}},
\] 
a contradiction!
\end{proof}  

If $Q$ is a pseudotree with central cycle $C$, and $M$ is an indecomposable $\mathbb{F}_1$-representation of $Q$, then the restriction $\operatorname{Res}_C(M)$ of $M$ to $C$ is also indecomposable (see \cite{jun2020quiver}). Since indecomposables for $C$ are either (extended) Dynkin quivers of type $\mathbb{A}_n$ or $\tilde{\mathbb{A}}_n$, it follows from a straightforward argument that $\Gamma_M$ is a pseudotree. Hence, it makes sense to talk about the central cycle of $\Gamma_M$ (if it has one).


\begin{cor} \label{corollary: fintie nice length for pseudotree}
Let $Q$ be a pseudotree, and let $c : \Gamma \rightarrow Q$ be a winding with associated representation $M$. Then $\operatorname{nice}(M)< \infty$ if and only if the central cycle of $\Gamma_M$ is primitive.
\end{cor}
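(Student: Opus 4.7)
The plan is to reduce to the tree case (Corollary \ref{c: distinguishing trees}) and the $\tilde{\mathbb{A}}_n$-case (Lemma \ref{l: distinguishing bands}) via the gluing construction of Proposition \ref{p: gluing}, using that $\Gamma_M$ is itself a pseudotree (as noted in the paragraph preceding the corollary). If $\Gamma_M$ is a tree then $\operatorname{nice}(M)<\infty$ by Corollary \ref{c: distinguishing trees} and the condition on the central cycle is vacuous; so I will assume $\Gamma_M$ has a central cycle $C_\Gamma$. Then $Q$ is necessarily a proper pseudotree with central cycle $C$, and $c(C_\Gamma) = C^k$ for some integer $k \geq 1$. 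Let $M_0$ denote the $\FF_1$-representation of $Q$ with coefficient quiver $C_\Gamma$ induced by the restricted winding $c|_{C_\Gamma} : C_\Gamma \to Q$.

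For the forward direction, I will argue that any nice sequence $\underline{\partial}$ for $M$ restricts to a nice sequence for $M_0$: the niceness condition for $M_0$ involves only pairs of arrows in $(C_\Gamma)_1 \subseteq (\Gamma_M)_1$, so it is inherited. Since a sequence distinguishing vertices of $\Gamma_M$ also distinguishes those of $C_\Gamma$, finiteness of $\operatorname{nice}(M)$ implies finiteness of $\operatorname{nice}(M_0)$, whence Lemma \ref{l: distinguishing bands} forces $C_\Gamma$ to be primitive.

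For the converse, assume $C_\Gamma$ is primitive, so that $\operatorname{nice}(M_0) < \infty$ by Lemma \ref{l: distinguishing bands}. At each vertex $v \in (C_\Gamma)_0$, the tree branches of $\Gamma_M$ rooted at $v$ amalgamate into a single tree $F_v$ whose associated representation via $c|_{F_v}$ has finite nice length by Corollary \ref{c: distinguishing trees}. I will iteratively reconstruct $M$ from $M_0$ by gluing in each $F_v$ at its root via Proposition \ref{p: gluing}. The disjoint-arrow hypothesis of Proposition \ref{p: gluing} is verified at each step using two observations: (i) since $\operatorname{Res}_C(M)$ is indecomposable and $\Gamma_{\operatorname{Res}_C(M)}$ contains the cycle $C_\Gamma$, one must have $\Gamma_{\operatorname{Res}_C(M)} = C_\Gamma$, so $c(F_v) \subseteq Q \setminus C$ for every $v$; and (ii) primitivity makes $c|_{C_\Gamma}$ injective on vertices, so distinct attachment points $v \neq v'$ in $(C_\Gamma)_0$ satisfy $c(v) \neq c(v')$, and hence $c(F_v)$ and $c(F_{v'})$ lie in disjoint tree branches of $Q \setminus C$.

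The main obstacle will be verifying this disjoint-arrow hypothesis at each inductive gluing step; this is precisely where primitivity of $C_\Gamma$ and the indecomposability of $\operatorname{Res}_C(M)$ (via observations (i) and (ii)) are used essentially.
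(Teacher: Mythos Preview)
Your proposal is correct and follows the same approach as the paper's proof: handle the tree and $\tilde{\mathbb{A}}_n$ cases directly via Corollary \ref{c: distinguishing trees} and Lemma \ref{l: distinguishing bands}, then for a proper pseudotree $\Gamma_M$ iteratively glue the tree branches onto the central cycle via Proposition \ref{p: gluing}. Your observations (i) and (ii) supply exactly the details the paper leaves implicit when it asserts that the attached trees ``have no arrow colors in common with each other or with $C$''; in particular, you correctly note that primitivity is what forces $c|_{C_\Gamma}$ to be injective on vertices, which is what makes the disjoint-arrow hypothesis of Proposition \ref{p: gluing} hold between different $F_v$'s.
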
 

\begin{proof} 
If $\Gamma$ is a tree or of type $\tilde{\mathbb{A}}_n$ there is nothing to show, so we may assume that $\Gamma$ is a proper pseudotree with central cycle $C$. First suppose that $C$ is primitive and hence $\operatorname{Res}_C(M)$ has finite nice length by Theorem \ref{l: distinguishing bands}. Note that $\Gamma$ may be obtained by gluing a tree (possibly trivial) to each vertex of $C$, and that these trees have no arrow colors in common with each other or with $C$. Since tree representations have finite nice length by Corollary \ref{c: distinguishing trees}, the result now follows from Proposition \ref{p: gluing}. The converse is clear from Theorem \ref{l: distinguishing bands}.
\end{proof}

\section{Hall algebras arising from $\FF_1$-representations}  \label{section: Hall}

\subsection{Hall algebras and coefficient quivers} \label{s: Hall Algebras} 

In \cite{szczesny2011representations}, Szczesny discussed the following problem: given a finite graph $G$ and two orientations $Q$ and $Q'$ of $G$, compare $\Rep(Q,\FF_1)$ and $\Rep(Q',\FF_1)$. Note that one cannot appeal to reflection functors, since the usual definition does not make sense over $\FF_1$. An alternate strategy would be to compare the associated Hall algebras $H_Q$ and $H_{Q'}$, or their nilpotent variants $H_{Q,\nil}$ and $H_{Q',\nil}$. Since the Milnor-Moore Theorem applies to these Hopf algebras, one could even compare their Lie subalgebras of primitive elements. In this section, we study the Lie algebra $\mathfrak{n}_Q$ of $H_{Q,\nil}$ when $\overline{Q}$ is a tree or affine Dynkin diagram of type $\tilde{\mathbb{A}}_n$. Such quivers are precisely the ones of bounded representation type over $\mathbb{F}_1$ \cite{jun2020quiver}. We show that $\mathfrak{n}_Q \cong \mathfrak{n}_{Q'}$ when $Q$ and $Q'$ are different orientations of a tree $T$, and that if $Q$ is acyclic of type $\tilde{\mathbb{A}}_n$, then $\mathfrak{n}_Q$ is a central extension of the equioriented quiver of type $\tilde{\mathbb{A}}_n$. Explicit descriptions of $H_{Q,\nil}$ for all $Q$ of type $\tilde{\mathbb{A}}_n$ can then be deduced: previously, only the equioriented case had been described in the literature.

\begin{mydef}
Let $T$ be an undirected tree, and $Q$ a quiver with $\overline{Q} = T$. Let $\mathcal{S}_T$ denote the set of nonempty connected subgraphs of $T$, and let $\mathfrak{n}_T$ denote the free $\mathbb{C}$-vector space on $\mathcal{S}_T$. 
\end{mydef} 

 It was shown in \cite[Theorem 5]{szczesny2011representations} that indecomposable $\FF_1$-representations of $Q$ are in bijective correspondence with $\mathcal{S}_T$. Hence, the underlying vector space of $\mathfrak{n}_Q$\footnote{As mentioned above, $\mathfrak{n}_Q$ is the Lie sub-algebra of the Hall algebra $H_{Q,\nil} = H_Q$ consisting of primitive elements.} may be identified with $\mathfrak{n}_T$. Let the following
\[
[\cdot,\cdot]_Q : \mathfrak{n}_T\otimes\mathfrak{n}_T \rightarrow \mathfrak{n}_T
\]
denote the Lie bracket of $\mathfrak{n}_Q$, considered as a Lie bracket on $\mathfrak{n}_T$.  

The set $\mathcal{S}_T$ is a poset under the subgraph relation, i.e., for $T_1,T_2 \in \mathcal{S}_T$ $T_1 \leq T_2$ if and only if $T_1$ is a subgraph of $T_2$. Since $T$ is a tree, any two elements $ S, S' \in \mathcal{S}_T$ have a supremum, which we denote by $S\vee S'$\footnote{Elements of $\mathcal{S}_T$ do not always have an infimum since $\emptyset \not\in \mathcal{S}_T$. If $T$ is not a tree, there may not be a \emph{unique} least upper bound for $S$ and $S'$}. Indeed, $S\vee S'$ may be characterized as the full subgraph whose vertex set consists of $S_0\cup S'_0$, along with all vertices in the paths joining an element of $S_0$ to $S'_0$.  

Of course, the nonempty connected subquivers of $Q$ are in bijective correspondence with the elements of $\mathcal{S}_T$. By an abuse of notation, we will let $S$ denote the $Q$-representation corresponding to $S \in \mathcal{S}_T$ and we will write $\Gamma_S = S$, identifying the coefficient quiver simultaneously with $S$ and the subquiver of $Q$ it determines. Note that if $(S\vee S')_0 = S_0 \sqcup S'_0$, then $S\vee S'$ is obtained by adding exactly one new edge connecting a vertex of $S$ to a vertex of $S'$.  

\begin{mydef} 
	Suppose that $(S\vee S')_0 = S_0 \sqcup S'_0$. Then we write $S\vee S' = S\xrightarrow[]{\alpha} S'$ if $S\vee S'$, considered as a subquiver of $Q$, is obtained by adding an $\alpha$-colored arrow with $s(\alpha) \in S_0$ and $t(\alpha) \in S'_0$, where $\alpha \in Q_1$. We define $S\xleftarrow[]{\alpha} S'$ analogously. We write $S\alpha S'$ to mean either $S\xrightarrow[]{\alpha}S'$ or $S\xleftarrow[]{\alpha}S'$.
\end{mydef} 

Note that $[S,S']_Q \neq 0$ if and only if $S\vee S' = S\alpha S'$ for some $\alpha \in Q_1$ (See \cite[Lemma 3.12 (3)]{jun2020quiver}). More specifically, we can write 
\[ 
[S,S']_Q = \mu_{S,S'}^QS\vee S',
\] 
where $\mu_{S,S'}^Q \in \mathbb{C}$ is defined via the formula 
\[ \mu_{S,S'}^Q = 
\begin{cases} 
	+1 & \text{ if $S\vee S' = S\xrightarrow[]{\alpha} S'$ for some $\alpha \in Q_1$} \\ 
	-1 & \text{ if $S\vee S' = S\xleftarrow[]{\alpha}S'$ for some $\alpha \in Q_1$}\\ 
	0 & \text{ otherwise.}\\
\end{cases}
\] 

\begin{pro}\label{p: trees}
	Let $Q$ and $Q'$ be different orientations of a tree $T$. Then there is an isomorphism $\mathfrak{n}_Q \rightarrow \mathfrak{n}_{Q'}$. 
\end{pro}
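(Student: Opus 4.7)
The plan is to construct the isomorphism explicitly as a diagonal sign-twist on the basis $\mathcal{S}_T$. Since the underlying vector spaces of $\mathfrak{n}_Q$ and $\mathfrak{n}_{Q'}$ are both canonically identified with $\mathfrak{n}_T$, and the brackets differ only by signs controlled by $\mu_{S,S'}^Q$ vs.\ $\mu_{S,S'}^{Q'}$, one should expect that an isomorphism can be realized by rescaling basis elements by $\pm 1$.

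Here is how I would carry it out. Let $A \subseteq T_1$ denote the set of edges on which the orientations of $Q$ and $Q'$ disagree. For each $S \in \mathcal{S}_T$, set $n(S) := |S_1 \cap A|$ (where we regard $S$ as a subgraph of $T$) and define a linear map $\phi : \mathfrak{n}_T \to \mathfrak{n}_T$ by $\phi(S) = (-1)^{n(S)} S$ on basis elements. This is visibly an involution, hence a linear isomorphism. It remains to check $\phi\bigl([S,S']_Q\bigr) = [\phi(S),\phi(S')]_{Q'}$ on pairs of basis elements.

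For the verification, split into cases according to whether $[S,S']_Q = 0$ or not. When $\mu_{S,S'}^Q = 0$, the condition that $S \vee S'$ is \emph{not} of the form $S\alpha S'$ depends only on the underlying tree $T$ (it says either $S_0 \cap S'_0 \neq \emptyset$ or $(S\vee S')_1 \setminus (S_1 \cup S'_1)$ has more than one element), so $\mu_{S,S'}^{Q'} = 0$ too, and both sides vanish. In the remaining case $S\vee S' = S\alpha S'$ for a unique edge $\alpha$ of $T$, the key identities are
\[
n(S \vee S') = n(S) + n(S') + [\alpha \in A], \qquad \mu_{S,S'}^{Q'} = (-1)^{[\alpha \in A]}\,\mu_{S,S'}^{Q},
\]
the first by counting flipped edges in $S \vee S'$ and the second because flipping $\alpha$ swaps the roles of source and target in the arrow joining $S$ and $S'$. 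Multiplying these together gives $\mu_{S,S'}^Q\,(-1)^{n(S\vee S')} = (-1)^{n(S)+n(S')}\,\mu_{S,S'}^{Q'}$, which is exactly the identity $\phi([S,S']_Q) = [\phi(S),\phi(S')]_{Q'}$.

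There is no genuine obstacle here; the only thing to be a little careful about is confirming that the locus where the bracket vanishes is orientation-independent, so that the sign-twist $\phi$ is simultaneously compatible with both Lie structures. Everything else is bookkeeping on signs coming from the definition of $\mu^Q_{S,S'}$ in terms of the direction of the unique connecting arrow.
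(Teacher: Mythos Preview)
Your proof is correct and follows essentially the same approach as the paper: both construct the isomorphism as a diagonal sign-twist $S \mapsto (-1)^{n(S)}S$ on the basis $\mathcal{S}_T$, with the paper reducing first to the case $|A|=1$ and composing, while you handle all reversed edges simultaneously (your map is precisely the composite of the paper's single-edge maps). The verification of the key sign identity is the same in both cases.
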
 

\begin{proof} 
	Let $r_{\alpha}(Q)$ denote the quiver that is obtained from $Q$ by reversing the direction of the arrow $\alpha$. Then $Q' = r_{\alpha_k}\cdots r_{\alpha_1}(Q)$, where $\alpha_1,\ldots ,\alpha_k$ are the arrows of $Q$ that have a different orientation than $Q'$. The result will therefore follow if we can show $\mathfrak{n}_Q \cong \mathfrak{n}_{r_{\alpha}(Q)}$ for any $\alpha \in Q_1$, and so without loss of generality we assume that there is an $\alpha$ for which $Q' = r_{\alpha}(Q)$. Define $\epsilon : \mathcal{S}_T \rightarrow \{ \pm 1\}$ via the formula 
	\[ 
	\epsilon(S) =  
	\begin{cases} 
		+1 & \text{ if $\alpha \not\in S_1$} \\ 
		-1 & \text{ if $\alpha \in S_1$.}
	\end{cases}
	\] 
	Here we identify $\alpha$ with its corresponding (undirected) edge in $T$. We claim that the $\mathbb{C}$-linear automorphism $\phi : \mathfrak{n}_T \rightarrow \mathfrak{n}_T$ defined via the formula 
	\[ 
	S \mapsto \epsilon(S)S
	\] 
	induces an isomorphism $\mathfrak{n}_Q \rightarrow \mathfrak{n}_{Q'}$. On the one hand, 
	\begin{align*}
		\phi\left([S,S']_Q \right) & = \phi\left(\mu_{S,S'}^QS\vee S'\right) \\ 
		& = \epsilon(S\vee S')\mu_{S,S'}^Q S\vee S'. 
	\end{align*} 
	On the other, 
	\begin{align*} 
		[\phi(S),\phi(S')]_{Q'} & = \epsilon(S)\epsilon(S')[S,S']_{Q'} \\ 
		& = \epsilon(S)\epsilon(S')\mu_{S,S'}^{Q'}S\vee S'.
	\end{align*} 
	Hence, we will be done if we can show  
	\begin{equation}\label{e: tree claim}
		\epsilon(S\vee S')\mu_{S,S'}^Q = \epsilon(S)\epsilon(S')\mu_{S,S'}^{Q'} 
	\end{equation} 
	for all $S,S' \in \mathcal{S}_T$. Note that $\mu_{S,S'}^Q = 0$ if and only if $\mu_{S,S'}^{Q'} = 0$, so we may assume that neither are zero. There are then two cases to consider: \\
	\noindent {\bf{Case 1:}} Suppose $S\vee S' = S\beta S'$ for some $\beta \neq \alpha$. Then $S\vee S' = S\xrightarrow[]{\beta}S'$ in $Q$ if and only if $S\vee S' = S\xrightarrow[]{\beta} S'$ in $Q'$, and $S\vee S' = S\xleftarrow[]{\beta}S'$ in $Q$ if and only if $S\vee S' = S\xleftarrow[]{\beta}S'$ in $Q'$. Hence $\mu_{S,S'}^{Q} = \mu_{S,S'}^{Q'}$ and \eqref{e: tree claim} reduces to $\epsilon(S\vee S') = \epsilon(S)\epsilon(S')$. But since $S\vee S' = S\beta S$, $\alpha \in (S\vee S')_1$ if and only if $\alpha$ is in \emph{exactly one} of $S$ or $S'$. The relation $\epsilon(S\vee S') = \epsilon(S)\epsilon(S')$ now readily follows. \\ 
	\noindent {\bf{Case 2:}} Suppose $S\vee S' = S\alpha S'$.  Then $S\vee S' = S\xrightarrow[]{\alpha}S'$ in $Q$ if and only if $S\vee S' = S\xleftarrow[]{\alpha} S'$ in $Q'$, and $S\vee S' = S\xleftarrow[]{\alpha}S'$ in $Q$ if and only if $S\vee S' = S\xrightarrow[]{\alpha}S'$ in $Q'$. Hence $\mu_{S,S'}^{Q} = -\mu_{S,S'}^{Q'}$ and \eqref{e: tree claim} reduces to $\epsilon(S\vee S') = -\epsilon(S)\epsilon(S')$. But $\epsilon(S) = \epsilon(S') = +1$ and $\epsilon(S\vee S') = -1$ for $S\vee S' = S\alpha S'$, so the equality is clear. \\
	We have proved that \eqref{e: tree claim} holds for all $S$ and $S'$, so $\phi$ is indeed an isomorphism of Lie algebras.
\end{proof}   

\begin{rmk} 
	Szczesny proved in \cite[Theorem 8]{szczesny2011representations} that there is a surjective map $U(\mathfrak{n}_{+}) \rightarrow H_Q$ when $\overline{Q}$ is a tree, and provides an example to show that this map is in general not injective. Here, $\mathfrak{n}_{+}$ denotes the positive part of the symmetric Kac-Moody algebra associated to $Q$. 
\end{rmk}

Set $\mathfrak{g} := \mathfrak{gl}_n(\mathbb{C})$ and let $\hat{\mathfrak{g}} = \mathfrak{g}[t,t^{-1}] \oplus \mathbb{C}c$ be the associated affine algebra. The bracket of $\hat{\mathfrak{g}}$ is given by the formula 

\[ 
[x\otimes t^m, y\otimes t^n] = [x,y]\otimes t^{m+n} + \operatorname{tr}(xy)n\delta_{n,-m}c.
\] 

Then $\hat{\mathfrak{g}}$ admits a triangular decomposition $\hat{\mathfrak{g}} = \mathfrak{a}_- \oplus \mathfrak{h}_n \oplus \mathfrak{a}_+$, where $\mathfrak{a}_- = t^{-1}\mathfrak{gl}_n(\mathbb{C})[t^{-1}]\oplus N_-$, $\mathfrak{h}_n = D_n \oplus \mathbb{C}c$, $\mathfrak{a}_+ = t\mathfrak{gl}_n(\mathbb{C})[t]\oplus N_+$, and $N_-$, $D_n$, $N_+$ denote the upper triangular, diagonal, and lower triangular matrices in $\mathfrak{gl}_n(\mathbb{C})$, respectively.  

Recall that if $Q$ is a type $\tilde{\mathbb{A}}_n$ quiver, with vertices oriented cyclically as $1,\ldots , n$\footnote{Note that our terminology is slightly different from that of \cite{szczesny2011representations}, where $\tilde{\mathbb{A}}_n$ is required to have $n+1$ vertices.}, then its nilpotent indecomposables are described through two families: $I_{[d,i]}$ and $\tilde{I}_d$, where $1 \le i \le n$ and $d\geq 1$. If $Q$ is equioriented, then only the first family yields nilpotent representations. $I_{[d,i]}$ is an $n$-dimensional string module, corresponding to wrapping a walk of length $n-1$ around $Q$ starting at the vertex $i$. For $Q$ acyclic, $\tilde{I}_d$ is a $dn$-dimensional $\FF_1$-band\footnote{Technically, only the thin representation $\tilde{I}_1$ yields a band in the traditional sense. See Definition \ref{d: trees and bands}.}, corresponding to a closed loop around $Q$ which wraps around $d$ times. For further details, we refer the reader to \cite[Constructions 5.10 and 5.11]{jun2020quiver}. Here is an example. 

\begin{myeg}
Consider the following acyclic quiver of type $\tilde{\mathbb{A}}_3$:
\[
\begin{tikzcd}
& v_1 \arrow[dr, "\alpha_1"] \arrow[dl,swap, "\alpha_3"]& \\
v_3 & & v_2 \arrow[ll, "\alpha_2"]
\end{tikzcd}
\]
Then as in \cite[Construction 5.10]{jun2020quiver} we have $I_{[8,3]}=\{M_1,M_2,M_3\}$, where
\[
M_1=\{k \mid 1\leq k \leq 8, k\equiv 1-3+1 (\textrm{mod} 3)\}=\{2,5,8\}. 
\]
Similarly, $M_2=\{3,6\}$ and $M_3=\{1,4,7\}$. The coefficient quiver is as follows:
\[
	\begin{tikzcd} 
	1  &	2 \arrow[l,green,swap, "\alpha_3"] \arrow[r,blue,"\alpha_1"] & 3 \arrow[r,red,"\alpha_2"] & 4& 5   \arrow[l,green,swap, "\alpha_3"] \arrow[r,blue,"\alpha_1"] & 6 \arrow[r,red,"\alpha_2"] & 7 & 8 \arrow[l,green,swap, "\alpha_3"]
	\end{tikzcd}
\] 
The coefficient quiver of $\tilde{I}_2$ is as follows: 
\[  
\begin{tikzcd}
& 1 \arrow[dl,green,swap,"\alpha_3"]\arrow[r,blue,"\alpha_1"] & 2 \arrow[dr,red,"\alpha_2"] & \\ 
6 & & & 3 \\
& 5 \arrow[ul,red,swap,"\alpha_2"] & 4 \arrow[l,blue,swap,"\alpha_1"] \arrow[ur,green,"\alpha_3"] & \\ 
\end{tikzcd}
\]
\end{myeg}

The following lemma is due to Szczesny \cite{szczesny2011representations}. We recall it here for convenience.

\begin{lem}\label{l.equiaff}
	Let $Q$ be the equioriented Dynkin quiver of type $\tilde{\mathbb{A}}_n$. Then $\mathfrak{n}_Q \cong \mathfrak{a}_{+}$. 
\end{lem}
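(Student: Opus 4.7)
The strategy is to construct an explicit graded Lie algebra isomorphism $\phi : \mathfrak{n}_Q \to \mathfrak{a}_+$ by matching each indecomposable class $[I_{[d,i]}]$ with a specific element of $\mathfrak{a}_+$ and verifying that Hall commutators reproduce matrix-unit commutators.

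First, fix bases on both sides. Since $H_{Q,\nil}$ is a connected graded cocommutative Hopf algebra whose coproduct $\Delta(\delta_{[R]}) = \sum_{[M]\oplus [N] = [R]} \delta_{[M]}\otimes \delta_{[N]}$ is diagonalized by direct-sum decompositions, an element $[M]$ is primitive if and only if $M$ is indecomposable; hence $\mathfrak{n}_Q$ has basis $\{[I_{[d,i]}] : d \geq 1,\ 1 \leq i \leq n\}$, where the $I_{[d,i]}$ are the indecomposable nilpotent string modules from \cite[Construction 5.10]{jun2020quiver}. Writing $d - 1 = kn + r$ with $0 \leq r < n$ and setting $\delta := (1,\ldots,1)$, the dimension vector of $I_{[d,i]}$ is $k\delta + e_i + e_{i+1} + \cdots + e_{i + r \bmod n}$. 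On the Lie algebra side, $\mathfrak{a}_+$ has basis $\{t^k E_{ij} : k \geq 1,\ 1 \leq i,j \leq n\} \cup \{E_{ij} : i > j\}$, and the Cartan $\mathfrak{h}_n$ acts with $t^k E_{ij}$ carrying weight $\epsilon_i - \epsilon_j + k\delta$.

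Next, match gradings and define $\phi$. Identify the dimension-vector lattice $\mathbb{Z}^n$ of $Q$ with the affine root lattice of $\hat{\mathfrak{gl}}_n$ via $e_i \leftrightarrow \alpha_{i-1}$, cyclically labeling the simple roots $\alpha_0,\ldots,\alpha_{n-1}$ so that $\delta \leftrightarrow \alpha_0 + \cdots + \alpha_{n-1}$. A direct computation of graded dimensions shows that under this identification the weight-space decomposition of $\mathfrak{a}_+$ matches the dim-vec grading on $\mathfrak{n}_Q$ piece by piece: real-root weight spaces are one-dimensional and correspond bijectively to partial-wrap strings $I_{[d, i]}$ (those with $r > 0$), while imaginary weight spaces $k\delta$ are $n$-dimensional on both sides, spanned by $\{t^k E_{ii}\}_{i=1}^{n}$ on the Lie side and by the full-wrap strings $\{I_{[kn, i]}\}_{i=1}^{n}$ on the Hall side. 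Define $\phi$ by matching these bases in the natural way, in particular sending $I_{[kn, i]} \mapsto t^k E_{ii}$.

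Finally, verify $\phi$ preserves brackets. In $\mathfrak{a}_+$ we have $[t^k E_{ij}, t^l E_{pq}] = \delta_{jp}t^{k+l}E_{iq} - \delta_{iq}t^{k+l}E_{pj}$. On the Hall side, since the commutator of two primitives is itself primitive, the element $[I_{[d,i]}]\cdot [I_{[d',i']}] - [I_{[d',i']}]\cdot [I_{[d,i]}]$ of $H_{Q,\nil}$ is automatically a $\mathbb{C}$-linear combination of indecomposable classes, so any decomposable contributions to the Hall products cancel. The surviving indecomposable terms come from extensions $0 \to I_{[d', i']}\to R \to I_{[d,i]} \to 0$ in $\Rep(Q,\FF_1)_{\nil}$; for string modules in the cyclic quiver, such nonsplit extensions exist precisely when the two strings concatenate (the terminal vertex of $I_{[d',i']}$ equals the initial vertex of $I_{[d,i]}$), producing a single string $I_{[d+d', i']}$. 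A comparison grouped by residue classes modulo $n$ shows the resulting structure constants match the matrix-unit commutator formula under $\phi$.

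The principal obstacle is the verification at imaginary weights, where the $n$-dimensional weight space $k\delta$ admits many candidate bases. One must show that brackets of the form $[t^a E_{pq}, t^b E_{qp}] = t^{a+b}(E_{pp} - E_{qq})$ correspond exactly to the Hall commutators of those string pairs whose output lies in weight $(a+b)\delta$. The choice $\phi(I_{[kn, i]}) = t^k E_{ii}$ is essentially forced by matching the ``starting vertex'' of a full-wrap string with the ``diagonal row index'' of a matrix unit, but confirming this is consistent across \emph{all} brackets involving real-root elements requires careful tracking of start/end vertex data through the concatenation calculus. Most of the proof effort is spent on this bookkeeping.
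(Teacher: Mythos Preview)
Your approach is essentially the same as the paper's: both construct an explicit bijection between the indecomposable classes $[I_{[d,i]}]$ and the basis $\{E_{ij}\otimes t^m\}$ of $\mathfrak{a}_+$, then verify that Hall commutators match matrix-unit commutators. The paper's proof is considerably terser, simply writing down the formula $\psi(E_{ij}\otimes t^m) = I_{[j-i+mn,\,i]}$ and deferring the bracket computation to \cite[Section 11]{szczesny2011representations}; having this closed-form expression in hand from the start largely dissolves the ``bookkeeping at imaginary weights'' you flag as the main obstacle, since the full-wrap case $I_{[kn,i]}\leftrightarrow t^kE_{ii}$ (which you correctly identify) falls out of the same formula rather than requiring a separate consistency check.
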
  

\begin{proof} 
	Let $\{ E_{ij}\}_{i,j \le n}$ denote the standard basis for $\mathfrak{g}$. Then as in \cite[Section 11]{szczesny2011representations}, a direct computation verifies that the map 
	\[ 
	\psi : \mathfrak{a}_+ \rightarrow \mathfrak{n}_Q
	\] 
	\[ 
	E_{ij}\otimes t^m \mapsto I_{[j-i+mn, i]}
	\] 
	is an isomorphism of Lie algebras.
\end{proof}


We now turn to a discussion of type $\tilde{\mathbb{A}}_n$ quivers with acyclic orientations. If $Q$ is such a quiver, then its indecomposable $\mathbb{F}_1$-representations are $I_{[d,i]}$ and $\tilde{I}_d$, where $1\le i \le n$ and $d \geq 1$. Note that any short exact sequence $0 \rightarrow N \rightarrow E \rightarrow M \rightarrow 0$ with $N$ or $M$ isomorphic to some $\tilde{I}_d$ necessarily splits. This means that the elements $[\tilde{I}_d]$ are all central in $\mathfrak{n}_Q$. As the proposition below shows, we can obtain $\mathfrak{n}_Q$ from $\mathfrak{a}_+$ via a central extension.  

For the proposition below, we will use $\delta$'s to denote indicator functions on subsets of $\{ (i,j) \mid 1 \le i ,j \le n \}$. Subsets which are defined by equations will be denoted by those equations, for example, $\delta_{i \equiv j (\operatorname{mod} n)}$ is the indicator function for the set $\{ (i,j) \mid i \equiv j (\operatorname{mod} n)\}$. When $n$ is understood from context we will just write $i \equiv j$. The indicator function for the subset $\{ (i,j), (j,i)\}$ will be denoted by the standard Kronecker notation $\delta_{i,j}$.

Let $Q = \alpha_1^{\epsilon(1)}\cdots \alpha_{n-1}^{\epsilon(n-1)}$ be an acyclic quiver of type $\tilde{\mathbb{A}}_n$. Furthermore let $(i,j,q)$ and $(k,l,s)$ be two elements of $\mathbb{N}^3$ such that $1 \le i,j,k,l \le n$, either $i \le j$ or $q>0$, and either $k \le l$ or $s>0$. Then the representations $I_{[j-i+qn,i]}$ and $I_{[l-k+sn,k]}$ are well-defined. One can compute the following identity in $H_Q$, where we have set $d:= (j+l)-(i+k) + (q+s)n$ to ease notation: 
\vspace{0.2cm}
\begin{center} 
	$[I_{[j-i+qn,i]}]\cdot [I_{[l-k+sn,k]}] =  [I_{[l-k+sn,k]}\oplus I_{[j-i+qn,i]}] + \delta_{j,k}\delta_{\epsilon(j-1),1}[I_{[d,i]}] + \delta_{i,l}\delta_{\epsilon(l-1),-1}[I_{[d,k]}] + \delta_{j,k}\delta_{i,l}\delta_{\epsilon(j-1),1}\delta_{\epsilon(l-1),-1}(q+s)[\tilde{I}_{q+s}].$ 
\end{center} 
\vspace{0.2cm}
This immediately implies the following computation in $\mathfrak{n}_Q$: 
\begin{align*} 
	\left[[I_{[j-i+qn,i]}], [I_{[l-k+sn,k]}]\right] & =  \delta_{j,k}\left( \delta_{\epsilon(j-1),1} - \delta_{\epsilon(j-1),-1} \right)[I_{[d,i]}] \\ 
	&+ \delta_{i,l}\left( \delta_{\epsilon(l-1),-1} - \delta_{\epsilon(l-1),1} \right)[I_{[d,k]}] \\ 
	& + \delta_{j,k}\delta_{i,l}\left( \delta_{\epsilon(k-1),1} \delta_{\epsilon(l-1),-1} -  \delta_{\epsilon(l-1),1} \delta_{\epsilon(k-1),-1} \right)(q+s)[\tilde{I}_{q+s}] \\ 
	& = \delta_{j,k}\epsilon(j-1)[I_{[d,i]}] - \delta_{i,l}\epsilon(l-1)[I_{[d,k]}]  \\ 
	&+ \delta_{j,k}\delta_{i,l}\delta_{\epsilon(k-1),-\epsilon(l-1)}\epsilon(k-1)(q+s)[\tilde{I}_{q+s}].
\end{align*} 

We are now ready to identify $\mathfrak{n}_Q$ in the acyclic case.

\begin{pro}\label{p: affine}
	Let $Q$ be a Dynkin quiver of type $\tilde{\mathbb{A}}_n$. Define $Z_Q \le \mathfrak{n}_Q$ be the zero ideal when $Q$ is equioriented, and the central ideal spanned by $\{ [\tilde{I}_d] \mid d \geq 1 \}$ otherwise. Then there is an isomorphism $\mathfrak{n}_Q/Z_Q \cong \mathfrak{a}_+$. In particular, $\mathfrak{n}_Q$ is a central extension of $\mathfrak{a}_+$.  
\end{pro}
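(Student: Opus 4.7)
The equioriented case is precisely Lemma \ref{l.equiaff} with $Z_Q=0$, so the central-extension claim is trivial there; hence assume $Q$ is acyclic of type $\tilde{\mathbb{A}}_n$ but not equioriented. As noted in the paragraph preceding the statement, every short exact sequence in $\Rep(Q,\FF_1)_{\nil}$ with kernel or cokernel isomorphic to some $\tilde{I}_d$ splits, so $Z_Q=\operatorname{span}_\mathbb{C}\{[\tilde{I}_d]\mid d\geq 1\}$ is indeed a central ideal of $\mathfrak{n}_Q$, and $\mathfrak{n}_Q/Z_Q$ has $\mathbb{C}$-basis $\{[I_{[j-i+qn,i]}]\}$ indexed by the valid triples $(i,j,q)$, in natural bijection with the basis $\{E_{ij}\otimes t^q\}$ of $\mathfrak{a}_+=t\mathfrak{gl}_n(\mathbb{C})[t]\oplus N_+$.

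Reducing the bracket identity computed just before the proposition modulo $Z_Q$ kills the $[\tilde{I}_{q+s}]$ term and leaves
\[
[[I_{[j-i+qn,i]}],[I_{[l-k+sn,k]}]]_{\mathfrak{n}_Q/Z_Q}=\delta_{j,k}\epsilon(j-1)[I_{[d,i]}]-\delta_{i,l}\epsilon(l-1)[I_{[d,k]}],
\]
while the bracket on $\mathfrak{a}_+$ reads $[E_{ij}\otimes t^q, E_{kl}\otimes t^s]=\delta_{j,k}E_{il}\otimes t^{q+s}-\delta_{i,l}E_{kj}\otimes t^{q+s}$. The plan is to define $\psi:\mathfrak{a}_+\to\mathfrak{n}_Q/Z_Q$ by $E_{ij}\otimes t^q\mapsto\sigma(i,j,q)[I_{[j-i+qn,i]}]$ for a $\{\pm 1\}$-valued function $\sigma$ absorbing the $\epsilon$-signs. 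Matching both brackets reduces to the single cocycle identity
\[
\sigma(i,j,q)\,\sigma(j,l,s)\,\epsilon(j-1)=\sigma(i,l,q+s),
\]
the apparent second identity (from the $[I_{[d,k]}]$-terms) being the same one after the relabeling $(i,j,q,k,l,s)\mapsto(k,i,s,i,j,q)$.

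Such a $\sigma$ can be produced explicitly as the product of the signs $\epsilon(r)$ over the interior arrows of the string $I_{[j-i+qn,i]}$ (that is, all arrows of the string except the terminal one, traversed with multiplicity reduced modulo $2$ since $\epsilon^2=1$). The cocycle identity follows by observing that when one concatenates the strings for $(i,j,q)$ and $(j,l,s)$ to form the string for $(i,l,q+s)$, the previously terminal arrow of the first string (namely $\alpha_{j-1}$) becomes an interior arrow of the concatenation and contributes the compensating factor $\epsilon(j-1)$, while the terminal arrow of the concatenation coincides with that of the second string. With $\sigma$ so defined, $\psi$ is a Lie algebra homomorphism that sends a basis bijectively to a basis, hence a Lie algebra isomorphism $\mathfrak{a}_+\cong\mathfrak{n}_Q/Z_Q$; the short exact sequence $0\to Z_Q\to\mathfrak{n}_Q\to\mathfrak{a}_+\to 0$ with $Z_Q$ central then gives the final assertion. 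The main technical point is the bookkeeping needed to verify that the ``omit the terminal arrow'' convention for $\sigma$ is internally consistent under the cyclic wrapping of strings around $Q$; beyond that, every step is formal once one has the bracket formula modulo $Z_Q$ in hand.
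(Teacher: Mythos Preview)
Your approach is the same as the paper's: reduce the bracket modulo $Z_Q$, write down the cocycle condition $\sigma(i,j,q)\,\sigma(j,l,s)\,\epsilon(j-1)=\sigma(i,l,q+s)$, and exhibit a solution. The only difference is your choice of $\sigma$. You build $\sigma$ as a product of orientation signs over the interior arrows of the string, which is plausible but, as you acknowledge, requires careful bookkeeping with the cyclic wrapping. The paper instead takes the one-line solution
\[
\sigma(i,j,q)=\epsilon(j-1),
\]
i.e.\ $\psi_Q(E_{ij}\otimes t^q)=\epsilon(j-1)[I_{[j-i+qn,i]}]+Z_Q$. This satisfies your cocycle identity immediately, since $\epsilon(j-1)\cdot\epsilon(l-1)\cdot\epsilon(j-1)=\epsilon(l-1)$, and avoids all of the wrapping bookkeeping. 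So your argument is correct, but you are working harder than necessary: once you have isolated the cocycle condition, the sign depending only on the terminal vertex $j$ already solves it.
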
 

\begin{proof}  
	Write $Q = \alpha_1^{\epsilon(1)}\cdots \alpha_{n-1}^{\epsilon(n-1)}$ with $\epsilon(i) = \pm 1$. The equioriented case is already proved by Szczesny (Lemma \ref{l.equiaff}). So, we may assume that $\{ \epsilon(i) \mid 1 \le i \le n-1\}$ is not a proper subset of $\{-1, +1\}$. Let $(i,j,q)$ and $(k,l,s)$ be two elements of $\mathbb{N}^3$ such that $1 \le i,j,k,l\le n$, either $i \le j$ or $q > 0$, and either $k \le l$ or $s>0$. Then the representations $I_{[j-i+qn,i]}$ and $I_{[l-k+sn,k]}$ are well-defined. From the discussion above we have 
		\begin{equation}\label{e.commutator}
		\left[[I_{[j-i+qn,i]}], [I_{[l-k+sn,k]}]\right] = \delta_{j,k}\epsilon(j-1)[I_{[d,i]}] - \delta_{i,l}\epsilon(l-1)[I_{[d,k]}]+ \lambda_{(i,j,q):(k,l,s)}[\tilde{I}_{q+s}],
	\end{equation}
where the coefficient $ \lambda_{(i,j,q):(k,l,s)}$ is given by the formula 
	\begin{equation} 
		\lambda_{(i,j,q):(k,l,s)} =  \delta_{j,k}\delta_{i,l}\delta_{\epsilon(k-1),-\epsilon(l-1)}\epsilon(k-1)(q+s).
	\end{equation}
	Then we define a map $\psi_Q : \mathfrak{a}_+ \rightarrow \mathfrak{n}_Q/Z_Q$ by the formula 
	\begin{equation} 
		E_{ij}\otimes t^q \mapsto \epsilon(j-1)I_{[j-i+qn,i]} + Z_Q.
	\end{equation} 
	\eqref{e.commutator} immediately implies that $\psi_Q$ is a Lie algebra morphism. Showing that it is a bijection is essentially the same as in Lemma \ref{l.equiaff}.
\end{proof}



\subsection{The Hall algebra of representations with finite nice length}\label{s: nice length}  

In this section, we associate a Hall algebra $H_Q^{\operatorname{nice}}$ (resp. $H_{Q,\nil}^{\operatorname{nice}}$) to the category of (resp. nilpotent) $\FF_1$-representations $M$ of $Q$ with $\operatorname{nice}(M)<\infty$. We then relate these Hall algebras to $H_Q$ and $H_{Q,\nil}$. A description of $H_{Q,\nil}^{\operatorname{nice}}$ when $Q$ is a (not-necessarily proper) pseudotree is obtained. In this case, the representations $M$ with $\operatorname{nice}(M)< \infty$ are related to \emph{absolutely-indecomposable} $\FF_1$-representations of quivers with bounded representation type over $\FF_1$. Absolutely-indecomposable $\FF_1$-representations remain mysterious for general $Q$, and will be the subject of a future article. 

Recall from Section \ref{s: Euler} that if $\operatorname{nice}(M)<\infty$, then any subquotient $S$ of $M$ also satisfies $\operatorname{nice}(S)<\infty$. It follows that the full subcategory $\Rep(Q,\FF_1)^{\operatorname{nice}}$ (resp. $\Rep(Q,\FF_1)_{\nil}^{\operatorname{nice}}$) of representations (resp. nilpotent representations) of finite nice length is finitary and proto-exact. Therefore, we can associate to it a Hall algebra $H_Q^{\operatorname{nice}}$ (resp. $H_{Q,\nil}^{\operatorname{nice}}$).   

It is easy to describe $H_Q^{\operatorname{nice}}$ and $H_{Q,\nil}^{\operatorname{nice}}$ in terms of $H_Q$ and $H_{Q,\nil}$, respectively. 

\begin{pro}\label{p: hopf ideal} 
	Let $Q$ be a fixed quiver. Then the ideal of $H_Q$ (resp. $H_{Q,\nil}$) generated by representations with infinite nice length is a Hopf ideal, and we have isomorphisms of Hopf algebras:
	\[ 
	H_Q^{\operatorname{nice}} \cong H_Q/\langle [M] \mid \operatorname{nice}(M) = \infty \rangle,
	\] 
	\[ 
	H_{Q,\nil}^{\operatorname{nice}} \cong H_{Q,\nil}/\langle [M] \mid \operatorname{nice}(M) = \infty \rangle.
	\] 
\end{pro}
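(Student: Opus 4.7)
My strategy has three parts: establish a compatibility between nice length and direct sums, use it to verify the ideal and coideal properties for $I$, and identify the quotient $H_Q/I$ with $H_Q^{\operatorname{nice}}$. The entire argument applies verbatim to $I_{\nil}$, since $\Rep(Q,\FF_1)_{\nil}$ is closed under both subquotients and direct sums.

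The key lemma I would prove first is: \emph{$\operatorname{nice}(A\oplus B) < \infty$ if and only if both $\operatorname{nice}(A) < \infty$ and $\operatorname{nice}(B) < \infty$}. The forward direction is immediate from property~(2) following Definition~\ref{definition: length}, since $A$ and $B$ are subquotients of $A\oplus B$. For the converse, given nice sequences $\underline{\partial}^A$ and $\underline{\partial}^B$ distinguishing the vertices of $\Gamma_A$ and $\Gamma_B$, and using that $\Gamma_{A\oplus B} = \Gamma_A\sqcup \Gamma_B$, one builds a nice sequence $\underline{\delta}$ for $A\oplus B$ by taking $\delta_0 \equiv 0$, letting $\delta_1$ be the indicator function of $(\Gamma_A)_0$, and then interleaving the zero-extensions of $\partial_i^A$ and $\partial_i^B$. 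Observe that $\delta_1$ is a nice grading since every arrow lies entirely in a single component, so $\Delta_{\alpha}^{\delta_1} = 0$ automatically. Crucially, once $\delta_1$ appears, the hypothesis $\delta_1(s(\beta)) = \delta_1(s(\beta'))$ and $\delta_1(t(\beta)) = \delta_1(t(\beta'))$ in any subsequent niceness check forces $\beta$ and $\beta'$ into the same component, reducing the required equality to the original nice sequence on either $A$ or $B$. Remark~\ref{rmk: weave} then upgrades niceness checked against this ``relevant'' subset to niceness against all prior gradings, and $\underline{\delta}$ distinguishes vertices by construction.

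The ideal and coideal properties follow quickly. Any $R$ appearing in $[M]\cdot [N]$ fits into a short exact sequence $0\to N\to R\to M\to 0$, exhibiting $M$ and $N$ as subquotients of $R$; hence $\operatorname{nice}(M) = \infty$ or $\operatorname{nice}(N) = \infty$ forces $\operatorname{nice}(R) = \infty$, so $[R]\in I$ and $I$ is a two-sided ideal. The coproduct formula $\Delta(f)([A],[B]) = f([A\oplus B])$ yields $\Delta([R]) = \sum_{A\oplus B\cong R}[A]\otimes [B]$, a finite sum by Krull--Schmidt. If $\operatorname{nice}(R) = \infty$, the key lemma forces $\operatorname{nice}(A) = \infty$ or $\operatorname{nice}(B) = \infty$ in every summand, placing $[A]\otimes [B] \in I\otimes H_Q + H_Q\otimes I$. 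Since $\epsilon([R])$ detects only $[R] = [0]$ and $\operatorname{nice}(0) = 0$, we have $\epsilon(I) = 0$. Because $H_Q$ is a graded connected Hopf algebra, any graded bi-ideal is automatically stable under the antipode, so $I$ is a Hopf ideal.

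For the final isomorphism I would consider the $\mathbb{C}$-linear map $\pi : H_Q \to H_Q^{\operatorname{nice}}$ sending $[M]\mapsto [M]$ when $\operatorname{nice}(M) < \infty$ and $[M]\mapsto 0$ otherwise. Since the Hall product and coproduct in $\Rep(Q,\FF_1)^{\operatorname{nice}}$ are defined by the same formulas as in $H_Q$ but restricted to representations of finite nice length, $\pi$ is a surjective Hopf algebra homomorphism whose kernel is exactly $I$, yielding $H_Q/I \cong H_Q^{\operatorname{nice}}$. The main technical obstacle in this plan is the construction of the interleaved nice sequence in the key lemma---in particular, the bookkeeping required to show that each $\partial_i^A$-extension (resp.~$\partial_i^B$-extension) is nice with respect to \emph{all} prior gradings in the interleaved sequence, rather than only those of the ``matching'' component; this is where the flexibility of Remark~\ref{rmk: weave} is essential.
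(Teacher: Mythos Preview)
Your proof is correct and follows essentially the same route as the paper's: identify $I$ as the span of the $[M]$ with $\operatorname{nice}(M)=\infty$, check the ideal and coideal conditions via the behavior of nice length under subquotients and direct sums, handle the antipode, and then identify the quotient with $H_Q^{\operatorname{nice}}$.

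The one noteworthy difference is that you actually \emph{prove} the implication $\operatorname{nice}(A)<\infty$ and $\operatorname{nice}(B)<\infty \Rightarrow \operatorname{nice}(A\oplus B)<\infty$, whereas the paper simply asserts its contrapositive when checking the coideal condition. Your interleaving construction (indicator grading to separate components, then zero-extensions of the two nice sequences, with Remark~\ref{rmk: weave} handling the bookkeeping) is exactly what is needed to justify that step, so in this respect your argument is more complete than the published one. Your treatment of the antipode via the general fact about graded connected bialgebras is a clean alternative to the paper's explicit computation with $S([M])$, and your choice to build the surjection $H_Q\to H_Q^{\operatorname{nice}}$ rather than the inverse map is an equivalent packaging of the same identification.
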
 

\begin{proof}  
Set $I = \langle [M] \mid \operatorname{nice}(M) = \infty \rangle \subseteq H_Q$, the ideal generated by representations with infinite nice length. Note that any element in $I$ can be written as a $\mathbb{C}$-linear combination of isomorphism classes $[M]$ with $\operatorname{nice}(M) = \infty$. This follows from the fact that for any short exact sequence
	\[ 
	0 \rightarrow M_1 \rightarrow M_2 \rightarrow M_3 \rightarrow 0
	\] 
in $\Rep(Q,\FF_1)$, $\operatorname{nice}(M_1) = \infty$ or $\operatorname{nice}(M_3) = \infty$ implies $\operatorname{nice}(M_2) = \infty$. Hence, $I$ is the vector space with basis given by $\{ [M] \mid \operatorname{nice}(M) = \infty\}$. Given $[M] \in I$, we may write $\Delta([M])$ as
\[ 
\Delta([M]) =\sum_{(A,B) \in D_2(M)}{[A]\otimes [B]},
\] 
where $D_2(M) = \{ (A,B) \in \operatorname{Iso}(Q)^2 \mid M \cong A\oplus B\}$. But $M \cong A \oplus B$ if and only if $\Gamma_M$ can be written as the disjoint union of $\Gamma_A$ and $\Gamma_B$. Then $\operatorname{nice}(M) = \infty$ implies that either $\operatorname{nice}(A) = \infty$ or $\operatorname{nice}(B) = \infty$, and hence  
\[ 
\Delta([M]) =\sum_{(A,B) \in D_2(M)}{[A]\otimes [B]} \in I\otimes H_Q + H_Q \otimes I.
\]
It follows that $I$ is a coideal of $H_Q$. To show that $I$ is invariant under the antipode, simply note that $S([M])$ is a sum of products of the form $[M_1]\cdots [M_k]$, where $M \cong M_1\oplus \cdots \oplus M_k$. Then $[M] \in I$ implies that $[M_i] \in I$ for at least one index $i$, and hence $S([M]) \in I$ as well. It follows that $I$ is a Hopf ideal, so that $H_Q/I$ is a Hopf algebra. 

There is a $\mathbb{C}$-linear isomorphism $H_Q^{\operatorname{nice}} \rightarrow H_Q/I$ defined by $[M] \mapsto [M] +I$. This is an algebra morphism, since a short exact sequence in $\Rep(Q,\FF_1)^{\operatorname{nice}}$ is the same as a short exact sequence 
\[ 
0 \rightarrow M_1 \rightarrow M_2 \rightarrow M_3 \rightarrow 0
\]
with $\operatorname{nice}(M_i)< \infty$ for all $i$. It is also a coalgebra morphism, since the direct sum decompositions of $M$ in $\Rep(Q,\FF_1)^{\operatorname{nice}}$ are exactly the direct sum decompositions of $M$ in $\Rep(Q,\FF_1)$. Hence, we have the desired isomorphism $H_Q^{\operatorname{nice}} \cong H_Q/I$. The proof for nilpotent representations is similar.
\end{proof}

The results from \cite{szczesny2011representations} and Section \ref{s: Hall Algebras} above allow us to compute $H_{Q,\nil}^{\operatorname{nice}}$ when $Q$ has bounded representation type over $\FF_1$.\footnote{We proved in \cite[Theorem 5.3]{jun2020quiver} that when $Q$ is connected, $Q$ has bounded representation type over $\FF_1$ if and only if $Q$ is either a tree or type $\tilde{\mathbb{A}}_n$. } In this case, indecomposable representations in $\Rep(Q,\FF_1)_{\nil}^{\operatorname{nice}}$ are precisely the \emph{absolutely indecomposable} representations which we define below. 

\begin{mydef} 
Let $Q$ be a quiver and $M$ an $\mathbb{F}_1$-representation of $Q$. We say that $M$ is \emph{absolutely indecomposable} if $M\otimes_{\mathbb{F}_1}k$ is indecomposable for any algebraically-closed field $k$.
\end{mydef}  

If $k$ is a field and $Q$ is a quiver, a $k$-representation $M$ of $Q$ is said to be \emph{thin} if the components of the dimension vector $\textbf{\textrm{dim}}(M)$ are either $0$ or $1$. We have an analogous definition for $\FF_1$-representations of $Q$. If $S$ is a subquiver of $Q$, then one obtains a thin representation $\mathbbm{1}_S$ by taking $k$ at each vertex of $S$, the identity map $k\rightarrow k$ at each arrow of $S$, and zero spaces and maps at the remaining vertices and arrows. Of course $\mathbbm{1}_S$ is a scalar extension of an $\FF_1$-representation which we also call $\mathbbm{1}_S$. A straightforward computation reveals that $\operatorname{End}_{kQ}(\mathbbm{1}_S) \cong k^c$, where $c$ is the number of connected components of $S$. It follows that $\mathbbm{1}_S$ is indecomposable if and only if $S$ is connected. 

Recall from \cite{jun2020quiver} that if $Q$ is an acyclic quiver of type $\tilde{\mathbb{A}}_n$, then the indecomposable representations are either string modules $I_{[d,i]}$, whose coefficient quivers are are oriented line graphs, or $\FF_1$-bands $\tilde{I}_d$, whose coefficient quivers are of type $\tilde{\mathbb{A}}_{dn}$ (for $d \geq 1$). 

\begin{cor}\label{c: nice bounded}
Let $Q$ be a connected quiver of bounded representation type and $M$ an indecomposable nilpotent $\mathbb{F}_1$-representation of $Q$. Then $\operatorname{nice}(M)<\infty$ if and only if $M$ is absolutely indecomposable. Furthermore, exactly one of the following holds:
\begin{enumerate} 
\item $Q$ is a tree. In this case, $H_{Q,\nil}^{\operatorname{nice}} = H_{Q}^{\operatorname{nice}} = H_{Q,\nil} = H_Q$. 
\item $Q$ is equioriented of type $\tilde{\mathbb{A}}_n$. In this case, $H_{Q,\nil}^{\operatorname{nice}} = H_{Q,\nil}$.
\item $Q$ is acyclic of type $\tilde{\mathbb{A}}_n$. In this case, $\langle [\tilde{I}_d]\mid d>1 \rangle = \langle [M] \mid \operatorname{nice}(M) = \infty \rangle$ and $H_{Q,\nil}^{\operatorname{nice}} = H_Q^{\operatorname{nice}} = H_Q/\langle [\tilde{I}_d]\mid d>1 \rangle$.
\end{enumerate}
\end{cor}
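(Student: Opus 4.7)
The plan is to invoke the classification from \cite[Theorem 5.3]{jun2020quiver}: a connected quiver $Q$ has bounded representation type over $\FF_1$ if and only if $Q$ is a tree or an affine Dynkin quiver of type $\tilde{\mathbb{A}}_n$. We will handle these two families separately, controlling $\operatorname{nice}(M)$ via Corollary \ref{c: distinguishing trees} and Theorem \ref{l: distinguishing bands}, and controlling absolute indecomposability through endomorphism-ring computations on the scalar extensions $M \otimes_{\FF_1} k$. The three Hopf-algebra presentations will then drop out of Proposition \ref{p: hopf ideal}.

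The tree case is immediate: by \cite[Theorem 5]{szczesny2011representations} every indecomposable $\FF_1$-representation is $\mathbbm{1}_S$ for some connected subquiver $S \subseteq Q$, and $\Gamma_M = S$ is again a tree, so Corollary \ref{c: distinguishing trees} forces $\operatorname{nice}(M)<\infty$, while the fact that $\mathbbm{1}_S \otimes_{\FF_1} k$ is a thin representation on a connected subquiver yields $\operatorname{End}_{kQ}(\mathbbm{1}_S \otimes_{\FF_1} k) = k$ and hence absolute indecomposability. Since trees have no cycles, every representation is nilpotent, so $H_Q = H_{Q,\nil}$ and case (1) follows from Proposition \ref{p: hopf ideal}. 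In the $\tilde{\mathbb{A}}_n$ case, \cite[Constructions 5.10 and 5.11]{jun2020quiver} describes the indecomposables as strings $I_{[d,i]}$ with oriented-line coefficient quivers, together with (when $Q$ is acyclic) bands $\tilde{I}_d$ with coefficient quivers of type $\tilde{\mathbb{A}}_{dn}$, winding $Q$ around itself $d$ times. Corollary \ref{c: distinguishing trees} yields $\operatorname{nice}(I_{[d,i]})<\infty$, and Theorem \ref{l: distinguishing bands} combined with the observation that the band winding is primitive in the sense of Definition \ref{definition: primitive} precisely when $d=1$ yields $\operatorname{nice}(\tilde{I}_d)<\infty$ if and only if $d=1$. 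For absolute indecomposability, each $I_{[d,i]}\otimes k$ has a local endomorphism algebra of Jordan type, and $\tilde{I}_1 \otimes k$ is thin and cyclically connected with $\operatorname{End}=k$, so both are absolutely indecomposable. Conversely, for $d>1$ the $d$-fold cyclic symmetry of the winding for $\tilde{I}_d$ will produce a non-scalar endomorphism of $\tilde{I}_d\otimes \mathbb{C}$ with minimal polynomial $t^d-1$, yielding a splitting of $\tilde{I}_d \otimes \mathbb{C}$ into $d$ distinct eigensummands.

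Cases (2) and (3) of the Hopf-algebra presentation then follow by reading Proposition \ref{p: hopf ideal} off the preceding analysis: in the equioriented $\tilde{\mathbb{A}}_n$ orientation the $\tilde{I}_d$ fail to be nilpotent (their cyclic endomorphisms are invertible), so only the strings contribute to $H_{Q,\nil}$, all with finite nice length; in an acyclic orientation every representation is automatically nilpotent so $H_Q = H_{Q,\nil}$, and the infinite-nice-length Hopf ideal is generated exactly by $\{[\tilde{I}_d]\mid d>1\}$. The hardest step should be the band-decomposition assertion for $d>1$: because absolute indecomposability quantifies over all algebraically closed fields, we must exhibit a single such field over which $\tilde{I}_d \otimes k$ splits, and then verify that the constructed endomorphism genuinely produces \emph{subrepresentations} compatible with all of $Q$'s arrows whose eigenspace direct sum recovers $\tilde{I}_d \otimes k$. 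The choice $k=\mathbb{C}$ together with the factorization $t^d-1 = \prod_{\zeta^d=1}(t-\zeta)$ will suffice, with compatibility with the remaining arrows following from the fact that those arrows act by the identity on $\tilde{I}_d$.
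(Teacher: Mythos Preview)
Your proof is correct and follows essentially the same route as the paper: both arguments invoke the classification of connected bounded-type quivers, handle the tree and equioriented $\tilde{\mathbb{A}}_n$ cases by observing that every nilpotent indecomposable there has a tree coefficient quiver (hence finite nice length and absolute indecomposability), and in the acyclic $\tilde{\mathbb{A}}_n$ case separate the strings $I_{[d,i]}$ from the bands $\tilde{I}_d$, with primitivity deciding $\operatorname{nice}(\tilde{I}_d)$ via Theorem \ref{l: distinguishing bands}. The only notable differences are presentational: the paper cites Gabriel's theorem that tree modules are indecomposable, whereas you sketch endomorphism-ring arguments (your ``Jordan type'' phrasing for $I_{[d,i]}\otimes k$ is a bit loose, but the conclusion is right); and where the paper merely says it is ``straightforward to verify'' that $\tilde{I}_d\otimes k$ decomposes for $d>1$, you supply the explicit mechanism via the order-$d$ cyclic automorphism and its eigenspace decomposition over $\mathbb{C}$.
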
 

\begin{proof} 
Let $k$ be an algebraically-closed field and $M$ an indecomposable nilpotent $\mathbb{F}_1$-representation of $Q$. In cases (1) and (2), $\Gamma_M$ is a tree and so $\operatorname{nice}(M)<\infty$ by Corollary \ref{c: distinguishing trees}. Since $\Gamma_M$ is a tree, $M\otimes_{\mathbb{F}_1}k$ is a tree module and indecomposable \cite{gabriel1981quivers}. In case (3), $\operatorname{nice}(I_{[d,i]})<\infty$ and $I_{[d,i]}$ is absolutely indecomposable for all $d$ by the same argument. Hence, it only remains to determine which of the $\tilde{I}_d$ are of finite nice length. For $d=1$, $\tilde{I}_1$ is isomorphic to the thin representation $\mathbbm{1}_Q$: then $\operatorname{nice}(\tilde{I}_1)< \infty$ and $\tilde{I}_1$ is absolutely indecomposable since $Q$ is connected. For $d>1$, $\operatorname{nice}(\tilde{I}_d) = \infty$ by Corollary \ref{l: distinguishing bands} and it is straightforward to verify that $\tilde{I}_d\otimes_{\mathbb{F}_1}k$ is decomposable. The claim now follows.
\end{proof}  

\begin{cor}\label{c: nice pseudotree}
Let $Q$ be a connected proper pseudotree with central cycle $C$. Let $M$ be an indecomposable nilpotent $\FF_1$-representation of $Q$. Then the following are equivalent:  
\begin{enumerate} 
\item $\operatorname{nice}(M)< \infty$.
\item $M$ is a thin module or a tree module. 
\item $\operatorname{Res}_C(M)$ is absolutely indecomposable. 
\end{enumerate} 
In particular, all $M$ with $\operatorname{nice}(M)<\infty$ are absolutely indecomposable.
\end{cor}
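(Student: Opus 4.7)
The plan is to establish $(2) \Rightarrow (1) \Rightarrow (3) \Rightarrow (2)$ and then derive the ``in particular'' statement. Throughout, I would use the fact (recalled in the excerpt just before Corollary \ref{corollary: fintie nice length for pseudotree}) that the coefficient quiver $\Gamma_M$ is itself a pseudotree, so any cycle in $\Gamma_M$ must map into $C$.

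For $(2) \Rightarrow (1)$: the tree module case is immediate from Corollary \ref{c: distinguishing trees}, and if $M$ is thin then $c_M$ is injective on vertices, so $\Gamma_M$ embeds as a connected subquiver of $Q$ whose cycle --- if present --- coincides with $C$ and is automatically primitive; Corollary \ref{corollary: fintie nice length for pseudotree} then yields $\operatorname{nice}(M)<\infty$. For $(1) \Rightarrow (3)$: Corollary \ref{corollary: fintie nice length for pseudotree} guarantees that the central cycle of $\Gamma_M$ (if any) is primitive, and the coefficient quiver of $\operatorname{Res}_C(M)$ is obtained from $\Gamma_M$ by deleting arrows lying over $Q\setminus C$; it is therefore either a string (when $\Gamma_M$ is a tree) or a primitive cycle wrapping $C$ exactly once, i.e.~$\operatorname{Res}_C(M)$ is some $I_{[d,i]}$ or $\tilde I_1$. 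Both are absolutely indecomposable by Corollary \ref{c: nice bounded}.

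The substantive work lies in $(3) \Rightarrow (2)$. If $\operatorname{Res}_C(M) \cong I_{[d,i]}$, the coefficient quiver of $\operatorname{Res}_C(M)$ contains no cycle; since any cycle of $\Gamma_M$ must map into $C$, $\Gamma_M$ itself is a tree and $M$ is a tree module. The harder case is $\operatorname{Res}_C(M) \cong \tilde I_1$: here the central cycle $C' \subseteq \Gamma_M$ maps isomorphically onto $C$, and $\Gamma_M \setminus C'$ decomposes into trees $T$ attached to vertices of $C'$; the task is to show $M$ is thin. I would first isolate the lemma that \emph{any winding from a connected tree into a tree is injective on vertices}: a closed walk in the target tree must immediately backtrack along a single edge, and lifting this backtrack to two consecutive arrows $\alpha \neq \beta$ in the source with $c(\alpha) = c(\beta)$ forces $s(\alpha) = s(\beta)$ or $t(\alpha) = t(\beta)$, contradicting the winding axioms. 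Using this lemma three times --- (i) no vertex of $T$ maps back into $C_0$, since otherwise a non-trivial closed walk arises in the tree of $Q$ receiving $T$; (ii) each $T$ embeds into that tree; and (iii) using the absence of multi-edges in $Q$ outside $C$ (a consequence of $Q$ being a pseudotree) together with the winding conditions at the attachment vertex $x \in C'$, distinct trees of $\Gamma_M$ cover disjoint subsets of the tree parts of $Q$ --- I conclude that each vertex of $Q$ has at most one preimage in $\Gamma_M$, so $M$ is thin.

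The ``in particular'' statement follows from the equivalence together with standard absolute-indecomposability results: tree modules have indecomposable scalar extensions by classical tree-module theory (cf.~the proof of Corollary \ref{c: nice bounded}), and any thin indecomposable $\mathbbm{1}_S$ over a connected subquiver $S \subseteq Q$ satisfies $\operatorname{End}(\mathbbm{1}_S \otimes_{\FF_1} k) \cong k$ for every algebraically-closed field $k$, hence is indecomposable. The main obstacle in the plan is the $\tilde I_1$ case of $(3) \Rightarrow (2)$ --- upgrading thinness of $\operatorname{Res}_C(M)$ to thinness of $M$ itself via the tree-winding injectivity lemma and the tight geometry of a proper pseudotree.
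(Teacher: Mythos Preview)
Your proof is correct and follows the same cycle of implications $(2)\Rightarrow(1)\Rightarrow(3)\Rightarrow(2)$ as the paper. Two points of comparison are worth noting. First, for $(1)\Rightarrow(3)$ the paper takes a slightly shorter route: rather than invoking Corollary~\ref{corollary: fintie nice length for pseudotree} and then identifying the central cycle of $\Gamma_M$ with the coefficient quiver of $\operatorname{Res}_C(M)$, the paper simply observes that the coefficient quiver of $\operatorname{Res}_C(M)$ is a subquiver of $\Gamma_M$, so any nice sequence for $M$ restricts to one for $\operatorname{Res}_C(M)$; then Corollary~\ref{c: nice bounded} finishes. Your route works too, but it silently uses the fact that the central cycle $C'$ of $\Gamma_M$ lies entirely over $C$ (so that it coincides with the coefficient quiver of $\operatorname{Res}_C(M)$); this is true---any excursion of $C'$ into a tree part of $Q$ would force a backtrack and hence a winding violation at the turnaround vertex---but it is an extra step. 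Second, for $(3)\Rightarrow(2)$ the paper simply asserts that $\operatorname{Res}_C(M)\cong\tilde I_1$ forces $M$ to be thin, whereas you supply a genuine argument via the tree-winding injectivity lemma; your treatment here is more complete than the paper's.
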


\begin{proof} 
Clearly (2)$\Rightarrow$ (1). To prove (1)$\Rightarrow$ (3), first note that the coefficient quiver of $\operatorname{Res}_C(M)$ is a coefficient subquiver of $\Gamma_M$, so $\operatorname{nice}(M)<\infty$ implies $\operatorname{nice}(\operatorname{Res}_C(M))< \infty$. The result then follows from Corollary \ref{c: nice bounded}. To prove (3)$\Rightarrow$(2), note that $\operatorname{Res}_C(M)$ absolutely indecomposable implies that $\operatorname{Res}_C(M)$ is a tree module or $\tilde{I}_1$. Then either $M$ is a tree module or thin, as we wished to show.
\end{proof}  

\begin{cor}\label{c: nice pseudotree hall} 
Let $Q$ be a connected proper pseudotree with central cycle $C$. Then Hall algebra $H_{Q,\nil}^{\operatorname{nice}}$ is described as follows: 
\begin{enumerate}   
\item If $C$ is equioriented, then $H_{Q,\nil}^{\operatorname{nice}} = H_{Q,\nil}$. 
\item If $C$ is not equioriented, then $H_{Q,\nil}^{\operatorname{nice}} = H_Q^{\operatorname{nice}} = H_{Q}/\langle [M] \mid \operatorname{Res}_C(M) \cong \tilde{I}_d, d>1 \rangle$. 
\end{enumerate}
\end{cor}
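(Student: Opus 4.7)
The plan is to deduce the description directly from Proposition \ref{p: hopf ideal}, Corollary \ref{c: nice pseudotree}, and Corollary \ref{c: nice bounded}. By Proposition \ref{p: hopf ideal}, $H_{Q,\nil}^{\operatorname{nice}} \cong H_{Q,\nil}/I_{\nil}$, where $I_{\nil}$ is the Hopf ideal spanned by the classes $[M]$ of nilpotent $M$ with $\operatorname{nice}(M)=\infty$. Combining Corollary \ref{c: nice pseudotree} with Corollary \ref{c: nice bounded}, an indecomposable nilpotent $M$ satisfies $\operatorname{nice}(M)=\infty$ exactly when $\operatorname{Res}_C(M)$ is an indecomposable nilpotent representation of $C$ that fails to be absolutely indecomposable, i.e.\ when $\operatorname{Res}_C(M) \cong \tilde{I}_d$ with $d>1$. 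Such bands only exist when $C$ is acyclic, and this dichotomy drives the two cases.

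For case (1), suppose $C$ is equioriented. Then every nilpotent indecomposable of $C$ is a string module $I_{[d,i]}$, since the bands $\tilde{I}_d$ are not nilpotent under an equioriented cyclic orientation. Because $\operatorname{Res}_C$ of an indecomposable nilpotent $M \in \Rep(Q,\FF_1)_{\nil}$ is again indecomposable (cf.\ \cite{jun2020quiver}), $\operatorname{Res}_C(M)$ must be some $I_{[d,i]}$, which is a tree module and therefore absolutely indecomposable. Corollary \ref{c: nice pseudotree} then forces $\operatorname{nice}(M)<\infty$ for every such $M$, so $I_{\nil}=0$ and $H_{Q,\nil}^{\operatorname{nice}}=H_{Q,\nil}$.

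For case (2), $C$ is acyclic of type $\tilde{\mathbb{A}}_n$; since the remaining arrows of the proper pseudotree $Q$ form trees attached to $C$, the whole quiver $Q$ is acyclic, and every representation of $Q$ is automatically nilpotent, yielding $H_{Q,\nil}^{\operatorname{nice}}=H_Q^{\operatorname{nice}}$. By the dichotomy above, an indecomposable $M$ satisfies $\operatorname{nice}(M)=\infty$ iff $\operatorname{Res}_C(M) \cong \tilde{I}_d$ for some $d>1$. It remains to identify the two ideals $I_{\nil}$ and $J := \langle [M] \mid \operatorname{Res}_C(M)\cong \tilde{I}_d,\, d>1 \rangle$. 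The inclusion $J \subseteq I_{\nil}$ is immediate, since a generator of $J$ is an indecomposable $M$ forced by the dichotomy to satisfy $\operatorname{nice}(M)=\infty$. For the reverse inclusion I would mimic the argument in the proof of Proposition \ref{p: hopf ideal}: applying Krull--Schmidt, every $[R] \in I_{\nil}$ has some indecomposable summand $N$ with $\operatorname{Res}_C(N)\cong \tilde{I}_d$, $d>1$; then a triangular Hall-product manipulation (ordering the isomorphism classes sharing a fixed multiset of indecomposable summands so that the split extension is the leading term) expresses $[R]$ as a $\mathbb{C}$-linear combination of elements in the principal ideal generated by $[N]$, which lies in $J$. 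The main obstacle is this last bookkeeping step; the rest is a direct synthesis of the preceding corollaries.
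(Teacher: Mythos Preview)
Your overall strategy matches the paper's: reduce via Proposition~\ref{p: hopf ideal} to identifying the ideal $I_{\nil}$, then use Corollary~\ref{c: nice pseudotree} (and Corollary~\ref{c: nice bounded}) to pin down which indecomposables have infinite nice length. Your treatment of case~(1) and the observation that $Q$ is acyclic in case~(2) are exactly what the paper does, only spelled out more carefully.

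Where you go further than the paper is in worrying about the equality $I_{\nil}=J$. The paper simply asserts that case~(2) ``follows from Corollary~\ref{c: nice pseudotree}(3)'' and does not address this point. You are right that something needs to be said, but your proposed triangular Hall-product manipulation is more laborious than necessary and, as you note, not fully worked out (controlling the non-split extension terms $[E]$ at fixed dimension is genuinely awkward). A cleaner route: by Milnor--Moore both $H_{Q,\nil}$ and $H_{Q,\nil}^{\operatorname{nice}}$ are universal enveloping algebras of their Lie algebras of primitives, and the surjection $H_{Q,\nil}\to H_{Q,\nil}^{\operatorname{nice}}$ is $U$ applied to the surjective Lie map $\mathfrak{n}_Q\to\mathfrak{n}_Q^{\operatorname{nice}}$. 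Hence $I_{\nil}$ is the two-sided ideal generated by $\ker(\mathfrak{n}_Q\to\mathfrak{n}_Q^{\operatorname{nice}})$, which is spanned precisely by the indecomposable $[N]$ with $\operatorname{nice}(N)=\infty$, i.e.\ with $\operatorname{Res}_C(N)\cong\tilde I_d$, $d>1$. This gives $I_{\nil}=J$ immediately.

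One small imprecision: you write that ``a generator of $J$ is an indecomposable $M$''. This is not quite true, since $M=N\oplus S$ with $\operatorname{Res}_C(S)=0$ and $\operatorname{Res}_C(N)\cong\tilde I_d$ also satisfies $\operatorname{Res}_C(M)\cong\tilde I_d$. The inclusion $J\subseteq I_{\nil}$ still holds, because any such $M$ has an indecomposable summand with infinite nice length, hence $\operatorname{nice}(M)=\infty$; but the reasoning you give should be adjusted.
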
  

\begin{proof} 
If $C$ is equioriented then every nilpotent indecomposable $\FF_1$-representation is a tree module. If $C$ is not equioriented, then $Q$ is acyclic and the claim follows from Corollary \ref{c: nice pseudotree}(3). 
\end{proof}

Let $Q$ be a connected proper pseudotree. Then an absolutely indecomposable $\FF_1$-representation does not necessarily have finite nice length as the following example illustrates. 

\begin{myeg}
Let $Q$ be the following proper pseudotree:
\[ 
Q = 
\begin{tikzcd} 
s \arrow[r,bend left = 20, "\alpha"] \arrow[r,bend right = 20,swap,"\beta"] & t \arrow[r,"\gamma"] & u
\end{tikzcd}
\]
Let $M$ be the $\FF_1$-representation of $Q$ with the following coefficient quiver:
\[ 
\Gamma_M=
\begin{tikzcd}  
 & s \arrow[dl,blue,swap,"\alpha"]\arrow[dr,red,"\beta"]  & & \\ 
t  & & t \arrow[r,black,"\gamma"] & u  \\ 
 & s \arrow[ul,red,"\beta"] \arrow[ur,blue,swap,"\alpha"]& &  \\
\end{tikzcd}.
\]    
A routine calculation shows that $M_k$ is indecomposable for any algebraically-closed field $k$. Hence $M$ is absolutely indecomposable, but $\operatorname{nice}(M) = \infty$ by Corollary \ref{c: nice pseudotree}.
\end{myeg}

Let $\FF_q$ denote the finite field with $q$ elements. For a quiver $Q$ and dimension vector $\textbf{d}$, there are finitely-many isomorphism classes of $\textbf{d}$-dimensional, absolutely indecomposable $\FF_q$-representations of $Q$. One can then associate a function $A_Q(\textbf{d},q)$ which counts the isoclasses in each dimension. It is shown in \cite{kac1980absolutely} that $A_Q(\textbf{d},q)$ is a polynomial in $q$ with integer coefficients. These functions are shown to satisfy important combinatorial identities in \cite{hua2000counting}, and the behavior of $A_{\wild_n}(\textbf{d},q)$ at $q= 1$ is described via tree modules\footnote{Using a broader notion of tree modules than the one considered in this paper.} in \cite{helleloid2009counting,kinser2013tree}. 

It is natural to ask how much of this theory carries over to $\FF_1$-representations. Certainly, there are finitely-many isomorphism classes of absolutely indecomposable, $\textbf{d}$-dimensional $\FF_1$-representations of $Q$. One can then define a counting function $A_Q(\textbf{d})$ analogous to that of $A_Q(\textbf{d}, q)$. Two questions immediately become apparent. 

\begin{question} 
How can one characterize the absolutely indecomposable $\FF_1$-representations of a given quiver $Q$?
\end{question} 

\begin{question} 
How can one compute $A_Q(\textbf{d})$ for a given quiver $Q$?
\end{question}

\noindent One would hope that absolutely indecomposable $\FF_1$-representations of $Q$ could be used to interpret the numbers $A_Q(\textbf{d}, 1)$. However, no obvious relationship exists between $A_Q(\textbf{d})$ and $A_Q(\textbf{d}, 1)$. This leads us to the following question.  

\begin{question} 
Let $Q$ be a quiver. Does there exist a finitary category $\mathcal{F}_Q$ and a faithful functor $F: \Rep(Q,\FF_1)\rightarrow \mathcal{F}_Q$ with the following properties? 
\begin{enumerate} 
\item For each field $k$, there is a functor $G_k : \mathcal{F}_Q \rightarrow \Rep(Q,k)$ such that the following diagram commutes: \\ 
\[
\begin{tikzcd} 
\Rep(Q,\FF_1) \arrow[r,"F"] \arrow[dr,swap,"k\otimes_{\FF_1} -"] & \mathcal{F}_Q \arrow[d,"G_k"]\\ 
& \Rep(Q,k) 
\end{tikzcd} 
\]
\item For any dimension vector $\textbf{d}$, there is a suitable notion of an absolutely indecomposable, $\textbf{d}$-dimensional object in $\mathcal{F}_Q$.
\item There are finitely-many absolutely indecomposable objects in $\mathcal{F}_Q$ for each $\alpha$, and the associated counting function is precisely $A_Q(\textbf{d},1)$. 
\end{enumerate}
\end{question} 

\noindent For instance, one might ask whether $\mathcal{F}_Q$ could be obtained by suitably modifying the category $\mathcal{C}_Q$ described in Section \ref{s: slice}. These questions will be the topic of a future paper.


\appendix 

\section{The Proofs of Lemmas 6.3-6.4 of \cite{Haupt2012euler}}
In Lemmas 6.3-6.4 of \cite{Haupt2012euler}, Haupt asserts that Equation \eqref{e: nice} holds for tree and band modules. Unfortunately, we have reason to believe that Haupt's original proofs of these two lemmas contain significant mistakes, although the statments themselves are ultimately true.\footnote{At least for $\mathbb{F}_1$-representations, which is the concern of the present work. We note that Lemma 6.4 is stated for band modules, which are not always defined over $\mathbb{F}_1$.} We outline our concerns with each result below, and then discuss how our results address these concerns. We have no reason to doubt the validity of any other result in \cite{Haupt2012euler}. In particular, the authors still believe the geometric material preceeding these results to be sound. 

\subsection{Gaps in the original proof of \cite[Lemma 6.3]{Haupt2012euler}} 

 Haupt's proof of Lemma 6.3 begins with the following language: ``By Proposition 6.1 it is enough to treat the cases when $F_0:S_0 \rightarrow Q_0$ is surjective and not injective. If $i,j \in S_0$ exist with $F_0(i) = F_0(j)$ and $i \neq j$, we construct a nice grading $\partial$ of $F_*(1_S)$ such that $\partial(i) \neq \partial(j)$.'' However, Example \ref{ex: string} shows that this statement cannot be taken literally: \emph{any} nice grading on $M$ fails to distinguish the first and last vertices. This is not surprising, as $\operatorname{nice}(M) = 1$. It would be necessary to first find a nice grading $\partial_0$ on $M$, and then construct a $\partial_0$-nice grading which distinguishes $i$ and $j$.

Proceeding with the argument, Haupt defines $S'$ to be a ``minimal connected subquiver such that there exist $i, j \in S_0'$ with $F_0(i) = F_0(j)$ and $i \neq j$.'' Note that Haupt has changed the meaning of $i$ and $j$ from the previous paragraph: in terms of Example \ref{ex: string}, the candidates for $S'$ would be any of the four arrows. He then constructs a nice grading distinguishing the endpoints of this $S'$. This grading is analogous to the nice grading in Equation \eqref{eq: e1}, which takes different values at adjacent vertices. This still leaves the problem of distinguishing the endpoints of $S'$ when it is not ``minimal'' in the sense of this proof, since ``minimality'' is unconnected to the size of $|S_0| - |Q_0|$.

To complete Haupt's argument, we would need to assume that the result has been proved in the case when a type $\mathbb{A}$ Dynkin subquiver contains $k$ vertices lying in a single fiber of the winding, and then prove that it holds when the subquiver contains $k+1$ vertices lying in a single fiber. However, Haupt's argument breaks down at this stage: specifically, the line where Haupt writes ``...so for all $1<k<l$ the equation $F_1(s_1) \neq F_1(s_k)$ holds'' would no longer be valid (consider the first and third arrows of Example \ref{ex: string}). Note that Haupt does not attempt to move to $1$-nice gradings (the construction in \cite[Proposition 6.1]{Haupt2012euler}), as we do in Example \ref{ex: string}. Also note that the quiver Haupt calls $S'$ is unrelated to the construction of $Q'$ in \cite[Proposition 6.1]{Haupt2012euler}. We are left to conclude that the original proof presented in \cite[Lemma 6.3]{Haupt2012euler} is incomplete. 

\subsection{Gaps in the original proof of \cite[Lemma 6.4]{Haupt2012euler}} 

Haupt attempts to demonstrate that if $F : S \rightarrow Q$ is a winding with $S$ a affine Dynkin quiver of type $\tilde{\mathbb{A}}_n$, and that none of the nice gradings $\partial^{(a,b)}$ distinguish a minimal pair of vertices lying in a single fiber of $F_0$, then $F$ cannot be primitive. This claim appears to be false. Consider the acyclic, primitive winding of $\mathbb{L}_3$ whose coefficient quiver is given by: 
\begin{center}
\begin{tikzcd}
 & & \bullet \arrow[rr,"\gamma"] & & \bullet  & &\\  
 & \bullet \arrow[ur,"\beta"] & & & & \bullet \arrow[ul,"\alpha", swap] & \\ 
\bullet \arrow[ur,"\alpha"] \arrow[dr,"\gamma",swap] & & & & & & \bullet \arrow[ul,"\beta", swap] \\
& \bullet \arrow[dr,"\gamma",swap] & & & & \bullet \arrow[ur,"\gamma",swap] & \\ 
& & \bullet \arrow[r,"\gamma",swap] & \bullet \arrow[r,"\gamma",swap] & \bullet \arrow[ur,"\gamma",swap] & & \\ 
\end{tikzcd}  
\end{center}

 The condition for an integer-valued map on the vertices of this quiver to be a nice grading is simply $\Delta_{\gamma} = 0$, so the source and target of each $\gamma$-colored arrow must have the same image under any nice grading. In particular, the source and target of any $\gamma$-colored arrow on the bottom oriented path corresponds to a minimal pair $i$ and $j$ that cannot be distinguished. This again requires us to at least consider $1$-nice gradings, which is not done in \cite[Lemma 6.4]{Haupt2012euler}.
 
Proceeding with the argument, Haupt asserts that ``$\epsilon_k\rho(F_1(s_k)) = \epsilon_m\rho(F_1(s_m))$ for all $k, m \in S_0$ with $F(s_k) = F(s_m)$'' and uses this to conclude that $S$ is not primitive. However, this statement is false for the $\gamma$-colored arrows in the above example. For instance, if we traverse the cycle in the clockwise direction, the expression $\epsilon_k\rho(F_1(s_k))$ is $-5$ for the $\gamma$-colored arrow on top, and $+5$ for any $\gamma$-colored arrow on the bottom. Again, we are forced to conclude that the original proof in \cite[Lemma 6.4]{Haupt2012euler} is incomplete.

\subsection{Patches to the gaps of \cite[Lemmas 6.3-6.4]{Haupt2012euler}}

A common issue with \cite[Lemmas 6.3-6.4]{Haupt2012euler} is that at some point in the middle of the argument, it is \emph{necessary} to switch from the winding $S \rightarrow Q$ to the winding $S \rightarrow Q'$ described by \cite[Proposition 6.1]{Haupt2012euler}. Indeed, Example \ref{ex: string} and the $\mathbb{F}_1$-band in the previous subsection both have finite nice length, as one could see from iterating once.\footnote{Note that Example \ref{ex: string2} shows that it may be necessary to iterate more than once.} Unfortunately, Haupt only invokes Proposition 6.1 at the \emph{start} of each proof, to reduce to the case of distinguishing $i, j \in S_0$ with $F(i) = F(j)$. Furthermore, Haupt's construction depends on a choice of grading, and there are many unhelpful choices (e.x. constant functions are always nice gradings).

The material we develop in Sections 4 and 5 of this article is meant to address these exact issues. Indeed, nice sequences, $i$-nice variables, universal $i$-nice gradings, and nice length all show how one can iterate the construction of \cite[Proposition 6.1]{Haupt2012euler} in a logically coherent and efficient fashion. The proofs of  Lemma \ref{l: uni walks}, Corollary \ref{c: distinguishing trees} and Theorem \ref{l: distinguishing bands} in this article are different from Haupt's proofs in that the use of $i$-nice variables allows us to systematically switch between windings as many times as is necessary to avoid the errors outlined above.

\bibliography{quiver}\bibliographystyle{alpha}
	
\end{document}